\begin{document}

\newtheorem{cor}{Corollary}[section]
\newtheorem{theorem}[cor]{Theorem}
\newtheorem{prop}[cor]{Proposition}
\newtheorem{lemma}[cor]{Lemma}
\newtheorem{conj}[cor]{Conjecture}
\newtheorem{claim}[cor]{Claim}
\theoremstyle{definition}
\newtheorem{defi}[cor]{Definition}
\theoremstyle{remark}
\newtheorem{remark}[cor]{Remark}
\newtheorem{example}[cor]{Example}

\newcommand{\cD}{{\mathcal D}}
\newcommand{\cL}{{\mathcal L}}
\newcommand{\cM}{{\mathcal M}}
\newcommand{\cT}{{\mathcal T}}
\newcommand{\cML}{{\mathcal M\mathcal L}}
\newcommand{\cGH}{{\mathcal G\mathcal H}}
\newcommand{\C}{{\mathbb C}}
\newcommand{\N}{{\mathbb N}}
\newcommand{\R}{{\mathbb R}}
\newcommand{\Z}{{\mathbb Z}}
\newcommand{\HH}{{\mathbb H}}
\newcommand{\Kt}{\tilde{K}}
\newcommand{\Mt}{\tilde{M}}
\newcommand{\dr}{{\partial}}
\newcommand{\kappab}{\overline{\kappa}}
\newcommand{\pib}{\overline{\pi}}
\newcommand{\Sigmab}{\overline{\Sigma}}
\newcommand{\gd}{\dot{g}}
\newcommand{\diff}{\mbox{Diff}}
\newcommand{\dev}{\mbox{dev}}
\newcommand{\tr}{\mbox{tr}}
\newcommand{\devb}{\overline{\mbox{dev}}}
\newcommand{\devt}{\tilde{\mbox{dev}}}
\newcommand{\vol}{\mbox{Vol}}
\newcommand{\hess}{\mbox{Hess}}
\newcommand{\db}{\overline{\partial}}
\newcommand{\Sigmat}{\tilde{\Sigma}}

\newcommand{\sh}{\mathrm{sinh}\,}
\newcommand{\ch}{\mathrm{cosh}\,}
\newcommand{\grad}{\mathrm{grad}\,}

\newcommand{\cunc}{{\mathcal C}^\infty_c}
\newcommand{\cun}{{\mathcal C}^\infty}
\newcommand{\dd}{d_D}
\newcommand{\dmin}{d_{\mathrm{min}}}
\newcommand{\dmax}{d_{\mathrm{max}}}
\newcommand{\Dom}{\mathrm{Dom}}
\newcommand{\dn}{d_\nabla}
\newcommand{\ded}{\delta_D}
\newcommand{\delmin}{\delta_{\mathrm{min}}}
\newcommand{\delmax}{\delta_{\mathrm{max}}}
\newcommand{\hmin}{H_{\mathrm{min}}}
\newcommand{\maxi}{\mathrm{max}}
\newcommand{\oL}{\overline{L}}
\newcommand{\oP}{{\overline{P}}}
\newcommand{\Ran}{\mathrm{Ran}}
\newcommand{\tgamma}{\tilde{\gamma}}
\newcommand{\cotan}{\mbox{cotan}}
\newcommand{\lambdat}{\tilde\lambda}
\newcommand{\St}{\tilde S}

\newcommand{\II}{I\hspace{-0.1cm}I}
\newcommand{\III}{I\hspace{-0.1cm}I\hspace{-0.1cm}I}
\newcommand{\note}[1]{\marginpar{\tiny #1}}

\title{Maximal surfaces and the universal Teichm\"uller space}
\author{Francesco Bonsante}
\address{Dipartimento di Matematica\\
Universit\`a degli Studi di Pavia\\
via Ferrata 1\\
27100 Pavia, Italy}
\email{francesco.bonsante@unipv.it}
\author{Jean-Marc Schlenker}\thanks{Partially supported by the ANR programs 
FOG (ANR-07-BLAN-0251-01),
Repsurf (ANR-06-BLAN-0311)
and ETTT (ANR-09-BLAN-0116-01).}
\address{Institut de Math\'ematiques de Toulouse, UMR CNRS 5219\\
Universit\'e Toulouse III\\
31062 Toulouse cedex 9, France}
\email{schlenker@math.univ-toulouse.fr}
\date{November 2009 (v1)}

\begin{abstract}
We show that any element of the universal Teichm\"uller space is realized by a 
unique minimal Lagrangian diffeomorphism from the hyperbolic plane to itself. 
The proof uses maximal surfaces in the 3-dimensional anti-de Sitter space. We
show that, in $AdS^{n+1}$, any subset $E$ of the 
boundary at infinity which is the boundary at infinity of a space-like hypersurface bounds 
a maximal space-like hypersurface. In $AdS^3$, if $E$ is the graph of a quasi-symmetric
homeomorphism, then this maximal surface is unique, and it has negative sectional
curvature. As a by-product, we find a simple characterization
of quasi-symmetric homeomorphisms of the circle in terms of 3-dimensional 
projective geometry.
\end{abstract}
\maketitle


\section{Introduction}

\subsection{The universal Teichm\"uller space}

We consider here the universal Teichm\"uller space $\cT$, which can be
defined as the space of quasi-symmetric homeomorphisms from $S^1$ to
$S^1$ up to projective transformations, see
e.g. \cite{gardiner-harvey}. The quasi-symmetric homeomorphisms of
$S^1$ to $S^1$ are precisely the homeomorphisms which are the boundary
value of a quasi-conformal diffeomorphism from $\mathbb H^2$ to
$\mathbb H^2$, so that the universal Teichm\"uller space $\cT$ can be
defined as the space of quasi-conformal diffeomorphisms from $\mathbb H^2$ to
$\mathbb  H^2$, up to composition with a hyperbolic isometry and up to the
equivalence relation which identifies two quasi-conformal
diffeomorphisms if they have the same boundary value.

It was conjectured by Schoen that any element in the universal Teichm\"uller
space can be uniquely realized as a quasi-conformal harmonic diffeomorphism:

\begin{conj}[Schoen \cite{schoen:role}] \label{cj:schoen}
Let $\phi:S^1\rightarrow S^1$ be a quasi-symmetric
homeomorphism. There is a unique quasi-conformal harmonic
diffeomorphism $\psi:\mathbb H^2\rightarrow \mathbb H^2$ such that
$\partial \psi=\phi$.
\end{conj}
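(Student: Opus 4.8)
The plan is to leave aside the area-preserving picture produced by the anti-de Sitter construction — a harmonic diffeomorphism of $\HH^2$ is not area-preserving in general — and to parametrize quasi-conformal harmonic self-maps of $\HH^2$ by their Hopf differentials. Writing the domain metric as $\rho^2|dz|^2$ and the target metric as $\sigma^2|dw|^2$, recall that for $\psi:\HH^2\to\HH^2$ the Hopf differential $\Phi=\sigma^2(\psi)\,\psi_z\,\overline{\psi_{\bar z}}\,dz^2$ is holomorphic exactly when $\psi$ is harmonic. With the holomorphic and antiholomorphic energy densities $\mathcal H=\sigma^2|\psi_z|^2/\rho^2$ and $\mathcal L=\sigma^2|\psi_{\bar z}|^2/\rho^2$ one has $|\Phi|^2=\rho^4\,\mathcal H\,\mathcal L$ and $|\mu|^2=\mathcal L/\mathcal H$ for the Beltrami coefficient $\mu=\psi_{\bar z}/\psi_z$, so $\psi$ is an orientation-preserving diffeomorphism iff $\mathcal H>\mathcal L$ and is quasi-conformal iff $\mathcal L/\mathcal H$ is bounded away from $1$. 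The right parameter space is thus the Banach space $Q$ of holomorphic quadratic differentials with $\|\Phi\|_\infty:=\sup\rho^{-2}|\Phi|<\infty$, and, once each $\Phi$ is realized by a harmonic diffeomorphism $\psi_\Phi$, the conjecture is equivalent to bijectivity of the boundary-value map $F:Q\to\cT$, $\Phi\mapsto\partial\psi_\Phi$.

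The first step is to realize each $\Phi\in Q$. Eliminating $\mathcal L=(|\Phi|^2/\rho^4)/\mathcal H$, the Bochner identity for maps between curvature $-1$ surfaces reduces to the scalar equation
\begin{equation}
\Delta\log\mathcal H = 2\mathcal H - \frac{2|\Phi|^2}{\rho^4\,\mathcal H} - 2,
\end{equation}
where $\Delta$ is the Laplace--Beltrami operator of the hyperbolic domain metric; substituting $\mathcal H=(|\Phi|/\rho^2)e^{w}$ (where $\Phi\neq 0$) turns this into the sinh-Gordon equation $\Delta w=4(|\Phi|/\rho^2)\,\sinh w$. I would solve it by constructing sub- and super-solutions out of $\|\Phi\|_\infty$ on an exhaustion of $\HH^2$ by hyperbolic disks and passing to the limit; the hyperbolic boundedness of $\Phi$ is precisely what pins $\mathcal L/\mathcal H$ away from $1$ and makes the limit a quasi-conformal diffeomorphism. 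Uniqueness of $\psi_\Phi$ for fixed $\Phi$ follows from the maximum principle applied to the difference of two solutions. This identifies $Q$ with the set of quasi-conformal harmonic diffeomorphisms of $\HH^2$ modulo post-composition by isometries.

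The second step is to analyze $F$. It is well defined because a quasi-conformal self-diffeomorphism of $\HH^2$ extends to a quasi-symmetric homeomorphism of $S^1$. I would show $F$ is a local homeomorphism by the implicit function theorem in Banach spaces: its linearization along $\psi_\Phi$ is the Jacobi operator of the harmonic map, and because the target has negative curvature there are no nontrivial bounded Jacobi fields (Hartman-type convexity of the energy along geodesic homotopies), so the linearization is an isomorphism. Granting in addition that $F$ is proper, a proper local homeomorphism is a covering map; since $\cT$ is contractible, hence simply connected, and $Q$ is connected, $F$ is then a homeomorphism. This yields simultaneously the existence (surjectivity) and the uniqueness (injectivity) demanded by the statement.

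The main obstacle is the properness of $F$, equivalently the a priori boundary control: the Dirichlet energy of a map $\HH^2\to\HH^2$ is infinite, so no direct minimization is available, and the exhaustion in the first step must be accompanied by uniform gradient and Hopf-differential estimates strong enough to guarantee that the limiting map has boundary value exactly the prescribed $\phi$, with no loss of quasi-symmetry and with the Jacobian staying positive up to $S^1$. Ruling out such degeneration — that a sequence $\Phi_n$ with $\partial\psi_{\Phi_n}\to\phi$ in $\cT$ cannot have $\|\Phi_n\|_\infty\to\infty$ — is exactly the delicate point that the anti-de Sitter method sidesteps for minimal Lagrangian maps but which has no analogous shortcut in the harmonic category, and it is where essentially all of the analytic work lies.
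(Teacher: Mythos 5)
You have been asked to prove a statement that the paper itself does not prove: Conjecture \ref{cj:schoen} is stated as an open conjecture of Schoen, and the authors explicitly say that only partial results are known (uniqueness, and existence for sufficiently smooth $\phi$). The whole point of the paper is to \emph{sidestep} this conjecture by proving the analogous statement for minimal Lagrangian diffeomorphisms (Theorem \ref{tm:ml}) via maximal surfaces in $AdS_3$, precisely because the harmonic-map version resisted proof. So there is no proof in the paper to compare yours against, and any complete argument you gave would have to be new mathematics.

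Your proposal is a faithful outline of the classical strategy: parametrize harmonic self-maps of $\HH^2$ by bounded holomorphic quadratic differentials, solve the Bochner/sinh-Gordon equation on an exhaustion to realize each $\Phi$ with $\|\Phi\|_\infty<\infty$ by a quasi-conformal harmonic diffeomorphism (this is essentially Wan's theorem, which together with the Li--Tam uniqueness results accounts for the partial results the paper cites), and then try to show that the boundary-value map $F:Q\to\cT$ is a bijection. The formulas you write for $|\Phi|^2=\rho^4\mathcal{H}\mathcal{L}$, for $|\mu|^2=\mathcal{L}/\mathcal{H}$, and the reduced Bochner equation are correct. But the argument is not a proof: the surjectivity of $F$ --- equivalently the properness/a priori estimate you isolate in your last paragraph --- is exactly the content of the conjecture, and you state openly that you do not have it. A proper local homeomorphism onto a simply connected target is indeed a homeomorphism, but without properness the covering-map argument never starts, and the local-homeomorphism step itself (invertibility of the Jacobi operator on the relevant Banach space of infinite-energy deformations) is also only sketched. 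So the gap is genuine and is located precisely where you say it is; what you have written is an accurate map of the problem, not a solution to it. (For the record, the existence part of the conjecture was only settled years after this paper, by quite different and substantially harder arguments.)
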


A number of partial results were obtained towards this conjecture,
proving the uniqueness of $\psi$ and its existence if $\phi$ is smooth
enough, see \cite{tam-wan,akutagawa:harmonic,markovic:harmonic} and
the references there. 

\subsection{Minimal Lagrangian diffeomorphisms}

Our first goal here is to prove an analog of Conjecture
\ref{cj:schoen}, with harmonic maps replaced by close relatives: minimal
Lagrangian diffeomorphisms. 

\begin{defi}
Let $\Phi:S\rightarrow S'$ be a diffeomorphism between two hyperbolic
surfaces. $\Phi$ is {\em minimal Lagrangian} if it is area-preserving, and
its graph is a minimal surface in $S\times S'$. 
\end{defi}

The relationship between harmonic maps and minimal Lagrangian maps is
as follows.

\begin{prop}
\begin{itemize}
\item Let $S_0$ be a Riemann surface, and let $\psi:S_0\rightarrow S$ be a 
harmonic diffeomorphism from $S_0$ to a hyperbolic surface $S$. Let 
$q$ be the Hopf differential of $\psi$. There is a unique 
harmonic diffeomorphism $\psi':S_0\rightarrow S'$ from $S_0$ to 
another hyperbolic surface $S'$ with Hopf differential $-q$.
Then $\psi'\circ\psi^{-1}:S\rightarrow S'$ is a minimal Lagrangian map.
\item Conversely, let $\Phi:S\rightarrow S'$ be a minimal Lagrangian
map between two (oriented) hyperbolic surfaces, and let $S_0$ be the
graph of $\Phi$, considered as a Riemann surface with the complex
structure defined by its induced metric in $S\times S'$. Then the
natural projections from $S_0$ to $S$ and to $S'$ are harmonic maps,
and the sum of their Hopf differentials is zero.
\end{itemize}
\end{prop}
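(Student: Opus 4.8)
The two statements are two readings of a single computation, organized around two standard facts: an isometric immersion of a surface is minimal if and only if it is harmonic for its induced metric (for an isometric immersion the tension field equals the mean curvature vector); and a map into a Riemannian product is harmonic if and only if each of its two components is harmonic, while harmonicity of a map out of a surface depends only on the conformal class of the domain metric. I would prove the second (converse) statement first, since it is the cleaner of the two.

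So let $\Phi:S\to S'$ be minimal Lagrangian, parametrize its graph by $S$ via $x\mapsto(x,\Phi(x))$, and endow $S_0$ with the conformal class of the induced metric $I=g+\Phi^*g'$. The two projections are then $\pi=\mathrm{id}_S$ and $\pi'=\Phi$, with $\pi^*g+\pi'^*g'=I$. By definition of the conformal structure the $(2,0)$-part of $I$ vanishes, so the Hopf differentials $q=(\pi^*g)^{(2,0)}$ and $q'=(\pi'^*g')^{(2,0)}$ satisfy $q+q'=I^{(2,0)}=0$ for purely algebraic reasons; this is the vanishing of the sum. Since the graph is minimal, the inclusion $(S_0,I)\hookrightarrow S\times S'$ is harmonic, hence both components $\pi$ and $\pi'$ are harmonic, and by conformal invariance they stay harmonic for any metric in the class $S_0$. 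Note that the area-preserving hypothesis plays no role here, neither for harmonicity nor for $q+q'=0$.

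For the first statement the geometric conclusion runs through the same mechanism in reverse, so the real content is the existence and uniqueness of $\psi'$ with Hopf differential $-q$. Writing $\psi$ in a local conformal coordinate $z$ on $S_0$ with background metric $\sigma_0$, and setting $\mathcal{H}$ and $\mathcal{L}$ for the holomorphic and antiholomorphic energy densities, one has $|q|^2=\mathcal{H}\,\mathcal{L}$ and the harmonic map equation into a surface of curvature $-1$ takes the Bochner form
$$\Delta_{\sigma_0}\log\mathcal{H}=2\mathcal{H}-\frac{2|q|^2}{\mathcal{H}}-2\kappa_{\sigma_0},$$
where $|q|^2$ is measured with $\sigma_0$. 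The right-hand side depends on $q$ only through $|q|^2$, so replacing $q$ by $-q$ leaves the equation unchanged: a solution $\psi'$ then exists with the \emph{same} energy densities $\mathcal{H}'=\mathcal{H}$, $\mathcal{L}'=\mathcal{L}$, and I would import existence and uniqueness of the corresponding harmonic diffeomorphism from the available theory of harmonic maps with prescribed Hopf differential. Granting $\psi'$, set $\Phi=\psi'\circ\psi^{-1}$. Area-preservation is immediate, since the pulled-back area form of a harmonic diffeomorphism is $(\mathcal{H}-\mathcal{L})\,dA_{\sigma_0}$, so $\psi^*dA_g=\psi'^*dA_{g'}$ and hence $\Phi^*dA_{g'}=dA_g$. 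For minimality of the graph, consider $F=(\psi,\psi'):S_0\to S\times S'$; its induced metric $\psi^*g+\psi'^*g'$ has $(2,0)$-part $q+(-q)=0$, so $\sigma_0$ lies in its conformal class, whence the harmonicity of $\psi$ and $\psi'$ makes $F$ harmonic for its induced metric, i.e.\ a minimal isometric immersion whose image is exactly the graph of $\Phi$. Thus $\Phi$ is minimal Lagrangian.

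The main obstacle is the analytic input in the first statement. The cancellation of Hopf differentials and the ``$|q|^2$ only'' structure of the Bochner equation make the symmetry $q\leftrightarrow -q$ transparent and reduce everything else to formal manipulations; but actually producing $\psi'$ and establishing its uniqueness rests on the solvability theory for the Bochner equation, which in the $\HH^2$ setting is delicate and is precisely the circle of questions the rest of the paper addresses.
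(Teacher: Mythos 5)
Your argument is essentially correct, and there is nothing in the paper to compare it against: the proposition is stated there without proof, as background imported from the harmonic-maps literature (Schoen, Labourie, Wolf, Tam--Wan). Your route is the standard one. The converse direction is complete as written (minimal isometric immersion $\Rightarrow$ harmonic, product target $\Rightarrow$ harmonic components, $(2,0)$-part of the induced metric vanishes in its own conformal class), and your observation that the area-preserving hypothesis is not needed there is accurate. In the forward direction the computation $\psi^*g+\psi'^*g'=2\sigma_0(\mathcal{H}+\mathcal{L})|dz|^2$ and the Jacobian identity $\mathcal{H}-\mathcal{L}=\mathcal{H}'-\mathcal{L}'$ correctly yield minimality of the graph and area-preservation \emph{once} $\psi'$ with $\mathcal{H}'=\mathcal{H}$ is in hand. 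The one substantive point, which you flag yourself, is that the existence and uniqueness of $\psi'$ --- and the step ``same $|q|^2$, hence same $\mathcal{H}$'' --- is precisely the solvability and uniqueness theory for the Bochner equation: Wolf's theorem for closed surfaces, and Wan's theorem for complete harmonic diffeomorphisms of $\HH^2$ with bounded Hopf differential. In the generality in which the proposition is phrased (arbitrary Riemann surface $S_0$, arbitrary hyperbolic target) uniqueness of solutions of the Bochner equation can fail without a completeness or boundedness hypothesis, so strictly speaking your reduction should cite the version of that theory appropriate to the setting actually used in the paper (closed surfaces, or $\HH^2$ with $q$ bounded) rather than an unspecified ``available theory.'' With that reference supplied, the proof is complete.
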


Thus minimal Lagrangian maps are a kind of ``symmetric squares'' of 
harmonic maps. 

It is known that any diffeomorphism between two closed hyperbolic
surfaces can be deformed to a unique harmonic diffeomorphism, see
e.g. \cite{jost,jost:compact}.  In the same manner, it was proved by
Schoen and by Labourie that any such diffeomorphism can be deformed to
a unique minimal Lagrangian diffeomorphism \cite{L5,schoen:role}.

Our first result is an extension of this existence and uniqueness
result to the universal Teichm\"uller space.

\begin{theorem} \label{tm:ml}
Let $\phi:S^1\rightarrow S^1$ be a quasi-symmetric
homeomorphism. There is a unique quasi-conformal minimal Lagrangian
diffeomorphism $\Phi:\mathbb H^2\rightarrow \mathbb H^2$ such that $\partial
\Phi=\phi$.
\end{theorem}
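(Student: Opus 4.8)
The plan is to realize the minimal Lagrangian diffeomorphism via a maximal surface in the anti-de Sitter space $AdS^3$, exploiting the correspondence announced in the abstract. Recall that $AdS^3$ is a Lorentzian analog of hyperbolic space; its group of isometries is (up to finite index) $PSL(2,\R)\times PSL(2,\R)$, acting on a model whose boundary at infinity is naturally $S^1\times S^1$, and a space-like maximal surface $\Sigma$ carries an induced hyperbolic metric together with a second fundamental form $\II$ which is the real part of a holomorphic quadratic differential. The key geometric fact is that from such a $\Sigma$ one extracts two maps to $\HH^2$: the composition of the two projections, suitably normalized using the first and second fundamental forms, produces a diffeomorphism $\Phi:\HH^2\to\HH^2$ whose graph corresponds to $\Sigma$, and the maximality condition ($\tr(\II)=0$, i.e. the mean curvature vanishes) translates precisely into the condition that $\Phi$ be minimal Lagrangian. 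So the first step is to set up this dictionary carefully: given a space-like maximal surface in $AdS^3$, I would write down the two associated maps $\Phi_\pm$ and verify by a direct computation with the Gauss and Codazzi equations that $\Phi=\Phi_+\circ\Phi_-^{-1}$ is area-preserving and has minimal graph, hence is minimal Lagrangian.

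Next I would address existence. Given the quasi-symmetric homeomorphism $\phi:S^1\to S^1$, its graph is a curve $E\subset S^1\times S^1=\partial_\infty AdS^3$, and I would invoke the general existence result announced in the abstract: any subset of $\partial_\infty AdS^{n+1}$ that bounds a space-like hypersurface bounds a maximal one. Here I must first check that the graph of a quasi-symmetric homeomorphism is \emph{acausal} (space-like) as a curve in $\partial_\infty AdS^3$, so that it is the asymptotic boundary of some space-like disk; this is where quasi-symmetry enters, guaranteeing the relevant boundedness/uniform space-likeness that feeds the maximal hypersurface existence theorem. Applying that theorem yields a maximal space-like surface $\Sigma$ with $\partial_\infty\Sigma=E$, and running the dictionary of the previous step backwards produces a minimal Lagrangian $\Phi$ with $\partial\Phi=\phi$. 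The quasi-conformality of $\Phi$ should follow from the negative curvature and the quasi-symmetric boundary data: one controls the conformal distortion of $\Phi$ by the principal curvatures of $\Sigma$, which are bounded because $E$ is the graph of a quasi-symmetric map.

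For uniqueness I would again translate the problem to $AdS^3$: two minimal Lagrangian diffeomorphisms with the same boundary value $\phi$ give two maximal space-like surfaces in $AdS^3$ with the same boundary curve $E$. Thus uniqueness of $\Phi$ reduces to the uniqueness clause of the maximal surface statement in dimension $3$ for boundary data that is the graph of a quasi-symmetric homeomorphism, which is again available from the abstract. The crux here is a maximum-principle argument: one shows that two maximal surfaces asymptotic to the same acausal curve must coincide, typically by applying a comparison/maximum principle to the difference of their support functions or cosmological-time-type functions, after establishing enough asymptotic control near $E$ to rule out escape to infinity.

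The main obstacle I expect is analytic control at infinity rather than the formal differential geometry. The dictionary between maximal surfaces and minimal Lagrangian maps is an algebraic manipulation of the structure equations and should go through cleanly; likewise, on compact surfaces the existence and uniqueness are classical (Schoen, Labourie). The difficulty in the universal (non-compact, infinite-area) setting is that $\HH^2$ and $AdS^3$ are noncompact, so neither existence nor uniqueness follows from compactness arguments. One must show that the maximal surface $\Sigma$ exists with \emph{exactly} the prescribed asymptotic boundary $E$ (no loss of boundary data, no spurious boundary components), that its geometry is uniformly controlled near infinity so that $\Phi$ is genuinely quasi-conformal and boundary-continuous with $\partial\Phi=\phi$, and that the maximum principle used for uniqueness has enough decay at infinity to be applicable. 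Quantifying quasi-symmetry of $\phi$ as uniform space-likeness/width of $E$ in $AdS^3$, and converting that into a priori curvature bounds on $\Sigma$, is the technical heart of the argument and the place where the precise hypothesis (quasi-symmetric, not merely a homeomorphism) is essential.
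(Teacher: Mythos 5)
Your proposal follows essentially the same route as the paper: the dictionary between maximal space-like surfaces in $AdS_3$ and minimal Lagrangian maps (Proposition \ref{pr:translation}), existence obtained by feeding the graph of $\phi$ into the general maximal-graph existence theorem and using the equivalence of quasi-symmetry with width $<\pi/2$ to get bounded second fundamental form and uniformly negative curvature (hence quasi-conformality), and uniqueness by a maximum principle at infinity for maximal graphs with the same asymptotic boundary. The one slip is your claim that a maximal surface carries an \emph{induced} hyperbolic metric --- its induced metric has curvature $-1+k^2\geq -1$, and the hyperbolic metrics are the auxiliary metrics $I((E\pm JB)\cdot,(E\pm JB)\cdot)$ --- but this does not affect the structure of your argument.
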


The result of Schoen and Labourie on closed hyperbolic surfaces
obviously follows from this.  The proof of Theorem \ref{tm:ml} can be
found in Section \ref{sc:proof}. Note that partial results in this
direction were obtained previously by Aiyama, Akutagawa and Wan \cite{AAW}, 
who proved the existence part of Theorem \ref{tm:ml} when $\phi$ has small
dilatation. Recently, Brendle has obtained results on the
existence and uniqueness of minimal Lagrangian diffeomorphisms between
two convex domains of the same, finite area in hyperbolic surfaces,
see \cite{brendle:minimal}.

\subsection{The anti-de Sitter space}

The proof of Theorem \ref{tm:ml} is essentially based on the geometry
of maximal surfaces in the anti-de Sitter (AdS) 3-dimensional
space, as already in \cite{AAW}. This follows a pattern in some recent works (see
\cite{mess,mess-notes,cone,mbh,earthquakes} and also \cite{AAW}),
where results on Teichm\"uller theory were proved using 3-dimensional
AdS geometry, although mostly in a somewhat different direction. 
The relationship between maximal surfaces in
3-dimensional AdS manifolds and minimal Lagrangian maps between closed
hyperbolic surfaces was also used recently in \cite{minsurf}.

The 3-dimensional AdS space can be considered as a Lorentzian analog of the 
3-dimensional hyperbolic space. It can be defined as the quadric
\[
  AdS^*_{3}=\{x\in\R^{2,2}|\langle x,x\rangle=-1\}~, 
\]
where $\R^{2,2}$ is $\R^{4}$ endowed with bilinear symmetric form of
signature $(2,2)$. It is a geodesically complete Lorentz manifold of
constant curvature $-1$. Another way to define it 
is as the Lie group $SL(2,\R)$, endowed
with its bi-invariant Killing metric.  More details are given in
Section \ref{sc:ads}. The key point for us, basically discovered by
Mess \cite{mess,mess-notes} and used in the references mentioned
above, is that space-like surfaces in $AdS^*_3$ naturally give rise
to area-preserving diffeomorphisms from the hyperbolic plane to
itself. In this way, Theorem \ref{tm:ml} is proved below to be
equivalent to an existence and uniqueness statement for maximal
space-like surfaces in $AdS^*_3$, and it is in this form that it is
proved.

The anti-de Sitter space can of course be defined in higher
dimensions. The existence part of the result on maximal surfaces is
actually stated (and proved) below in the more general context of
maximal hypersurfaces in $AdS^*_{n+1}$, see Theorem
\ref{tm:existence}.  The uniqueness part, however, is considered here
only in $AdS^*_3$ (and it needs hypothesis that are more interesting
in dimension $2+1$), see Theorem \ref{tm:uniqueness}.

\subsection{Maximal surfaces and minimal Lagrangian diffeomorphisms}

For closed hyperbolic surfaces, the existence of a minimal Lagrangian
diffeomorphism is equivalent to the existence of a maximal space-like
surface in a 3-dimensional globally hyperbolic AdS manifold, see
\cite{minsurf}. This relation extends to maximal surfaces in 
$AdS^*_3$ and the universal Teichm\"uller space as follows.

One way to consider the bridge between Teichm\"uller theory and AdS geometry 
is through the asymptotic boundary of $AdS_3^*$ -- denoted by $\partial_\infty
AdS_3^*$ -- that, as for the hyperbolic space, furnishes a natural
compactification of $AdS_3^*$. As in the hyperbolic case, a conformal
Lorentzian structure is defined on $\partial_\infty AdS_3^*$.  There
is a natural projection $\partial_\infty AdS_3^*$ to $S^1\times S^1$
that is a $2$-to-$1$ covering (see Section \ref{ssc:3d} for details).
The graph of any homeomorphism of $S^1$ lifts to a spacelike closed
curve in $\partial_\infty AdS_3^*$.

\begin{prop} \label{pr:translation}
\begin{itemize}
\item Let $S\subset AdS^*_3$ be a maximal space-like graph with
  unformly negative sectional curvature. Then there is a minimal Lagrangian
  diffeomorphism $\Phi_S:\HH^2\rightarrow \HH^2$ associated to $S$, and
   the graph of $\partial \Phi_S:\dr \HH^2\rightarrow\dr \HH^2$ is the
   projection of the boundary at infinity of $S$ in $\dr_\infty AdS^*_3$.
\item Conversely, to any quasi-conformal minimal Lagrangian
  diffeomorphism $\Phi:\HH^2\rightarrow \HH^2$ is associated a maximal
  surface $S$ with uniformly negative sectional curvature and with
  boundary at infinity equal to the lifting of the graph of $\dr\Phi$
  in $\dr_\infty AdS_3^*$.
\end{itemize}
\end{prop}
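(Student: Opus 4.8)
The plan is to realize $\Phi_S$ as the composition of the two natural Gauss maps attached to a space-like surface in $AdS^*_3\cong SL(2,\R)$, in the spirit of the construction of Mess recalled above. Write $I$ for the induced metric on $S$, $B$ for its shape operator (so $\II=I(B\cdot,\cdot)$), $J$ for the complex structure of $I$ compatible with the orientation, and $E$ for the identity endomorphism. The bi-invariant geometry of $SL(2,\R)$ produces two maps $f_\ell,f_r\colon S\to\HH^2$ whose pull-back metrics are
$$ \mu_\ell=I((E+JB)\cdot,(E+JB)\cdot)~,\qquad \mu_r=I((E-JB)\cdot,(E-JB)\cdot)~, $$
and both have curvature $-1$. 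I would first record the elementary linear algebra these obey: since $S$ is maximal, $\tr B=0$, so diagonalizing $B=\mathrm{diag}(\lambda,-\lambda)$ in an $I$-orthonormal frame gives $\det(E\pm JB)=1-\lambda^2=-K_I$, where $K_I=-1-\det B$ is the Gauss equation in $AdS^*_3$, while the eigenvalues of $\mu_{\ell/r}$ relative to $I$ are $(1\pm\lambda)^2$.

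Uniformly negative curvature means precisely $\lambda^2\le 1-c$ for some $c>0$, so $E\pm JB$ are uniformly invertible and $\mu_\ell,\mu_r$ are uniformly comparable to $I$; as $S$ is an entire space-like graph, $I$ is complete, hence $\mu_\ell,\mu_r$ are complete hyperbolic metrics on the disk $S$, which forces $f_\ell,f_r$ to be global diffeomorphisms onto $\HH^2$. I then set $\Phi_S=f_r\circ f_\ell^{-1}$. That $\Phi_S$ is area-preserving follows from $\tr(JB)=0$, which makes $\det(E+JB)=\det(E-JB)$ and the area forms of $\mu_\ell$ and $\mu_r$ equal (both $=(1-\lambda^2)\,dA_I$). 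For minimality I would use the characterization that $\Phi$ is minimal Lagrangian iff $\Phi^*g=g(b\cdot,b\cdot)$ with $b$ self-adjoint, positive, $\det b=1$ and $d^\nabla b=0$; here $b$ is the transport by $f_\ell$ of $(E-JB)(E+JB)^{-1}$, whose determinant is $1$ by $\tr B=0$ and whose Codazzi equation is exactly the Codazzi equation of $B$ in $AdS^*_3$. Finally $f_\ell,f_r$ extend to $\dr_\infty S\to S^1$, and reading off the two boundary values identifies the projection of $\dr_\infty S$ under the $2$-to-$1$ covering with the graph of $\partial\Phi_S=\partial f_r\circ(\partial f_\ell)^{-1}$.

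For the converse I would run this backwards. Given a quasi-conformal minimal Lagrangian $\Phi$, write $\Phi^*g=g(b\cdot,b\cdot)$ with $b$ self-adjoint, positive, $\det b=1$ and $d^\nabla b=0$; quasi-conformality bounds the eigenvalues of $b$ away from $0$ and $\infty$. Solving $b=(E-JB)(E+JB)^{-1}$ for a trace-free $I$-symmetric $B$, one checks that the resulting pair $(I,B)$ satisfies the Gauss equation $K_I=-1-\det B$ and the Codazzi equation of $AdS^*_3$; the fundamental theorem of surfaces then produces, on the simply connected disk, an isometric embedding with these fundamental forms, unique up to a global isometry, whose image $S$ is maximal. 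The eigenvalue bounds on $b$ become $\lambda^2\le 1-c$, i.e. uniformly negative curvature and completeness, and the boundary computation read in reverse identifies $\dr_\infty S$ with the lift of the graph of $\partial\Phi$.

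The main obstacle is not this essentially algebraic local correspondence but the global and asymptotic analysis around it: showing that the Gauss maps are honest diffeomorphisms onto all of $\HH^2$ and extend continuously to the boundary, that an entire maximal graph has complete induced metric, and above all that the chain ``quasi-symmetric boundary value $\Leftrightarrow$ quasi-conformal $\Phi$ $\Leftrightarrow$ uniformly bounded $b$ and $E\pm JB$ $\Leftrightarrow$ uniformly negative curvature of $S$'' holds, which is exactly where the hypotheses are really used.
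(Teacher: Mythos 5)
Your algebraic skeleton is exactly the paper's: the same metrics $\mu_l=I((E+JB)\cdot,(E+JB)\cdot)$ and $\mu_r=I((E-JB)\cdot,(E-JB)\cdot)$, the same computation $\det(E\pm JB)=1+\det B=-K$, the same observation that $\tr(JB)=0$ makes the two area forms agree, and, for the converse, the same use of the characterization of minimal Lagrangian maps via a Codazzi tensor $b$ with $\det b=1$ followed by the verification of Gauss--Codazzi and the fundamental theorem of surfaces. That part is correct. The difficulty is that the two statements you defer to your closing paragraph --- that the Gauss maps are global diffeomorphisms onto $\HH^2$, and that they extend continuously to $\dr_\infty S$ with boundary values $\pi_l$, $\pi_r$ --- are not peripheral: they are the actual content of the paper's proof (Lemma \ref{crit:lem}, Remark \ref{bo:rk}, Claim \ref{cl:equidistant}, Lemma \ref{lm:320}, Proposition \ref{prop:global}), and your proposal proves neither.

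For the global diffeomorphism property you propose a genuinely different route: $\mu_l$ is uniformly comparable to $I$, $I$ is complete because $S$ is an entire graph, hence $\mu_l$ is a complete simply connected hyperbolic metric and its developing map is an isometry onto $\HH^2$. The weak link is the claim that an entire space-like graph in $AdS_3$ has complete induced metric; this is not automatic (an entire space-like graph can become asymptotically lightlike, so $I$ may fail to be complete), and the paper never proves it --- completeness appears only as a \emph{hypothesis} in Propositions \ref{pr:maxk} and \ref{pr:k1}. The paper instead establishes the boundary extension first and deduces that $\Phi_l,\Phi_r$ are proper local diffeomorphisms, hence coverings, hence diffeomorphisms. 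The boundary extension is itself the hardest step: one must exclude sequences of tangent planes degenerating to lightlike planes whose endpoints miss $\dr_\infty S$, which the paper does by passing to the equidistant surface at distance $\pi/4$ (convex precisely because the principal curvatures lie in $(-1+\epsilon,1-\epsilon)$, i.e.\ because the curvature is uniformly negative) and by showing that $\dr_\infty S$ contains no lightlike segment via the width estimate $w(S)<\pi/2$ (Corollary \ref{cr:w} and Claim \ref{cl:light}). This cannot be replaced by the one-line assertion that ``$f_\ell,f_r$ extend to $\dr_\infty S$''. Until you either prove completeness of $I$ \emph{and} supply the identification of the boundary values with $\pi_l,\pi_r$, or reproduce the convexity/width argument, the first bullet (and hence the boundary statement in the second) remains unproved.
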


It is this proposition which provides the bridge between Theorem \ref{tm:ml} 
and the existence and uniqueness of maximal surfaces in $AdS^*_3$.

\subsection{Existence and regularity of maximal hypersurfaces in $AdS_{n+1}$}

We can state an existence result for maximal hypersurfaces in the AdS space
with fixed boundary values. The regularity conditions on the boundary values
are quite weak, since we only demand that it bounds some space-like surface
in $AdS_{n+1}^*$. 

\begin{theorem} \label{tm:existence}
Let $\Gamma$ be a closed acausal $\mathrm C^{0,1}$ graph in $\dr_\infty AdS_{n+1}^*$
($n\geq 2$).
If $\Gamma$ does not contain lightlike segments, then there is a maximal
space-like hypersurface $S_0$ such that $\dr S_0=\Gamma$.
\end{theorem}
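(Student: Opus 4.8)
The plan is to obtain $S_0$ as a limit of maximal hypersurfaces solving Dirichlet problems on a compact exhaustion of the domain of dependence of $\Gamma$. The first step is to localize the problem: since $\Gamma$ is acausal and, being a graph that bounds a spacelike hypersurface, has a well-defined invisible domain (domain of dependence) $D(\Gamma)\subset AdS^*_{n+1}$, any spacelike hypersurface with boundary at infinity $\Gamma$ must be contained in $\overline{D(\Gamma)}$, whose intersection with $\dr_\infty AdS^*_{n+1}$ is exactly $\Gamma$. It therefore suffices to produce a maximal hypersurface inside $D(\Gamma)$ with the correct asymptotics.

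The second step is to set up barriers and solve the approximating problems. The convex hull $C(\Gamma)$ has a future boundary $\dr^+ C(\Gamma)$ and a past boundary $\dr^- C(\Gamma)$, both achronal, both asymptotic to $\Gamma$, and convex towards each other; these provide upper and lower barriers. I would exhaust $D(\Gamma)$ by compact spacelike hypersurfaces-with-boundary $\Sigma_k$ with $\dr\Sigma_k\to\Gamma$, and on each solve the Dirichlet problem for the maximal hypersurface equation --- a quasilinear elliptic equation, being the Euler--Lagrange equation of the (locally maximized) volume functional among spacelike graphs. Solvability with prescribed boundary $\dr\Sigma_k$ follows from the standard theory for maximal hypersurfaces (continuity method together with the convex-hull barriers), yielding maximal hypersurfaces $S_k$ pinched between $\dr^\pm C(\Gamma)$.

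The third step, and the \emph{main obstacle}, is the a priori control of the gradient --- equivalently of the causal character, or tilt --- of the $S_k$, uniformly on compact subsets of $D(\Gamma)$ and up to the boundary at infinity. In the interior, once a hypersurface is uniformly spacelike the operator is uniformly elliptic and Schauder estimates give $C^{2,\alpha}_{\mathrm{loc}}$ bounds; the difficulty is to prevent the tilt from degenerating to the lightlike limit as one approaches $\Gamma$. This is exactly where the hypothesis that $\Gamma$ contains no lightlike segments enters: a lightlike segment in $\Gamma$ would force $\dr^\pm C(\Gamma)$, and hence the trapped $S_k$, to become null along it, whereas its absence yields a quantitative modulus of spacelikeness for the barriers near $\Gamma$ that is inherited by the $S_k$. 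With this uniform ellipticity in hand, Arzel\`a--Ascoli produces a limit $S_0$ that is a smooth spacelike maximal hypersurface; the trapping between $\dr^+ C(\Gamma)$ and $\dr^- C(\Gamma)$, both of which collapse onto $\Gamma$ at infinity, forces $\dr S_0=\Gamma$, completing the proof.
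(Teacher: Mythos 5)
Your overall architecture matches the paper's: exhaust by compact pieces, solve Dirichlet problems for the maximal hypersurface equation (the paper quotes Bartnik's Theorem 4.1 for this, taking as boundary data the boundary of larger and larger pieces of $\dr_+K(\Gamma)$), trap the solutions in the convex hull of $\Gamma$ via the maximum principle (Proposition \ref{convexity:prop}), establish uniform gradient bounds on compact cylinders, and pass to a diagonal limit by elliptic regularity. However, there is a genuine gap at the step you yourself identify as the main obstacle. You assert that the absence of lightlike segments in $\Gamma$ gives the barriers $\dr_\pm C(\Gamma)$ a ``quantitative modulus of spacelikeness'' which is then ``inherited'' by the trapped maximal hypersurfaces $S_k$. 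Neither half of this is justified. First, the convex hull boundaries are in general only weakly spacelike (achronal, ruled by spacelike and possibly lightlike support directions); the paper never claims, and does not need, that they are uniformly spacelike. Second, and more seriously, being squeezed between two uniformly spacelike graphs does not bound the tilt of a spacelike graph lying between them: a graph trapped in a thin slab can still have tangent planes arbitrarily close to lightlike. A gradient bound for the $S_k$ must come from the PDE itself, not from the trapping alone.

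What the paper actually does is invoke Bartnik's interior gradient estimate (Theorem 3.1 of \cite{bartnik-regularity}), packaged as Lemma \ref{apriori:lem}: for a maximal graph $M$ with $\partial M\cap I^+(p)=\emptyset$ and $M\cap I^+(p)$ contained in a fixed compact $H\subset I^-(p_+)$, the gradient function $v_M$ is bounded on $M\cap I^+_\epsilon(p)$ by a constant depending only on $(p,\epsilon,H)$, obtained using the time function $\tau=\delta(\cdot,p)-\epsilon/2$. The no-lightlike-segments hypothesis enters not through a modulus of spacelikeness of the barriers but through Lemma \ref{lm:nosing}: it guarantees that the convex hull $K(\Gamma)$ is disjoint from $\dr D(\Gamma)$, so that every compact piece $(B_R\times\R)\cap K$ can be covered by finitely many sets $I^+_{\epsilon_k}(p_k)$ with $p_k\in D\cap I^-(\dr_-K)$ and $\overline{I^+(p_k)}\cap K$ compact in $I^-((p_k)_+)$ --- exactly the configuration needed to run Bartnik's estimate uniformly in $r$. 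Without this (or an equivalent interior gradient estimate), your Arzel\`a--Ascoli step has nothing to feed on, so as written the proof does not close; the rest of your outline (localization to $\overline{D(\Gamma)}$, trapping in $C(\Gamma)$, identification of the boundary at infinity of the limit) is consistent with the paper.
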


We provide in Section \ref{sc:maximal} a direct proof of this result, where the maximal
surface is obtained as a limit of bigger and bigger maximal disks.

This existence result can be improved insofar as the regularity of the 
hypersurface is concerned. To state this improvement, we need a definition. 
Let $\Gamma$ be a nowhere time-like 
graph in $\dr_\infty AdS_{n+1}^*$. Using the projective model
of $AdS_{n+1}^*$ which is also recalled in Section \ref{proj:sec}, we can consider
the convex hull of $\Gamma$, it is a convex subset of $AdS_{n+1}^*$ with
boundary at infinity containing $\Gamma$, we use the notation $CH(\Gamma)$. 
We denote by $C(\Gamma)$ the intersection with $AdS_{n+1}^*$ of $CH(\Gamma)$
(considered as a subset of projective space). The boundary of $C(\Gamma)$
is the disjoint union of two nowhere time-like hypersurfaces, which we call
$\dr_+C(\Gamma)$ and $\dr_-C(\Gamma)$. 

\begin{defi} \label{df:width}
The \emph{width} of $C(\Gamma)$ (or by extension of $\Gamma$),
denoted by $w(C(\Gamma))$ (resp. $w(\Gamma)$)
is the supremum of the (time) distance between $\dr_-C(\Gamma)$ and 
$\dr_+C(\Gamma)$.
\end{defi}

It is proved below (Lemma \ref{lm:width}) that $w(\Gamma)$ is always 
at most equal to $\pi/2$.

\begin{theorem} \label{tm:existence2}
Suppose that $w(\dr_\infty S)<\pi/2$ in Theorem \ref{tm:existence}. Then
$S_0$ can be taken to have bounded second fundamental form.
\end{theorem}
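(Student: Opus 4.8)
The plan is to obtain the bound on the second fundamental form as a uniform a priori \emph{interior} estimate for the maximal disks that approximate $S_0$, and then to pass to the limit. Recall from the proof of Theorem~\ref{tm:existence} that $S_0$ is constructed as a limit of maximal disks $S_n$ whose boundaries at infinity are contained in $\Gamma$. By the convex hull property (a maximum principle), each $S_n$ is contained in the convex set $C(\Gamma)$; in particular the convex core of $\dr_\infty S_n$ is contained in $C(\Gamma)$, so its width is at most $w(\Gamma)<\pi/2$. It therefore suffices to prove an a priori bound $\|\II\|\leq C$ on any maximal space-like disk contained in a convex region of width $w<\pi/2$, with $C$ depending only on $w$ (and blowing up as $w\to\pi/2$); such a bound then passes to the smooth limit $S_0$.

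The key estimate I would establish by a rescaling (blow-up) argument. Suppose it fails, so that there is a sequence of maximal disks $S_n$, each contained in a convex region of width at most $w<\pi/2$, with $\sup_{S_n}\|\II\|\to\infty$. Using a point-picking argument, choose $p_n\in S_n$ at which $\|\II\|$ is essentially maximal, set $R_n:=\|\II(p_n)\|\to\infty$, and rescale the ambient metric around $p_n$ by the factor $R_n^2$. Since $AdS_{n+1}^*$ has constant curvature $-1$, the rescaled spaces converge to Minkowski space $\R^{n,1}$, and the recentred surfaces converge, smoothly on compact sets, to a space-like maximal hypersurface $\Sigma$ in $\R^{n,1}$ with $\|\II(0)\|=1$. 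The width hypothesis enters decisively here: it must provide a scale-invariant bound keeping the $S_n$ uniformly space-like near $p_n$, so that $\Sigma$ is an \emph{entire} space-like graph rather than a surface degenerating towards the light cone. By the Calabi--Bernstein theorem, an entire space-like maximal hypersurface in Minkowski space is a space-like hyperplane, whence $\II\equiv 0$ on $\Sigma$, contradicting $\|\II(0)\|=1$.

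The main obstacle is exactly the step that makes the blow-up limit entire and space-like, that is, turning the bound $w<\pi/2$ into a uniform, scale-invariant control of the tilt of the surface. Concretely I would compare the maximal surface with the umbilic hypersurfaces equidistant from a totally geodesic space-like hyperplane: in $AdS_{n+1}^*$ these have constant principal curvature $\tan t$ at time-distance $t$ and become singular precisely at $t=\pi/2$, which is also the reason for the universal bound of Lemma~\ref{lm:width}. Because the surface lies in a region of width strictly less than $\pi/2$, it can be trapped between two such families staying at time-distance bounded away from $0$ and from $\pi/2$; via the maximum principle this already constrains the surface and yields the desired uniform spacelike bound. Carrying out this comparison --- and checking that the resulting bound is genuinely scale-invariant, so that it survives the rescaling --- is where the real work lies; the remaining convergence and the appeal to Calabi--Bernstein are then routine, and the uniform estimate $\|\II\|\leq C(w)$ on the approximating disks transfers to $S_0$ by smooth convergence, completing the proof.
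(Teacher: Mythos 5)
Your overall strategy (a uniform a priori curvature estimate for maximal graphs in a convex region of width $w<\pi/2$, with a constant depending only on $w$) is the same reduction the paper makes: Theorem \ref{tm:existence2} follows from the convex hull property (Proposition \ref{convexity:prop}) together with Corollary \ref{cr:bound}. But your proposed proof of the estimate has a genuine gap, and it sits exactly at the step you yourself flag as ``where the real work lies.'' You claim that because the surface lies in a region of width $<\pi/2$ it can be trapped between two umbilic barriers, and that ``via the maximum principle this already constrains the surface and yields the desired uniform spacelike bound.'' Containment between barriers controls the \emph{height} of the graph, not its \emph{tilt}: a space-like graph confined to an arbitrarily thin slab can still have tangent planes arbitrarily close to null, so no barrier/trapping argument by itself bounds the gradient function $v_M=(1-\phi^2|\bar\nabla u|^2)^{-1/2}$. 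Without that bound your blow-up limit need not be an entire space-like graph (it can degenerate toward the light cone, precisely the scenario you need to exclude), and the appeal to Calabi--Bernstein/Cheng--Yau collapses. Deriving uniform spacelikeness from the width hypothesis is the actual analytic content of the theorem, and it is not a consequence of the maximum principle applied to the height function.

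The paper fills this gap with Bartnik's interior gradient estimate (Lemma \ref{apriori:lem}, i.e.\ Theorem 3.1 of \cite{bartnik-regularity}): for a maximal graph $M$ with $\dr M\cap I^+(q)=\emptyset$ and $M\cap I^+(q)$ contained in a controlled compact set, $v_M$ is bounded on $I^+_\epsilon(q)$ by a constant depending only on the configuration. The role of the hypothesis $w(K)<\pi/2$ is to produce, for \emph{every} point $p$ of the surface, such a point $q$ at time-distance $\geq\epsilon/2$ in the past of $p$ with $M\cap I^+(q)\subset K\cap I^+(q)$ contained in $I^-$ of a plane at distance $<\pi/2-\epsilon/2$ from $q$ (this uses Lemma \ref{distance:lem}); the homogeneity of $AdS_{n+1}$ then moves every such configuration to a single model one, making the constant uniform. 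Once $v_M$ is uniformly bounded, the paper concludes by quasilinear elliptic regularity rather than by blow-up. Your blow-up--plus--Bernstein endgame could replace the elliptic regularity step, but it cannot replace the gradient estimate; if you want to keep your architecture, you must either import Bartnik's estimate (at which point the rescaling argument becomes unnecessary) or supply an independent proof of uniform spacelikeness, which the umbilic barriers do not provide.
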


The proof is also in Section \ref{sc:proof}.

\subsection{The mean curvature flow}

We also give in the appendix another proof of Theorem \ref{tm:existence}. It
is based on the mean curvature flow for hypersurfaces in the anti-de Sitter
space. 

\begin{theorem} \label{tm:flow}
Let $S\subset AdS_{n+1}$ be a space-like graph. There exists a 
long-time solution of the mean curvature flow with initial value $S$ with
fixed boundary at infinity, defined for all $t>0$.
\end{theorem}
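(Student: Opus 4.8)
The plan is to write the flow in a parametrization that turns it into a scalar quasilinear parabolic equation, establish the a priori estimates needed to rule out singularities, and then pass to a limit of solutions on compact pieces to obtain a solution defined for all positive time with the prescribed boundary at infinity. Since $S$ is a space-like graph, I would fix a reference space-like slice (for instance a totally geodesic copy of $\HH^n$ in a time-function foliation of $AdS_{n+1}$) and write the evolving hypersurface as the graph of a time-dependent height function $u(\cdot,t)$ along the normal geodesics to this slice. The mean curvature flow $\dr_t F=\vec H$ then becomes a quasilinear parabolic PDE for $u$, whose uniform parabolicity is exactly the condition that the hypersurface stay uniformly space-like, i.e. a bound on the lapse $v=\langle\nu,\nu_0\rangle$ comparing the time-like unit normal $\nu$ of the graph with that of the reference slice. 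Standard parabolic theory gives short-time existence, so the real content is global control of $u$.

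For the a priori estimates I would follow the Lorentzian mean curvature flow techniques of Ecker and Ecker--Huisken. The two controlling quantities are the lapse $v$ and the norm of the second fundamental form $|\II|^2$. Using the evolution equation for $v$ together with the fact that the ambient curvature of $AdS_{n+1}$ is the negative constant $-1$, a parabolic maximum principle should show that $v$ stays uniformly bounded, so that the hypersurface remains uniformly space-like along the flow; here the convex hull $C(\Gamma)$, together with the width bound $w(\Gamma)\le\pi/2$ (Lemma \ref{lm:width}), supplies the barriers $\dr_-C(\Gamma)$ and $\dr_+C(\Gamma)$ that confine the flow and feed into this estimate. An Ecker--Huisken type computation for a quantity of the form $|\II|^2/(\text{a function of }v)$ should then yield an interior bound on $|\II|$ that is independent of the size of the region considered. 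Combined with the ambient geometry and the first-order bounds, interior Schauder estimates bound all higher derivatives, which is what upgrades short-time existence to long-time existence on any fixed compact piece.

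Finally, to obtain the global solution I would exhaust $S$ by compact space-like graphs with boundary, solve the corresponding Dirichlet mean curvature flow problems on these pieces, and invoke the interior estimates above---uniform in the size of the piece---to extract a limit defined for all $t>0$ on $AdS_{n+1}$, with boundary at infinity fixed equal to $\Gamma=\dr_\infty S$. I expect the main obstacle to be precisely the behavior near $\dr_\infty AdS_{n+1}$: one must build barriers that pin the asymptotic boundary and show that the lapse and curvature estimates remain uniform \emph{up to} infinity, so that the limiting flow neither degenerates from space-like to null nor detaches from $\Gamma$ as $t\to\infty$. Controlling this non-compact, asymptotic regime---rather than the short-time theory or the interior estimates, which are by now standard---is where the argument is delicate.
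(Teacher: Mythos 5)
Your proposal follows essentially the same route as the paper's appendix: write the flow as a scalar quasilinear parabolic equation for the height function, solve the Dirichlet problem on an exhaustion by compact space-like graphs using Ecker's results, confine the evolving hypersurfaces to the convex hull of the asymptotic boundary, apply Ecker's interior gradient and curvature estimates (the parabolic analogues of Bartnik's elliptic ones), and extract a diagonal limit defined for all $t>0$. The one point of emphasis that differs is your claim that the delicate step is making the estimates uniform \emph{up to} infinity: the paper avoids this entirely, needing only estimates uniform in the exhaustion parameter on each fixed compact spatial region, while the boundary at infinity is pinned simply because the flow is trapped in the convex hull $K$, whose asymptotic boundary equals $\dr_\infty S$, and the initial condition is recovered as $s\to 0$ from the bound $H^2\leq n/(2s)$.
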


This flow converges, as $t\rightarrow \infty$, to a maximal surface. 
When $w(\dr_\infty S)<\pi/2$, we also have bounds on the second fundamental
form of the hypersurfaces occuring in the flow.

\subsection{Uniqueness of maximal surfaces in $AdS_3^*$}

We do not know whether maximal hypersurfaces with given boundary at 
infinity are unique in $AdS_{n+1}^*$. We can however state a result
for surfaces in $AdS_3*$, under a regularity assumption on the boundary
at infinity.

\begin{theorem} \label{tm:uniqueness}
Let $S$ be a space-like graph in $AdS_3^*$. Suppose that the boundary
at infinity of $S$ is the graph of a quasi-symmetric homeomorphism
from $S^1$ to $S^1$. Then there is a unique maximal surface in 
$AdS_3^*$ with boundary at infinity $\dr_\infty S$ and with bounded
second fundamental form, and it has negative sectional curvature.
\end{theorem}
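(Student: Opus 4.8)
The plan is to separate the statement into three assertions --- existence of a maximal surface with bounded second fundamental form, uniqueness among such surfaces, and negative curvature --- and to treat them in that order. For existence, I would first observe that the graph $\Gamma=\dr_\infty S$ of a quasi-symmetric homeomorphism is a closed acausal $\mathrm C^{0,1}$ graph containing no lightlike segment, so Theorem \ref{tm:existence} applies and produces a maximal surface $S_0$ with $\dr S_0=\Gamma$. To upgrade this to bounded second fundamental form via Theorem \ref{tm:existence2}, the one new ingredient I need is a lemma asserting that quasi-symmetry forces the strict inequality $w(\Gamma)<\pi/2$ (as opposed to the general bound $w(\Gamma)\le\pi/2$ of Lemma \ref{lm:width}): quasi-symmetry controls the cross-ratios along $\Gamma$, and thereby keeps the two boundary components $\dr_-C(\Gamma)$ and $\dr_+C(\Gamma)$ of the convex hull strictly time-separated below $\pi/2$.

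For the negative curvature I would use the Gauss equation for a spacelike maximal surface in the constant-curvature $-1$ Lorentzian manifold $AdS_3^*$. Writing the principal curvatures as $\pm\lambda$ (the trace vanishes by maximality), it reads $K_I=-1+\lambda^2$, so $K_I<0$ is equivalent to $|\lambda|<1$ everywhere. I would then deduce $|\lambda|<1$ from $w(\Gamma)<\pi/2$: since $S_0$ lies between $\dr_-C(\Gamma)$ and $\dr_+C(\Gamma)$, its normal-exponential flow toward either component stays nonsingular over a time interval shorter than $\pi/2$, and tracking the evolution of the principal curvatures along this flow --- they transform by the AdS rule $\kappa\mapsto(\kappa+\tan s)/(1-\kappa\tan s)$ --- converts the width bound into $|\lambda|<1$.

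The core is uniqueness, which I would prove by a geometric maximum principle using equidistant barriers. Let $S_1,S_2$ be two maximal surfaces with boundary $\Gamma$ and bounded second fundamental form; both lie in $C(\Gamma)$ (a maximal surface is contained in the convex hull of its boundary), which is thin because $w(\Gamma)<\pi/2$, and by the previous step both satisfy $|\lambda|<1$. I write $S_2$ as a normal graph over $S_1$, namely $p\mapsto\exp_p(u(p)\,n(p))$ with $n$ the future unit normal; the time-separation function $u$ is well defined because the separation stays below the focal distance (guaranteed by $|\lambda|<1$ and $w(\Gamma)<\pi/2$), and $u\to 0$ towards $\Gamma$. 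In the future-normal convention the mean curvature of the graph is $H(u)=-\Delta_I u+H_0(u(\cdot))$, where $H_0(s)$ is the mean curvature of the surface equidistant to $S_1$ at time $s$; since $S_1$ is maximal, $H_0(0)=0$, and the same $\tan$-evolution gives $H_0(s)>0$ for $0<s<\pi/2$. Suppose $\sup u=u_0>0$ is attained at an interior point $p$. There $\nabla u=0$ and $\Delta_I u(p)\le 0$, so maximality of $S_2$ gives $0=H(u)(p)=-\Delta_I u(p)+H_0(u_0)\ge H_0(u_0)>0$, a contradiction; equivalently, the equidistant surface $S_1^{u_0}$ touches $S_2$ from the future with $H_0(u_0)>0=H(S_2)$, which the comparison principle forbids. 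Hence $\sup u\le 0$, and symmetrically $\inf u\ge 0$, so $u\equiv 0$ and $S_1=S_2$.

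The main obstacle I anticipate is ensuring that the extrema of $u$ are attained in the interior rather than escaping to the boundary at infinity, where $\Gamma$ is only Lipschitz (quasi-symmetric, not smooth). I would control this using the bounded second fundamental form together with the thinness $w(\Gamma)<\pi/2$: these give uniform bounds on the induced metrics and uniform asymptotic control of $S_1$ and $S_2$ near $\Gamma$, so that $u\to 0$ at infinity with enough uniformity to exclude a supremum that only escapes to $\dr_\infty AdS_3^*$ --- if necessary through a translation and compactness (blow-up) argument, exploiting that bounded geometry makes the family of pointed surfaces precompact. Once the extrema are interior, the equidistant-barrier comparison above closes the uniqueness argument, and negative curvature of the unique surface follows from the second step.
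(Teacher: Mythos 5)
Your overall architecture matches the paper's: quasi-symmetry forces $w(\Gamma)<\pi/2$ (this is Proposition \ref{pr:width}, proved by exactly the cross-ratio degeneration you describe), existence with bounded second fundamental form then follows from Theorem \ref{maximal:teo} and Corollary \ref{cr:bound}, and uniqueness is proved by comparing a second maximal surface to the equidistant foliation of the first and running a maximum principle ``at infinity'' via a translation-and-compactness argument (this is Proposition \ref{pr:k1}, which uses Lemma \ref{lm:compact} and the saddle-surface containment of Lemma \ref{lm:saddle} just as you outline).

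The genuine gap is in your second step, the derivation of $|\lambda|<1$ from $w(\Gamma)<\pi/2$. You assert that the normal-exponential flow of $S_0$ ``stays nonsingular over a time interval shorter than $\pi/2$'' because $S_0$ is squeezed inside the convex hull, and that the $\tan$-evolution rule then converts the width bound into a pointwise curvature bound. But the implication only runs the other way: a uniform bound $|\lambda|\le 1-\epsilon$ makes the equidistant foliation nonsingular on $(-\pi/4,\pi/4)$ and hence gives $w<\pi/2$ (Claim \ref{cl:equidistant} and Corollary \ref{cr:w}). If $\lambda(p)>1$ at some point, the normal flow focalizes at distance less than $\pi/4$ from $p$, and nothing about the thinness of $C(\Gamma)$ forbids this a priori --- a focal point of the normal flow is not an exit from the convex hull. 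The paper obtains negative curvature globally instead: $\chi=\log(-\det B)/4$ satisfies $\Delta\chi=e^{4\chi}-1$ (Lemma \ref{lm:equation}), the maximum principle applied at infinity via the blow-up Lemma \ref{lm:compact} gives $\sup K\le 0$, and if $\sup K=0$ the blow-up limit is a flat ``horosphere'' whose boundary consists of four lightlike segments, which forces $w=\pi/2$ (Lemma \ref{lm:horo}, Proposition \ref{pr:maxk}); since $w<\pi/2$, the curvature is bounded above by a negative constant. Note also that your uniqueness step needs this \emph{uniform} negativity, not merely pointwise $|\lambda|<1$: the equidistant barriers must exist and be convex for $|s|$ up to a fixed $\pi/4-\alpha$ so that the region they sweep out contains the convex hull and hence $S_2$, and your blow-up argument at infinity requires the foliation not to degenerate along the escaping sequence. (A small slip in the same step: $H_0(s)$ is positive only on the range where the foliation is defined, roughly $0<s<\pi/4$, not $\pi/2$.) With your step 2 replaced by the paper's Proposition \ref{pr:maxk}, the rest of your argument goes through.
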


The proof, which can be found in Section \ref{sc:proof}, 
is based on the following proposition.

\begin{prop}
Let $S_0\subset AdS_3$ be a maximal space-like graph with bounded 
principal curvatures. Then either it is flat, or its sectional curvature
is uniformly negative (bounded from above by a negative constant).
\end{prop}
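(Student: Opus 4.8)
The plan is to reduce the statement to the Gauss and Codazzi equations of the space-like immersion and then to run a maximum principle on the induced metric $I$. Write $B$ for the shape operator of $S_0$ and $K$ for the intrinsic curvature of $I$. Since $S_0$ is maximal, $\tr B=0$, so the principal curvatures are $\pm\mu$ for a function $\mu\geq 0$, and the Gauss equation in the curvature $-1$ ambient space $AdS_3$ reads $K=-1+\mu^{2}$ (the sign is fixed by the calibration that the hyperboloid $\HH^2\subset\mathbb{R}^{2,1}$ has $K=-1$). Thus $K<0$, $K=0$, $K>0$ precisely where $\mu<1$, $\mu=1$, $\mu>1$, and the assertion to prove is exactly that either $\mu\equiv 1$ (flat) or $\sup_{S_0}\mu<1$ (uniformly negative).

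First I would exploit maximality through the Codazzi equation: the trace-free tensor $\II$ is Codazzi, so its $(2,0)$-part is a holomorphic quadratic differential $q$ for the conformal structure of $I$, with $\mu=|q|_I$ its pointwise norm. Writing $I=e^{2u}|dz|^2$ and $q=\phi\,dz^2$ in a local conformal coordinate and using that $\log|\phi|$ is harmonic, a direct computation turns the Gauss equation into
\[ \Delta_I\log\mu = 2(\mu^{2}-1)=2K \]
away from the isolated zeros of $q$, where $\Delta_I$ is the Laplace--Beltrami operator of $I$. This semilinear equation is what the whole argument rests on.

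Next I would set up the maximum principle. The curvature satisfies $K=\mu^2-1\geq-1$ automatically, while bounded principal curvatures give $K\leq C$; so $(S_0,I)$ has curvature bounded below, and together with completeness of $I$ (which for a space-like graph with bounded second fundamental form I would verify directly) this places us in the scope of the Omori--Yau maximum principle. Applying it to $\log\mu$, which is bounded above since $\mu$ is bounded, yields points $p_k$ with $\mu(p_k)\to\sup\mu$ and $\limsup_k\Delta_I\log\mu(p_k)\leq 0$; the displayed equation then forces $2(\sup\mu^{2}-1)\leq 0$, i.e. $\mu\leq 1$ and $K\leq 0$ everywhere, which already rules out positive curvature. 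For the dichotomy, if $\sup\mu<1$ we are done. If instead $\sup\mu=1$ is \emph{attained} at an interior point $p$, then $w:=-\log\mu\geq 0$ has an interior minimum $0$ at $p$ and, using $1-e^{-2w}\leq 2w$, satisfies $(\Delta_I-4)w\leq 0$; the strong minimum principle for this coercive operator forces $w\equiv 0$ near $p$, and by connectedness together with real-analyticity of the maximal surface equation one gets $\mu\equiv 1$, so $S_0$ is flat.

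The hard part will be the remaining case, where $\sup\mu=1$ is \emph{not} attained: $\mu<1$ everywhere but $\mu\to 1$ at infinity. Here the local strong maximum principle is powerless, precisely because $\mu\equiv 1$ is itself a solution, so the maximum principle is neutral at the top value and Omori--Yau produces no contradiction; eliminating this case is exactly where completeness of $I$ must enter in an essential, non-local way. I would attack it through the minimal Lagrangian data of Proposition \ref{pr:translation}: the operators $\mathrm{Id}\pm JB$ (with $J$ the complex structure of $I$) are invertible exactly when $\mu<1$ and produce from the maximal surface a pair of hyperbolic metrics $h_\pm=I((\mathrm{Id}\pm JB)\cdot,(\mathrm{Id}\pm JB)\cdot)$. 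Since $h_\pm\succeq(1-\mu)^2 I$, a region where $\mu\to 1$ is a region where $h_\pm$ degenerates relative to the complete metric $I$; showing this to be incompatible with the hyperbolicity of $h_\pm$ — equivalently, extracting an a priori bound $\sup\mu<1$ from completeness — is the crux of the proof.
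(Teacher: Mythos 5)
Your reduction to the scalar equation $\Delta_I\log\mu=2(\mu^2-1)$ is exactly the paper's Lemma \ref{lm:equation} (there written as $\Delta\chi=e^{4\chi}-1$ with $\chi=\tfrac14\log(-\det B)=\tfrac12\log\mu$), and your treatment of the case where the supremum of the curvature is attained coincides with Lemma \ref{lm:max_prin}. Your use of Omori--Yau to get $\sup\mu\le1$ is a legitimate substitute for what the paper actually does at that step: the paper avoids Omori--Yau by taking a curvature-maximizing sequence, moving it to a fixed base point by isometries, extracting a $C^\infty$-limit surface via the compactness Lemma \ref{lm:compact} (this is where bounded second fundamental form is used), and applying the \emph{local} maximum principle to the limit, where the supremum \emph{is} attained. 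Two caveats about your version: completeness of $I$ is not free --- the paper adds it as a hypothesis in Proposition \ref{pr:maxk} rather than deriving it from the graph condition --- and $\log\mu$ tends to $-\infty$ at the zeros of the Hopf differential, so you should run Omori--Yau on the smooth function $-\det B=\mu^2$ rather than on $\log\mu$.

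The genuine gap is the one you flag yourself: the case $\mu<1$ everywhere with $\sup\mu=1$. Your proposed attack cannot work as written. The degeneration of $h_\pm=I((E\pm JB)\cdot,(E\pm JB)\cdot)$ where $\mu\to1$ contradicts nothing by itself ($h_\pm$ are hyperbolic only where nondegenerate), and the global statement you would need (Proposition \ref{prop:global}) takes uniformly negative curvature as a \emph{hypothesis}, so the route is circular; moreover no a priori bound $\sup\mu<1$ can follow from completeness and bounded $\II$ alone, since the flat ``horosphere'' of Lemma \ref{lm:horo} is a complete entire maximal graph with $\mu\equiv1$. In fact the paper does not prove the literal dichotomy in this case either: in the proof of Proposition \ref{pr:maxk} it reuses the same blow-up, shows that the limit surface is flat and hence a horosphere, reads off its boundary at infinity (four light-like segments joining the endpoints of two dual space-like lines), and concludes only that $w(\dr_\infty S)=\pi/2$. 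The ``flat or uniformly negative'' conclusion is then obtained downstream only for boundaries that are graphs of quasi-symmetric homeomorphisms, for which Proposition \ref{pr:width} forces $w<\pi/2$. So to finish you should abandon the $h_\pm$ idea and instead blow up along a sequence with $\mu\to1$, identify the limit as a horosphere, and convert that into the width statement $w(\dr_\infty S_0)=\pi/2$ --- which is what the rest of the paper actually uses, and which excludes this case exactly when the data at infinity is quasi-symmetric.
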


Those results should be compared to the existence and uniqueness of a
maximal surface in a maximal globally hyperbolic AdS $3$-dimensional
manifold, see \cite{BBZ}. Theorem \ref{tm:existence} applies to this
case, with $S_0$ the lift of a closed surface in the globally hyperbolic
manifold $M$.  In this case the boundary at infinity of $S$ is the
limit set of $M$, which is the graph of a quasi-symmetric
homeomorphism (see \cite{mess,mess-notes}).  Theorem \ref{tm:qsym}
then shows that $w(\dr_\infty S)<\pi/2$, so that Theorem
\ref{tm:uniqueness} also applies.

\subsection{A characterization of quasi-symmetric homeomorphisms of the circle}

Consider a homeomorphism $u:S^1\rightarrow S^1$, let $\Gamma_u\subset S^1\times 
S^1\simeq \dr_\infty AdS_3$ be its graph.

\begin{theorem} \label{tm:qsym}
$w(\Gamma_u)$ is at most $\pi/2$. It is strictly less than $\pi/2$ if
  and only if $u$ is quasi-symmetric.
\end{theorem}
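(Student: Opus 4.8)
The bound $w(\Gamma_u)\le\pi/2$ is Lemma \ref{lm:width}, so only the equivalence remains. The plan is to translate the width of $C(\Gamma_u)$ into a condition on the four-point boundary data of $\Gamma_u$, and then to recognize that condition as the cross-ratio characterization of quasi-symmetry.

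First I would analyze the two surfaces $\partial_+C(\Gamma_u)$ and $\partial_-C(\Gamma_u)$. Each is pleated, i.e. ruled by space-like geodesics with endpoints on $\Gamma_u$, so the width is realized along a time-like common perpendicular joining a leaf $\ell_+\subset\partial_+C(\Gamma_u)$ to a leaf $\ell_-\subset\partial_-C(\Gamma_u)$; at the point of maximal thickness this perpendicular is orthogonal to the two support planes $P_+\supset\ell_+$ and $P_-\supset\ell_-$. By Section \ref{ssc:3d} a space-like totally geodesic plane in $AdS_3^*$ is dual to a point and has as boundary at infinity the graph $\Gamma_m$ of some $m\in PSL(2,\R)$; the planes supporting $\Gamma_u$ from above and below thus correspond to Möbius maps $m_+\ge u$ and $m_-\le u$ touching $\Gamma_u$ along $\ell_\pm$. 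Two such planes are disjoint exactly when $m_+^{-1}m_-$ is elliptic, and along their common perpendicular the time-distance equals half its rotation angle, the extreme value $\pi/2$ being attained precisely when $m_+^{-1}m_-$ is a half-turn, equivalently when $P_{m_-}$ passes through the point dual to $P_{m_+}$. Since $m_+\ge m_-$ forces $m_+^{-1}m_-\le\mathrm{id}$ and hence rotation angle in $[0,\pi]$, this gives
\[
  w(\Gamma_u)=\tfrac12\,\sup\bigl\{\,\theta(m_+^{-1}m_-)\ :\ m_-\le u\le m_+\ \text{supporting}\,\bigr\}\in[0,\tfrac\pi2],
\]
so that $w(\Gamma_u)<\pi/2$ if and only if these rotation angles stay uniformly below $\pi$.

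The heart of the argument is to make this quantitative on the boundary. Fixing a supporting pair with contact points $a,c$ (where $u=m_+$) and $b,d$ (where $u=m_-$) in cyclic order, I would compute the time-distance between the two linked space-like geodesics $\gamma_{ac},\gamma_{bd}$ directly from the Gram matrix of their four null endpoints in $\R^{2,2}$. Under the tensor splitting $\R^{2,2}\cong\R^{1,1}\otimes\R^{1,1}$ of the determinant form --- for which each inner product of boundary points factors as a product of the two circle determinants --- the configuration is determined up to isometry by the two cross-ratios $\mathrm{cr}_1=\mathrm{cr}(a,b,c,d)$ and $\mathrm{cr}_2=\mathrm{cr}(u(a),u(b),u(c),u(d))$ alone, so the time-distance is an explicit function $\delta=\delta(\mathrm{cr}_1,\mathrm{cr}_2)$. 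This function vanishes on the diagonal $\mathrm{cr}_1=\mathrm{cr}_2$ --- the Möbius locus, where the four points are coplanar and the geodesics meet --- and approaches $\pi/2$ as the distortion between $\mathrm{cr}_1$ and $\mathrm{cr}_2$ degenerates. Producing this formula is the step I expect to be the main obstacle: extremizing $\langle P_1,P_2\rangle$ over $\gamma_{ac}\times\gamma_{bd}$ also returns the conjugate (antipodal) common perpendicular, of time-length $\pi-\delta$, and at the coplanar configuration the geometric minimum $\delta=0$ is attained only in a degenerate limit of the parametrization; isolating the correct branch and its dependence on $(\mathrm{cr}_1,\mathrm{cr}_2)$ is the crux.

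Granting this formula, both implications follow from the cross-ratio characterization of quasi-symmetry (see e.g. \cite{gardiner-harvey}), namely that $u$ is quasi-symmetric if and only if $\mathrm{cr}_2$ stays bounded, above and below, whenever $\mathrm{cr}_1$ does. If $w(\Gamma_u)<\pi/2$ then, by the displayed identity, $\delta(\mathrm{cr}_1,\mathrm{cr}_2)$ is bounded away from $\pi/2$ over all cyclic quadruples on $\Gamma_u$; since $\delta\to\pi/2$ forces the distortion between $\mathrm{cr}_1$ and $\mathrm{cr}_2$ to blow up, this yields a uniform two-sided bound on $\mathrm{cr}_2$ in terms of $\mathrm{cr}_1$, so $u$ is quasi-symmetric. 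Conversely, quasi-symmetry gives exactly such a bound, whence $\delta\le\pi/2-\varepsilon$ uniformly and $w(\Gamma_u)=\sup\delta<\pi/2$. Finally I would note that this proof uses only the convex-hull geometry of $\Gamma_u$ and not the existence or uniqueness of maximal surfaces, so that Theorem \ref{tm:qsym} is available as an input (via Theorem \ref{tm:existence2}) to the proof of Theorem \ref{tm:ml}.
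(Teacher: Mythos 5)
Your reduction of $w(\Gamma_u)$ to a supremum, over cyclically ordered quadruples $(a,b,c,d)$, of an isometry-invariant function $\delta(\mathrm{cr}_1,\mathrm{cr}_2)$ of the two cross-ratios is sound in principle (the $PSL(2,\R)\times PSL(2,\R)$ action does make these two numbers a complete invariant of the pair of linked chords). But the proof has a genuine gap exactly where you locate "the crux": you never establish the qualitative behaviour of $\delta$ that your final paragraph relies on, and that behaviour is in fact \emph{not} what you assert. The time-distance between $\gamma_{ac}$ and $\gamma_{bd}$ reaches $\pi/2$ precisely when $\gamma_{bd}$ meets the dual line $\gamma_{ac}^*$ \emph{at a point in the correct causal position}; the purely algebraic incidence condition is the codimension-one relation $\mathrm{cr}_1=\sigma(\mathrm{cr}_2)$ with $\sigma(\lambda)=\lambda/(\lambda-1)$, which is satisfied by harmless quadruples (e.g.\ every harmonic quadruple on the graph of a M\"obius map, where the width is $0$), so the causal branch is everything. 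Once the causal restriction is imposed, $\delta\to\pi/2$ forces the pair of chords to converge, after renormalization, to a dual pair $(\Delta,\Delta^*)$ -- this is exactly the rigidity used in Proposition \ref{pr:width} -- and hence forces \emph{both} $\mathrm{cr}_1$ and $\mathrm{cr}_2$ to degenerate, to the two \emph{different} degenerate values $1$ and $\infty$. Consequently a quadruple witnessing failure of quasi-symmetry, with $\mathrm{cr}_1$ held in a compact set while $\mathrm{cr}_2$ degenerates, has $\delta$ bounded away from $\pi/2$. So "$w<\pi/2\Rightarrow\delta$ bounded away from $\pi/2$ on all quadruples" does \emph{not} yield a two-sided bound on $\mathrm{cr}_2$ in terms of $\mathrm{cr}_1$; your sentence also runs the implication backwards (you state "$\delta\to\pi/2$ forces the distortion to blow up" and then use its converse). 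To pass from unbounded distortion to width $\pi/2$ one must produce \emph{new} quadruples on which both cross-ratios degenerate, e.g.\ by renormalizing $u$ by M\"obius maps, extracting a monotone (Helly) limit containing a lightlike segment, and invoking a Claim \ref{cl:light}--type rigidity; none of this is in your write-up.

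For comparison, the paper splits the two implications quite differently. Quasi-symmetric $\Rightarrow w<\pi/2$ is Proposition \ref{pr:width}: a compactness/renormalization argument showing that $w=\pi/2$ forces a limit curve made of four lightlike segments, on which an explicit quadruple has cross-ratios $2$ and $1$ under the two projections, violating quasi-symmetry. The converse $w<\pi/2\Rightarrow$ quasi-symmetric is \emph{not} proved by convex-hull geometry at all: it runs through Theorem \ref{maximal:teo}, Corollary \ref{cr:bound} and Proposition \ref{pr:maxk} to produce a maximal graph with bounded second fundamental form and uniformly negative curvature, hence via Proposition \ref{pr:translation} a quasi-conformal minimal Lagrangian extension of $u$. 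Your closing remark that the theorem can be proved "using only the convex-hull geometry of $\Gamma_u$" is therefore a genuine (and attractive) claim of a different, softer proof -- but as written it is not substantiated: both the explicit formula for $\delta$ and, more importantly, the renormalization step needed to convert cross-ratio distortion into width $\pi/2$ are missing.
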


The first part here is just Lemma \ref{lm:width}, already mentioned above. 
The second part is proved in Section \ref{ssc:qsym}.

This statement can be considered in a purely projective way, because the
fact that a point of $\dr_-C(\Gamma_u)$ is at distance strictly less than
$\pi/2$ from $\dr_+C(\Gamma_u)$ corresponds to a purely projective 
property, stated in terms of the duality between points and space-like
planes in $AdS_3$, see Section \ref{ssc:causal}. 
This duality is itself a projective notion, see Section \ref{proj:sec}.

The proof uses the considerations explained above on the properties of
maximal surfaces bounded by $\Gamma_u$, it can be found in Section 
\ref{ssc:qsym}. It is based on Theorem \ref{tm:existence2} and to a partial
converse, in dimension 3 only: if an
acausal graph in $\dr_\infty AdS_3^*$
is the boundary of a maximal surface with bounded second fundamental form
which is not a ``horosphere'' (as described in Section \ref{ssc:52}),
then $\Gamma$ is the graph of a quasi-symmetric homeomorphism from 
$S^1$ to $S^1$.

\subsection{What follows}

Section \ref{sc:ads} contains a number of basic notions on the anti-de Sitter
(AdS) space and some of it basic properties. It is included here for completeness,
in the hope of making the paper reasonably self-contained for reader not
yet familiar with AdS geometry. Section \ref{sc:spacelike} similarly contains
some basic facts (presumably less well-known) on space-like hypersurfaces
in the AdS space. 

Section \ref{sc:maximal} is perhaps the heart of the paper. After some
preliminary statements on maximal space-like hypersurfaces in AdS, it contains
both an existence theorem for maximal hypersurfaces with given boundary
data at infinity, and a statement on the regularity of those hypersurfaces
under a geometric condition on the boundary at infinity. 
This condition
is later translated (for surfaces in the 3-dimensional AdS space) in terms
of quasi-symmetric regularity of the data at infinity.

In Section \ref{sc:uniqueness} we further consider this regularity issue,
with emphasis on surfaces in $AdS_3^*$, and we prove a uniqueness 
result for maximal surfaces with regular enough data at infinity. Finally
we prove Theorem \ref{tm:ml}.

Appendix A contains an alternative proof of the existence of a maximal
hypersurface with given data at infinity, based on the mean curvature 
flow. This approach also yields some regularity results.

\section{The Anti de Sitter space}
\label{sc:ads}

This section contains a number of basic statements on AdS geometry, which are
necessary in the proof of the main results. Readers who are already familiar
with AdS geometry will find little interest in it, we have however decided
to include it to make the paper self-contained, hoping that it is useful for 
readers interested in Teichm\"uller theory but not yet in AdS geometry.

\subsection{Definitions} \label{ssc:ads}

We consider the hyperbolid model of the hyperbolic space: the
hyperbolic space $\HH^n$ is identified with the set of future-pointing
unit timelike vectors in $(n+1)$-dimensional Minkowski space
$\R^{n,1}$.  In this work, if it is not specified differently, we
always use this identification. In particular points of $\HH^n$ are
identified with elements $(x_1,\ldots,x_{n+1})\in\R^{n+1}$ such that
$\sum_1^nx_i^2-x_{n+1}^2=-1$.  We also fix the point
$x^0=(0,\ldots,0,1)\in\HH^n$.

Let $\R^{n,2}$ be $\R^{n+2}$ equipped with the symmetric $2$-form
\[
  \langle x,y\rangle=x_1y_1+\ldots+x_ny_n-x_{n+1}y_{n+1}-x_{n+2}y_{n+2}
\]
The $(n+1)$-dimensional anti de Sitter space is the set
\[
  AdS_{n+1}^*=\{x\in\R^{n,2}|\langle x,x\rangle=-1\}~.
\]

The tangent space at a point $x\in AdS_{n+1}^*$ is the linear
hyperplane orthogonal to $x$ with respect to
$\langle\cdot,\cdot\rangle$. The restriction of
$\langle\cdot,\cdot\rangle$ to $T_xAdS_{n+1}^*$ is a Lorentzian scalar product.

\begin{remark} \label{rk:basics}
With this definition of $AdS^*_{n+1}$, its isometry group is immediately seen
to be $O(n,2)$. In particular, this isometry group acts transitively on the
points of $AdS^*_{n+1}$. More precisely, it acts simply transitively on the
set of couples $(x,e)$ where $x\in AdS^*_{n+1}$ and $e$ is an orthonormal 
basis of $T_xAdS^*_{n+1}$. It is also clear (using the action of $O(n,2)$ by
isometries) that the geodesics in $AdS^*_{n+1}$ are precisely the intersections
of $AdS^*_{n+1}$ with the linear planes in $\R^{n,2}$ containing $0$.   
\end{remark}

There is a map
\[
 \Phi: \HH^n\times\R\rightarrow AdS_{n+1}^*
\]
defined by 
\begin{equation}\label{covering:eq}
\Phi((x_1,\ldots,x_{n+1}),t)=(x_1,x_2,\ldots, x_n,x_{n+1}\cos t,x_{n+1}\sin t)\,.
\end{equation}
$\Phi$      is     a      covering      map,     so      topologically
$AdS_{n+1}^*\cong\HH^n\times  S^1$.  It will often be convenient to
consider the universal cover $AdS_{n+1}$  of $AdS_{n+1}^*$,  that is
$\HH^n\times\R$,  equipped  with  the   pull-back  of  the  metric  on
$AdS_{n+1}^*$.

It is easy to see that this metric at a point
$((x_1,\ldots,x_{n+1}),t)$ takes the form
\begin{equation}\label{adsmetric1:eq}
    g_{\HH}-x_{n+1}^2 dt^2~.
\end{equation}

If we consider  the Poincar\'e model of $\HH^n$, the metric
can be written as
\begin{equation}\label{adsmetric3:eq}
  \frac{4}{(1-r^2)^2}(dy_1^2+\ldots+
  dy_n^2)-\left(\frac{1+r^2}{1-r^2}\right)^2 dt^2~,
\end{equation}
where $r=\sqrt{y_1^2+\ldots+y_n^2}$ and $y_1,\ldots, y_n$ are the
Cartesian coordinates on the ball $\{y\in\R^n| r(y)<1\}$.

By (\ref{adsmetric1:eq}) we see that the time translations
\[
    (x,t)\rightarrow(x,t+a)
\]
are isometries of $AdS_{n+1}$. The coordinate field
$\frac{\partial\,}{\partial t}$ is a Killing vector field and the
slices $\mathbb H^n\times\{t\}$ are totally gedesic.

We denote by $\bar\nabla$ the Levi-Civita connections of both
$AdS_{n+1}$ and $\mathbb H^n$.  Since $\mathbb H^n\times\{t\}$ is
totally geodesic, the restriction of $\bar\nabla$ on this slice
coincides with its Levi-Civita connection.

We say that a vector $v\in T_{x,t}AdS_{n+1}$ is horizontal if it is tangent to
the slice $\mathbb H^n\times\{t\}$.  Analogously it is vertical if it
is tangent to the line $\{x\}\times\mathbb R$.

The lapse function $\phi$ is defined by
\[
  \phi^2=-\left\langle \frac{\partial\,}{\partial t},
  \frac{\partial\,}{\partial t}\right\rangle
\]
The gradient of $t$ is a vertical vector at each point and it equal to
\[
  \bar\nabla t=-\frac{1}{\phi^2}\frac{\partial\,}{\partial t}~,
\]
so its squared norm is equal to $-\frac{1}{\phi^2}$.



\subsection{The asymptotic boundary and the causal structure}

We  denote by
$\overline{AdS}_{n+1}$ the manifold with boundary 
$\overline{\mathbb H^n}\times\mathbb R$, where $\overline{\mathbb H}^n$ is
the usual compactification of $\mathbb H^n$ (obtained for instance in 
the projective model of $\mathbb H^n$). Another way to consider 
$\overline{AdS}_{n+1}$ is as the universal cover of the compactification
of $AdS^*_{n+1}$ defined by adding the projectivization of the cone of
vectors $x\in \R^{n,2}$ such that $\langle x,x\rangle=0$.

Clearly $AdS_{n+1}$ is the interior part of $\overline{AdS}_{n+1}$, whereas
its boundary, $\partial\mathbb H^n\times\mathbb R$
 is called the \emph{asymptotic boundary} of $AdS_{n+1}$ and is
 denoted by $\partial_\infty AdS_{n+1}$. The following statement
is clear when considering the definition of $AdS^*_{n+1}$ as a quadric.

\begin{lemma} \label{isoextension:lem}
Every isometry $f$ of $AdS_{n+1}$ extends to a homeomorphism
of $\overline{AdS}_{n+1}$.
\end{lemma}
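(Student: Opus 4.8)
The statement to prove is Lemma \ref{isoextension:lem}: every isometry $f$ of $AdS_{n+1}$ extends to a homeomorphism of $\overline{AdS}_{n+1}$. Let me plan the proof.

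The key realization is that $AdS^*_{n+1}$ is defined as a quadric in $\R^{n,2}$ and its isometry group is $O(n,2)$ (Remark \ref{rk:basics}). The compactification is built by projectivizing and adding the null cone. So the natural strategy is to pass to the projective picture, where isometries become linear, and linear maps manifestly extend continuously to the projectivized boundary.

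\begin{proof}
Consider first the quadric $AdS^*_{n+1}$. By Remark \ref{rk:basics}, every isometry of $AdS^*_{n+1}$ is the restriction of an element $A\in O(n,2)$ acting linearly on $\R^{n,2}$. The map $A$ is linear, hence it preserves the set of null vectors $\{x\in\R^{n,2}\mid \langle x,x\rangle=0\}$ and descends to a homeomorphism of the associated projective space $P(\R^{n,2})$. The compactification of $AdS^*_{n+1}$ is obtained, as recalled above, by adding to (the projectivization of) the quadric the projectivization of the null cone; since $A$ preserves both the quadric and the null cone and acts projectively, it restricts to a homeomorphism of this compactified quadric that carries $\partial_\infty AdS^*_{n+1}$ to itself. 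This gives the result for isometries of $AdS^*_{n+1}$.

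To pass from $AdS^*_{n+1}$ to its universal cover $AdS_{n+1}=\HH^n\times\R$ and the compactification $\overline{AdS}_{n+1}=\overline{\HH^n}\times\R$, I would use the covering map $\Phi$ of equation (\ref{covering:eq}). An isometry $f$ of $AdS_{n+1}$ descends, or rather projects, to an isometry of $AdS^*_{n+1}$ (alternatively, every isometry of $AdS_{n+1}$ is a lift of an element of $O(n,2)$, possibly composed with a deck transformation, i.e.\ an integer time translation $(x,t)\mapsto(x,t+2k\pi)$). Time translations visibly extend to homeomorphisms of $\overline{\HH^n}\times\R$, fixing each boundary slice setwise. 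For the remaining part, one checks that the continuous extension to $\partial_\infty AdS^*_{n+1}$ constructed above lifts, through the covering $\overline{\HH^n}\times\R\to$ (compactified quadric), to a continuous map on $\overline{\HH^n}\times\R$: the boundary $\partial_\infty AdS_{n+1}=\partial\HH^n\times\R$ is precisely the universal cover of the boundary null quadric, and the extension respects this covering since $f$ commutes appropriately with the deck group.

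The main technical point, and the step I expect to require the most care, is verifying the continuity of the extension at the boundary rather than merely its existence as a set-theoretic bijection. Since the extension of a linear map to projective space is manifestly continuous, the real work is checking that passing to the universal cover does not destroy this: one must confirm that the lift of the boundary behaves well, i.e.\ that sequences in $\HH^n\times\R$ converging to a boundary point are sent by $f$ to sequences converging to the image boundary point. This reduces to the continuity of the projective extension together with the continuity of the covering $\Phi$ near the boundary, both of which are straightforward once the identifications are set up correctly.
\end{proof}
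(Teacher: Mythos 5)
Your proof is correct and follows exactly the route the paper intends: the paper offers no written proof, merely remarking that the statement ``is clear when considering the definition of $AdS^*_{n+1}$ as a quadric,'' and your argument -- isometries are restrictions of linear maps in $O(n,2)$, which extend projectively to the compactification by the projectivized null cone, then lift through the covering $\Phi$ -- is precisely the elaboration of that remark.
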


The asymptotic boundary of  a set $K\subset AdS_{n+1}$
--- denoted by $\partial_\infty K$ ---
is the set of the accumulation points of $K$ in $\partial_\infty AdS_{n+1}$.

By (\ref{adsmetric3:eq}) it is clear that the conformal structure on $AdS_{n+1}$
 extends to the boundary.
This means that in the conformal class of the metric $g$
 there is a metric $g^*$ that extends to the 
boundary. We can for instance put $g^*=\frac{1}{\phi^2}g$.

A vector $v$ tangent at some point in $\partial_\infty AdS_{n+1}$ is
 timelike (ligthlike, spacelike) if $g^*(v,v)<0$ ($=0$,
$>0$). Notice that the definition makes sense since the sign of
$g^*(v,v)$ depends only on the conformal class of $g^*$.

\begin{lemma}
Let $c:(-1,1)\rightarrow AdS_{n+1}$ be an inextensible timelike
path. If the function $t$ is bounded from above on $c$, there exists
the limit $p_1=\lim_{s\rightarrow 1}c(s)\in \partial_\infty
AdS_{n+1}$.
\end{lemma}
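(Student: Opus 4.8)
The plan is to pass to the conformally rescaled metric $g^*=\frac{1}{\phi^2}g$ introduced above, in which the degeneration of the metric near $\partial_\infty AdS_{n+1}$ disappears, and then to control the spatial displacement of $c$ by its variation in the $t$-coordinate. Write $c(s)=(\gamma(s),t(s))$ with $\gamma(s)\in\HH^n$ and $t(s)\in\R$. First I would record that, since $c$ is timelike, the inequality $g_\HH(\dot\gamma,\dot\gamma)-\phi^2\dot t^2<0$ together with $g_\HH(\dot\gamma,\dot\gamma)\ge 0$ forces $\dot t\neq 0$, so $t$ is strictly monotone along $c$; taking $c$ future-directed so that $t$ increases, the hypothesis that $t$ is bounded from above then gives a finite limit $t_1=\lim_{s\to 1}t(s)$.

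The key estimate comes from working in the Poincar\'e ball model, where by (\ref{adsmetric3:eq}) one has $\phi^2=\left(\frac{1+r^2}{1-r^2}\right)^2$ and hence
\[
  g^*=\frac{1}{\phi^2}g=\frac{4}{(1+r^2)^2}(dy_1^2+\ldots+dy_n^2)-dt^2~,
\]
which extends smoothly to the closed ball $\overline{B^n}\times\R=\overline{AdS}_{n+1}$. Since $g$ and $g^*$ are conformal, $c$ is timelike for $g^*$ as well, so $\frac{4}{(1+r^2)^2}|\dot y|^2<\dot t^2$. As $r\le 1$ gives $(1+r^2)^2\le 4$, i.e. $\frac{4}{(1+r^2)^2}\ge 1$, this yields the Euclidean Lipschitz bound $|\dot y|<|\dot t|=\dot t$. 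Integrating and using monotonicity of $t$, for any fixed $s_0$ the Euclidean length of $y|_{[s_0,s)}$ is at most $t(s)-t(s_0)\le t_1-t(s_0)$; thus $y$ has finite total Euclidean length near $s=1$ and is Cauchy in $\overline{B^n}$. Hence $y(s)$ converges to some $y_1\in\overline{B^n}$, and $c(s)\to(y_1,t_1)$ in $\overline{AdS}_{n+1}$.

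It remains to locate the limit on the asymptotic boundary. Here I would invoke the standard fact that an inextensible timelike curve leaves every compact subset of $AdS_{n+1}$: if the limit $(y_1,t_1)$ were an interior point, then $c$ would extend continuously to $s=1$ with an endpoint in $AdS_{n+1}$ and, being timelike up to that point, could be prolonged as a timelike curve, contradicting inextensibility. Therefore $(y_1,t_1)\in\partial_\infty AdS_{n+1}=\partial\HH^n\times\R$, and since $t_1$ is finite this means exactly $y_1\in\partial\HH^n$, giving the desired boundary point $p_1=(y_1,t_1)$.

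The main obstacle is the length estimate of the middle paragraph: the naive hyperbolic length of $\gamma$ need not be finite, because the lapse $\phi$ blows up as $\gamma\to\partial\HH^n$, so a bound on $g_\HH$-length fails outright. The essential point is that rescaling by $\phi^{-2}$ converts the timelike condition into a genuine Euclidean bound $|\dot y|<\dot t$ on the spatial coordinate, after which the control on $t$ furnished by the hypothesis immediately produces finite Euclidean length and hence convergence. The only remaining delicate point is the clean use of inextensibility to rule out an interior limit, which is routine once existence of the limit in $\overline{AdS}_{n+1}$ has been established.
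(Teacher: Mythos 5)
Your argument is correct and follows essentially the same route as the paper: both pass to the conformal metric $g^*=\frac{1}{\phi^2}g$, use the timelike condition to bound the horizontal speed by $\dot t$, deduce finite length of the projected path from the bound on $t$, and then invoke inextensibility to place the limit on $\partial_\infty AdS_{n+1}$. Your only cosmetic difference is making the estimate explicit in the Poincar\'e ball model (obtaining the Euclidean Lipschitz bound $|\dot y|<\dot t$), where the paper states the same finite-length conclusion directly for the metric $\frac{1}{\phi^2}g_{\mathbb H}$.
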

\begin{proof}
 The vertical component of $\dot c$ is
\[
   \dot c_V=\langle \dot c,\bar\nabla
   t\rangle\frac{\partial\,}{\partial t}=\dot
   t\frac{\partial\,}{\partial t}~.
\]
Since the norm of $\frac{\partial\,}{\partial t}$ for $g^*$is $1$, we
have $|\dot c_V|_{g^*}=\dot t$. On the other hand, the fact that $c$
is timelike implies
\[
  |\dot c_H|_{g^*}\leq |\dot c_V|_{g^*}=\dot t\,.
\]
Since the function $t$ is increasing along $c$, the bound on $t$ along
$c$ implies that $\dot c$ is bounded in a neighbourhood of $1$.  It
follows that the path $c_H$ obtained by projecting $c$ to $\mathbb
H^n$, has finite length with respect to the metric
$\frac{1}{\phi^2}g_{\mathbb H}$.  This implies that there exists the
limit $x_1=\lim_{s\rightarrow 1}c_H(s)$.  On the other hand, since $t$
is increasing along $ c$ there exists the limit
$t_1=\lim_{s\rightarrow 1}t(c(s))$.  The point $p_1=(x_1,t_1)$ is the
limit point of $c$.  Since we assume that $c$ is inextensible in $AdS_{n+1}$,
$p_1\in\partial_\infty AdS_{n+1}$.
\end{proof}

The point $p_1$ is an asymptotic end-point of $c$.

An inextensible path is  without end-points if and only if the function
$t$ takes all the real values along $c$, or equivalently, if $c$ does not admit
any asymptotic end-point.
Vertical lines are instances of inextensible paths without end-points.

\subsection{Geodesics and geodesic hyperplanes in $AdS_{n+1}$}
\label{geodesics:sec}

The next statement, which is classical, describes the geodesics in
$AdS_{n+1}^*$, considered as a quadric in $\R^{n,2}$.

\begin{lemma}[see \cite{benedetti-bonsante}]\label{geoimmersed:lem}
Geodesics in $AdS_{n+1}^*$ are the intersection $AdS_{n+1}^*$ with
linear $2$-planes in $\R^{n,2}$ containing $0$.

In particular given a tangent vector $v$ at some point $p\in
AdS_{n+1}^*$ we have
\begin{equation}\label{geoformula:eq}
   \exp_p(sv)=\left\{\begin{array}{ll} \cos (s) p+\sin (s) v & \textrm{if
   } \langle v,v\rangle=-1\,;\\ p+sv &\textrm{if }\langle v,v\rangle
   =0\,;\\ \ch (s) p+\sh (s) v &\textrm{if } \langle v,v\rangle=1\,.
   \end{array}\right.
\end{equation}
\end{lemma}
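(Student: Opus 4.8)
The plan is to realize $AdS^*_{n+1}$ as a non-degenerate quadric in the flat pseudo-Riemannian space $\R^{n,2}$ and to use the Gauss formula to reduce the geodesic equation to a condition on the ambient acceleration. Writing $D$ for the flat connection of $\R^{n,2}$, at any $x\in AdS^*_{n+1}$ the tangent space is $T_xAdS^*_{n+1}=x^\perp$, and since $\langle x,x\rangle=-1\neq 0$ the position vector $x$ is non-null; hence $x^\perp$ is a non-degenerate hyperplane, $x\notin x^\perp$, and the normal space to the quadric at $x$ is exactly the line $\R x$. The Levi-Civita connection $\bar\nabla$ is then the tangential projection of $D$, so a curve $\gamma$ lying on $AdS^*_{n+1}$ is a geodesic if and only if its ambient second derivative $\ddot\gamma$ is everywhere normal, i.e.\ $\ddot\gamma(s)\in\R\,\gamma(s)$.

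With this criterion the three formulas are verified by direct differentiation. Fix $p\in AdS^*_{n+1}$ and $v\in T_pAdS^*_{n+1}$, so that $\langle p,p\rangle=-1$ and $\langle p,v\rangle=0$. In the timelike case $\langle v,v\rangle=-1$, set $\gamma(s)=\cos(s)\,p+\sin(s)\,v$; then $\ddot\gamma=-\gamma$ is normal, one computes $\langle\gamma,\gamma\rangle=-\cos^2 s-\sin^2 s=-1$ so that $\gamma$ stays on the quadric, and $\gamma(0)=p$, $\dot\gamma(0)=v$. In the lightlike case $\langle v,v\rangle=0$, the affine line $\gamma(s)=p+sv$ has $\ddot\gamma=0$ and $\langle\gamma,\gamma\rangle=-1$, with the correct initial data. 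In the spacelike case $\langle v,v\rangle=1$, set $\gamma(s)=\ch(s)\,p+\sh(s)\,v$; then $\ddot\gamma=\gamma$ is normal, $\langle\gamma,\gamma\rangle=-\ch^2 s+\sh^2 s=-1$, and again $\gamma(0)=p$, $\dot\gamma(0)=v$. In each case $\gamma$ is a curve on the quadric whose acceleration is normal, hence a geodesic, and by uniqueness of the geodesic with given initial position $p$ and velocity $v$ it coincides with $\exp_p(sv)$; this gives (\ref{geoformula:eq}).

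Finally, each of these geodesics is contained in $\mathrm{span}(p,v)$, a linear $2$-plane through $0$, and in fact sweeps out the connected component of $AdS^*_{n+1}\cap\mathrm{span}(p,v)$ through $p$. Conversely, if $P$ is a linear $2$-plane through $0$ meeting $AdS^*_{n+1}$, pick $p\in P\cap AdS^*_{n+1}$ and a nonzero tangent vector $v\in P\cap p^\perp$, which exists because $\dim(P\cap p^\perp)\geq 2+(n+1)-(n+2)=1$; the associated geodesic stays in $P$, so $P\cap AdS^*_{n+1}$ is a geodesic. This establishes the characterization of geodesics as plane sections. The only point needing care is the identification of $\bar\nabla$ with the tangential projection and of the normal line with $\R x$, which rests on the non-degeneracy of the quadric; all remaining steps are elementary trigonometric and hyperbolic identities. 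Alternatively, the plane-section statement can be seen without computation from the reflection across $P$ (the identity on $P$ and $-\mathrm{id}$ on $P^\perp$), which lies in $O(n,2)$ and whose fixed-point set meets $AdS^*_{n+1}$ exactly in $P\cap AdS^*_{n+1}$, forcing this set to be totally geodesic.
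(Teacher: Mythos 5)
Your proof is correct and complete. The paper itself does not prove this lemma --- it is stated as classical with a citation to Benedetti--Bonsante, and the only hint of an argument is Remark~\ref{rk:basics}, which says the plane-section characterization is ``clear using the action of $O(n,2)$ by isometries,'' i.e.\ essentially the reflection argument you sketch at the end. Your main route --- identifying $\bar\nabla$ with the tangential projection of the flat connection of $\R^{n,2}$ via the Gauss formula, reducing the geodesic equation to $\ddot\gamma(s)\in\R\,\gamma(s)$, and verifying the three explicit curves --- is the standard direct verification and has the advantage of producing the exponential formula (\ref{geoformula:eq}) at the same time as the plane-section statement, which the reflection argument alone does not give. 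One small caveat on your alternative: the reflection ``identity on $P$, $-\mathrm{id}$ on $P^\perp$'' is only well defined when $P$ is non-degenerate, since otherwise $P\cap P^\perp\neq\{0\}$ and $P\oplus P^\perp\neq\R^{n,2}$; this is exactly the case of lightlike geodesics (where $P=\mathrm{span}(p,v)$ with $v$ null and orthogonal to $p$ carries a degenerate form), so the reflection argument covers only the timelike and spacelike cases and your direct computation is genuinely needed for the null one. Also, as you note in passing, the intersection $P\cap AdS^*_{n+1}$ may have two connected components (spacelike and lightlike cases), each of which is a single geodesic; the lemma's phrasing glosses over this, and your proof handles it correctly.
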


\begin{remark}
Totally geodesics $k$-planes in $AdS^*_{n+1}$
are the intersection of $AdS^*_{n+1}$ with
$(k+1)$-linear planes of $\mathbb R^{n,2}$ containing $0$.
\end{remark}




Spacelike and lightlike geodesics are open simple curves. Homotopically,
timelike geodesics are simple closed non-trivial curve.  Moreover
every complete timelike geodesic starting at $p$ passes through $-p$
at time $(2k+1)\pi$ and at $p$ at time $2k\pi$ for $k\in\mathbb Z$.

Passing to the universal cover, we get the following statement.

\begin{lemma}\label{geodesics:lem}
Given a point $p=(x,t)\in AdS_{n+1}$ there is a discrete set
$\{p_n|n\in\Z\}$ such that every timelike geodesics $\gamma$ starting
through $p$ passes through $p_k$ at time $t=n\pi$.

Moreover, $p_{2k}=(x,t+2k\pi)$ and $p_{2k+1}=(y,t+(2k+1)\pi)$ where
$y$ is some point in $\mathbb H^n$ independent of $k$. 
\end{lemma}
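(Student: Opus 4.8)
The plan is to reduce everything to the description of the timelike geodesics of $AdS^*_{n+1}$ recalled just above (formula (\ref{geoformula:eq})) and then to control their lift to the universal cover $AdS_{n+1}=\HH^n\times\R$ by tracking the coordinate $t$. Write $p=(x,t_0)$ and let $q=\Phi(x,t_0)\in AdS^*_{n+1}$ be its image under the covering map (\ref{covering:eq}). A timelike geodesic $\gamma$ of $AdS_{n+1}$ through $p$ projects to a timelike geodesic $\Phi\circ\gamma$ of $AdS^*_{n+1}$ through $q$, parametrized by arclength, of the form $s\mapsto\cos s\,q+\sin s\,v$ with $v$ unit timelike. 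By the preceding discussion this curve passes through $q$ at every $s\in 2\pi\Z$ and through $-q$ at every $s\in\pi+2\pi\Z$. Hence at arclength $s=n\pi$ the lift $\gamma$ lies in the fiber $\Phi^{-1}(q)$ (for $n$ even) or $\Phi^{-1}(-q)$ (for $n$ odd).

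These fibers are exactly the $2\pi\Z$--orbits $\{(x,t_0+2k\pi)\}$ and $\{(y,t_0+(2k+1)\pi)\}$, where $y\in\HH^n$ is the $\HH^n$--component of the preimages of $-q$; note that $y$ depends only on $q$ (equivalently on $p$), and in particular neither on $\gamma$ nor on $k$. So the only remaining issue is to decide which element of each discrete fiber $\gamma$ actually meets, i.e. to show that $t$ advances by exactly $\pi$ over each arclength interval of length $\pi$.

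For this, recall that $\partial_t$ is Killing with $\langle\partial_t,\partial_t\rangle=-\phi^2$, so $\phi^2\dot t$ is constant along $\gamma$; since $\gamma$ is timelike its vertical component never vanishes, so $\dot t$ has constant nonzero sign and $t$ is strictly monotone along $\gamma$. To compute the total variation of $t$ I would use the last two coordinates of $\Phi\circ\gamma$: putting $z(s)=X_{n+1}(s)+iX_{n+2}(s)$ one has $t(s)=\arg z(s)$ and, from the explicit form above, $z(s)=\cos s\,P+\sin s\,V$ with $P,V\in\C$ fixed. A short computation gives $\frac{d}{ds}\arg z(s)=\mathrm{Im}(\bar P V)/|z(s)|^2$, a quantity of constant (nonzero) sign, matching $\dot t\neq 0$. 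Since $z$ traces a centred ellipse winding once around the origin over $[0,2\pi]$ and satisfies $z(s+\pi)=-z(s)$, the monotone argument increases by exactly $\pi$ on every interval of length $\pi$.

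Combining the two steps, at arclength $s=n\pi$ the geodesic $\gamma$ lies over $q$ or $-q$ and has coordinate $t=t_0+n\pi$ (up to the overall sign fixed by the time-orientation of $\gamma$), so it passes through a well-defined point $p_n$ with $p_{2k}=(x,t_0+2k\pi)$ and $p_{2k+1}=(y,t_0+(2k+1)\pi)$; these are independent of the chosen geodesic and $y$ is independent of $k$, and discreteness of $\{p_n\}$ is immediate. The main point, and the only place requiring genuine work, is this synchronization of proper time and coordinate time: the refocusing in $AdS^*_{n+1}$ only pins down $t$ modulo $2\pi$, and it is the monotonicity-plus-ellipse argument that upgrades this to the exact value $t_0+n\pi$, singling out one element of each fiber.
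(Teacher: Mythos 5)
Your proof is correct and follows the same route as the paper, which simply observes (from the explicit exponential formula (\ref{geoformula:eq})) that timelike geodesics in $AdS^*_{n+1}$ refocus at $\pm q$ at proper times $n\pi$ and then asserts the lemma by ``passing to the universal cover.'' The only thing you add is an explicit justification of the lifting step --- the monotonicity of $t$ plus the computation $\frac{d}{ds}\arg z=\mathrm{Im}(\bar P V)/|z|^2$ showing that $t$ advances by exactly $\pi$ per half-period --- which is a detail the paper leaves implicit, and your computation of it is correct.
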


In what follows we will often use the points $p_1$ and $p_{-1}$.
To simplify the notation we will denote these points 
by $p_+$ and $p_-$.

Timelike geodesics are timelike paths without end-points.  On the
other hand since spacelike geodesics are conjugated to horizontal ones
by some isometry, they have $2$ asymptotic end-points.  
Using the projection $\Phi$ one can check that the path $c(s)=
\left(x(s),\mathrm{arccos}\left(\frac{1}{\sqrt{1+s^2}}\right)\right)$
where $x(s)=\left(s,0,\ldots,0,\sqrt{1+s^2}\right)$ is a lightlike geodesic.
Since $c$ has two asymptotic end-points,
the same property holds for every lightlike geodesic.

\begin{remark}\label{asym:rk}
Points in $\partial_\infty
AdS_{n+1}$ related by a timelike arc in $\partial_\infty AdS_{n+1}$
are not joined by a geodesic arc in $AdS_{n+1}$.
Indeed by the above description if a geodesic connects two points in
the asymptotic boundary of $AdS_{n+1}$ then it is either space-like or
light-like (and in this case it is contained in the boundary).
\end{remark}

Totally geodesic $n$-planes in $AdS_{n+1}$ are
distinguished by the restriction of the
ambient metric on them. They can be timelike, spacelike or lightlike
according as whether this restriction has Lorentzian, Euclidean or degenerate
signature.

Spacelike hyperplanes are conjugated by some isometry
to horizontal planes. Timelike hyperplanes
are conjugated by some isometry to the hyperplane
$P_0\times\mathbb R$, where $P_0$ is a totally geodesic
hyperplane in $\HH^n$.

For lightlike hyperplanes we will need a more precise description.

\begin{lemma}\label{llkplanes:lem}
Let $P$ be a lightlike hyperplane.  There are two points $\zeta_-$ and $\zeta_+$ in
$\partial_\infty AdS_{n+1}$ such that $P$ is foliated by lightlike
geodesics with asymptotic end-points $\zeta_-$ and $\zeta_+$.

The foliation of $P$ by lightlike geodesics extends to a a foliation
of $\overline P\setminus\{\zeta_-,\zeta_+\}$ by lightlike geodesics, where
$\overline P$ denotes the closure of $P$ in $\overline{AdS}_{n+1}$.
\end{lemma}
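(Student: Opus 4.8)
The plan is to analyze lightlike hyperplanes using the description of $AdS_{n+1}^*$ as a quadric in $\R^{n,2}$, reducing everything to linear algebra via Lemma~\ref{geoimmersed:lem} and its remark. A totally geodesic hyperplane $P$ in $AdS_{n+1}^*$ is the intersection of $AdS_{n+1}^*$ with a linear $(n+1)$-plane $V\subset\R^{n,2}$ through the origin. The signature type of $P$ (spacelike, timelike, lightlike) is governed by the restriction of $\langle\cdot,\cdot\rangle$ to $V$, or equivalently by the orthogonal line $V^\perp$: the hyperplane is \emph{lightlike} precisely when $V^\perp$ is a null line, i.e. spanned by a vector $\nu$ with $\langle\nu,\nu\rangle=0$. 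So I would first fix such a null vector $\nu$ spanning $V^\perp$, and describe $P=\{x\in AdS_{n+1}^*\mid \langle x,\nu\rangle=0\}$.

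First I would locate the two distinguished boundary points. The null cone $\{x\in\R^{n,2}\mid\langle x,x\rangle=0\}$ projectivizes to the compactification whose universal cover carries $\partial_\infty AdS_{n+1}$; I want the null lines lying in $V$. Since $\nu\in V$ is itself null and orthogonal to all of $V$, the line $\R\nu$ is one such isotropic line in $V$, giving a boundary point; I would produce a second transverse null line in $V$ by an explicit linear-algebra computation, using the $(2,2)$-type behaviour that the restriction of the form to $V$ is a degenerate Lorentzian form with one-dimensional kernel $\R\nu$. Passing these two isotropic lines of $\R^{n,2}$ through the covering map (equation~(\ref{covering:eq})) and lifting to the universal cover yields the points $\zeta_-,\zeta_+\in\partial_\infty AdS_{n+1}$. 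I expect these to be exactly the two endpoints shared by every lightlike geodesic in $P$.

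Next I would establish the foliation. Lightlike geodesics in $P$ are intersections of $AdS_{n+1}^*$ with null $2$-planes $W\subset V$. The key claim is that every such $W$ contains the kernel line $\R\nu$: this follows because a null $2$-plane $W$ is totally isotropic or has one-dimensional radical, and inside the degenerate form on $V$ whose radical is exactly $\R\nu$, the isotropic $2$-planes must meet $\R\nu$. Hence each lightlike geodesic of $P$ passes to the asymptotic boundary through the isotropic line $\R\nu$ on one side; a parallel computation handles the other endpoint, giving that all lightlike geodesics share the endpoints $\zeta_\pm$. Sweeping $W$ over the pencil of null $2$-planes through $\R\nu$ foliates $P$, and one checks the leaves are disjoint and cover $P$ by comparing with the model case (conjugating by an isometry in $O(n,2)$ to a normal form for $\nu$ via Remark~\ref{rk:basics}).

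Finally, for the extension to $\overline P\setminus\{\zeta_-,\zeta_+\}$, I would use that each leaf is a lightlike geodesic with two asymptotic endpoints $\zeta_-,\zeta_+$ (as established for lightlike geodesics just before the lemma), so its closure in $\overline{AdS}_{n+1}$ adds exactly these two points; removing them, the closures of the leaves still foliate, since the only overlap among distinct closed leaves occurs at $\zeta_\pm$. The step I expect to be the main obstacle is the second endpoint: showing that \emph{every} lightlike geodesic in $P$ limits to the \emph{same} pair $\zeta_-,\zeta_+$, rather than sharing only the one obvious endpoint coming from $\R\nu$. This requires understanding the two isotropic lines in the degenerate $2$-dimensional quotient form on $V/\R\nu$ and checking that each leaf limits to a fixed point in the universal cover (as opposed to a $\Z$-orbit of lifts); I would pin this down with the explicit lightlike-geodesic parametrization already given in the excerpt, transported by an isometry to normal form.
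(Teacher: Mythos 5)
Your reduction to linear algebra in the quadric model is sound up to a point. A lightlike hyperplane is indeed $V\cap AdS^*_{n+1}$ where $V^\perp=\R\nu$ is a null line, necessarily contained in $V$, and your radical argument does show (once made precise: $V/\R\nu$ carries a \emph{nondegenerate} Lorentzian form of signature $(n-1,1)$, and there a null vector orthogonal to a timelike vector must vanish) that every lightlike geodesic contained in $P$ is of the form $s\mapsto x+s\nu$. So the leaves are the parallel null lines in the single direction $\nu$, and they foliate $P$. For comparison, the paper works instead in the $\HH^n\times\R$ model, exhibiting $P$ as the graph of $\mathrm{arcsin}(x_1/x_{n+1})$ over $\HH^n$ and pushing forward the foliation of $\HH^n$ by the geodesics orthogonal to the hyperplanes $x_1/x_{n+1}=a$; that choice makes the passage to the universal cover and the extension to $\overline P$ essentially automatic.

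The genuine gap is in your identification of $\zeta_-$ and $\zeta_+$: they do \emph{not} come from two distinct isotropic lines of $V$. The degenerate $2$-plane $W=\mathrm{span}(x,\nu)$ carrying a leaf contains exactly one null line, namely $\R\nu$ (any null vector $ax+b\nu$ has $\langle ax+b\nu,ax+b\nu\rangle=-a^2$, forcing $a=0$), so in the projective compactification \emph{both} ends of every leaf converge to the single boundary point $[\nu]=[V^\perp]$. The two points $\zeta_-$ and $\zeta_+$ are two distinct lifts of this one point to $\partial_\infty AdS_{n+1}=\partial\HH^n\times\R$; in the paper's normal form they are $(x_-,-\pi/2)$ and $(x_+,\pi/2)$. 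A ``second transverse null line in $V$'' does exist when $n\geq 2$ (take a null line of a Lorentzian complement $U$ with $V=\R\nu\oplus U$), but it corresponds to a point of $\partial_\infty P$ that is not an endpoint of any leaf, so declaring it to be $\zeta_+$ would make the asserted foliation statement false; likewise there is no ``degenerate $2$-dimensional quotient form on $V/\R\nu$'' to appeal to, since that quotient is nondegenerate of dimension $n$. Your fallback of computing with an explicit parametrization in the universal cover is exactly what is needed (and is what the paper does), but the step you yourself identify as the main obstacle --- locating the common pair of endpoints --- rests, as written, on a wrong picture of the null cone of $V$ and would not produce the correct points.
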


\begin{proof}
It is sufficient to prove the statement for a specific lightlike
plane.  Consider the hypesurface $P_0=\left\{(x,t)\in AdS_{n+1}|
t=\mathrm{arcsin}\left(\frac{x_1}{x_{n+1}}\right)\right\}$.
Using the projection $\Phi$ one see that $P_0$ is a totally geodesic plane,
indeed $\Phi(P_0)$ is a connected component of the intersection of $AdS_{n+1}^*$
with the linear plane defined by the equation $y_1-y_{n+2}=0$.

We consider the natural parameterization 
$\sigma:\mathbb H^n\rightarrow P_0$
defined by $\sigma(x)=(x, \mathrm{arcsin}(\frac{x_1}{x_{n+1}}))$.  Since the
function $\frac{x_1}{x_{n+1}}$ extends to the boundary of $\mathbb H^n$, 
the map $\sigma$ extends to $\overline{\HH}^n$ and
gives a parameterization of the closure $\overline P_0$ of $P_0$ in
$\overline{AdS}_{n+1}$.

The level surfaces $H_a=\{\frac{x_1}{x_{n+1}}=a\}$ are totally
geodesic hyperplanes orthogonal to the geodesic
$c=\{x_2=\ldots=x_n=0\}$. Let $N$ be the unit future-oriented vector
field on $\mathbb H^n$ orthogonal to $H_a$ for all $a$.  A simple
computation shows that
\begin{itemize}
\item for all $a$, $\sigma|_{H_a}$ is an isometric embedding;
\item $\hat N=\sigma_*(N)$ is a lightlike field;
\item $\hat N$ is orthogonal to $\sigma(H_a)$.
\end{itemize}
It follows that $P_0$ is a lightlike plane.
The integral lines of $\hat N$ produce a foliation of $P_0$ by
lightlike geodesics.  Notice that integral lines of $\hat N$ are the
images of integral lines of the field $N$.  By standard hyperbolic
geometry, all these lines join the endpoints, say $x_-$, $x_+$, of the
geodesic $c$. We conclude that lightlike geodesics of $P_0$ join
$\sigma(x_-)$ to $\sigma(x_+)$.
Since the foliation of $\mathbb H^n$ by integral lines of $N$ extends
to a foliation of $\overline\HH^n\setminus\{x_-,x_+\}$, the foliation
given by $\hat N$ extends to a foliation of
$\overline{P_0}\setminus\{\zeta_-,\zeta_+\}$.  By continuity we conclude that
the leaves of this foliation are lightlike.
\end{proof}

For a lightlike plane $P$ the points $\zeta_-$ and $\zeta_+$ are called respectively
the past and the future end-points of the plane.


Spacelike and lightlike hyperplanes
disconnect $AdS_{n+1}$ in two connected components,
that coincide with the past and the future of them.
Their asymptotic boundary is a no-where timelike closed curve.
On the other hand the asymptotic boundary of
a timelike plane is the union of two inextensible timelike curves.

\subsection{The causal structure of $AdS_{n+1}$}
\label{ssc:causal}

If $c:[0,1]\rightarrow AdS_{n+1}$ is a timelike path, its length is
defined in this way:
\[
\ell(c)=\int_0^1\left(-\langle\dot c(s),\dot c(s)\rangle\right)^{1/2} ds\,.
\] 

Given $p\in AdS_{n+1}$ we consider the set $P_-(p)$ (resp. $P_+(p)$)
defined respectively as the set of points that can be joined to $p$
through a past-directed (resp. future-directed) timelike geodesic of
length $\pi/2$.

\begin{figure}
\begin{center}
\input causalityads.pstex_t
\end{center}
\end{figure}

\begin{remark}\label{planes:rem}
For a point $x\in AdS^*_{n+1}$ we can identify the set of unit
timelike vectors at $x$ with the geodesic plane $P^*_x=x^\perp\cap
AdS^*_{n+1}$ (where $x^\perp$ is the linear plane orthogonal to $x$).
$P^*_x$ has two connected components. Equation (\ref{geoformula:eq}) shows
that these components are the images of $P_+(p)$ and $P_-(p)$,
where $p$ is any preimage of $x$ in $AdS_{n+1}$.
\end{remark}

The following properties of $P_-(p)$ and $P_+(p)$ are a direct consequence
of Remark \ref{planes:rem}

\begin{lemma}
The sets $P_-(p)$ and $P_+(p)$ are complete, space-like totally
geodesic planes.  Every timelike geodesic starting at $p$ meets
$P_-(p)$ and $P_+(p)$ orthogonally.
\end{lemma}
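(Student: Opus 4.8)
The plan is to reduce the statement to the quadric model and read off all four claims from Remark \ref{planes:rem} together with the explicit geodesic formula (\ref{geoformula:eq}).

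First I would work in $AdS^*_{n+1}$ and fix a preimage $x \in AdS^*_{n+1}$ of $p$. If $v \in T_x AdS^*_{n+1}$ is a unit timelike vector, so $\langle v,v\rangle = -1$ and $\langle x,v\rangle = 0$, then (\ref{geoformula:eq}) gives $\exp_x(sv) = \cos(s)\,x + \sin(s)\,v$; in particular $\exp_x(\frac{\pi}{2}v) = v$ and $\exp_x(-\frac{\pi}{2}v) = -v$. Hence the set of endpoints of timelike geodesics of length $\pi/2$ issuing from $x$ is exactly the set of unit timelike vectors at $x$, namely $P^*_x = x^\perp \cap AdS^*_{n+1}$, and by Remark \ref{planes:rem} its two connected components are the images of $P_+(p)$ and $P_-(p)$.

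Next I would record the geometric properties directly on $P^*_x$. Since $x^\perp$ is an $(n+1)$-dimensional linear subspace of $\R^{n,2}$, the intersection $P^*_x = x^\perp \cap AdS^*_{n+1}$ is totally geodesic (intersections of $AdS^*_{n+1}$ with linear subspaces through $0$ are totally geodesic). As $x$ is timelike, the restriction of $\langle\cdot,\cdot\rangle$ to $x^\perp$ has signature $(n,1)$, so $P^*_x$ is the hyperboloid of unit timelike vectors in a copy of Minkowski space; each of its two sheets is isometric to $\HH^n$. Thus $P_\pm(p)$ are spacelike (the induced metric is Riemannian), totally geodesic, and complete, and these properties pass to the universal cover because $\Phi$ is a local isometry.

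Finally, for orthogonality I would differentiate the geodesic $\gamma(s) = \exp_x(sv)$: it meets $P_+(p)$ at $\gamma(\frac{\pi}{2}) = v$ with velocity $\gamma'(\frac{\pi}{2}) = -x$, and $P_-(p)$ at $\gamma(-\frac{\pi}{2}) = -v$ with velocity $\gamma'(-\frac{\pi}{2}) = x$. The tangent space of $P^*_x$ at either endpoint is $x^\perp \cap v^\perp$, and $\pm x$ is orthogonal to all of $x^\perp$ while being tangent to $AdS^*_{n+1}$ at $\pm v$ (both facts following from $\langle x,v\rangle = 0$); hence $\gamma$ meets each plane orthogonally. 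The computations are elementary, and the only point needing care is matching the two sheets of $P^*_x$ with $P_\pm(p)$ under the covering $\Phi$ --- exactly the content of Remark \ref{planes:rem} --- so no serious obstacle remains.
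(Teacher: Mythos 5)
Your proposal is correct and follows exactly the route the paper intends: the paper gives no explicit proof but states the lemma is a direct consequence of Remark \ref{planes:rem}, i.e.\ the identification of $P_\pm(p)$ with the two sheets of $P^*_x=x^\perp\cap AdS^*_{n+1}$ via formula (\ref{geoformula:eq}), from which the signature count on $x^\perp$ and the computation $\gamma'(\pm\pi/2)=\mp x\perp x^\perp\cap v^\perp$ give completeness, spacelikeness, total geodesy and orthogonality just as you wrote. Your write-up simply makes explicit the verifications the paper leaves to the reader.
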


\begin{remark}
For the point $p_0=(x^0,0)$, a direct computation
(still using the projection $\Phi$) shows that
$P_-(p_0)$ and $P_+(p_0)$ are level curves
of the time function $t$ corresponding to values $-\pi/2$
and $\pi/2$ respectively. 
\end{remark}

The planes $P_-(p)$ and $P_+(p)$ are disjoint and bound an open precompact
domain $U_p$ in $\overline{AdS}_{n+1}$.  For instance, for $p=(x^0,0)$
we have $U_p=\{(x,t)\in\overline{AdS}_{n+1}| -\pi/2<t<\pi/2\}$.

By definition the interior of $U_p$ (denoted by $\mathrm{int}(U_p)$) is the
intersection of $U_p$ with $AdS_{n+1}$. Notice that
\[
\mathrm{int}(U_p)= I^+(P_-(p))\cap I^-(P_+(p))\,.
\]

Notice that $P_+(p_k)=P_-(p_{k+1})$ for every $k\in\Z$.
In particular $U_{p_i}\cap U_{p_j}=\emptyset$ if $|i-j|>1$ 
and $\overline{U_{p_i}}\cap \overline{U_{p_{i+1}}}=P_+(p_i)$. 

Given $p\in AdS_{n+1}$ we denote by $C_p$ the set of points joined to
$p$ through a timelike geodesic of length less than $\pi/2$.

\begin{prop}\label{caus:prop}
\begin{itemize}
\item $C_p\subset U_p$.
\item spacelike and lightlike geodesics join $p$ to points in $U_p\setminus C_p$, whereas
timelike geodesics are contained in
$\bigcup_{n\in\Z} C_{p_n}$.
\item $I^+(p)\subset C_p\cup I^+(P_+(p))=C_p\cup \bigcup_{k>0} U_{p_k}$.
\item $\partial C_p\cap U_p$ is the lightlike cone through $p$, whereas $\partial_\infty C_p$
is the union of the asymptotic boundary of $P_+(p)$ and the asymptotic boundary
of $P_-(p)$.
\end{itemize}
\end{prop}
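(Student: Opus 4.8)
The plan is to exploit the homogeneity of $AdS_{n+1}$ in order to reduce every assertion to the single point $p_0=(x^0,0)$, and then to read everything off from the explicit model. By Remark~\ref{rk:basics} the group $O(n,2)$ acts transitively on $AdS^*_{n+1}$ by isometries, and these lift to isometries of the universal cover $AdS_{n+1}$; since $C_p$, $U_p$, $P_\pm(p)$ and the chronological relations $I^\pm$ are all defined purely in terms of the Lorentzian structure, and the points $p_n$ are isometry-natural by Lemma~\ref{geodesics:lem}, it suffices to treat $p=p_0$. For this point we already know that $P_\pm(p_0)$ are the slices $\{t=\pm\pi/2\}$, that $U_{p_0}=\{-\pi/2<t<\pi/2\}$, and that $\mathrm{int}(U_{p_0})=I^+(P_-(p_0))\cap I^-(P_+(p_0))$, so the time function $t$ becomes the main bookkeeping device. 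Recall also that $t$ is strictly monotone along any future-directed timelike curve, since $\dot t=\langle\dot c,\bar\nabla t\rangle=-\phi^{-2}\langle\dot c,\frac{\partial\,}{\partial t}\rangle>0$.

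For the first two bullets I would argue directly from the geodesic formula~(\ref{geoformula:eq}) together with Remark~\ref{planes:rem}. A unit timelike geodesic $s\mapsto\exp_{p_0}(sv)=\cos(s)\,p_0+\sin(s)\,v$ satisfies $\exp_{p_0}(\tfrac{\pi}{2}v)=v\in P^*_{x^0}$, so by Remark~\ref{planes:rem} it meets $P_+(p_0)$ or $P_-(p_0)$ exactly at $s=\pi/2$; as $t$ is monotone along it and equals $\pm\pi/2$ on those planes, the arc with $s<\pi/2$ keeps $t\in(-\pi/2,\pi/2)$, giving $C_{p_0}\subset U_{p_0}$. The second bullet then splits in two. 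Geodesics do not change causal type and $C_p$ is by definition swept out only by timelike geodesics, so the spacelike and lightlike geodesics issued from $p_0$ land in $U_{p_0}\setminus C_{p_0}$; the explicit forms $\exp_{p_0}(sv)=\ch(s)p_0+\sh(s)v$ and $\exp_{p_0}(sv)=p_0+sv$ show via $\tan t=y_{n+2}/y_{n+1}$ that $t$ stays strictly inside $(-\pi/2,\pi/2)$ for finite $s$ (for the lightlike leaf $c(s)=(x(s),\mathrm{arccos}(1/\sqrt{1+s^2}))$ of the text, $t$ increases monotonically to $\pi/2$). For timelike geodesics, $\exp_{p_0}(\pi v)=-p_0$ shows that the geodesic passes through the successive $p_n$ at arclength $n\pi$, so it is covered by the consecutive diamonds $\overline{C_{p_n}}$, touching the shared planes $P_+(p_n)=P_-(p_{n+1})$ at the junction times.

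The third bullet is where the real work lies. Writing $I^+(p_0)$ as the union of its intersection with $U_{p_0}$ and its part beyond $P_+(p_0)$, the latter is contained in $I^+(P_+(p_0))=\{t>\pi/2\}$, which, up to the intermediate spacelike planes, is exactly $\bigcup_{k>0}U_{p_k}$ with $U_{p_k}=\{k\pi-\pi/2<t<k\pi+\pi/2\}$. The delicate inclusion is $I^+(p_0)\cap U_{p_0}\subset C_{p_0}$: a point of the diamond lying in the chronological future of $p_0$ must be joined to $p_0$ by a timelike geodesic of length strictly less than $\pi/2$. I expect this to be the main obstacle, and I would handle it with causality theory inside the diamond: the slices $\{t=c\}$ are Cauchy surfaces, so $U_{p_0}$ is globally hyperbolic, the Avez--Seifert theorem produces a maximizing timelike geodesic realizing the Lorentzian distance $d(p_0,q)$, and the supremum of $d(p_0,\cdot)$ over $\mathrm{int}(U_{p_0})$ equals $\pi/2$ and is approached only near $P_+(p_0)$; hence a point $q$ strictly to the past of $P_+(p_0)$ satisfies $d(p_0,q)<\pi/2$, i.e. $q\in C_{p_0}$.

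Finally, for the fourth bullet I would observe that the points of $\mathrm{int}(U_{p_0})$ fall into three classes according to whether they are timelike-, lightlike- or spacelike-related to $p_0$, with $C_{p_0}$ the timelike class; the boundary separating the timelike class from the spacelike one is precisely the lightlike cone through $p_0$, while the only other candidate boundary, where $d(p_0,\cdot)=\pi/2$, sits on $P_\pm(p_0)=\partial U_{p_0}$ and hence disappears upon intersecting with $U_{p_0}$. For the asymptotic statement, the lightlike geodesics issued from $p_0$ reach $\partial_\infty AdS_{n+1}$ with $t\to\pm\pi/2$ (again visible on the explicit leaf $c$), so the accumulation set $\partial_\infty C_{p_0}$ is exactly $\partial_\infty P_+(p_0)\cup\partial_\infty P_-(p_0)$, which completes the reduction.
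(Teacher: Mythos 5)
Your proposal is correct and follows exactly the route the paper indicates: the paper gives no written-out argument beyond the remark that the proposition ``can be easily proved using the projection $\Phi$ and the explicit formula (\ref{geoformula:eq})'', and your reduction to $p_0=(x^0,0)$ followed by a case check on the explicit geodesics and the time function $t$ is precisely that verification. Your invocation of global hyperbolicity of the slab and Avez--Seifert for the inclusion $I^+(p_0)\cap U_{p_0}\subset C_{p_0}$ is somewhat heavier machinery than the quadric computation (comparing $\langle p_0,\cdot\rangle$ with $-\cos s$, $-1$, $-\ch s$ in the three causal cases) that the paper has in mind, but it is a valid substitute.
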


This proposition can be easily proved using the projection $\Phi$ and the explicit
formula (\ref{geoformula:eq}).

It is worth noticing that $AdS_{n+1}$ is not geodesically convex. 
Indeed the set of points in $AdS_{n+1}$ that can be joined to $p$ by
a geodesic is $\mathrm{int}(U_p)\cup\bigcup C_{p_k}$.

\begin{cor}
The set  $I^-(p_+)\cap I^+(p_-)$ is the maximal star neighbourhood of $p$.
\end{cor}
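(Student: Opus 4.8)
The plan is to realize $I^-(p_+)\cap I^+(p_-)$ as the image under $\exp_p$ of the largest star-shaped domain in $T_pAdS_{n+1}$ on which $\exp_p$ is a diffeomorphism. By homogeneity (Remark \ref{rk:basics}) I may assume $p=p_0=(x^0,0)$, so that $p_+=(x^0,\pi)$, $p_-=(x^0,-\pi)$ and $P_\pm(p_0)=\{t=\pm\pi/2\}$. Here a \emph{star neighbourhood} of $p$ means an open set $V\ni p$ of the form $V=\exp_p(\Omega)$ with $\Omega\subset T_pAdS_{n+1}$ open and star-shaped with respect to $0$ and $\exp_p|_\Omega$ a diffeomorphism onto $V$; I must produce the maximal such $V$ and identify it with the causal diamond.

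First I would determine the maximal admissible $\Omega$ directly from the geodesic formula (\ref{geoformula:eq}). For a unit timelike $v$ one has $\exp_{p_0}(sv)=\cos(s)\,p_0+\sin(s)\,v$, and by Lemma \ref{geodesics:lem} every such geodesic first returns to a conjugate point at parameter $s=\pi$, namely $p_+$ or $p_-$ according to the time orientation of $v$; for spacelike or lightlike $v$ the formula gives $\exp_{p_0}(sv)=\ch(s)\,p_0+\sh(s)\,v$ (resp.\ $p_0+sv$), which are properly embedded open curves with no conjugate point. This forces $\Omega=\{v\in T_{p_0}AdS_{n+1}:\langle v,v\rangle>-\pi^2\}$, an open star-shaped set; on $\partial\Omega$ one has $v=\pi w$ with $w$ unit timelike and $\exp_{p_0}(v)=p_\pm$, where $d\exp_{p_0}$ is degenerate, so $\Omega$ cannot be enlarged, which gives maximality. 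I would then check that $\exp_{p_0}|_\Omega$ is a diffeomorphism onto its image: it is an immersion since no geodesic meets a conjugate point for parameters in $\Omega$, and injectivity splits by causal type, the timelike geodesics reconverging only at the excluded points $p_\pm$ (Lemma \ref{geodesics:lem}) while the spacelike and lightlike ones map into $U_{p_0}$ by Proposition \ref{caus:prop}; the classification of the geodesically reachable set as $\mathrm{int}(U_{p_0})\cup\bigcup_k C_{p_k}$ shows each reachable point is hit by a single geodesic of a single type, so the three families do not overlap.

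The main step is to identify $V=\exp_{p_0}(\Omega)$ with $I^-(p_+)\cap I^+(p_-)$. Here I would use the conformal metric $g^*=\frac1{\phi^2}g$ and the horizontal estimate from the proof of the endpoint lemma: writing $\rho(x)$ for the $g^*$-distance in the slice from $x$ to $x^0$, the factor $1/\phi^2=1/x_{n+1}^2$ integrates along hyperbolic radial geodesics to $\rho(x)<\pi/2$ for every $x$. Since any future timelike curve satisfies $|\dot c_H|_{g^*}<\dot t$, a point $(x,t)$ lies in $I^-(p_+)$ iff $\rho(x)<\pi-t$ and in $I^+(p_-)$ iff $\rho(x)<\pi+t$, so that $I^-(p_+)\cap I^+(p_-)=\{(x,t):|t|<\pi-\rho(x)\}$. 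For $V\subseteq I^-(p_+)\cap I^+(p_-)$: a spacelike or lightlike image lies in $U_{p_0}=\{|t|\le\pi/2\}$, hence in the diamond because $\rho<\pi/2$; a timelike image $\exp_{p_0}(sv)$ with $s<\pi$ lies on a timelike geodesic running from $p_-$ through $p_0$ to $p_+$, hence strictly between them.

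The reverse inclusion, and the point I expect to be the main obstacle, is matching the causal bound $|t|<\pi-\rho(x)$ with the geometric reach of the geodesics. Here I would take a point $q$ of the diamond, use the classification to join it to $p_0$ by a unique geodesic, and verify that if this geodesic is timelike then its parameter is $<\pi$ — precisely because $|t(q)|<\pi-\rho(x(q))$ places $q$ strictly before the conjugate points $p_\pm$ — so that the geodesic lies in $\Omega$. The delicate part is to rule out that a point of the diamond is reached \emph{only} by a geodesic leaving $\Omega$; this is excluded by the same computation, since a timelike geodesic continued past $p_\pm$ (or any geodesic reaching the far sheets $C_{p_k}$ with $|k|\ge 2$) lands in the region $|t|\ge\pi-\rho(x)$, outside the diamond.
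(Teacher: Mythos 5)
Your proof is correct and follows essentially the route the paper intends: the corollary is stated there without proof, as an immediate consequence of Proposition \ref{caus:prop} and the preceding remark that the set of points joined to $p$ by a geodesic is $\mathrm{int}(U_p)\cup\bigcup_k C_{p_k}$, with the star neighbourhood cut off exactly at the first conjugate points $p_\pm$ where $\exp_p$ degenerates. Your only real variation is to identify the diamond $I^-(p_+)\cap I^+(p_-)$ explicitly as $\{(x,t): |t|<\pi-\rho(x)\}$ via the conformal product metric rather than reading it off from the inclusion $I^+(p)\subset C_p\cup I^+(P_+(p))$ of Proposition \ref{caus:prop}; both work, and the details you supply (injectivity and non-degeneracy of $\exp_p$ on $\{\langle v,v\rangle>-\pi^2\}$, for which one can also invoke the uniqueness of geodesics in $U_p$ established in Section \ref{proj:sec}) are precisely the ones the paper leaves implicit.
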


Given $p\in AdS_{n+1}$ and $q\in I^+(p)$, the distance between them is
defined as
\[
  \delta(p,q)=\sup\{\ell(c)|c\textrm{ timelike path joining }p\textrm{
    to }q\}\,.
\]
The next statement is true in a rather general context and 
can be proved by classical arguments.

\begin{lemma}
If $U$ is a star neighbourhood of $p$, then the distance from $p$
\[
  \delta_p:U\cap I^+(p)\ni q\mapsto\delta(p,q)\in\mathbb R
\]
is smooth.
For $q\in U\cap I^+(p)$ the distance $\delta(p,q)$ is realized by the unique
geodesic joining $p$ to $q$ contained in $U$.
\end{lemma}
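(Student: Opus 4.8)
The plan is to reduce both assertions to the behaviour of the exponential map at $p$ together with the Lorentzian Gauss lemma. Since $U$ is a star neighbourhood of $p$, the map $\exp_p$ restricts to a diffeomorphism from a star-shaped open set $\widetilde U\subset T_pAdS_{n+1}$ onto $U$; denote its inverse by $E:U\to\widetilde U$. For $q\in U\cap I^+(p)$ the vector $v:=E(q)$ is future-pointing timelike, because inside a normal neighbourhood a point lies in $I^+(p)$ exactly when its preimage under $\exp_p$ is future timelike (the null cone of $T_pAdS_{n+1}$ being carried by $\exp_p$ to the local light cone of $p$). I then set
\[
  \tau(q)=\sqrt{-\langle E(q),E(q)\rangle}\,,
\]
which is smooth on $U\cap I^+(p)$ since $\langle v,v\rangle<0$ there and $E$ is smooth. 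The radial geodesic $s\mapsto\exp_p(sv)$, $s\in[0,1]$, is the unique geodesic from $p$ to $q$ lying in $U$, and because the speed of a geodesic is constant and equal to the norm of its initial velocity, its length is exactly $\tau(q)$. Thus the whole statement is equivalent to the single identity $\delta(p,q)=\tau(q)$, and smoothness of $\delta_p$ will follow at once from that of $\tau$.

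To prove $\delta(p,q)=\tau(q)$ I would show that the radial geodesic maximizes length, by the classical Gauss lemma argument. The Gauss lemma gives $\langle d\exp_p(v)\,v,\, d\exp_p(v)\,w\rangle=\langle v,w\rangle$ for all $v\in\widetilde U$ and all $w$. Differentiating $\tau^2=-\langle E,E\rangle$ and substituting this identity yields $\grad(\tau^2)=-2R$, where $R(q)=d\exp_p(v)\,v$ is the velocity at $q$ of the radial geodesic; hence $\grad\tau=-R/\tau$ is a unit timelike field, $\langle\grad\tau,\grad\tau\rangle=-1$, directed along the (past-pointing) radial direction. Given any future-directed timelike curve $c:[a,b]\to U$ from $p$ to $q$, the reverse Cauchy--Schwarz inequality for the timelike vectors $\grad\tau$ and $\dot c$, which lie in opposite time cones, gives
\[
  \frac{d}{ds}\,\tau(c(s))=\langle\grad\tau,\dot c\rangle\ge\sqrt{-\langle\dot c,\dot c\rangle}=|\dot c|\,,
\]
with equality if and only if $\dot c$ is radial. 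Integrating over $[a,b]$ and letting the initial point tend to $p$ (so that $\tau\to0$ there by continuity) yields $\ell(c)\le\tau(q)$. Since the radial geodesic attains this bound, $\delta(p,q)=\tau(q)$ and the maximizer is exactly that geodesic, unique up to reparametrization.

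One point requiring care is that $\delta(p,q)$ is defined as a supremum over \emph{all} timelike curves, whereas the estimate above concerns curves contained in $U$. I would bridge this by noting that a future-directed timelike curve from $p$ to $q$ stays inside the maximal star neighbourhood $\mathrm{int}(U_p)=I^+(P_-(p))\cap I^-(P_+(p))$, which is causally convex because the planes $P_\pm(p)$ are spacelike, hence acausal; the above estimate, applied there, therefore controls every competitor, and its conclusion descends to $U$ since the radial geodesic and $\tau$ depend only on $p$, $q$ and $v=E(q)$.

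The main obstacle is the Lorentzian sign bookkeeping, absent in the Riemannian analogue: one must confirm that $E(q)$ is genuinely \emph{future} timelike for $q\in U\cap I^+(p)$ (so that $\tau$ is smooth and positive and the reverse Cauchy--Schwarz applies with the right sign, $\grad\tau$ being past- rather than future-pointing), and that the boundary term $\tau$ at the initial point vanishes in the limit as that point approaches $p$, where $\tau$ fails to be smooth. Once these are settled the remaining steps are formal; and in the AdS setting one can cross-check the final formula against the explicit description (\ref{geoformula:eq}), since $\exp_p(sv)=\cos(s)\,p+\sin(s)\,v$ gives $\delta(p,q)=\arccos(-\langle p,q\rangle)$ for nearby timelike-separated points, manifestly smooth where $|\langle p,q\rangle|<1$.
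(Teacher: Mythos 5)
The paper gives no proof of this lemma --- it merely asserts that it ``can be proved by classical arguments'' --- and your argument is exactly that classical argument (Lorentzian Gauss lemma plus the reverse Cauchy--Schwarz inequality for the unit past-pointing field $\grad\tau$), correctly executed. You also correctly identify and handle the one genuinely delicate point, namely that $\delta(p,q)$ is a supremum over \emph{all} timelike curves, not just those in $U$: since $P_\pm(p)$ are acausal spacelike graphs, every competitor stays in $\mathrm{int}(U_p)$, which is itself star-shaped about $p$ (the paper shows every point of $U_p$ is joined to $p$ by a unique, continuously varying geodesic), so the estimate $\ell(c)\le\tau(q)$ applies to all of them. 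No gaps.
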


\begin{remark}
The definition of the distance shows that 
for $q\in I^+(p)\cap U$ and $r\in I^+(q)$, the reverse of the triangle
inequality holds
\begin{equation}\label{reverse:eq}
  \delta(p,r)\geq\delta(p,q)+\delta(q,r)\,.
\end{equation}
\end{remark}

\subsection{The projective model}
\label{proj:sec}

As noted in the proof of Lemma \ref{geodesics:lem} the geodesics
in $AdS_{n+1}^*$ are obtained as the intersection
of $AdS_{n+1}^*$ with the linear planes of $\mathbb R^{n+2}$ containing $0$.

For this reason the projection map
\[
   \pi:AdS_{n+1}\rightarrow \mathbb R P^{n+1}
\]
is projective: it sends geodesics of $AdS_{n+1}$ 
to projective segments. The image of this projective map is
the interior of a quadric $Q\subset \R P^{n+1}$ of signature 
$(n-1,1)$. 

Notice for $p\in AdS_{n+1}$ the domain $\Phi(\mathrm{int}(U_p))$ 
is a connected component
of $AdS^*_{n+1}\setminus P^*_{\Phi(p)}$.
Thus the domain $\pi(\Phi(U_p))$ is contained in some affine chart 
of $\mathbb R P^{n+1}$.

In this way we construct a projective embedding
\[
\pi^*: \mathrm{int}(U_p)\rightarrow\mathbb R^{n+1}~.
\]
The map $\pi^*$ can be easily computed assuming $p=(x^0,0)$.  In this
case $U_p=\{(x,t)| t\in (-\pi/2,\pi/2)\}$ so
$\Phi(\mathrm{int}(U_p))=\{(y_1,\ldots, y_n, ,y_{n+1},y_{n+2})\in
AdS^*_{n+1}|y_{n+1}>0\}$ and
\begin{equation}\label{projective:eq}
  \pi^*(x_1,\ldots,x_{n+1}, t)= \left(\frac{x_1}{x_{n+1}\cos
    t},\frac{x_2}{x_{n+1}\cos t},\ldots,\frac{x_n}{x_{n+1}\cos t},\tan
  t\right)
\end{equation}
for every $(x_1,\ldots,x_n)\in\mathbb H^n$ and $t\in (-\pi/2,\pi/2)$.

Notice that the map extends continuously on $U_p$ to a map, still
denoted by $\pi^*$.  From (\ref{projective:eq}), the image
$\pi^*(U_p)$ is the set
\begin{equation}\label{image:eq}
\{(z_1,\ldots,z_{n+1})|\sum_{i=1}^n z_i^2\leq z_{n+1}^2+1\}\,.
\end{equation}

In particular we deduce that every point $q\in U_p$ (even on the
boundary) can be joined to $p$ by a unique geodesic and that this
geodesic continuously depend on $q$.

We have seen above how to associate to a point $p\in AdS_{n+1}$ two
totally geodesic space-like hyperplanes $P_-(p)$ and $P_+(p)$. Both
planes are sent by $\pi$ to the intersection with $\pi(AdS_{n+1}^*)$
of the same projective plane $P$, and $P$ has a purely projective
definition. Indeed the light-cone of $p$ is tangent to $Q$ along a
circle $C$, and the image by $\pi$ of the boundary at infinity of
$P_-(p)$ is precisely $C$. One way to see this is by using the fact
that in the projective model of $AdS_{n+1}$ (as for the hyperbolic
space) the distance between two points can be defined in terms of the
Hilbert distance of the quadric $Q$, see e.g. \cite{shu}.

This duality extends to a duality between totally geodesic
(space-like) $k$-planes in $\pi(AdS_{n+1})$, with the dual of a
$k$-plane $P$ being a $(n-k)$-plane $P^*$. Then $P^*$ can be defined as
the intersection between the hyperplanes dual to the points of $P$,
and conversely. Then $P^*$ can be characterized as the set of points
at distance $\pi/2$ from $P$ along a time-like segment, and
conversely.

\subsection{The 3-dimensional AdS space} \label{ssc:3d}

The general description of the $n$-dimensional anti-de Sitter space
$AdS_{n+1}^*$ above can be refined when $n=2$, and $AdS_3^*$ has some
quite specific properties.

One such specificity is that $AdS_3^*$ is none other than the Lie group 
$SL(2,\R)$, with its Killing metric. This point of view, which is important
in itself (see \cite{mess,mess-notes}), will not be used explicitly here.

Another feature which is specific of $AdS_3$ is the fact that the boundary
of $\pi(AdS_3)$ in $\R P^3$ is a quadric of signature $(1,1)$ which, as is
well known, is foliated by two families of projective lines, which we
will call $\cL_l$ and $\cL_r$ ($l$ and $r$ stand for ``left'' and ``right''
here). Those 
projective lines correspond precisely to the isotropic curves in the
Lorentz-conformal structure on $\dr_\infty AdS_3$. Each line of one family
intersects each line of the other family at exactly one point, this 
provides an identification of $\dr \pi(AdS_3^*)$ with $S^1\times S^1$,
with each copy of $S^1$ identified with one of the two families of lines
foliating $\dr\pi(AdS_3^*)$.

This has interesting consequences, in particular those explained in 
Section \ref{ssc:diffeos}. Another consequence is that the isometry group of
$AdS_3$ can be naturally identified (up to finite index) with the product of
two copies of $PSL(2,\R)$. Indeed any isometry of $AdS_3$ in the connected
component of the identity acts on the two families of lines foliating
$\dr_\infty AdS_3$ by permuting those lines, and this action is projective
on each family of lines. Conversely, any couple of elements of $PSL(2,\R)$
can be obtained in this manner.

\section{Spacelike graphs in $AdS_{n+1}$} \label{sc:spacelike}

This section continues the description of the geometry of the AdS
space, with emphasis on space-like surfaces. Readers already familiar
with AdS geometry might not be very surprised by most of the results,
but several notations and lemmas will be used in the next section.

\subsection{Definitions}

A smooth embedded hypersurface $M$ in $AdS_{n+1}$ is
\emph{spacelike} if for every $x\in M$ the restriction of
$\langle\cdot,\cdot\rangle$ on $T_xM$ is positive definite.  It turns
out that a Riemannian structure is induced on every spacelike
hypersurface by the ambient metric.

We say that a spacelike surface $M$ in $AdS_{n+1}$ is a \emph{graph}
if there is a function
\[
    u:\HH^n\rightarrow \R
\]
such that $M$ coincides with the graph of $u$.

First let us check which functions correspond to spacelike graphs.

The function $u$ induces a function on $\mathbb H^n\times\mathbb R$
\[
   \hat u(x,t)=u(x)\,.
\]
The gradient of $\hat u$ at a point $(x,t)$ is the horizontal vector
that projects to the gradient of $u$ at $x$.

The graph of $u$, say $M=M_u$, is defined by the equation $\hat
u-t=0$.  Thus the tangent space $T_{(x,u(x))}M=\ker (dt-d\hat u)_{(x,
  u(x))}$.  In particular the normal direction of $M$ at $(x,u(x))$ is
generated by the vector
\begin{equation}\label{normal_dir:eq}
    \bar\nu=\bar\nabla t-\bar \nabla \hat u
\end{equation}
whose norm is
\[
   |\bar\nabla \hat u|^2-\frac{1}{\phi^2}~.
\]
Since $|\bar\nabla \hat u|=|\bar\nabla u|$ we deduce that $M$ is
spacelike if and only if
\begin{equation}\label{splike:eq}
  1-\phi^2|\bar\nabla u|^2<0~,
\end{equation}
and the future-pointing normal vector is
\begin{equation}\label{normal:eq}
\nu=\frac{\phi}{\sqrt{1-\phi^2|\bar\nabla u|^2}}(\bar\nabla \hat u-\bar\nabla t)\,.
\end{equation}

It is interesting to express (\ref{splike:eq}) using the Poincar\'e
model of hyperbolic space.  In that case we have
\[
     \bar\nabla u=\frac{(1-r^2)^2}{4}\left(\frac{\partial u}{\partial
       y_1},\ldots,\frac{\partial u}{\partial y_n}\right)
\]
so
\[
  |\bar\nabla u|^2=\frac{(1-r^2)}{4}\sum\left(\frac{\partial u}{\partial y_j}\right)^2
\]     
and condition (\ref{splike:eq}) becomes
\begin{equation}\label{splike2:eq}
   \sum_j\left(\frac{\partial u}{\partial y_j}\right)^2< \frac{4}{(1+r^2)^2}\,.
\end{equation}
In particular the function $u$ is $2$-Lipschitz with respect to the
Euclidean distance of the ball.  

\begin{lemma} \label{lm:boundary}
Let $M=M_u$ be a smooth spacelike graph in $AdS_{n+1}$.
Then the function $u$ extends to a continuous function
\[
  \bar u:\bar\HH^n\rightarrow\R\,.
\]
In particular the closure of $M$ in $\overline{AdS}_{n+1}$ is still a graph.
\end{lemma}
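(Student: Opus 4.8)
The plan is to exploit the gradient bound already derived for spacelike graphs, and then invoke the standard extension theorem for Lipschitz functions. Working in the Poincar\'e model of $\HH^n$, condition (\ref{splike2:eq}) shows that $\sum_j(\partial u/\partial y_j)^2<4/(1+r^2)^2\le 4$ throughout the open unit ball $\{r<1\}$, so that $u$ is $2$-Lipschitz with respect to the \emph{Euclidean} distance on the ball. My strategy is simply to extend this Lipschitz function to the closed Euclidean ball, which is exactly the compactification $\overline{\HH}^n$.

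First I would record that a $2$-Lipschitz function on the open Euclidean ball is uniformly continuous there. Since the open ball is dense in the compact closed ball $\overline{\HH}^n$, uniform continuity guarantees a unique continuous extension $\bar u:\overline{\HH}^n\rightarrow\R$; in fact $\bar u$ remains $2$-Lipschitz, as one sees by passing to the limit in the inequality $|u(x)-u(y)|\le 2|x-y|$ along sequences approaching boundary points. This is the usual extension of a Lipschitz map to the closure of its domain, and it is the only step where the geometry of the surface enters.

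It then remains to identify the closure of $M=M_u$ inside $\overline{AdS}_{n+1}=\overline{\HH}^n\times\R$ with the graph of $\bar u$. The graph of $\bar u$ is the image of the compact set $\overline{\HH}^n$ under the continuous map $x\mapsto(x,\bar u(x))$, hence compact and therefore closed in $\overline{\HH}^n\times\R$, and it clearly contains $M$; conversely $M$ is dense in it, because $\HH^n$ is dense in $\overline{\HH}^n$ and $\bar u$ is continuous. Thus the closure of $M$ is precisely the graph of $\bar u$, proving the final assertion.

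The argument is essentially routine once the Lipschitz bound is in hand, so the hard part is really bookkeeping rather than geometry: the one point deserving care is the compatibility of the two compactifications. The bound (\ref{splike2:eq}) is Euclidean and hence controls $u$ relative to the closed Euclidean ball, whereas $\overline{\HH}^n$ is defined in the paper through the projective model. Since the Poincar\'e and projective models add the same boundary sphere $\{r=1\}$ with the same topology, the Euclidean extension is automatically continuous for the topology of $\overline{\HH}^n$, and no further work is needed.
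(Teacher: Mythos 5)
Your proposal is correct and matches the paper's intended argument: the paper derives exactly the bound (\ref{splike2:eq}), remarks immediately before the lemma that $u$ is therefore $2$-Lipschitz for the Euclidean distance on the Poincar\'e ball, and leaves the (routine) extension to the closed ball and the identification of the closure of the graph implicit. Your write-up simply fills in those standard steps, including the harmless check that the Poincar\'e and projective compactifications agree.
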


\subsection{Acausal surfaces}
\label{ssc:acausal}

A $\mathrm{C}^{0,1}$ hypersurface $M$ in $AdS_{n+1}$ is said to be weakly spacelike
if for every $p\in M$ there is a neighbourhood $U$ of $p$ in $AdS_{n+1}$ such that
$U\setminus M$ is the disjoint union $I^+_U(M)\cup I^-_U(M)$.

A neighbourhood satisfying the above property will be called a good
neighbourhood of $p$.

It is not hard to see that a spacelike surface is weakly spacelike. On
the other hand a $\mathrm C^1$ weakly spacelike surface is
characterized by the property that no tangent plane is timelike.

A weakly spacelike graph is  a  weakly spacelike surface that is the graph
of some function $u$.
Weakly spacelike graphs correspond to Lipschitz functions  $u$ 
such that the inequality
\[
  1-\phi^2|\bar\nabla u|^2\leq 0
\]
holds almost everywhere.

As for spacelike graphs it is still true that the closure of acausal
graphs in $\overline{AdS}_{n+1}$ is a graph.

First we provide an intrinsic characterization of weakly spacelike graphs.

\begin{prop}\label{spacelikegraphs:prop}
Let $M$ be a connected weakly spacelike   hypersurface. 
The following statements are equivalent:
\begin{enumerate}
\item $M$ is a weakly spacelike graph;
\item $AdS_{n+1}\setminus M$ is the union of $2$ connected components;
\item every inextensible timelike curve without end-points
meets $M$ exactly in one point.
\end{enumerate}
\end{prop}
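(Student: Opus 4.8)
The plan is to prove the cycle $(1)\Rightarrow(3)\Rightarrow(2)\Rightarrow(1)$, using throughout that along a timelike curve the time function $t$ is strictly monotone, and that an inextensible timelike curve has no end-points precisely when $t$ takes every real value along it. For $(1)\Rightarrow(3)$, let $M=M_u$ be a weakly spacelike graph and let $c$ be a future-oriented inextensible timelike curve without end-points. Since $t$ is strictly monotone and surjective along $c$, I reparametrize by $t\in\R$, writing $c(t)=(x(t),t)$ with $x(t)\in\HH^n$. The timelike condition gives $|\dot x(t)|_{g_\HH}<\phi(x(t))$, while the slope bound for weakly spacelike graphs, $\phi\,|\bar\nabla u|_{g_\HH}\le 1$, gives $\langle \bar\nabla u,\dot x\rangle_{g_\HH}\le |\bar\nabla u|_{g_\HH}\,|\dot x|_{g_\HH}<1$. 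Hence $h(t):=t-u(x(t))$ has $h'>0$ almost everywhere, so $h$ is strictly increasing; and since by Lemma \ref{lm:boundary} $u$ extends continuously to the compact set $\overline{\HH}^n$ and is therefore bounded, $h(t)\to\pm\infty$ as $t\to\pm\infty$. Thus $h$ has exactly one zero, i.e. $c$ meets $M$ in exactly one point.

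For $(3)\Rightarrow(2)$: the vertical lines $\{x\}\times\R$ are inextensible timelike curves without end-points, so by $(3)$ each meets $M$ in exactly one point; this exhibits $M$ as the graph of a function $u:\HH^n\to\R$, and $AdS_{n+1}\setminus M$ is the disjoint union of the two regions $\{t>u(x)\}$ and $\{t<u(x)\}$. Each is connected (any two points can be joined by moving vertically far from $M$, which is possible since $u$ is bounded, and then horizontally), and they are distinct components since any path between them must cross $\{t=u(x)\}=M$. Hence there are exactly two components.

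The substance is in $(2)\Rightarrow(1)$. Write $AdS_{n+1}\setminus M=M^+\sqcup M^-$. Using a good neighbourhood at each point of $M$ together with the connectedness of $M$, I first argue that the labels can be chosen consistently, so that the local future $I^+_U(M)$ always lies in $M^+$ and the local past $I^-_U(M)$ in $M^-$. A future-directed timelike curve then crosses $M$ only from $M^-$ to $M^+$; in particular a vertical line meets $M$ \emph{at most once}, since between two consecutive intersections the open vertical segment would lie in a single component while being in the local future of the lower intersection (hence in $M^+$) and in the local past of the upper one (hence in $M^-$), a contradiction. The same local picture shows that $M^+$ is upward-closed and $M^-$ downward-closed along vertical lines: if $(y,s)\in M^+$ then $(y,s')\in M^+$ for all $s'\ge s$, for the first intersection above $s$ would force a point just below it to lie simultaneously in $M^+$ (no crossing yet) and in $M^-$ (local past). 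So each vertical line consists of a lower ray in $M^-$, possibly one point of $M$, and an upper ray in $M^+$.

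The main obstacle is to rule out a vertical line lying \emph{entirely} in $M^+$ (or in $M^-$), which would mean it never meets $M$. Here I will use that $M$ is a closed, boundaryless, properly embedded hypersurface: the set where vertical lines cross $M$ is exactly the horizontal projection of $M$, and if this projection were a proper open subset $C\subsetneq\HH^n$, then as $x\to\partial C$ the points $(x,u(x))$ would have to leave every compact subset of $AdS_{n+1}$, forcing $M$ either to acquire a boundary or to fail to separate $AdS_{n+1}$ consistently with the established up/down structure. Combining ``at most once'' with this ``at least once'' conclusion gives that every vertical line meets $M$ in exactly one point, i.e. $M=M_u$ is a graph, which is $(1)$. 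I expect this last properness step — controlling the projection of $M$ and excluding vertical lines trapped in one component — to be the delicate point, whereas the monotonicity arguments are routine consequences of the weakly spacelike condition.
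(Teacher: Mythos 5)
Your cycle $(1)\Rightarrow(3)\Rightarrow(2)\Rightarrow(1)$ is a legitimate strategy, and the first two implications go through: the monotonicity of $h(t)=t-u(x(t))$ along a timelike curve (using $\phi|\bar\nabla u|\le 1$ versus $|\dot x|<\phi$) together with the boundedness of $u$ is a correct direct proof of $(1)\Rightarrow(3)$, and $(3)\Rightarrow(2)$ is routine once you add the invariance-of-domain remark (as in the paper) to see that $u$ is continuous. The consistent labelling of $M^\pm$, the ``at most one intersection'' statement, and the up/down monotonicity of $M^\pm$ along vertical lines in your $(2)\Rightarrow(1)$ also match the paper's argument. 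The problem is the step you yourself flag as delicate: ruling out a vertical line contained entirely in $M^+$ or $M^-$. As written this is a genuine gap, for two reasons. First, you assume without proof that the projection $C=\pi(M)$ is open; this does follow from the structure you have established (if $x_k\to x_0\in C$ with $\{x_k\}\times\R\subset M^+$, then $(x_k,t_0-1)\in M^+$ converges to a point of the open set $M^-$ below $(x_0,u(x_0))$, a contradiction), but it is a step that must be made. Second, and more seriously, the mechanism you invoke is backwards: as $x\to\partial C$ the points $(x,u(x))$ do \emph{not} leave every compact set --- the slope bound $\phi|\bar\nabla u|\le 1$ makes $u$ locally $2$-Lipschitz in the Euclidean metric of the Poincar\'e ball, so $u$ extends continuously to $\overline C\cap\HH^n$; the correct conclusion is that $C$ is \emph{closed} in $\HH^n$ (the limit point lies on the closed set $M$ and projects to $\partial C$), whence $C=\HH^n$ by connectedness once openness is also known. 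The phrase ``forcing $M$ either to acquire a boundary or to fail to separate $AdS_{n+1}$ consistently'' is not an argument, and no contradiction is actually derived.

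For comparison, the paper closes exactly this point by a global causal argument rather than a properness argument on the projection: from the consistent labelling one deduces that no two points of $M$ are chronologically related, hence $M\subset U_p$ for any $p\in M$, hence the time function $t$ is bounded on $M$, say $t\in[a,b]$; then $\{t>b\}\subset\Omega_+$ and $\{t<a\}\subset\Omega_-$, so every inextensible timelike curve without end-points (in particular every vertical line) meets both components and therefore meets $M$. An alternative patch within your framework: if $\{x\}\times\R\subset M^+$, note that $I^+(M^+)\subset M^+$ (a future-directed timelike curve can only cross $M$ from $M^-$ to $M^+$), and $\bigcup_{t\in\R}I^+\bigl((x,t)\bigr)=AdS_{n+1}$, which would force $M=\emptyset$. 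Either of these replaces your heuristic with a proof; as submitted, the surjectivity half of $(2)\Rightarrow(1)$ is not established.
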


\begin{proof}
The implication $(1)\Rightarrow (2)$ is clear.

Assume $(3)$ holds. Then every vertical line meets $M$ exactly in one point.
This shows that the projection $\pi:M\rightarrow\mathbb H^n$ is one-to-one.
Since $M$ is a topological manifold, the Invariance of Domain Theorem 
implies that $\pi$ is a homeomorphism. Thus $M$ is a graph.

Finally suppose that $(2)$ holds.  We consider the equivalence
relation on $M$ such that $p\sim q$ if there are good neighbourhoods
$U$ and $V$ of $p$ and $q$ respectively such that $I^+_U(p)$ and
$I^+_V(q)$ are contained in the same component of $AdS_{n+1}\setminus
M$.  Equivalence classes are open. Since $M$ is connected, all points
are equivalent.  We deduce that there is a component, say $\Omega_+$,
of $AdS_{n+1}\setminus M$ such that if $c=c(s)$ is a future-directed
timelike path hitting $M$ for $s=0$, then there is $\epsilon>0$ such
that $c(s)\in\Omega_+$ for $0<s<\epsilon$.  In the same way, there is
a component, say $\Omega_-$ such that $c(s)\in\Omega_-$ for
$-\epsilon<s<0$.

If $U$ is a good neighbourhood of some point $p\in M$, then 
$U\subset\Omega_+\cup M\cup\Omega_-$, so
 $\Omega_+\cup\Omega_-\cup M$ is an open  neighbourhood of $M$. 
Since the closure of every component of $AdS_{n+1}\setminus M$ contains
points in $M$, by the assumption $(2)$, 
$\Omega_+$ and $\Omega_-$  are 
different components of $AdS_{n+1}\setminus M$ and $AdS_{n+1}=\Omega_-\cup M\cup\Omega_+$.

It follows that no future-directed timelike curve starting at a point of $\Omega_+$ can end
at some point of $M$. Since any future-directed timelike curve that starts on $M$ intersects 
$\Omega_+$, points of $M$ are not related by timelike curves and
$I^+(M)\subset\Omega_+$ and $I^-(M)\subset\Omega_-$.

In particular, given a point $p\in M$, the surface $M$ is contained in $U_p$.
It follows that the restriction of the time-function $t$ on $M$
 is bounded in some interval $[a,b]$.
 Moreover $\Omega_+$ contains the region $\{(x,t)|t>b\}$, instead
 $\Omega_-$ contains the region $\{(x,t)|t<a\}$.
 
 Since the restriction of $t$ on any inextensible timelike curve without end-points 
 $c$ takes all the values
 of the interval $(-\infty,+\infty)$ we have that $c$ contains points of $\Omega_-$ and points
 of $\Omega_+$. Thus it must intersect $M$.
 Since points of $M$ are not related by timelike arcs, such intersection point is unique.
\end{proof}

\begin{remark}
Proposition \ref{spacelikegraphs:prop} implies that spacelike graphs
are intrinsically described in terms of the geometry of $AdS_{n+1}$.
In particular, if $M$ is a spacelike graph, and $\gamma$ is an
isometry of $AdS_{n+1}$, then $\gamma(M)$ is still a spacelike graph.
\end{remark}

\begin{remark}
Given a point $p\in AdS_{n+1}$ we have that $\partial I^+(p)$ is a
weakly spacelike graph.  Indeed we can assume $p=(x^0,0)$. In that
case it turns out that $\partial I^+(p)$ is the graph of the function
$\mathrm{arccos}\left(\frac{1}{x_{n+1}}\right)$.
\end{remark}

An important feature of weakly spacelike graphs is that they are
acausal as the following proposition states.

\begin{prop}\label{acausal:prop}
Let $M=M_u$ be a weakly spacelike graph in $AdS_{n+1}$, and let $\overline
M$ denote its closure in $\overline{AdS}_{n+1}$.
Given $p\in M$, then, for every $q\in\overline M$, $p$ and $q$
are connected by a geodesic $[p,q]$ that is not timelike.
Moreover, if this geodesic is lightlike, then it is contained in $M$.
\end{prop}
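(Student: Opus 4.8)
The plan is to obtain the geodesic $[p,q]$ from the projective model, rule out the timelike case using the achronality of $M$ established in the proof of Proposition \ref{spacelikegraphs:prop}, and treat the lightlike case by viewing $M$ as an achronal boundary whose null generators must lie inside it. Throughout I use that, by the proof of Proposition \ref{spacelikegraphs:prop}, $M\subset U_p$, so $\overline M\subset\overline{U_p}$, and that the projective model (Section \ref{proj:sec}) gives, for every point of $\overline{U_p}$, a unique geodesic joining it to $p$, depending continuously on the point; this is the geodesic $[p,q]$ in the statement. I also record the elementary fact that, since $M$ is a graph of $u$, the two components of $AdS_{n+1}\setminus M$ from Proposition \ref{spacelikegraphs:prop} are $\Omega_+=I^+(M)=\{(x,t):t>u(x)\}$ and $\Omega_-=I^-(M)=\{t<u(x)\}$ (a point above the graph is joined to the point of $M$ below it by a future vertical, hence timelike, segment). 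In particular $M$ is disjoint from $I^+(M)\cup I^-(M)$, i.e. $M$ is achronal, and $M=\dr\Omega_+$ is an achronal boundary.

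\emph{The non-timelike alternative.} If $q\in M$ and $[p,q]$ were timelike, then $p$ and $q$ would be joined by a timelike geodesic, so $q\in I^+(p)\cup I^-(p)\subset I^+(M)\cup I^-(M)$, contradicting $q\in M$. For $q\in\dr_\infty AdS_{n+1}\cap\overline M$ I take $q_n\in M$ with $q_n\to q$: each $[p,q_n]$ is non-timelike, so its initial velocity $v_n$ satisfies $\langle v_n,v_n\rangle\geq 0$, and by the continuous dependence of the geodesic on its endpoint, $v_n\to v$, the initial velocity of $[p,q]$, whence $\langle v,v\rangle\geq 0$ and $[p,q]$ is not timelike.

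\emph{Lightlike segments lie in $M$: the interior case.} Suppose $[p,q]$ is lightlike with $q\in M$, and orient it as a future-directed null geodesic $\gamma$ from $p$ to $q$. For an interior point $r=\gamma(s)$, let $m_r\in M$ be the (unique) point of $M$ on the vertical line through $r$. If $r\in\Omega_-$ then $m_r$ lies strictly above $r$, so $r\ll m_r$; combined with $p\leq r$ along $\gamma$ this gives $p\ll m_r$ with $p,m_r\in M$, contradicting achronality. If $r\in\Omega_+$ then $m_r\ll r\leq q$ gives $m_r\ll q$ with $m_r,q\in M$, again a contradiction. Hence $r\in M$ and $\gamma\subset M$. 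This is exactly the familiar statement that causally related points of an achronal boundary are joined by a null generator contained in it, and the two bullets are its standard ``chronological $+$ causal $=$ chronological'' proof.

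\emph{The asymptotic case and the main obstacle.} When $q\in\dr_\infty AdS_{n+1}$ sits at the future end of $\gamma$, the argument ruling out $r\in\Omega_-$ is unchanged (it uses only the interior endpoint $p$), so $t\geq u$ along the interior of $\gamma$. Ruling out $r\in\Omega_+$ is the delicate point, since the contradiction ``$m_r\ll q$ with $q\in M$'' is no longer available once $q$ is at infinity. Here I would transport achronality to the asymptotic boundary: using the continuous extension $\bar u$ of $u$ to $\overline{\HH}^n$ (Lemma \ref{lm:boundary}) and the reverse triangle inequality \eqref{reverse:eq}, the concatenation $m_r\ll r\leq\gamma(s)$ yields timelike geodesics $[m_r,\gamma(s)]$ of proper length bounded below by $\delta(m_r,r)>0$; letting $s\to 1$ and invoking the already-proven non-timelike alternative for the pair $(m_r,q)$ forces $[m_r,q]$ to be a null geodesic sharing the endpoint $q$ with $\gamma$, which one then has to play off against the achronality of $\overline M$ up to $\dr_\infty AdS_{n+1}$ to reach a contradiction. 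Making this last step rigorous — showing that a timelike curve issuing from a point of $M$ cannot limit onto a point of $\dr_\infty M$ — is the part I expect to require the most care; every other step is a direct reading of the causal picture of $U_p$ from Section \ref{ssc:causal} and Proposition \ref{caus:prop}.
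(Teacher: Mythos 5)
Your first two steps are fine: the non-timelike alternative is proved exactly as in the paper (continuity of the geodesic $[p,q_n]$ for $q_n\to q$), and your treatment of a lightlike $[p,q]$ with $q\in M$ is a correct, if different, argument --- the standard ``chronological $\circ$ causal $=$ chronological'' trick for achronal boundaries, in place of the paper's computation. But the proposition is not proved, because the case $q\in\partial_\infty M$ is left open, and you say so yourself. That case is not a technicality one can wave at: it is precisely the case used later (e.g.\ in Lemma \ref{dom:lem} and Proposition \ref{boundarydep:prop}, where the lightlike segment runs from an interior point of $M$ to a point of $\Sigma$). Moreover the closing move you sketch does not close: for $q$ at infinity, the ``non-timelike alternative for the pair $(m_r,q)$'' is vacuous, since by Remark \ref{asym:rk} \emph{every} geodesic from an interior point to a boundary point is spacelike or lightlike, so it yields no contradiction. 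What you actually need is a quantitative statement: if $r=\gamma(s)\in I^+(M)$, the vertical gap $t_r-u(x_r)>0$ does not close as $s\to T$, so that $q$ lands strictly above the graph of $\bar u$, contradicting $q\in\partial_\infty M$. Proving that the gap is non-decreasing along $\gamma$ is exactly an ODE comparison between $t\circ\gamma$ and $u$ along the projected hyperbolic geodesic --- which is the paper's entire proof.

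Indeed the paper handles both cases at once by this comparison: writing $u_+$ for the graph function of $\partial I^+(p)$ and restricting to the hyperbolic geodesic $x(s)$ from $x_0$ to $x_1$ (with $T=+\infty$ when $x_1\in\partial\HH^n$), one has $\dot u_+=1/\phi$ while $\dot u\le 1/\phi$, so $u_+-u$ is non-negative and non-decreasing with $(u_+-u)(0)=0$; if $[p,q]$ is lightlike then $u_+-u$ vanishes again at $s_0$ (or tends to $0$ as $s\to\infty$ in the asymptotic case), forcing $u\equiv u_+$ on the whole interval, i.e.\ the lightlike segment lies in $M$. I recommend you either adopt this comparison outright or insert it as the missing lemma (``$t\circ\gamma-\hat u\circ\gamma$ is non-decreasing along any future-directed null geodesic $\gamma$'') to finish your asymptotic case; without it the proof is incomplete.
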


\begin{proof}
Proposition \ref{spacelikegraphs:prop} implies that $M\cap
I^+(p)=\emptyset$ and $M\cap I^-(p)=\emptyset$. In particular,
$M\subset U_p$ that is a star-neighbourhood of $p$.
It follows that any point $q$ of $\overline M$ is connected to $p$ by some geodesic
that continuously depends on $p$. Since points of $M$
cannot be connected to $p$ by a timelike geodesic, the same holds for
points in $\partial_\infty M$.

Finally, let us prove that if $[p,q]$ is lightlike, then it is contained in $M$.

Let $u_\pm:\mathbb H^n\rightarrow\mathbb R$ be such that $\partial
I^\pm(p)$ is the graph of $\Gamma_{u_\pm}$.  Let us set $p=(x_0,t_0)$
and $q=(x_1,t_1)$.  Consider the geodesic arc of $\mathbb H^n$, say
$x(s)$, starting from $x_0$ and ending at $x_1$ defined for $s\in
[0,T]$ ($T$ can be $+\infty$ if $x_1\in\partial\mathbb H^n$). Notice
  that the function of $s$ defined by $u_+(s)=u_+(x(s))$ satisfies
\begin{equation}\label{sys:eq}
\dot u_+=\frac{1}{\phi(x(s))}~,\qquad u_+(0)=t_0\,.
\end{equation}
On the other hand the function $u(s)=u(x(s))$ satisfies
\begin{equation}\label{sys2:eq}
\dot u=\langle\bar\nabla u,\frac{dx}{ds}\rangle\leq\frac{1}{\phi(x(s))}~,\qquad u(0)=t_0~.
\end{equation}
Comparing (\ref{sys:eq}) and (\ref{sys2:eq}) we deduce that
\[
   u(s)\leq u_+(s)~,
\]
and the equality holds at some $s_0$ if and only if $\dot
u(s)=\frac{1}{\phi(x(s))}$ on the interval $[0,s_0]$, that is
equivalent to say that the light-like segment joining $p=(x_0,t_0)$ to
$q=(x(s_0), u(x(s_0))$ is contained in $M$.

In an analogous way we show that $u_-(s)\leq u(s)$.
\end{proof}

\begin{remark}
The hypothesis that $M$ is a graph is essential in Proposition \ref{acausal:prop}.
It is not difficult to construct a spacelike surface $M$ containing points
$p$, $q$ that are related by a vertical segment.
\end{remark}

For a weakly spacelike surface $M$, a  point $p\in M$ is \emph{singular} 
if it is contained in the interior of some lightlike segment contained in $M$.
The singular set of $M$ is the set of singular points.

Analogously we define the singular set of the asymptotic boundary
$\Sigma$ of $M$.  Notice that the singular set of $\Sigma$ can be
non-empty even if $M$ does not contain singular points.





\subsection{The domain of dependence of a spacelike graph}

Let $M$ be a spacelike graph in $AdS_{n+1}$, and let
$\Sigma$ denote its asymptotic boundary.  We will suppose 
that $M$ does not contain any singular point.

The domain of dependence of
$M$ is the set $D$ of points $x\in AdS_{n+1}$ such that 
\emph{every inextensible causal path through $x$ intersects $M$.}

It can be easily shown that this property is equivalent to requiring
that $(I^+(x)\cup I^-(x))\cap M$ is precompact in $AdS_{n+1}$.

There is an easy characterization of $D$ in terms of $\Sigma$.

\begin{lemma}\label{dom:lem}
With the notations of Section \ref{geodesics:sec}, a point $p$ lies in $D$
if and only if $\Sigma$ is contained in  $U_p$.
\end{lemma}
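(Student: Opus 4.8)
The plan is to reduce to a standard point and then translate everything into a statement purely about the asymptotic boundary. Since an isometry $\gamma$ of $AdS_{n+1}$ carries spacelike graphs without singular points to spacelike graphs without singular points, extends to $\overline{AdS}_{n+1}$ (Lemma \ref{isoextension:lem}), and satisfies $\gamma(U_p)=U_{\gamma(p)}$ as well as $\gamma(D_M)=D_{\gamma(M)}$ (causal curves meet $M$ iff their images meet $\gamma(M)$), I may apply the isometry sending $p$ to $p_0=(x^0,0)$ and assume $p=p_0$. Then $U_{p_0}=\{(x,t)\mid -\pi/2<t<\pi/2\}$ and $P_\pm(p_0)=\{t=\pm\pi/2\}$, so $\Sigma\subset U_{p_0}$ means exactly that the boundary function $\bar u$ of the graph satisfies $-\pi/2<\bar u<\pi/2$ on $\partial\HH^n$. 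I would then use the characterization recalled above, that $p_0\in D$ if and only if $(I^+(p_0)\cup I^-(p_0))\cap M$ is precompact in $AdS_{n+1}$. Since the accumulation set of $M$ at infinity is precisely $\Sigma$, it suffices to show that this precompactness fails exactly when $\Sigma$ meets $\partial_\infty I^+(p_0)\cup\partial_\infty I^-(p_0)$.

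Next I would compute these asymptotic boundaries. Using that $\partial I^+(p_0)$ is the graph of $\arccos(1/x_{n+1})$, and that this function tends to $\pi/2$ as $x\to\partial\HH^n$, one gets $\partial_\infty I^+(p_0)=\partial\HH^n\times[\pi/2,+\infty)$ and symmetrically $\partial_\infty I^-(p_0)=\partial\HH^n\times(-\infty,-\pi/2]$; their union is the complement in $\partial_\infty AdS_{n+1}$ of $\partial_\infty U_{p_0}=\partial\HH^n\times(-\pi/2,\pi/2)$. Thus the claim reduces to showing that $(I^+(p_0)\cup I^-(p_0))\cap M$ is precompact if and only if $\Sigma$ avoids $\{|t|\geq\pi/2\}$. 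One implication is immediate: if the set is not precompact, a sequence in it converges to some $\xi\in\Sigma$, and as the sequence lies (up to passing to a subsequence) in, say, $I^+(p_0)$, its limit lies in $\partial_\infty I^+(p_0)$, so $\Sigma$ meets $\{t\geq\pi/2\}$.

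The reverse implication is the delicate point, precisely because $U_{p_0}$ is open while $\partial_\infty I^+(p_0)$ is closed: I must show that if $\Sigma$ contains a point $\xi=(\zeta,s)$ with $s\geq\pi/2$ — in particular in the borderline case $s=\pi/2$, where $\xi$ sits on $\partial_\infty P_+(p_0)$ — then $M$ already meets $I^+(p_0)$ in points converging to $\xi$. Here I would run a comparison along the geodesic ray $x(\cdot)$ of $\HH^n$ from $x^0$ to $\zeta$. Let $L$ be the height of the future light cone along this ray, so that $L'=1/\phi$ and $L\to\pi/2$, and compare it with $v:=u\circ x$, which satisfies $v\to s$. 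Because $M$ is smooth and strictly spacelike (it has no singular points), the estimate (\ref{sys2:eq}) holds with strict inequality, $\dot v<1/\phi$, so $L-v$ is strictly increasing with limit $\pi/2-s\leq 0$ at infinity; therefore $L-v<\pi/2-s\leq 0$ for every finite parameter, i.e.\ $v>L$ all along the ray. Hence the points $(x(\cdot),v(\cdot))$ of $M$ lie strictly above the future light cone, so inside $I^+(p_0)$, and they converge to $\xi\in\Sigma$, so precompactness fails and $p_0\notin D$. The main obstacle is exactly this borderline analysis: it is the monotonicity of $L-v$ forced by the strict spacelike condition that makes the light cone touch $\Sigma$ only from below and guarantees that $M$ crosses into $I^+(p_0)$, which is what distinguishes the open condition $\Sigma\subset U_{p_0}$ from the condition on its closure.
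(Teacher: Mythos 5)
Your proof is correct. The overall skeleton coincides with the paper's: both reduce the lemma to the precompactness characterization of $D$ and then translate precompactness of $I^\pm(p)\cap M$ into the disjointness of $\Sigma$ from $\partial_\infty I^\pm(p)$; your ``easy'' implication is essentially the paper's argument for the ``if'' direction. Where you genuinely diverge is in the borderline case $s=\pi/2$, i.e.\ when $\Sigma$ touches $\partial_\infty P_+(p)$. The paper disposes of it by citing Proposition \ref{acausal:prop}: the lightlike geodesic from $p$ to the offending boundary point must meet $M$ at some $q$, and the lightlike continuation from $q$ would then be forced to lie inside $M$, contradicting the absence of singular points. You instead inline a direct comparison along the geodesic ray toward $\zeta$, comparing the height $v=u\circ x$ of the graph with the height $L$ of the light cone (with $L'=1/\phi$) and using the strict spacelike inequality $\dot v<1/\phi$ to conclude that $L-v$ increases strictly to $\pi/2-s\le 0$, hence $M$ lies strictly inside $I^+(p)$ along the whole ray and accumulates at the bad boundary point. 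The two arguments are close cousins --- the comparison of (\ref{sys:eq}) against (\ref{sys2:eq}) is precisely how the paper proves Proposition \ref{acausal:prop} --- but yours is self-contained and yields the slightly stronger quantitative fact that $M$ enters $I^+(p)$ all along the ray, whereas the paper's is shorter given the proposition already in hand. One small caveat: you equate ``no singular points'' with strict spacelikeness; for the smooth spacelike graphs of the lemma these agree, so your strict inequality $\dot v<1/\phi$ is legitimate, but for a merely weakly spacelike graph the strictness would have to be replaced by the observation that equality $L=v$ at one point propagates (by the monotonicity of $L-v$ together with its nonpositive limit) to a lightlike ray contained in $M$, which is where the no-singular-points hypothesis would genuinely enter.
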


\begin{proof}
Suppose that $p\in D$. Without loss of generality we can suppose that
$p\in I^-(M)$. By the hypothesis, $I^+(p)\cap M$ is precompact in
$AdS_{n+1}$ (whereas $I^-(p)\cap M=\emptyset$). Thus there is a compact
ball $B\subset\mathbb H^n$ such that $I^+(p)\cap M$ is contained in the cylinder
above $B$. In particular, $M\setminus (B\times\mathbb R)$ is
contained in $U_p$. It follows that $\Sigma\subset\overline{U_p}$.

If some point $x$ of $\Sigma$ were contained in $\partial_\infty P_+(p)$
then the geodesic joining $p$ to $x$ would be lightlike
and would intersects $M$ in some point $q$.
Then by Proposition \ref{acausal:prop}, the lightlike
geodesic segment joining $q$ to $x$ would be contained 
in $M$ and this would contradict the hypothesis that $M$ does not
contain any singular point.

Let us consider now  a point $p$ such that $\Sigma\subset U_p$.
Again we can suppose that $p\in I^-(M)$.
By the assumption the asymptotic boundary of $M$ and the asymptotic 
 boundary of $I^+(p)$ are disjoint.
It follows that $I^+(p)\cap M$ is pre-compact in $AdS_{n+1}$. 
\end{proof}

\begin{cor}
Two spacelike surfaces share the boundary at infinity
if and only if their domains of dependence coincide.
\end{cor}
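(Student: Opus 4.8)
The plan is to deduce everything from Lemma \ref{dom:lem}, which identifies the domain of dependence $D$ of a spacelike graph $M$ purely in terms of its asymptotic boundary $\Sigma=\partial_\infty M$, namely $D=\{p\in AdS_{n+1}\mid \Sigma\subset U_p\}$. This settles the ``only if'' direction at once: if two spacelike graphs satisfy $\Sigma_1=\Sigma_2$, then the sets $\{p\mid \Sigma_i\subset U_p\}$ coincide, so $D_1=D_2$. It therefore remains to show that the correspondence $\Sigma\mapsto D$ is injective, i.e.\ that $\Sigma$ can be recovered from $D$. (Note $D\neq\emptyset$: since $M$ is acausal, $M\subset D$.)

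For the recovery I would use the formula
\[
  \Sigma=\bigcap_{p\in D}\big(\overline{U_p}\cap\partial_\infty AdS_{n+1}\big)~,
\]
whose right-hand side depends only on $D$. One inclusion is immediate from Lemma \ref{dom:lem}: for every $p\in D$ we have $\Sigma\subset U_p\subset\overline{U_p}$, and since $\Sigma\subset\partial_\infty AdS_{n+1}$ this gives $\Sigma\subset\overline{U_p}\cap\partial_\infty AdS_{n+1}$ for all such $p$, so $\Sigma$ lies in the intersection. One cannot replace the right-hand side by the cruder $\overline{D}\cap\partial_\infty AdS_{n+1}$: already for the totally geodesic slice $\HH^n\times\{0\}$ the domain of dependence is the whole slab $\{|t|<\pi/2\}$, whose asymptotic boundary is the full strip $\partial\HH^n\times[-\pi/2,\pi/2]$, strictly larger than $\Sigma=\partial\HH^n\times\{0\}$. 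It is the intersection of the individual diamonds $\overline{U_p}$, not the closure of $D$, that pins $\Sigma$ down.

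The reverse inclusion is the heart of the matter and amounts to a \emph{separation statement}: if $y\in\partial_\infty AdS_{n+1}\setminus\Sigma$, then there is $p\in D$ with $y\notin\overline{U_p}$. Writing $\Sigma$ as the graph of the continuous boundary function $f$ on $\partial\HH^n$ (Lemma \ref{lm:boundary}) and $y=(\xi_0,t_0)$, the fact that $\Sigma$ is a graph forces $t_0\neq f(\xi_0)$, say $t_0>f(\xi_0)$. I would produce $p$ through the point--plane duality of Section \ref{proj:sec}: it suffices to find a spacelike totally geodesic plane lying strictly above $\Sigma$ everywhere but strictly below $y$ over $\xi_0$, take it to be the upper boundary plane $P_+(p)$ of a diamond, and check that the companion plane $P_-(p)$ then lies below $\Sigma$, so that $\Sigma\subset U_p$ (hence $p\in D$ by Lemma \ref{dom:lem}) while $y$ is cut off by $P_+(p)$. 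The main obstacle is exactly the construction of this separating plane. Here I expect to use both the compactness of $\Sigma$ and its acausality, which keep $\Sigma$ uniformly on the spacelike side of nearby planes, together with the width bound $w(\Sigma)\le\pi/2$ (Lemma \ref{lm:width}): since the two faces $P_\pm(p)$ of a diamond are a full time-distance $\pi$ apart while $\Sigma$ occupies width at most $\pi/2$, there is a positive slack that lets one slide the diamond so that $P_+(p)$ passes just below $t_0$ over $\xi_0$ with $\Sigma$ still trapped between $P_-(p)$ and $P_+(p)$. I would realize this either by a direct continuity/deformation argument, starting from some $p_*\in D$ and pushing $P_+(p_*)$ downward past $y$, or by invoking the convex hull $C(\Sigma)$ (as in Definition \ref{df:width}), whose supporting spacelike planes furnish the required separators. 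Once the separation statement is established, the displayed recovery formula follows, and with it the implication $D_1=D_2\Rightarrow\Sigma_1=\Sigma_2$, completing the proof.
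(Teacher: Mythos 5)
The ``only if'' direction of your argument is fine: Lemma \ref{dom:lem} shows that $D$ is determined by $\Sigma$. The problem is in the other direction, and it is twofold. First, the ``counterexample'' you give to the crude recovery $\Sigma=\overline{D}\cap\partial_\infty AdS_{n+1}$ is wrong: the domain of dependence of the slice $\HH^n\times\{0\}$ is \emph{not} the slab $\{|t|<\pi/2\}$ --- that slab is $\mathrm{int}(U_{(x^0,0)})$, which strictly contains $D$. Indeed, take $(y,t_1)$ with $0<t_1<\pi/2$ and $y$ very far from $x^0$: the past-directed lightlike ray from $(y,t_1)$ pointing towards the boundary direction of $y$ converges to the asymptotic point $\bigl(\xi_y,\, t_1-\mathrm{arccos}(\tanh d(y,x^0))\bigr)$, whose $t$-coordinate is positive once $d(y,x^0)$ is large; this inextensible causal curve never meets $\{t=0\}$, so $(y,t_1)\notin D$. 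In coordinates one finds $D=\{|x_{n+1}\sin t|<1,\ |t|<\pi/2\}$, which pinches onto $t=0$ at infinity, so $\partial_\infty D=\Sigma$ after all. This is exactly what the paper proves in Proposition \ref{boundarydep:prop} (``the closure at infinity of $D$ is precisely $\Sigma$'', via convexity of $D$ and Remark \ref{asym:rk}), and it gives the nontrivial implication in one line. By rejecting this route on the basis of a miscomputation, you committed yourself to a harder path.

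Second, on that harder path the proof is not actually carried out. Your recovery formula $\Sigma=\bigcap_{p\in D}\bigl(\overline{U_p}\cap\partial_\infty AdS_{n+1}\bigr)$ is correct, but the reverse inclusion --- the separation statement --- is the entire content of the implication $D_1=D_2\Rightarrow\Sigma_1=\Sigma_2$, and you only list two possible strategies (a deformation argument, or support planes of the convex hull) without executing either; the appeal to the width bound $w(\Sigma)\le\pi/2$ does not by itself produce the separating plane. So as written there is a genuine gap. (For what it is worth, the separation is easy once set up correctly: given $y=(\xi_0,t_0)$ with $t_0>f(\xi_0)$, take $p=(x,u(x))\in M\subset D$ with $x\to\xi_0$; then both boundary-value functions $u_p$ and $v_p$ of $P_\pm(p)$ converge at $\xi_0$ to $f(\xi_0)$, i.e.\ the diamonds $U_p$ pinch at $\xi_0$, so $y\notin\overline{U_p}$ eventually. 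But this pinching phenomenon is precisely what your counterexample claim denies, which is another sign that the example needs to be discarded.)
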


\begin{prop} \label{boundarydep:prop} 
The domain $D$ is geodesically convex and its closure at infinity is
precisely $\Sigma$.

The boundary of $D$ is the disjoint union of two weakly spacelike graphs 
$\partial_\pm D=M_{u_\pm}$ whose boundary at infinity is
$\Sigma$.

Every point $p\in \dr D$ is joined to $\Sigma$ by a lightlike ray.
\end{prop}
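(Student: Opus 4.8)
The plan is to work from the characterization of Lemma \ref{dom:lem}: a point $p$ lies in $D$ if and only if $\Sigma\subset U_p$. The first step is to translate this into a family of linear inequalities using the duality of the projective model (Section \ref{proj:sec}). Lifting a boundary point $\xi\in\partial_\infty AdS_{n+1}$ to a null vector $v\in\R^{n,2}$ ($\langle v,v\rangle=0$) and checking the model case $p=(x^0,0)$ --- where $U_p=\{-\pi/2<t<\pi/2\}$ and $P_\pm(p)$ are the level sets $t=\pm\pi/2$ --- one finds, after normalising $v$ to be future-pointing, that $\xi\in U_p$ is equivalent to $\langle v,p\rangle<0$, while $\xi\in\partial_\infty P_\pm(p)$ corresponds to $\langle v,p\rangle=0$. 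Propagating by the transitive action of $O(n,2)$, this holds for every $p$. Hence $D=\{p\in AdS_{n+1}:\langle v,p\rangle<0\text{ for every }[v]\in\Sigma\}$ is an intersection of open linear half-spaces. Since $\Sigma$ is a fixed acausal graph, every such $p$ stays within bounded time distance of $\Sigma$, so all of $D$ lies in a single affine chart $\pi^*(U_q)$ of the projective model, in which geodesics are projective segments (Lemma \ref{geoimmersed:lem}). There $\pi^*(D)$ is an intersection of affine half-spaces, hence convex, and $D$ is geodesically convex.

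Next I would record that $M\subset D$: for $p\in M$, Proposition \ref{acausal:prop} joins every point of $\overline M$ to $p$ by a non-timelike geodesic, so $\Sigma\subset\overline{U_p}$; a contact point $\xi\in\Sigma\cap\partial_\infty P_\pm(p)$ would force the lightlike segment $[p,\xi]$ into $M$, contradicting the absence of singular points, so in fact $\Sigma\subset U_p$ and $p\in D$. Consequently $\Sigma=\partial_\infty M\subseteq\partial_\infty D$. The convexity of $\pi^*(D)$ then gives the graph structure of the boundary: vertical lines are timelike geodesics (Lemma \ref{geodesics:lem}), hence meet the convex set $D$ in an interval $\{x\}\times(u_-(x),u_+(x))$, and the two endpoint functions define graphs $\partial_\pm D=M_{u_\pm}$ with $u_-\le u_+$ and $M$ lying between them.

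The heart of the argument is the null-ray structure of $\partial D$. A point $p\in\partial_+ D$ satisfies $\Sigma\subset\overline{U_p}$ but can be pushed no higher; raising $p$ raises the past plane $P_-(p)$ while relaxing the constraint coming from $P_+(p)$, so the binding condition is that $\Sigma$ touches $\partial_\infty P_-(p)$. By the comparison computation in the proof of Lemma \ref{dom:lem} (applied to the past plane by time-symmetry), any contact point $\xi\in\Sigma\cap\partial_\infty P_-(p)$ is joined to $p$ by a lightlike geodesic; the past-directed ray from $p$ to $\xi$ is the lightlike ray of the last assertion. Symmetrically, points of $\partial_- D$ send future-directed null rays to $\Sigma$. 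Since $\partial_\pm D$ are thus ruled by lightlike geodesics reaching $\Sigma$, no two of their points are timelike related, so they are achronal and therefore weakly spacelike graphs.

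It remains to identify the boundary at infinity, i.e. to upgrade $\Sigma\subseteq\partial_\infty D$ to an equality, which I expect to be the main obstacle. The inclusions $u_-\le u\le u_+$ show that $\partial_\infty(\partial_\pm D)$ lies weakly on one side of $\Sigma$; the opposite bound must come from the null generators. As the foot-point $p$ runs to infinity along $\partial_+ D$, its past null generator must shrink onto its endpoint in $\Sigma$, for otherwise a limiting lightlike segment would join a point of $\partial_\infty D$ lying strictly off $\Sigma$ to a point of $\Sigma$, violating the acausality of the graph $\Sigma$ (two points over distinct base points of $\partial\HH^n$ cannot be lightlike related in this way). This forces $\partial_\infty(\partial_\pm D)=\Sigma$, and hence $\partial_\infty D=\Sigma$, completing the proof. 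The delicate point throughout is precisely this control at infinity: ensuring that the half-space description does not let $D$ bulge past $\Sigma$ and that the null generators genuinely terminate on $\Sigma$, which is exactly where the graph (acausality) hypothesis on $\Sigma$ enters.
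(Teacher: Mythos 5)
Your overall skeleton is reasonable, and the half-space description of $D$ is an attractive alternative to the paper's route (the paper instead first confines $D$ to the slab $I^-(p_+)\cap I^+(p_-)$ and then applies Lemma \ref{inter:lem}, $U_p\cap U_q\subset U_r$ for $r\in[p,q]$, to get convexity). But the duality step is not yet correct as written: the inequality $\langle v,p\rangle<0$ only sees the projection to $AdS^*_{n+1}$, so it cannot distinguish $U_p$ from its deck translates $U_{p_{2k}}$, and a ``future-pointing'' normalisation of a null vector does not exist in signature $(n,2)$ (the natural lift of $(\theta,t)$ is $v=(\theta,\cos t,\sin t)$, and $\langle v,p\rangle<0$ then characterises $\bigcup_k U_{p_{2k}}$, not $U_p$). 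You therefore still need the confinement of $D$ to a single fundamental slab as a first step --- which is exactly what the paper proves --- before the affine-chart convexity argument applies. A smaller slip: vertical lines $\{x\}\times\R$ are not geodesics except through $x^0$, so geodesic convexity does not give that $D\cap(\{x\}\times\R)$ is an interval; that fact is true, but because time translations are isometries, so $\{t:(x,t)\in D\}$ is an intersection of intervals of the form $(s-u_{(x,0)}(\theta),\,s-v_{(x,0)}(\theta))$ over $(\theta,s)\in\Sigma$.

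Two steps are genuinely broken. First, ``$\partial_\pm D$ is ruled by lightlike geodesics reaching $\Sigma$, hence achronal'' is a non-sequitur: a timelike totally geodesic hyperplane is also ruled by lightlike geodesics and is not achronal. The paper derives acausality of $\partial_+D$ from convexity --- a timelike geodesic meets $\overline D$ in a compact segment with one endpoint in $I^+(M)$ and one in $I^-(M)$, so no timelike geodesic joins two points of $\partial_+D$ --- and then invokes \cite{bartnik-simon} and Proposition \ref{spacelikegraphs:prop} to get a weakly spacelike graph. Second, your argument that $\partial_\infty D=\Sigma$ does not close: a lightlike segment joining a point of $\partial_\infty D\setminus\Sigma$ to a point of $\Sigma$ contradicts nothing about the acausality \emph{of $\Sigma$}, since only one endpoint of that segment lies on $\Sigma$. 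The correct mechanism is different: a point $q\in\partial_\infty D\setminus\Sigma$ lies on the same vertical boundary line as some $q'\in\Sigma$, hence is timelike related to it in $\partial_\infty AdS_{n+1}$; by Remark \ref{asym:rk} no geodesic of $AdS_{n+1}$ joins $q$ to $q'$, which is incompatible with both being accumulation points of the geodesically convex set $D$. Your treatment of the lightlike ray (contact of $\Sigma$ with $\partial_\infty P_\mp(p)$ for $p\in\partial_\pm D$, then the lightlike geodesic from $p$ to the contact point) does match the paper's, provided you first make explicit the characterisation $\partial D=\{p:\Sigma\subset\overline{U_p}\ \text{and}\ \Sigma\cap\partial_\infty(P_+(p)\cup P_-(p))\neq\emptyset\}$ rather than the heuristic ``raising $p$ relaxes one constraint and tightens the other''.
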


To prove this proposition we need a technical lemma of AdS geometry.

\begin{lemma}\label{inter:lem}
Given two points $p,q\in AdS_{n+1}$ connected along a geodesic segment
$[p,q]$ and given any point $r$ lying on such a segment, we have that
\[
      U_p\cap U_q\subset U_r\,.
\]
\end{lemma}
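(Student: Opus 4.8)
The plan is to pass from the universal cover to the quadric $AdS^*_{n+1}\subset\R^{n,2}$ via the covering map $\Phi$ and to reduce the inclusion to a single sign condition. If $U_p\cap U_q=\emptyset$ there is nothing to prove, so I assume the intersection is nonempty. The starting point is Remark \ref{planes:rem}: for \emph{any} point $z\in AdS_{n+1}$, the map $\Phi$ sends $\mathrm{int}(U_z)$ onto the connected component $\{y\in AdS^*_{n+1}\mid\langle y,\Phi(z)\rangle<0\}$ of $AdS^*_{n+1}\setminus P^*_{\Phi(z)}$ (this is the component containing $\Phi(z)$, since $\langle\Phi(z),\Phi(z)\rangle=-1<0$), while it sends the two boundary planes $P_-(z)\cup P_+(z)$ into $\{\langle\,\cdot\,,\Phi(z)\rangle=0\}$. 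Thus, modulo the choice of lift, membership in the slabs is governed entirely by the sign of an inner product.

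The geodesic carrying $[p,q]$ is the intersection of $AdS^*_{n+1}$ with $\mathrm{span}(\Phi(p),\Phi(q))$, so $\Phi(r)=\alpha\,\Phi(p)+\beta\,\Phi(q)$ for some reals $\alpha,\beta$. The heart of the argument is the claim that, when $r$ lies \emph{between} $p$ and $q$, one has $\alpha\geq 0$ and $\beta\geq 0$, not both zero. I would verify this directly from the three cases of formula (\ref{geoformula:eq}): writing $\Phi(q)=\cos s\,\Phi(p)+\sin s\,v$, or $\ch s\,\Phi(p)+\sh s\,v$, or $\Phi(p)+s\,v$ according as $[p,q]$ is timelike, spacelike or lightlike (with $v\perp\Phi(p)$ a suitable unit or null vector), and taking $\Phi(r)$ at the corresponding parameter $s'\in[0,s]$, one finds $\beta=\frac{\sin s'}{\sin s}$ (resp. $\frac{\sh s'}{\sh s}$, $\frac{s'}{s}$) and $\alpha=\frac{\sin(s-s')}{\sin s}$ (resp. $\frac{\sh(s-s')}{\sh s}$, $\frac{s-s'}{s}$), all nonnegative; in the timelike case $\sin s>0$ because otherwise $U_p\cap U_q$ would already be empty. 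Granting the claim, any $y\in\mathrm{int}(U_p)\cap\mathrm{int}(U_q)$ satisfies $\langle\Phi(y),\Phi(r)\rangle=\alpha\langle\Phi(y),\Phi(p)\rangle+\beta\langle\Phi(y),\Phi(q)\rangle<0$, so $\Phi(y)\in\Phi(\mathrm{int}(U_r))$.

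The previous step only shows $\Phi(y)\in\Phi(\mathrm{int}(U_r))$, i.e. that $y$ lies in \emph{some} lift of this component; since all the time-translated slabs $U_{r_{2k}}$ project onto it, the genuine difficulty — and the step I expect to be the main obstacle — is to pin down that $y$ lies in the specific slab $U_r$ attached to the point $r$ on the segment. I would settle this by a connectedness argument along the lifted segment. Parametrising it as $r(s')$, $s'\in[0,s]$, from $p$ to $q$, set $S=\{s'\in[0,s]\mid y\in\mathrm{int}(U_{r(s')})\}$. Then $S$ is nonempty (it contains $0$, as $y\in\mathrm{int}(U_p)$) and open, because the bounding planes $P_\pm(r(s'))$ depend continuously on $s'$ while $y$ sits in the open interior. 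It is also closed: if $s'_n\to s'_*$ with $y\in\mathrm{int}(U_{r(s'_n)})$, then $y\in\overline{U_{r(s'_*)}}$, but $y\notin\partial U_{r(s'_*)}=P_-(r(s'_*))\cup P_+(r(s'_*))$, since that boundary projects into $\{\langle\,\cdot\,,\Phi(r(s'_*))\rangle=0\}$ whereas $\langle\Phi(y),\Phi(r(s'_*))\rangle<0$ by the computation above. Hence $S=[0,s]$, so in particular $y\in U_r$.

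Everything except this lift-tracking is routine: the inequality $\alpha,\beta\geq 0$ and the projection identities follow immediately from (\ref{geoformula:eq}) and Remark \ref{planes:rem}. The degenerate configurations ($\Phi(q)=\pm\Phi(p)$) give $U_p\cap U_q=\emptyset$ and are covered by the initial reduction, and the asymptotic boundary points of $U_p\cap U_q$ are handled by the same sign condition on the projectivised null cone, or by continuity from interior points.
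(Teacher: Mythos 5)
Your argument is correct, but it follows a genuinely different route from the paper's. The paper never leaves the universal cover: it writes $U_p=\{(x,t)\mid v_p(x)<t<u_p(x)\}$ and reduces the inclusion to the pointwise inequalities $v_r\leq\max\{v_p,v_q\}$ and $\min\{u_p,u_q\}\leq u_r$, which it then verifies by normalizing $p,q,r$ with an isometry separately in the timelike, spacelike and lightlike cases and computing the functions $u_\bullet$ explicitly (e.g.\ $u_p=\mathrm{arccos}\left(-x_1\sh\epsilon/(x_{n+1}\ch\epsilon)\right)$ in the spacelike case), finishing with an elementary sign discussion. You instead work downstairs in the quadric, where Remark \ref{planes:rem} turns membership in $\mathrm{int}(U_z)$ into the single condition $\langle\cdot,\Phi(z)\rangle<0$, and the whole lemma collapses to the observation that $\Phi(r)$ is a nonnegative combination of $\Phi(p)$ and $\Phi(q)$ -- your coefficient computation from (\ref{geoformula:eq}) is right in all three causal cases, and the reduction to $\sin s>0$ in the timelike case is legitimate since $s\geq\pi$ forces $U_p\cap U_q=\emptyset$. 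What your approach buys is uniformity (no case-by-case normalization; the content is just convexity of the half-space condition in the dual vector); what it costs is the lift ambiguity, which you correctly identify as the real obstacle and resolve cleanly by the open-closed argument along the segment -- the paper sidesteps this entirely by pinning down the branch of $\mathrm{arccos}$ via the condition that $t\in(0,\pi)$ on $P_+(p)$. The one loose end is the asymptotic boundary points of $U_p\cap U_q$ (these matter: the lemma is applied in Proposition \ref{boundarydep:prop} to conclude $\Sigma\subset U_r$), which you only gesture at; the clean way to close it within your framework is to note that your interior inclusion yields $\min\{u_p,u_q\}\leq u_r$ and $v_r\leq\max\{v_p,v_q\}$ wherever the fibre of $U_p\cap U_q$ is nonempty, and these inequalities between continuous functions pass to $\partial\HH^n$, ruling out $t_\infty=u_r(x_\infty)$ or $t_\infty=v_r(x_\infty)$ at a boundary point of $U_p\cap U_q$.
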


\begin{proof}
Let $u_p$ (resp. $v_p$)  be the real function on $\mathbb H^n$ 
such that $P_+(p)$ (resp. $P_-(p)$) is the graph of $u_p$ (resp. $v_p$).
Analogously define $u_q,v_q,u_r,v_r$.

We have that
\[
  U_p=\{(x,t)| v_p(x)<t<u_p(x)\}~,\qquad
  U_q=\{(x,t)|v_q(x)<t<u_q(x)\}~,\qquad U_r=\{(x,t)| v_r(x)<t<u_r(x)\}~.
 \]
 In particular,
$U_p\cap U_q=\{(x,t)| \max\{v_p(x),v_q(x)\}<t<\min\{u_p(x),u_q(x)\}\}$.
Then, the statement turns out to be equivalent to the inequalities
\[
   v_r\leq\max\{v_p,v_q\}\qquad \min\{u_p,u_q\}\leq u_r\,.
\]   

If the segment $[p,q]$ is timelike, then, up to isometry, we can suppose that
$p=(x^0,0)$, $q=(x^0,a)$, $r=(x^0,b)$ with $0\leq b\leq a$.
In this case we have $u_p(x)=\pi/2$, $u_q(x)=a+\pi/2$, $u_r(x)=b+\pi/2$
so the statement easily follows.

Suppose now that the geodesic $[p,q]$ is spacelike.
Up to isometry, we can suppose that  
$p=(x_p,0), q=(x_q,0), r=(x_r,0)$ where
$x_p,x_q,x_r$ are the following points in (the hyperboid model of)
$\mathbb H^n$:
\[
x_p=(-\sh\epsilon,0,\ldots,0,\ch\epsilon),\ 
x_q=(\sh\eta,0,\ldots,0,\ch\eta),\
x_r=(0,\ldots,0,1)\, ,
\]
where $\eta$ and $\epsilon$ are respectively the distance from $p$ and $q$ to $r$.

The corresponding points
$p^*,q^*,r^*\in AdS^*_{n+1}$ are
\[
p^*=(-\sh\epsilon, 0,\ldots,0, \ch\epsilon,0),\quad q^*=(\sh\eta,0,\ldots,0,\ch\eta,0), 
\quad r^*=(0,\ldots,0,1,0)~.
\]
By Remark \ref{planes:rem},
$\Phi(P_+(p))$ is a component of the intersection of $AdS^*_{n+1}$ with the hyperplane
defined by the equation
\[
-y_1\sh\epsilon-y_{n+1}\ch\epsilon=0\,.
\]
In particular, pulling-back this equation, we deduce that the set $P_+(p)$ is a component
of the set
\[
\{((x_1,\ldots,x_{n+1}),t)\in\mathbb H^n\times\mathbb R| -x_1\sh(\epsilon) -
x_{n+1}\cos t\ch(\epsilon)=0\}~.
\]
Since the function $t$ takes value in $(0,\pi)$ on $P_+(p)$ we deduce that
\[
 u_p(x_1,\ldots,x_{n+1})=\mathrm{arccos} 
\left(-\frac{x_1\sh\epsilon}{x_{n+1}\ch\epsilon }\right)~.
 \]  
Analogously, we derive
\[
u_r(x_1,\ldots, x_{n+1})=\pi/2\qquad u_q(x_1,\ldots, x_{n+1})=
\mathrm{arccos} \left(\frac{x_1\sh\eta }{x_{n+1}\ch\eta}\right)~.
\]
Notice that $u_p\leq\pi/2$ if $x_1\leq 0$, whereas $u_q\leq\pi/2$ if $x_1\geq 0$.
It follows that $\min\{u_p,u_q\}\leq u_r$.

Since $v_p=-u_p$, $v_q=-u_q$ and $v_r=-u_r$, we deduce
that $\max\{v_p,v_q\}\geq v_r$.

When $[p,q]$ is lightlike, the computation is completely analogous.
\end{proof}

\begin{remark}\label{ppp:rk}
From the proof of the lemma we have that
$P_+(p)$ and $P_+(q)$ are disjoint in $\overline{AdS}_{n+1}$ if
$p$ and $q$ are joined by a timelike segment, while they
meet along a $(n-1)$-dimensional geodesic plane if $p$ and $q$ are connected by a 
spacelike geodesic. Finally in the lightlike case, they meet
at the asymptotic end-points of the geodesic through $p$ and $q$.
\end{remark}

\begin{proof}[Proof of Proposition \ref{boundarydep:prop}]
Let $p$ be a point contained in $D$ and consider the nearest
conjugate points $p_\pm$ to $p$ as defined in Section \ref{geodesics:sec}.
First we show that $D$ is contained in the star neighbourhood 
$I^-(p_+)\cap I^+(p_-)$ of $p$.
Let $q\notin I^-(p_+)$.
If $q\in \overline{I^+(p_+)}$ then $I^-(p_+)\subset I^-(q)$. Since
$\Sigma$ is contained in the asymptotic boundary of the past of
$P_+(p)=P_-(p_+)$ that in turn coincides with the asymptotic boundary
of $I^-(p_+)$, we see that $\Sigma\subset\partial_\infty I^-(q)$, so that
$\Sigma\cap U_q=\emptyset$.
Suppose now that $q$ is related to $p_+$ by a spacelike geodesic.
Remark \ref{ppp:rk} shows that 
$\partial_\infty P_-(p_+)\cap \partial_\infty P_-(q)$ contains a point $(\xi,t)$.
Since $\Sigma$ is a graph on $\partial\mathbb H^n$, there is a point in $\Sigma$
of the form $(\xi, t')$ and since $\Sigma\subset I^-(P_-(p_+))$ we get $t'<t$.
It follows that $(\xi,t')$ is not contained in $U_q$.
Eventually we obtain that $q\notin D$.  The same argument shows that any point in
$D$ must be contained in $I^+(p_-)$ so  $D$ is contained
in $I^-(p_+)\cap I^+(p_-)$.

We deduce from this that given two points $p,q\in D$, the geodesic segment $[p,q]$ 
joining them exists and does not contain any point conjugate to $p$.
Given a point $r\in[p,q]$ the region $U_r$ contains $U_p\cap U_q$, so 
that $U_r$ contains $\Sigma$. By Lemma \ref{dom:lem} it
follows that $r\in D$. This shows that $D$ is convex.

Clearly $\Sigma$ is contained in the boundary of $D$. On the other
hand, given any other point $q\in\partial_\infty AdS_{n+1}$, the
vertical line through $q$ meets $\Sigma$ at a point $q'$. By Remark
\ref{asym:rk}, there is no geodesic arc in $AdS_{n+1}$ joining $q$ to
$q'$. Since $D$ is convex, $q'$ cannot lie on $D$.  In particular, the
asymptotic boundary of $D$ coincides with $\Sigma$.

To prove that the boundary of $D$ has two components, we notice that 
every timelike geodesic, say $c$, through a point $p\in M$ must intersect 
$\partial D$ in two points which are contained in the future and in the past 
of $M$ respectively. Indeed, since $D$ is contained in some compact region of
$\overline{AdS}_{n+1}$, it turns out that $c\cap D$ is precompact without 
asymptotic points. By the convexity of $D$, we have that $c\cap D$ is a
 compact segment and clearly there is an end-point in the future of $M$ 
 and another end-point in the past of $M$.

Let us define $\partial_\pm D=\partial D\cap I^\pm(M)$.  The previous
argument proves that no timelike geodesic can join points of
$\partial_+D$.  Since $D$ is convex, points of $\partial_+D$ are joined
by lightlike or spacelike geodesic arcs.  In particular $\partial_+D$ is an
acausal set.  By general results (see e.g. \cite{bartnik-simon}) it is a weakly
spacelike surface (in particular it is a $\mathrm C^{0,1}$-embedded
surface).

In addition, every inextensible timelike path without 
endpoints must intersect $\partial_+D$ at some point. 
By  Proposition \ref{spacelikegraphs:prop} we deduce 
that $\partial_+D$ is a   weakly spacelike graph.
 

To conclude we have to prove that points in $\partial D$ are connected
to $\Sigma$ by some lightlike ray.
By the characterization of $D$ given by Lemma \ref{dom:lem}, we have
that $\partial D$ is the set of points $p$ such that $\Sigma\subset
\overline{U_p}$ and $\Sigma\cap \partial_\infty(P_-(p)\cup
P_+(p))\neq\emptyset$.  Take a point $y$ in this intersection.  By the
convexity of $D$, the segment $c$ joining $x$ to $y$ (that is lightlike)
is contained in $\overline{D}$. Points on $c$ are joined to $y\in \Sigma$ 
by a lightlike geodesic, so they cannot be contained in $D$.
In particular $c\subset\partial D$.
\end{proof}



\begin{remark}\label{ct:rem}
Since timelike arcs in $D$ do not contain conjugate points, their length is less than $\pi$.
In particular, the length of  any timelike geodesic segment joining a point of $\partial_-D$
 and a point of $\partial_+D$ is less than $\pi$.
If there exists a point $q_+\in\partial_+D$ and $q_-\in\partial_-D$ such that
$\delta(q_-,q_+)=\pi$, then we have $P_-(q_+)=P_+(q_-)=P$ and 
$\overline{U_{q_+}}\cap\overline{U_{q_-}}=P$. 
Since $\Sigma$ is contained in 
$\overline{U_{q_+}}\cap\overline{U_{q_-}}$, we conclude that
$\Sigma=\partial_\infty P$. 
In this case $D=I^-(q_+)\cap I^+(q_-)$.
\end{remark}



\begin{remark}\label{compact:rem}
The closure of $D$ in $\overline{AdS}_{n+1}$ is compact.
\end{remark}

\begin{lemma}\label{compact:lem}
For every $p\in D$ the intersection
$\overline{I^+(p)}\cap \overline{D}$ is compact in $AdS_{n+1}$.
\end{lemma}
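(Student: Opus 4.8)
The plan is to show that $\overline{I^+(p)}\cap\overline{D}$ is a closed subset of $AdS_{n+1}$ that is bounded away from the asymptotic boundary, hence compact. Since we already know from Remark \ref{compact:rem} that $\overline{D}$ (the closure in $\overline{AdS}_{n+1}$) is compact, the only thing that can fail for compactness \emph{in} $AdS_{n+1}$ is that the set accumulates on $\partial_\infty AdS_{n+1}$. So the heart of the matter is to prove that $\overline{I^+(p)}\cap\overline{D}$ does not touch $\partial_\infty AdS_{n+1}=\Sigma$.

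First I would identify the asymptotic boundary of the set in question. By Proposition \ref{boundarydep:prop}, the closure at infinity of $D$ is exactly $\Sigma$, so $\partial_\infty(\overline{I^+(p)}\cap\overline{D})\subset \Sigma$. Now fix $p\in D$ and let $\xi\in\Sigma$ be any asymptotic point. Since $p\in D$, Lemma \ref{dom:lem} gives $\Sigma\subset U_p$, so in particular $\xi\in U_p\setminus C_p$, and by Proposition \ref{acausal:prop} (applied, say, to a weakly spacelike graph containing $\Sigma$ as its boundary, e.g. $\partial_\pm D$) the point $p$ and $\xi$ are joined only by a non-timelike geodesic. Concretely $\xi$ lies in $U_p$ but is \emph{not} in $I^+(p)$: by the causal description in Proposition \ref{caus:prop}, $I^+(p)\cap U_p\subset C_p$, and $\Sigma\cap C_p=\emptyset$ because $\Sigma$ is acausal relative to $p$. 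Hence no point of $\Sigma$ lies in the closure of $I^+(p)$ inside $U_p$; the only way $\overline{I^+(p)}$ reaches $\Sigma$ would be through $\partial_\infty P_+(p)$, but $\Sigma\subset U_p$ means $\Sigma\cap\partial_\infty P_+(p)=\emptyset$ (here I use that $p\in D$ \emph{does not} lie on $\partial D$, so the strict containment $\Sigma\subset U_p$ holds and $\Sigma$ stays away from the boundary planes $P_\pm(p)$). Therefore $\Sigma\cap\overline{I^+(p)}=\emptyset$.

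Combining these two facts, $\overline{I^+(p)}\cap\overline{D}$ has empty intersection with $\partial_\infty AdS_{n+1}$: any accumulation point at infinity would have to lie in $\Sigma$ (from $\overline D$) and simultaneously in $\overline{I^+(p)}$, which we have just excluded. Thus $\overline{I^+(p)}\cap\overline{D}$ is a closed subset of the compact set $\overline{D}$ that avoids the boundary, so it is contained in $AdS_{n+1}$ and is compact there.

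\textbf{The main obstacle} I anticipate is the delicate interplay between ``$p\in D$'' versus ``$p\in\partial D$'': the argument that $\Sigma$ stays strictly inside $U_p$ and hence off the null planes $\partial_\infty P_\pm(p)$ uses crucially that $p$ is in the \emph{open} domain of dependence. One must check, via Lemma \ref{dom:lem} and the precise statement of Proposition \ref{boundarydep:prop} (which characterizes $\partial D$ as exactly the points where $\Sigma$ meets $\partial_\infty(P_-(p)\cup P_+(p))$), that for interior $p$ the set $\Sigma$ is a compact subset of the \emph{open} region $U_p$, uniformly away from its boundary. Making this separation quantitative — or at least clearly topological — so as to conclude that $\overline{I^+(p)}$ cannot accumulate on $\Sigma$ is the one step that requires care; the rest is a routine closedness-plus-boundedness compactness argument inside the already-compact $\overline{D}$.
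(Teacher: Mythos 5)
Your argument is correct and is essentially the paper's own proof: both reduce the claim to showing that $\overline{I^+(p)}\cap\overline{D}$ has no accumulation points on $\partial_\infty AdS_{n+1}$, using that the asymptotic accumulation set of $D$ is $\Sigma\subset U_p$ (open, by Lemma \ref{dom:lem}) while the asymptotic accumulation set of $\overline{I^+(p)}$ is disjoint from $U_p$. The paper states this more tersely, whereas you spell out the causal decomposition $I^+(p)\subset C_p\cup\bigcup_{k>0}U_{p_k}$ from Proposition \ref{caus:prop}; the extra detail is harmless and the substance is the same.
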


\begin{proof}
Since the closure of $D$ in $\overline{AdS}_{n+1}$ is compact, it is
sufficient to show that no point in $\partial_\infty AdS_{n+1}$ is an
accumulation point for $\overline{D}\cap\overline{I^+(p)}$. However
the set of boundary accumulation points of $\overline{I^+(p)}$ is
disjoint from $U_p$, whereas the set of boundary accumulation points
for $D$ is $\Sigma$, that is contained in $U_p$.
\end{proof}

\begin{lemma}
There is a point $p\in D$ such that $D\subset U_p$.
\end{lemma}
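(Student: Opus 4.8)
The plan is to take $p$ to be the midpoint of a \emph{longest} timelike geodesic segment contained in $\overline{D}$. First I would use the compactness of $\overline{D}$ in $\overline{AdS}_{n+1}$ (Remark \ref{compact:rem}) to produce such a segment. By Remark \ref{ct:rem} the lengths of timelike geodesic segments joining $\partial_-D$ to $\partial_+D$ are bounded above by $\pi$, so their supremum $\ell$ is finite; choosing a maximizing sequence $[a_k,b_k]$ with $a_k\in\partial_-D$, $b_k\in\partial_+D$ and passing to a limit (using compactness of $\overline{D}$ and the fact that on the geodesically convex set $D$ there are no conjugate points, so $\delta$ is realized by honest geodesics, by Proposition \ref{boundarydep:prop}), one obtains $a\in\partial_-D$ and $b\in\partial_+D$ with $\delta(a,b)=\ell$. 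Since $D$ is open and nonempty, $\ell>0$, so $[a,b]$ is genuinely timelike; let $p$ be its midpoint, so that $\delta(a,p)=\delta(p,b)=\ell/2$. As the open segment $(a,b)$ lies in the convex open set $D$, we have $p\in D$.

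The heart of the argument is to show $D\subset U_p$. Recall that $D$ is contained in the maximal star-neighbourhood $I^-(p_+)\cap I^+(p_-)$ of $p$, which is partitioned by the two space-like planes $P_-(p)$ and $P_+(p)$ into three slabs, the middle one being $U_p$ (cf. Proposition \ref{caus:prop}); hence any $q\in D\setminus U_p$ satisfies $q\in I^+(P_+(p))$ or $q\in I^-(P_-(p))$. Suppose for contradiction that some $q\in D$ lies in $I^+(P_+(p))$. Since $P_+(p)\subset I^+(p)$, we have $q\in I^+(p)$, and because $D$ lies in a star-neighbourhood of $p$ where $\delta(p,\cdot)$ is realized by the unique geodesic while $P_+(p)$ is exactly the locus at timelike distance $\pi/2$ from $p$, it follows that $\delta(p,q)>\pi/2$. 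Applying the reverse triangle inequality (\ref{reverse:eq}) along $[a,p]$ and $[p,q]$ inside $\overline{D}$ gives
\[
\delta(a,q)\geq\delta(a,p)+\delta(p,q)>\frac{\ell}{2}+\frac{\pi}{2}\geq\ell~,
\]
using $\ell\leq\pi$. Extending the timelike geodesic from $a$ through $q$ into the future, it must leave the compact convex set $\overline{D}$ through $\partial_+D$ at some point $b'$ (by the crossing property of timelike geodesics established in Proposition \ref{boundarydep:prop}), and then $\delta(a,b')\geq\delta(a,q)>\ell$, contradicting the maximality of $\ell$. The symmetric argument with $P_-(p)$ and the past rules out $q\in I^-(P_-(p))$, so indeed $D\subset U_p$.

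The step requiring the most care is the extraction and genuineness of the maximizing segment: one must check that the limit $[a,b]$ does not degenerate to a lightlike or trivial segment, which is guaranteed by $\ell>0$ together with the continuity of $\delta$ on the star-neighbourhood. Equally essential is the quantitative bound $\ell\leq\pi$ from Remark \ref{ct:rem}, which is precisely what forces $\ell/2\leq\pi/2$ and makes the inequality above contradictory. The degenerate case $\ell=\pi$, in which by Remark \ref{ct:rem} $\Sigma$ is the boundary at infinity of a single space-like plane and $D=U_p$ outright, is a consistent and trivial instance of the construction, since then no point of $D$ can lie beyond $P_+(p)$ or $P_-(p)$ at all.
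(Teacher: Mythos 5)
Your strategy --- taking $p$ to be the midpoint of a longest timelike geodesic joining $\partial_-D$ to $\partial_+D$, and using the reverse triangle inequality to show that any $q\in D\setminus U_p$ would generate a longer such segment --- is genuinely different from the paper's proof, and the contradiction step at its core is sound. The gap is at the very first step: the existence of a maximizing segment with endpoints in $AdS_{n+1}$. The compactness you invoke (Remark \ref{compact:rem}) is compactness of $\overline D$ in $\overline{AdS}_{n+1}$, whose boundary contains the ideal set $\Sigma$; a maximizing sequence $(a_k,b_k)$ may drift to $\Sigma$, and the Lorentzian distance is only lower semicontinuous in general --- the smoothness of $\delta_p$ on star neighbourhoods quoted in Section \ref{ssc:causal} concerns one endpoint fixed in the interior and says nothing about pairs of endpoints escaping to infinity. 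So ``one obtains $a\in\partial_-D$, $b\in\partial_+D$ with $\delta(a,b)=\ell$'' is precisely the assertion requiring proof, and ``$\ell>0$ together with the continuity of $\delta$'' does not supply it. (The case $\Sigma=\dr_\infty P$ for a spacelike plane $P$, where $\partial_\pm D$ are light cones of the dual points, already suggests that near-maximizing pairs can run off to infinity.) The same issue undermines your closing remark: Remark \ref{ct:rem} only applies when the value $\pi$ is \emph{attained} by some pair $(q_-,q_+)$, so the case $\ell=\pi$ unattained is left uncovered. Note that when $\ell<\pi$ your argument can be repaired without any attainment, by running the same contradiction from the midpoint of a segment of length $>\ell-\epsilon$ with $\epsilon<\pi-\ell$; but this repair fails exactly in the residual case $\ell=\pi$.

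The paper's proof is built to dodge this two-endpoint compactness problem. After disposing of the case where some pair realizes $\delta=\pi$ via Remark \ref{ct:rem}, it anchors one endpoint at an interior point $p\in D$, where Lemma \ref{compact:lem} gives compactness of $\overline{I^+(p)}\cap\overline D$ in $AdS_{n+1}$ itself (no ideal points), so the suprema $\tau_\pm(p)$ of the distance from $p$ to $\partial_\pm D$ are honestly attained at points $q_\pm(p)\in\partial_\pm D$. The reverse triangle inequality then gives $\tau_+(p)+\tau_-(p)\leq\delta(q_-(p),q_+(p))<\pi$ pointwise, and a connectedness argument with the open cover of $D$ by $\{\tau_+<\pi/2\}$ and $\{\tau_-<\pi/2\}$ produces a point where both are $<\pi/2$, i.e.\ $D\subset U_p$. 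To keep your variational scheme you would either have to prove that a maximizing pair cannot degenerate to $\Sigma$, or restructure the argument so that every supremum you need to realize has one endpoint fixed in $D$, as the paper does.
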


\begin{proof}
We consider first the case there are points $q_+\in\partial_+D$ and
$q_-\in\partial_-D$ such that $\delta(q_-,q_+)=\pi$.
By Remark \ref{ct:rem}, we deduce that $D=I^-(q_+)\cap I^+(q_-)$ and 
any point on the plane $P_-(q_+)=P_+(q_-)$ satisfies the statement.

Now we consider the case where $\delta(q,q')<\pi$
for $q\in\partial_-D$ and $q'\in\partial_+D$.
We define two functions on $D$
\[
\tau_+(p)=\sup_{q\in D\cap I^+(p)}\delta(p,q)\qquad
\tau_-(p)=\sup_{q\in D\cap I^-(p)}\delta(q,p)
\]
that are Lipschitz-continuous (see \cite{benedetti-bonsante}).
By Lemma \ref{compact:lem}, for $p\in D$ there is $q_+(p)\in \overline{D}$ such that
$\tau_+(p)=\delta(p, q_+(p))$ and analogously there is a point $q_-(p)$ such that 
$\tau_-(p)=\delta(q_-(p),p)$. Clearly $q_+(p)\in\partial_+D$ and $q_-(p)\in\partial_-D$.

Notice that by the reverse of triangle inequality we have 
$\tau_+(p)+\tau_-(p)\leq\delta(q_-(p),q_+(p))<\pi$.
In particular the open sets $\Omega_-=\{\tau_-<\pi/2\}$ and $\Omega_+=\{\tau_+<\pi/2\}$ cover
$D$. Since they are not empty, it follows that there exists a point $p$ such that
$\tau_-(p)<\pi/2$ and $\tau_+(p)<\pi/2$, so $D\subset U_p$.
\end{proof}

\subsection{From space-like graphs in $AdS_3$ to diffeomorphisms of $\HH^2$}
\label{ssc:diffeos}

There is a relation between some space-like surfaces in $AdS_3$ 
(satisfying some specific properties) and diffeomorphisms from $\HH^2$ to
$\HH^2$. More specifically, there is a one-to-one relation between maximal
graphs in $AdS_3$ with negative sectional curvature and minimal Lagrangian
diffeomorphisms from the hyperbolic disk to itself. The quasi-conformal
minimal Lagrangian diffeomorphisms correspond precisely to the maximal graphs
with uniformly negative sectional curvature.

This relation, which is well-known (see \cite{AAW}), is at the heart of the proof
of Theorem \ref{tm:ml}, so we outline its construction and its main properties here,
refering to \cite{mess,mess-notes,minsurf,cone,collision} for more details.

Let $S\subset AdS_3$ be a space-like graph. Let $I$ be its induced metric,
$B$ its shape (or Weingarten) operator, and let $E$ be the identity map from
$TS$ to $TS$ at each point. Denote by $J$ the complex structure of $I$
on $S$. We can then define two metrics $\mu_l,\mu_r$ as :
$$ \mu_l=I((E+JB)\cdot, (E+JB)\cdot)~, ~~ \mu_r = I((E-JB)\cdot, (E-JB)\cdot)~. $$
It is then not difficult to show that both $\mu_l$ and $\mu_r$ are hyperbolic
metrics (see \cite{minsurf,collision}) -- the reason for this being that 
$E\pm JB$ satisfies the Codazzi equation, $d^\nabla (E\pm JB)=0$ on $S$, and
that $\det(E\pm JB)=1+\det(B)$ is equal to minus the sectional curvature of
the induced metric $I$ on $S$, which by the Gauss equation in $AdS_3$ 
is equal to $-1-\det(B)$.

However $\mu_l$ and $\mu_r$ are not necessarily smooth metrics, they might
have singularities when $E\pm JB$ is singular, that is -- by the determinant
computation just mentioned -- when $1+\det(B)=0$. This means that $\mu_l$
and $\mu_r$ are smooth hyperbolic metrics whenever the induced metric on
$S$ has negative sectional curvature.


There is a nice geometric interpretation of metrics $\mu_l$ and $\mu_r$
that is based on a specific feature of $AdS_3$.

Every leaf of the left (right) foliation of $\dr_\infty AdS_3$ meets the boundary
of any spacelike planes exactly at one point.
Consider a fixed totally geodesic plane $P_0$.
Given any other plane $P$ there are two natural identifications
$\Phi_{P,l},\Phi_{P,r}:\dr_\infty P\rightarrow\dr_\infty  P_0$ obtained
by following each of the families of lines $\cL_l,\cL_r$.

By means of the projective model, it can be easily seen that maps
$\Phi_{P,l}$ and $\Phi_{P,r}$ extend uniquely to isometries of $AdS_3$
-- still denoted by $\Phi_{P,l}$, $\Phi_{P,r}$ -- sending $P$ to $P_0$
(see \cite{mess, mbh} for details).

It is also not difficult to check that replacing $P_0$ by another geodesic 
plane does not change $\Phi_{P,l}$ and $\Phi_{P,r}$ up to left composition 
by some isometry of $AdS_3$ preserving respectively $\cL_l$ and $\cL_r$.

Now given any spacelike surface $S$ we can define two maps
$\Phi_l,\Phi_r:S\rightarrow P_0$ as
\[
  \Phi_l(x)=\Phi_{P(x),l}(x)\qquad\Phi_r(x)=\Phi_{P(x),r}(x)\,,
\]
where $P(x)$ is the geodesic plane tangent to $S$ at $x$.
Still in this case, replacing $P_0$ does not change $\Phi_l$ and $\Phi_r$,
up to left composition with some isometry of $AdS_3$ that preserves
respectively $\cL_l$ and $\cL_r$.

The following is a basic remark, see e.g. \cite{minsurf} for a proof --
it can actually be checked by a 
direct computation, by choosing $P_0$ as the tangent plane at the point $x$.

\begin{lemma} \label{lm:local}
The pull-backs by
$\Phi_l$ (resp. $\Phi_r$) of the hyperbolic metric on $P_0$ is precisely
the metric $\mu_l$ (resp. $\mu_r$). 
\end{lemma}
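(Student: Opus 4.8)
The plan is to prove the identity pointwise, exploiting that the pulled-back metric does not depend on the choice of $P_0$. Indeed, as noted just before the statement, replacing $P_0$ by another geodesic plane $P_0'$ changes $\Phi_l$ into $\gamma\circ\Phi_l$, where $\gamma$ is an isometry of $AdS_3$ preserving $\cL_l$ and sending $P_0$ to $P_0'$; since $\gamma$ restricts to an isometry between the two copies of $\HH^2$, the pulled-back metric $\Phi_l^*g_{\HH}$ is unchanged. As $\Phi_l^*g_{\HH}$ is a tensor, to evaluate it at a fixed point $x_0\in S$ we are therefore free to choose $P_0=P(x_0)$, the geodesic plane tangent to $S$ at $x_0$. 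With this choice $\Phi_{P(x_0),l}=\mathrm{id}$, so $\Phi_l(x_0)=x_0$, the tangent planes $T_{x_0}P_0$ and $T_{x_0}S$ coincide, and the hyperbolic metric of $P_0$ agrees with $I$ at $x_0$. Hence it suffices to prove that the endomorphism $d\Phi_l|_{x_0}$ of $T_{x_0}S$ equals $E+JB$: this immediately gives $\Phi_l^*g_{\HH}(v,w)=I\big((E+JB)v,(E+JB)w\big)=\mu_l(v,w)$ at $x_0$.

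Next I would compute $d\Phi_l|_{x_0}$ by a first-variation argument. Given $v\in T_{x_0}S$, pick a curve $x(s)$ in $S$ with $x(0)=x_0$, $\dot x(0)=v$, and write $\Phi_l(x(s))=g_l(s)\,x(s)$, where $g_l(s)=\Phi_{P(x(s)),l}$ is the isometry sending $P(x(s))$ to $P_0$ along the left lines, normalised by $g_l(0)=\mathrm{id}$. Differentiating at $s=0$ yields $d\Phi_l(v)=v+Z_l(x_0)$, where $Z_l=\frac{d}{ds}\big|_{0}g_l(s)$ is the Killing field generating the infinitesimal left re-identification of the moving tangent plane. The whole problem is thus reduced to the single identity $Z_l(x_0)=JB\,v$; the analogous computation for the right foliation will give $Z_r(x_0)=-JB\,v$, whence $d\Phi_r|_{x_0}=E-JB$ and $\Phi_r^*g_{\HH}=\mu_r$.

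To identify $Z_l$ I would use the splitting of the identity component of the isometry group of $AdS_3$ as $PSL(2,\R)\times PSL(2,\R)$, acting on $\dr_\infty AdS_3\cong S^1\times S^1$ factor by factor, the two circles being the leaf spaces of $\cL_l$ and $\cL_r$. A spacelike geodesic plane $P$ has for boundary the graph of an element $\phi_P\in PSL(2,\R)$ between these two circles, and identifying $\dr_\infty P$ with $\dr_\infty P_0$ along the $\cL_l$-lines forces $g_l(s)$ to act trivially on the leaf space of $\cL_l$; one finds $g_l(s)=(\mathrm{id},\phi_{P_0}\phi_{P(x(s))}^{-1})$, so that $Z_l$ lies in a single $\mathfrak{sl}(2,\R)$ summand. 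Its value at $x_0$ is then governed entirely by $\frac{d}{ds}\big|_0\phi_{P(x(s))}$, i.e. by the infinitesimal motion of the tangent plane of $S$ along $v$. By the Weingarten formula this motion is controlled by the translation $v$ of the base point together with the rotation $-Bv$ of the unit normal; assembling these in that factor and evaluating the resulting invariant Killing field at $x_0$ should produce exactly $JB\,v$.

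The main obstacle is precisely this last step: pinning down the correspondence between the infinitesimal variation of $\phi_{P(x(s))}$ and the pair $(v,-Bv)$, and checking that the Killing field it generates evaluates at $x_0$ to $JB\,v$ — with the correct sign and normalisation, and including the verification that the base-point translation $v$ and the normal rotation $-Bv$ assemble into the single endomorphism $JB$ rather than some other combination. This is the genuinely three-dimensional ingredient, where the ruling of $\dr_\infty AdS_3$ by $\cL_l$ and $\cL_r$ and the factorisation of the isometry group are used in an essential way; as the authors indicate, it is most transparently carried out in an orthonormal frame $(x_0,n,e_1,e_2)$ of $\R^{2,2}$ adapted to $S$ at $x_0$, where $J$ is the rotation by $\pi/2$ in $\mathrm{span}(e_1,e_2)$ and $B$ is read off from $\bar\nabla_v n=-Bv$.
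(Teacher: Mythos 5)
Your reduction is sound and follows exactly the route the paper indicates for this lemma: by the invariance of $\Phi_l^*g_{\HH^2}$ under replacement of $P_0$ (up to post-composition with an isometry preserving $\cL_l$), it suffices to compute at one point $x_0$ after choosing $P_0=P(x_0)$, where $\Phi_{P_0,l}=\mathrm{id}$, the tangent planes coincide, the induced metric of $S$ agrees with that of $P_0$, and the claim becomes the single identity $d\Phi_l|_{x_0}=E+JB$. The first-variation formula $d\Phi_l(v)=v+Z_l(x_0)$ and the observation that $Z_l$ lies in the $\mathfrak{sl}(2,\R)$ summand acting trivially on the leaf space of $\cL_l$ are also correct.

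However, the proof is not complete: the decisive identity $Z_l(x_0)=JB\,v$ is never established --- you yourself label it ``the main obstacle'' and conclude only that the computation ``should produce exactly $JB\,v$''. This identity is the entire content of the lemma. It is the only place where the specific combination $E+JB$, as opposed to $E-JB$, $E+B$, or any other assembly of $v$ and $Bv$, is produced, and hence the only place where $\mu_l$ is distinguished from $\mu_r$; everything preceding it is formal bookkeeping compatible with any answer of the form $E+(\text{something built from }B)$. The danger is not hypothetical: your final sentence takes the convention $\bar\nabla_v n=-Bv$, whereas the paper defines $B(v)=\bar\nabla_v\nu$, and precisely such a sign flip exchanges $E+JB$ with $E-JB$ and therefore $\mu_l$ with $\mu_r$. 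To close the gap you must actually carry out the computation: e.g.\ in the quadric model write $P(x)$ as $n(x)^\perp\cap AdS_3^*$, use the Weingarten formula to obtain the infinitesimal motion of the dual point $n(x)$ along $v$, translate this into the variation of $\phi_{P(x(s))}$ in the $PSL(2,\R)\times PSL(2,\R)$ splitting, and evaluate the resulting Killing field at $x_0$ in an adapted frame $(x_0,n,e_1,e_2)$; only then do the factor $J$ and the sign appear. As it stands the proposal is a correct strategy (the same one the paper sketches, deferring the verification to the reference on minimal surfaces it cites) with the central verification missing.
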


A consequence is that $\Phi_l$ and $\Phi_r$ are non-singular when $\mu_l,\mu_r$
are non-degenerate metrics, and we have seen that this is the case when
$\det(B)\neq -1$. We are therefore lead to consider surfaces with negative
sectional curvature (the Gauss formula indicates that the sectional curvature
of $S$ is $K=-1-\det(B)$).

Lemma \ref{lm:local}, which is a local statement, can be improved, 
under the condition that $S$ is a 
space-like maximal graph with negative curvature.
Here we call $\pi_l$ (resp. $\pi_r$) the map from $\dr_\infty AdS_3$ to 
$P_0$ sending a point $x\in \dr_\infty AdS_3$ to the intersection with $P_0$
of the line of $\cL_l$ (resp. $\cL_r$) containing $x$.

\begin{prop} \label{prop:global}
Suppose that $S$ is a maximal space-like graph with 
sectional curvature bounded from above by some negative constant.
Then $\Phi_l$ (resp. $\Phi_r$) is a global diffeomorphism from $S$ to $P_0$. 
$\Phi_l$ (resp. $\Phi_r$) extends continuously to the closure of $S$ in 
$\overline{AdS_3}$, and its boundary value is the restriction of $\pi_l$ 
(resp. $\pi_r$) to $\dr_\infty S$.
\end{prop}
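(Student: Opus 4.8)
The plan is to show that $\Phi_l$ is a \emph{proper} local diffeomorphism onto the complete, simply connected surface $P_0\cong\HH^2$; since a proper local diffeomorphism onto a connected, simply connected manifold of the same dimension is a global diffeomorphism, this yields the first assertion, while the boundary analysis used to prove properness will at the same time produce the continuous extension to $\overline S$ and identify the boundary value with $\pi_l$. Everything below applies verbatim to $\Phi_r$, $\mu_r$, $\pi_r$ using the right foliation $\cL_r$.

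First I would record the consequences of the two hypotheses. Since $S$ is maximal, $\tr B=0$, so $B$ has eigenvalues $\pm\lambda$ with $\det B=-\lambda^2$, and the Gauss equation gives $K=-1-\det B=-1+\lambda^2$. The hypothesis $K\leq -c<0$ then forces $\lambda^2\leq 1-c$, so the principal curvatures are uniformly bounded and bounded away from $\pm 1$; in particular $\det(E\pm JB)=1+\det B=1-\lambda^2\geq c>0$ everywhere. Hence $\mu_l$ is a nondegenerate smooth hyperbolic metric, and by Lemma \ref{lm:local} the map $\Phi_l$ pulls back the hyperbolic metric of $P_0$ to $\mu_l$; thus $\Phi_l$ is a local isometry $(S,\mu_l)\to(P_0,g)$, and in particular a local diffeomorphism, which always takes values in $P_0$ since $\Phi_{P(x),l}(P(x))=P_0$ and $x\in P(x)$. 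The bound $\lambda^2\leq 1-c$ also bounds the second fundamental form of $S$, which is what will control the tangent planes near infinity.

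The heart of the argument, and the main obstacle, is properness, which I would obtain from the boundary behaviour. Let $x_n\in S$ leave every compact subset of $S$; since $S$ is a graph its closure in $\overline{AdS_3}$ is compact (Lemma \ref{lm:boundary}), so after passing to a subsequence $x_n\to\xi\in\dr_\infty S$, and I must show $\Phi_l(x_n)=\Phi_{P(x_n),l}(x_n)\to\pi_l(\xi)\in\dr_\infty P_0$. The bound on the second fundamental form makes the Gauss map $x\mapsto P(x)$ (equivalently the dual point $P(x)^\ast$) uniformly continuous, so along a further subsequence the tangent planes $P(x_n)$ converge to a spacelike plane $P_\infty$; as $x_n\in P(x_n)$ and $x_n\to\xi$, the limit satisfies $\xi\in\dr_\infty P_\infty$. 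The isometries $\Phi_{P,l}$ depend continuously on $P$ and extend to $\overline{AdS_3}$ (Lemma \ref{isoextension:lem}), acting on $\dr_\infty AdS_3$ through the foliation $\cL_l$; passing to the limit gives $\Phi_{P(x_n),l}(x_n)\to\Phi_{P_\infty,l}(\xi)$, and by construction $\Phi_{P_\infty,l}(\xi)$ is the intersection with $\dr_\infty P_0$ of the leaf of $\cL_l$ through $\xi$, that is $\pi_l(\xi)$. Crucially $\pi_l(\xi)$ depends only on $\xi$ and not on the subsequential limit $P_\infty$, so the full sequence $\Phi_l(x_n)$ converges to $\pi_l(\xi)$. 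The delicate point is exactly this interchange of limits up to the boundary: one must verify the joint continuity, at the boundary point $\xi$, of the extended isometries $\Phi_{P,l}$ as both the interior point $x_n$ and the plane $P(x_n)$ vary, and it is here that the uniform control of the tangent planes furnished by the bounded second fundamental form is indispensable to rule out oscillation of $P(x_n)$.

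This simultaneously proves that $\Phi_l$ is proper (a divergent sequence in $S$ has image escaping to $\dr_\infty P_0$, hence divergent in $P_0$) and exhibits the continuous extension of $\Phi_l$ to $\overline S$ with $\Phi_l|_{\dr_\infty S}=\pi_l|_{\dr_\infty S}$. To conclude, a proper local diffeomorphism onto the connected surface $P_0$ is a covering map, and since $P_0\cong\HH^2$ is simply connected this covering is trivial; therefore $\Phi_l$ is a bijective local diffeomorphism, i.e.\ a global diffeomorphism from $S$ onto $P_0$. Applying the identical argument to $\cL_r$ handles $\Phi_r$ and $\pi_r$, which completes the proof.
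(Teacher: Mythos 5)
There is a genuine gap, and it sits exactly at the point you flag as ``delicate'' but then dispose of too quickly: the claim that the tangent planes $P(x_n)$ converge (along a subsequence) to a \emph{spacelike} plane $P_\infty$. The bound on the second fundamental form controls the derivative of the Gauss map with respect to the induced metric on $S$, but the points $x_n$ escape to infinity in that metric, so nothing prevents the planes $P(x_n)$ from degenerating; the natural compactification of the space of spacelike planes contains the lightlike planes, and the subsequential limit $P_\infty$ may well be lightlike. This is not a vacuous worry: for the lift of the maximal surface in a closed globally hyperbolic AdS manifold (which satisfies all the hypotheses of the proposition), the tangent planes at $x_n\to\xi$ are images of a fixed plane under a divergent sequence of isometries and do converge to lightlike planes. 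When $P_\infty$ is lightlike, $\Phi_{P_\infty,l}$ is not even defined (a leaf of $\cL_l$ no longer meets $\dr_\infty P_\infty$ in a single point), the convergence $\Phi_{P_n,l}\to\Phi_{P_\infty,l}$ fails, and your interchange of limits breaks down. So the properness/extension argument, as written, does not go through.

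The paper's proof is organized precisely around this difficulty. Lemma \ref{crit:lem} splits into the two cases $P_\infty$ spacelike (where your argument is essentially the paper's) and $P_\infty$ lightlike, and the lightlike case is handled by a separate geometric argument (the rhombus construction) under the hypothesis that any limiting lightlike plane has its past or future endpoint on $\Gamma=\dr_\infty S$. That hypothesis is not verified on $S$ directly: the paper first uses the curvature bound to get principal curvatures in $(-1+\epsilon,1-\epsilon)$ (as you do), then invokes Claim \ref{cl:equidistant} to pass to the equidistant surface $S_+$ at distance $\pi/4$, which is convex; convexity forces any limiting lightlike plane to be a support plane and hence (Remark \ref{bo:rk}) to have its past endpoint at the limit point $x\in\Gamma$, and Lemma \ref{lm:320} supplies the auxiliary fact that $\dr_\infty S$ contains no lightlike segment. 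One then factors $\Phi_l=\Phi_l^+\circ\sigma$ through the normal flow $\sigma:S\to S_+$. Your reduction of the global statement to properness plus the covering-space argument over $P_0\simeq\HH^2$ is fine and matches the paper's conclusion, but to repair the proof you need either to supply the missing analysis of the lightlike limiting planes or to import the detour through the convex equidistant surface.
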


The difficult part to prove  is the extension result.
We need the following technical lemma that gives a condition
for the extension. Unfortunately this lemma does not apply directly to 
$S$, but to the surface $S^+$ of points whose distance from $S$ is $\pi/4$.
We then factorize the map $\Phi_l$  as the composition of the corresponding map
$\Phi_l^+:S^+\rightarrow P_0$ and a diffeomorphism $\sigma:S\rightarrow S_+$
that is given by the normal evolution and  that is the identity on the boundary.

\begin{lemma}\label{crit:lem}
Let $S$ be a spacelike surface in $AdS_3$ with negative curvature
whose boundary curve $\Gamma$ 
does  not contain singular points (that is,
$\partial_\infty S$ does not contain any lightlike segment). Consider
the maps $\Phi_l,\Phi_r:S\rightarrow P_0$ described above.  Suppose that
there is no sequence of points $x_n$ on $S$ such that
the totally geodesic  planes $P_n$ tangent to $S$ at $x_n$ converge
to a lightlike plane $P$ whose past end-point and future end-point
are not on $\Gamma$.

Then for any sequence of points $x_n\in S$ converging to
$x\in\partial_\infty S$ we have that $\Phi_l(x_n)\rightarrow \pi_l(x)$
(resp. $\Phi_r(x_n)\rightarrow\pi_r(x)$) in $\overline{P_0}$
\end{lemma}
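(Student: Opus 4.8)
The plan is to argue by compactness, reducing the boundary behaviour of $\Phi_l$ to the behaviour of the tangent planes along the sequence. Fix a sequence $x_n\in S$ with $x_n\to x\in\dr_\infty S$; it is enough to show that every subsequence has a further subsequence along which $\Phi_l(x_n)\to\pi_l(x)$ (and symmetrically for $\Phi_r,\pi_r$). The one identity I would record at the outset is that for a spacelike plane $P$ the isometry $\Phi_{P,l}$ restricted to $\dr_\infty P$ agrees with $\pi_l$: both send a boundary point to the intersection of its leaf of $\cL_l$ with $\dr_\infty P_0$. Since the tangent planes $P_n=P(x_n)$ are spacelike and a limit of such planes (along a spacelike surface) can only be spacelike or lightlike, the space they live in is compact, and I pass to a subsequence with $P_n\to P_\infty$ in $\overline{AdS_3}$. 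Because $x_n\in P_n$ and $x_n\to x$, the limit point satisfies $x\in\overline{P_\infty}$, hence $x\in\dr_\infty P_\infty$.

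If $P_\infty$ is spacelike the argument is immediate: the identifying isometries $\Phi_{P_n,l}$ converge in the isometry group to $\Phi_{P_\infty,l}$, so by Lemma \ref{isoextension:lem} their extensions to $\overline{AdS_3}$ converge uniformly, and together with $x_n\to x$ and the boundary identity this gives $\Phi_l(x_n)=\Phi_{P_n,l}(x_n)\to\Phi_{P_\infty,l}(x)=\pi_l(x)$. The delicate case is $P_\infty$ lightlike, and this is exactly where the hypothesis of the lemma is used: by assumption a lightlike limit of tangent planes must have its past and future end-points $\zeta_-,\zeta_+$ on $\Gamma$. In this regime the isometries $\Phi_{P_n,l}$ escape to infinity in the isometry group, so one can no longer pass to their limit.

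To handle the lightlike case I would first pin down $x$. By Lemma \ref{llkplanes:lem} the boundary $\dr_\infty P_\infty$ is a lightlike bigon with vertices $\zeta_\pm$, made of one segment of a leaf of $\cL_l$ and one segment of a leaf of $\cL_r$. Projecting to $S^1\times S^1$, this bigon collapses onto a single leaf of each family through a single point, and since $\Gamma$ is the graph of a homeomorphism it meets each of these two leaves exactly once, at the image of $\zeta_\pm$. As $x\in\dr_\infty P_\infty\cap\Gamma$, this forces $x\in\{\zeta_-,\zeta_+\}$; the standing assumption that $\Gamma$ contains no lightlike segment guarantees in addition that $\Gamma$ does not run along either edge of the bigon. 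It then remains to prove that $\Phi_l(x_n)\to\pi_l(\zeta_+)$ (say). The mechanism I would exploit is that each $\Phi_{P_n,l}$ acts trivially on the left factor of $\dr_\infty AdS_3$, hence preserves every leaf of $\cL_l$ setwise; since $x_n\to\zeta_+$ approaches the leaf $L\in\cL_l$ through $\zeta_+$, the images $\Phi_l(x_n)$ remain on leaves of $\cL_l$ tending to $L$. To turn this into convergence I would normalize the identification so that a fixed point $o\in P_0$ has preimage in a controlled part of $P_n$, write $\Phi_l(x_n)$ as the point of $P_0$ at distance $d_{P_n}(\Phi_{P_n,l}^{-1}(o),x_n)$ from $o$ along the ray toward $\pi_l(y_n)$, where $y_n\in\dr_\infty P_n$ is the boundary point in whose direction $x_n$ escapes, and check that $y_n\to\zeta_+$ while this distance tends to infinity, so that $\Phi_l(x_n)\to L\cap\dr_\infty P_0=\pi_l(\zeta_+)$.

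The main obstacle is precisely this last step. The qualitative fact that $\Phi_{P_n,l}$ preserves the leaves of $\cL_l$ must be upgraded to the quantitative statement that $\Phi_l(x_n)$ actually reaches $\dr_\infty P_0$ at the correct point, rather than drifting along $L$ or stalling in the interior of $P_0$ as the isometries degenerate. I expect this to require a careful estimate near the vertex $\zeta_+$, using the projective model of Section \ref{proj:sec} to make the degeneration of $\Phi_{P_n,l}$ explicit and the negative curvature of $S$ (which keeps $\Phi_l$ an immersion) to control how $x_n$ approaches $\zeta_+$ inside $P_n$. Everything else --- the subsequence reduction, the compactness of planes, and the spacelike case --- is routine once the identity $\Phi_{P,l}|_{\dr_\infty P}=\pi_l$ is in hand.
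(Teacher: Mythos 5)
Your skeleton matches the paper's: reduce to subsequences, extract a limit $P_\infty$ of the tangent planes, observe $x\in\dr_\infty P_\infty$, and split into the spacelike and lightlike cases. The spacelike case is handled the same way (uniform convergence of the isometries $\Phi_{P_n,l}$ on $\overline{AdS}_3$), and your use of the hypothesis to force an end-point of a lightlike $P_\infty$ onto $\Gamma$, hence $x\in\{\zeta_-,\zeta_+\}$, is also essentially the paper's step --- with the caveat that the hypothesis only guarantees that \emph{at least one} end-point lies on $\Gamma$, not both; acausality of $\Gamma$ then pins $x$ to that end-point.

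The gap is the one you name yourself: you never prove $\Phi_l(x_n)\rightarrow\pi_l(x)$ in the lightlike case, and the heuristic you offer does not close it. Knowing that each $\Phi_{P_n,l}$ has the form $(\mathrm{id},\rho_{r,n})$ and so preserves every leaf of $\cL_l$ in $\dr_\infty AdS_3$ says nothing about where the \emph{interior} points $x_n$ go: as $\rho_{r,n}$ degenerates, a point near the boundary can be sent deep into the interior of $P_0$, or drift along the limiting leaf to the wrong boundary point --- exactly the two failure modes you acknowledge. The paper closes this with a localization argument you would need to supply: fix an arbitrary geodesic $l\subset P_0$ and let $U$ be the half-plane it bounds containing $x$; the four leaves of $\cL_l,\cL_r$ through the endpoints of $l$ bound a rhombus $R$ in $\dr_\infty AdS_3$ containing $x$, and the two lightlike planes $L_\pm$ with end-points at the vertices $z_\mp$ of $R$ cut out a neighbourhood $V=I^-(L_+)\cap I^+(L_-)$ of $x$ with $\dr_\infty V=R$. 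One then checks that $P_n\cap V$ is a half-plane $H_n$ of $P_n$ bounded by the arc $c_n=P_n\cap R$ joining the two edges of $R$ adjacent to $z_+$, so that $\Phi_{P_n,l}(H_n)$ is the half-plane of $P_0$ with asymptotic boundary $\pi_l(c_n)\subset\dr_\infty U$, i.e. $\Phi_{P_n,l}(H_n)\subset U$; since $x_n\in P_n\cap V=H_n$ for $n$ large, $\Phi_l(x_n)\in U$, and shrinking $U$ gives the convergence. Without this (or an equivalent quantitative estimate) the hardest case of the lemma is unproved.
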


\begin{proof}
We prove that for any sequence
$x_n\rightarrow x\in \partial_\infty S$ there is a subsequence such that
$\Phi_l(x_{n_k})$ converges to
$\pi_l(x)$.  

Indeed, up to passing to a subsequence we can suppose that the
totally geodesic plane $P_n$ tangent to $S$ at $x_n$ converges to a
plane $P_\infty$.  Since $x$ is the limit of points on $P_n$, it
belongs to $\partial_\infty P_\infty$.

We distinguish two cases
\begin{enumerate}
\item $P_\infty$ is spacelike;
\item $P_\infty$ is lightlike.
\end{enumerate}

First we deal with the first case.
We have that $\Phi_l(x_n)=\Phi_{P_n,l}(x_n)$. Since $P_n\rightarrow
P_\infty$ it can be checked that $\Phi_{P_n,l}\rightarrow\Phi_{P,l}$ uniformly
on $\overline{AdS}_3$ (see \cite{mbh}). So we have
\[
   \Phi_l(x_n)\rightarrow\Phi_{P_\infty,l}(x)=\pi_l(x)\,.
\]

\begin{figure}
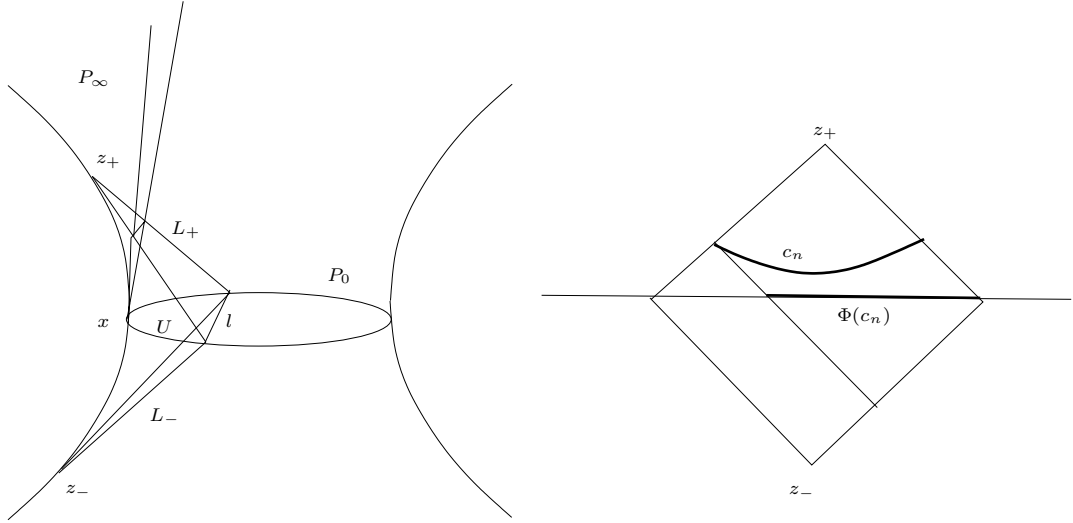

\begin{center}
\input rombus.pstex_t
\end{center}
\caption{The rhombus in the proof of Lemma \ref{crit:lem}}\label{fg:rhombus}
\end{figure}

Consider now the case where $P_\infty$ is lightlike.
By the assumption either the past or the future end-point of $P_\infty$
is contained in $\Gamma=\dr_\infty S$. Since points on $\Gamma$ are not joined
by lightlike segments, the intersection between $\Gamma$ and
$P_\infty$ is only this point. Since $x\in\Gamma\cap P_\infty$,
we conclude that $x$ is either the past endpoint or the future end-point 
of $P$. Up to reversing the time-orientation we can suppose
that $x$ is the past end-point of $P_\infty$.

Up to some isometry of $AdS_3$ preserving the leaves of $\cL_l$ we can suppose
that $x\in P_0$ so it is sufficient to prove that $\Phi_l(x_n)\rightarrow x$.

Consider any geodesic $l$ on $P_0$ and let $U$ be the half-plane
bounded by $l$ containing the point $x$. We will show that
for $n$ large enough $\Phi_l(x_n)\in U$.

The four leaves of $\cL_l$ and $\cL_r$
passing through the end-points of $l$ 
bound a rhombus $R$
in $\partial_\infty AdS_3$ containing $x$ in its interior (see Figure \ref{fg:rhombus}). 
The end-points of $l$ are two opposite vertices of $R$ and 
there are  two other opposite vertices $z_-$ and $z_+$
such that  $z_-$ is the past end-point of both edges
adjacent to it and $z_+$ is the future end-point of both edges
adjacent to it.

Since $x$ is the past endpoint of $P_\infty$,
this plane intersects the frontier of $R$ in two points,
one for each edge with vertex $z_+$. In particular also
$P_n\cap R$ is for $n$ large enough an arc  $c_n$ joining two points
on the edges adjacent to $z_+$.

Let $L_-$ be the lightlike plane whose past end-point is $z_-$ and
$L_ +$ be the lightlike plane whose future end-point is $z_+$.
Notice that $V=I^-(L_+)\cap I^+(L_-)$ is a neighbourhood
of $x$ in $\overline{AdS}_3$ and the asymptotic boundary of $V$ is exactly
$R$. In particular, for $n$ large enough, $x_n\in V$.

The boundary of $L_+$ is the union of the two past-directed lightlike
rays starting from $z_+$ and $L_-$ is the union of two future-directed lightlike
rays starting from $z_+$.

It turns out that  $H_n=P_n\cap I^-(L_+)$ is the half-plane on $P_n$ 
that is the convex hull of $c_n$. Since $c_n$ is contained in  
the future of $\partial_\infty L_-$ we  have that $H_n\subset I^+(L_-)$.
And we conclude that
\[
  P_n\cap V=H_n\,.
\]
Since for $n$ large enough $x_n\in P_n\cap V$, we have that
\[
   \Phi_l(x_n)=\Phi_{P_n,l}(x_n)\in \Phi_{P_n,l}(H_n)\,.
\]

Now $\Phi_{P_n,l}(H_n)$ is the half-plane of $P_0$ whose
asymptotic boundary is $\pi_l(c_n)$.

Notice that $\pi_l(c_n)$ is contained in $\partial_\infty U$
so we have $\Phi_l(x_n)\in\Phi_{P_n,l}(H_n)\subset U$.
\end{proof}

\begin{remark}\label{bo:rk}
If $S$ is a future-convex graph and its boundary does not contain
singular points then the condition required in Lemma \ref{crit:lem} is
satisfied.  Indeed totally geodesic planes tangent to $S$ are support
planes so if we take a sequence of such planes $P_n$ that converges
to some lightlike plane $P_\infty$, we have that $P_\infty$ cannot
intersects $S$ transversally.  In particular $S$ is contained in the
past of $P_\infty$.  This implies that either the boundary of $S$ is
disjoint from the boundary of $P_\infty$ or that the past end-point of
$P_\infty$ is contained in the boundary of $S$.

Now if the tangency points $x_n$ of $P_n$ with $S$ converge to
some asymptotic point $x$, clearly $x\in S\cap P_\infty$.
Thus, in this case we have that the past end-point
of $P_\infty$ is contained in the boundary of $S$. Since the boundary of
$S$ does not contain lightlike segments, the point $x$ must
coincide with the past end-point of $P_\infty$.
\end{remark}

\begin{lemma} \label{lm:320}
Let $S$ be a maximal spacelike graph with sectional curvature bounded from above
by some negative constant.  The asymptotic boundary of $S$ does not contain any lightlike
segment.
\end{lemma}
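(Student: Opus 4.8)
The strategy is to reduce everything to the boundary behaviour of the two hyperbolic developing maps $\Phi_l,\Phi_r$ of Lemma \ref{lm:local}, and to reach a contradiction from the fact that the two foliations $\cL_l,\cL_r$ treat a lightlike boundary segment in opposite ways.

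\textbf{Step 1 (the metrics are complete and comparable).} Since $S$ is maximal, $B$ is trace-free, so its principal curvatures are $\pm\lambda$ and the Gauss equation gives $K=-1-\det B=-1+\lambda^2$. Hence $K\le-\epsilon<0$ is equivalent to $\lambda\le\sqrt{1-\epsilon}<1$ everywhere, and $-1\le K\le-\epsilon$, so $(S,I)$ is a Hadamard surface with pinched negative curvature. Moreover $\det(E\pm JB)=1+\det B=-K\ge\epsilon$ while the eigenvalues $1\pm\lambda$ of $E\pm JB$ stay in a fixed compact subinterval of $(0,\infty)$; thus $\mu_l$ and $\mu_r$ are smooth hyperbolic metrics uniformly bi-Lipschitz to $I$, and in particular complete. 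By Lemma \ref{lm:local} the maps $\Phi_l,\Phi_r\colon(S,\mu_{l,r})\to P_0$ are local isometries from complete simply connected hyperbolic surfaces onto the hyperbolic plane, hence global isometries; this is the non-boundary half of Proposition \ref{prop:global}, and it uses \emph{only} completeness, not Lemma \ref{crit:lem}, so no circularity arises with the extension statement that the present lemma is meant to feed. Being isometries of $\HH^2$, they extend to homeomorphisms $\overline{\Phi_l},\overline{\Phi_r}$ of the ideal boundary circle $\dr_\infty(S,I)=\dr_\infty(S,\mu_{l,r})$ onto $\dr_\infty P_0$.

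\textbf{Step 2 (set-up of the contradiction).} Suppose $\dr_\infty S$ contains a nondegenerate lightlike segment $\ell$. Such a segment lies on a leaf of $\cL_l$ or of $\cL_r$; after exchanging left and right we may assume $\ell$ lies on a leaf of $\cL_l$. Now $\pi_l$ collapses each leaf of $\cL_l$ to a point, so $\pi_l$ is \emph{constant} on $\ell$; on the other hand, since each leaf of $\cL_l$ meets each leaf of $\cL_r$ exactly once, distinct points of $\ell$ lie on distinct leaves of $\cL_r$, and so $\pi_r$ is \emph{injective} on $\ell$.

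\textbf{Step 3 (matching the ideal extensions with $\pi_l,\pi_r$).} Fix an interior point $c$ of $\ell$ and a sequence $x_n\to c$ in $S$; after extracting we may assume that $x_n$ converges to an ideal point $\bar c\in\dr_\infty(S,I)$ and that the tangent planes $P_n$ of $S$ at $x_n$ converge to a plane $P_\infty$. If $P_\infty$ is spacelike, the uniform convergence $\Phi_{P_n,l}\to\Phi_{P_\infty,l}$, $\Phi_{P_n,r}\to\Phi_{P_\infty,r}$ of the first case in the proof of Lemma \ref{crit:lem} gives $\overline{\Phi_l}(\bar c)=\pi_l(c)$ and $\overline{\Phi_r}(\bar c)=\pi_r(c)$. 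Granting this for every interior point of $\ell$, we conclude as follows: because $\overline{\Phi_l}$ is injective and $c\mapsto\pi_l(c)$ is constant on $\ell$, the assignment $c\mapsto\bar c$ is constant on the interior of $\ell$; but then $\pi_r(c)=\overline{\Phi_r}(\bar c)$ is constant there as well, contradicting the injectivity of $\pi_r$ on $\ell$. Hence $\ell$ cannot exist.

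\textbf{Main obstacle.} The gap is the case where $P_\infty$ is \emph{lightlike}, which is exactly the situation excluded by hypothesis in Lemma \ref{crit:lem} and which cannot be assumed away here; I expect this to be the crux. It can be settled in one of two ways. The first is to refine the rhombus argument of Lemma \ref{crit:lem}: when $c$ lies in the interior of the $\cL_l$-arc of $\dr_\infty P_\infty$, the degeneration of the planes $P_n$ is aligned with the very foliation to which $\pi_l$ is adapted, and one checks that $\overline{\Phi_l}(\bar c)=\pi_l(c)$ and $\overline{\Phi_r}(\bar c)=\pi_r(c)$ still hold, so Step 3 goes through unchanged. The second, more conceptual route is to rule out the lightlike limit altogether by a barrier argument: a lightlike boundary segment of a \emph{maximal} graph would let one trap $S$ between the lightlike plane $L$ carrying $\ell$ and the horospheres asymptotic to $L$, forcing $S$ to be flat (a ``horosphere'' in the sense of Section \ref{ssc:52}); this is incompatible with $K\le-\epsilon$, and shows in passing that lightlike boundary segments occur only in the excluded flat case. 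Either way the proof is completed by excluding the lightlike degeneration over interior points of $\ell$.
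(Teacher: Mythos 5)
Your proposal does not close the argument: the step you yourself flag as the ``main obstacle'' --- the case where the tangent planes $P_n$ at $x_n\to c$ degenerate to a \emph{lightlike} plane $P_\infty$ --- is exactly where a proof along these lines has to do all the work, and neither of your two suggested repairs is carried out. The matching $\overline{\Phi_{l}}(\bar c)=\pi_{l}(c)$, $\overline{\Phi_{r}}(\bar c)=\pi_{r}(c)$ is the whole content of Lemma \ref{crit:lem}, whose proof in the lightlike case relies on the hypothesis that the past or future endpoint of $P_\infty$ lies on $\Gamma$ and that $c$ \emph{is} that endpoint; but in your Step 3 the point $c$ is an interior point of the lightlike arc, so the rhombus argument does not apply as stated, and the maps $\Phi_{P_n,l}$ do not converge to an isometry, so ``one checks'' is not a substitute for an argument. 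Note also the structural danger: in the paper, Lemma \ref{lm:320} exists precisely to verify (via Remark \ref{bo:rk} applied to the equidistant surface $S_+$) the hypothesis of Lemma \ref{crit:lem} needed for Proposition \ref{prop:global}; an argument for Lemma \ref{lm:320} that needs boundary continuity of $\Phi_l,\Phi_r$ is skating very close to circularity, and your attempt to replace that continuity by an ad hoc matching at points of $\ell$ is where the proof breaks. Your second repair (trap $S$ against the lightlike plane carrying $\ell$ and conclude $S$ is flat as in Lemma \ref{lm:horo}) is only a heuristic: a single lightlike boundary segment does not obviously pin $S$ against a family of barriers, and you give no comparison surfaces.

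The paper avoids all of this by a width argument that never touches $\Phi_l,\Phi_r$. Claim \ref{cl:equidistant} uses the Riccati equation for the shape operators of the equidistant surfaces to show that, when the principal curvatures lie in $(-1+\epsilon,1-\epsilon)$ (equivalent, for a maximal surface, to $K\le-\epsilon$), the surfaces $S_{\pm(\pi/4-\alpha)}$ are smooth and convex, so $S$ sits inside a convex slab of time-like width at most $\pi/2-2\alpha$; hence $w(\dr_\infty S)<\pi/2$ (Corollary \ref{cr:w}). Claim \ref{cl:light} then shows that a lightlike segment in $\dr_\infty S$ forces $w(\dr_\infty S)=\pi/2$, by degenerating the boundary graph under projective transformations to the standard two-step graph, whose convex hull is bounded by two dual space-like lines at distance exactly $\pi/2$. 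The two claims are incompatible, which proves the lemma. If you want to salvage your approach, the honest route is to first prove the width bound anyway (your Step 1 already contains the curvature pinching needed for Claim \ref{cl:equidistant}), at which point the foliation argument becomes unnecessary.
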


The proof is based on some simple preliminary claims.

\begin{claim} \label{cl:equidistant}
Let $S\subset AdS_3$ be a space-like graph with principal curvatures in $(-1,1)$.
Then the equidistant surfaces $S_r$ at (oriented) time-like distance $r$ from
$S$, for all $r\in (-\pi/4,\pi/4)$, are smooth, space-like graphs. If the
principal curvatures of $S$ are in $(-1+\epsilon,1-\epsilon)$, then, for 
$r$ close enough to $\pi/4$, $S_{r}$ 
is past-convex, and $S_{-r}$ is future-convex. 
\end{claim}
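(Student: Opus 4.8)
The plan is to realize $S_r$ explicitly as the image of $S$ under the time-$r$ normal geodesic flow and to read off every assertion from the resulting Jacobi-type operator. Working in the quadric model $AdS_3^*\subset\R^{2,2}$, let $\nu$ be the future-pointing unit normal of $S$ and $B$ its shape operator, normalized by $\bar\nabla_X\nu=-B(X)$; since $X,B(X)\in T_xS$ and $\nu\perp x$, one checks that $d\nu(X)$ lies in $T_xS$, so the ambient derivative agrees with $\bar\nabla_X\nu$. By the timelike case of formula (\ref{geoformula:eq}), the point of $S_r$ over $x\in S$ is $x_r=\cos(r)\,x+\sin(r)\,\nu(x)$, whose differential along $X\in T_xS$ is
\[
  dx_r(X)=\cos(r)\,X-\sin(r)\,B(X)=(\cos(r)\,E-\sin(r)\,B)(X)~.
\]
Writing $\cos(r)-\lambda_i\sin(r)=\cos(r)\,(1-\lambda_i\tan r)$, where $\lambda_i\in(-1,1)$ are the principal curvatures, and using $|\lambda_i|<1$, $|\tan r|<1$ for $|r|<\pi/4$, each eigenvalue is positive. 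Hence $dx_r$ is a linear isomorphism onto its image, the induced metric $I_r(X,Y)=I((\cos(r)E-\sin(r)B)X,(\cos(r)E-\sin(r)B)Y)$ is positive definite, and $S_r$ is a smooth spacelike immersed surface. Equivalently, $|r|<\pi/4$ is exactly the absence of focal points of the normal flow, the first occurring where $\tan r=1/\lambda_i$, i.e. $|r|>\pi/4$.

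To upgrade this to an embedded graph I would study the full normal flow $\Psi\colon S\times(-\pi/4,\pi/4)\to AdS_3$, $\Psi(x,r)=x_r$. Its differential is everywhere invertible: on each slice it is $dx_r$, just shown nonsingular, while $\partial_r\Psi=\nu_r$ (the unit normal of $S_r$, computed below) is transverse to $T_{x_r}S_r$. Because $S$ is a spacelike graph it is acausal (Proposition \ref{acausal:prop}), so its orthogonal timelike geodesics cannot re-meet one another or $S$ before the first focal time; as this time exceeds $\pi/4$, $\Psi$ is injective and hence a diffeomorphism onto its image. Each $S_r$ is therefore properly embedded and spacelike, and being a slice of this flow it is crossed exactly once by every inextensible timelike line, so Proposition \ref{spacelikegraphs:prop} identifies it with a graph. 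This global step — controlling the normal flow well enough to force embeddedness — is the only part that is not pure linear algebra, and is where I expect the real work to lie.

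For the second fundamental form, the unit future normal of $S_r$ is $\nu_r=-\sin(r)\,x+\cos(r)\,\nu$; one verifies $\langle\nu_r,\nu_r\rangle=-1$, $\langle\nu_r,x_r\rangle=0$ and $\langle\nu_r,dx_r(X)\rangle=0$. Differentiating gives $d\nu_r(X)=-(\sin(r)\,E+\cos(r)\,B)(X)\in T_{x_r}S_r$, so the shape operator $B_r$ of $S_r$ satisfies $B_r\circ(\cos(r)E-\sin(r)B)=\sin(r)E+\cos(r)B$. Every operator here is a polynomial in $B$, so they are simultaneously diagonalizable and
\[
  B_r=(\sin(r)E+\cos(r)B)(\cos(r)E-\sin(r)B)^{-1}~,
\]
whose principal curvatures are $\mu_i=\dfrac{\sin(r)+\lambda_i\cos(r)}{\cos(r)-\lambda_i\sin(r)}=\tan\!\big(r+\arctan\lambda_i\big)$.

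Finally I would read off the convexity. Writing $\theta_i=\arctan\lambda_i$, the hypothesis $\lambda_i\in(-1+\epsilon,1-\epsilon)$ yields $\theta_i\in(-\pi/4+\delta,\pi/4-\delta)$ for some $\delta=\delta(\epsilon)>0$. For $r$ close enough to $\pi/4$ we then have $r+\theta_i\in(0,\pi/2)$ for every $i$, so all $\mu_i=\tan(r+\theta_i)>0$. In the sign convention fixed by the umbilic hyperboloid model (which has principal curvatures $-1$ for the future normal and is future-convex, since its future is the convex region it bounds), positive principal curvatures mean $S_r$ is past-convex. Replacing $r$ by $-r$ gives $-r+\theta_i\in(-\pi/2,0)$ and hence $\mu_i<0$, so $S_{-r}$ is future-convex, which completes the claim. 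Note the computation is dimension-independent, so the same argument applies verbatim in $AdS_{n+1}$.
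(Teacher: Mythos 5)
Your proof is correct and takes essentially the same route as the paper: both track the principal curvatures along the normal flow and find that they evolve as $\tan(r+\arctan\lambda_i)$, so the first focal time exceeds $\pi/4$ and the extreme equidistant surfaces become strictly concave/convex; you merely integrate the Riccati equation $dB_r/dr=B_r^2-I$ in closed form via the quadric-model exponential map instead of quoting it. The one step you flag as incomplete --- injectivity of the normal flow, needed to pass from ``spacelike immersion'' to ``graph'' --- is treated just as briefly in the paper, which simply asserts that the equidistant foliation is well-defined for $r\in[-\pi/4,\pi/4]$.
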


\begin{proof}
If $(S_r)_{r\in I}$ is a 
non-singular foliation of a neighborhood of $S$ by space-like surfaces at constant distance 
$r$ from $S$, then the shape operator $B_r$ of $S_r$ satisfies a Riccatti type
equation relative to $r$:
$$ \frac{dB_r}{dr} = B_r^2-I~, $$
where $I$ is the identity. It follows that the principal curvatures of $S$
evolve as $\tan(r-r_0)$, where $r_0$ is chosen so that $\tan(r_0)$ is the
principal curvature of $S$ at the corresponding point and in the corresponding
direction.

Suppose now that $S$ has principal curvatures 
$k\in (-1+\epsilon,1-\epsilon)$ at each point, for some $\epsilon>0$.
This implies that, at each point and in
each principal direction, $r_0\in (-\pi/4+\alpha, \pi/4-\alpha)$, where
$\alpha>0$ is another constant. As a consequence, the
equidistant foliation $(S_r)$ is well-defined for $r\in [-\pi/4,\pi/4]$, 
and moreover the surfaces $S_{\pi/4-\alpha}$ and $S_{-\pi/4+\alpha}$ are smooth and respectively
strictly concave and strictly convex, so that the domain 
$$ \Omega=\cup_{r\in [-\pi/4+\alpha,\pi/4-\alpha]}S_r $$ 
is convex with smooth boundary, with principal curvatures bounded from below by a 
strictly positive constant. 
\end{proof}

\begin{cor} \label{cr:w}
Let $S$ be a space-like maximal surface, with sectional curvature bounded from
above by a negative constant. Then $w(S)<\pi/2$.
\end{cor}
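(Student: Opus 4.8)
The plan is to convert the curvature hypothesis into a uniform bound on the principal curvatures of $S$, use Claim \ref{cl:equidistant} to trap the convex hull of $\Gamma=\partial_\infty S$ between two strictly convex/concave equidistant surfaces, and then estimate the time distance across the foliated region in between.

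First I would record that, $S$ being maximal, its shape operator $B$ is trace-free, so its principal curvatures are $\pm k$ and $\det B=-k^2$. The Gauss equation (as recalled in Section \ref{ssc:diffeos}) gives the sectional curvature $K=-1-\det B=-1+k^2$, so the hypothesis $K\le -c^2<0$ forces $k^2\le 1-c^2$; hence the principal curvatures of $S$ lie in $(-1+\epsilon,1-\epsilon)$ for some $\epsilon>0$. This is exactly the situation of Claim \ref{cl:equidistant}: the equidistant foliation $(S_r)_{r\in[-\pi/4,\pi/4]}$ consists of smooth spacelike graphs, there is $\alpha>0$ such that $S_{\pi/4-\alpha}$ is strictly concave (past-convex) and $S_{-\pi/4+\alpha}$ is strictly convex (future-convex), and the region $\Omega=\cup_{r\in[-\pi/4+\alpha,\pi/4-\alpha]}S_r$ is convex with smooth boundary.

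Next I would observe that each equidistant surface $S_r$, being obtained from $S$ by a normal (timelike) flow whose direction becomes asymptotically lightlike, has the same asymptotic boundary $\Gamma$ as $S$; thus $\overline\Omega$ is a convex subset of $\overline{AdS}_3$ whose boundary at infinity is $\Gamma$, and therefore it contains the convex hull $C(\Gamma)$. In particular both $\partial_-C(\Gamma)$ and $\partial_+C(\Gamma)$ lie in $\overline\Omega$. To bound their time distance I would use the signed distance $\rho$ to $S$ along the normal geodesics, for which $S_r=\{\rho=r\}$ and $\rho$ ranges over an interval of length $\pi/2-2\alpha$ on $\overline\Omega$. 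Since the level sets of $\rho$ are spacelike, $\nabla\rho$ is a unit timelike field, so the reverse Cauchy--Schwarz inequality yields $\frac{d\rho}{ds}\ge 1$ along any future-directed unit-speed timelike curve; hence every such curve contained in $\overline\Omega$ has length at most $\pi/2-2\alpha$. Given $q_\pm\in\partial_\pm C(\Gamma)$ with $q_+\in I^+(q_-)$, the geodesic realizing $\delta(q_-,q_+)$ stays in the star neighbourhood containing $\overline\Omega$ and, by convexity of $C(\Gamma)\subseteq\overline\Omega$, inside $\overline\Omega$; so $\delta(q_-,q_+)\le\pi/2-2\alpha$. Taking the supremum gives $w(S)=w(\Gamma)\le\pi/2-2\alpha<\pi/2$.

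The main obstacle is the step asserting that the equidistant surfaces keep the asymptotic boundary of $S$, so that $\partial_\infty\overline\Omega=\Gamma$ and hence $C(\Gamma)\subseteq\overline\Omega$; this requires controlling the normal geodesics near infinity and checking that, at timelike distance $<\pi/2$, their endpoints at infinity converge to those of $S$. Once this containment is secured, the remaining time-function estimate is soft and only uses the convexity of $\Omega$ furnished by Claim \ref{cl:equidistant}.
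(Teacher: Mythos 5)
Your argument is correct and is essentially the paper's proof written out in full: the paper's one-line proof of Corollary \ref{cr:w} is exactly ``the convex hull of $S$ is contained in $\Omega$, and $w(\Omega)\leq \pi/2-2\alpha<\pi/2$'', and your reduction of the curvature bound to $k\in(-1+\epsilon,1-\epsilon)$ plus the time-function estimate across $\Omega$ supplies the details. The one step you flag as the ``main obstacle'' (that each $S_r$ keeps the asymptotic boundary $\Gamma$) can in fact be bypassed: since $S\subseteq\Omega$ and $\Omega$ is convex, its closure in the projective model already contains $\Gamma=\partial_\infty S$ and hence the convex hull $C(\Gamma)$, which is all the containment you need.
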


\begin{proof}
This follows from the claim because the convex hull of $S$ is contained in 
$\Omega$, and $w(\Omega)\leq \pi/2-2\alpha<\pi/2$.  
\end{proof}

\begin{claim} \label{cl:light}
Suppose that there is a light-like segment in $\dr_\infty S$. Then $w(S)=\pi/2$.
\end{claim}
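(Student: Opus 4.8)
The plan is to use the projective duality between points and space-like planes recalled in Section~\ref{proj:sec}, together with the bound $w(\Gamma)\le\pi/2$ from Lemma~\ref{lm:width}, so that only the reverse inequality $w(\Gamma)\ge\pi/2$ remains. Let $\ell\subset\Gamma=\dr_\infty S$ be the given lightlike segment. In the projective model $\ell$ lies on a projective line $L\subset\partial\pi(AdS_3)=Q$ belonging to one of the two rulings $\cL_l,\cL_r$ of the quadric $Q\subset\R P^3$. First I would record the basic duality fact: for $p\in AdS_3$ the dual space-like plane $P_+(p)$ (equivalently $P_-(p)$) projects to the polar plane of $\pi(p)$ with respect to $Q$, and as $p$ tends to a boundary point $\xi\in Q$ this polar plane converges to the tangent plane $T_\xi Q$, which is the projectivization of a lightlike plane $\Pi_\xi$. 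Since $\xi$ lies on the ruling line $L$, the line $L$ is contained in $T_\xi Q$, so $\dr_\infty\Pi_\xi\supset L\supset\ell$. The key point is thus that the dual planes of points approaching $\ell$ degenerate to lightlike planes whose boundary at infinity contains $\ell$ itself.

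Next I would exploit that $\ell\subset\Gamma$ is the common boundary at infinity of both $\partial_-C(\Gamma)$ and $\partial_+C(\Gamma)$. Choose a sequence $p_n\in\partial_-C(\Gamma)$ converging to an interior point $\xi$ of $\ell$ (such points exist because $\dr_\infty(\partial_-C(\Gamma))=\Gamma\ni\xi$). By the remark above, the dual planes $P_+(p_n)$, each of which lies at time-distance exactly $\pi/2$ above $p_n$, converge to the lightlike plane $\Pi_\xi$ with $\ell\subset\dr_\infty\Pi_\xi$. By Lemma~\ref{lm:width} we know $\partial_+C(\Gamma)\subset\overline{I^-(P_+(p_n))}$ for every $n$, i.e. the upper boundary stays below each dual plane. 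However $\dr_\infty(\partial_+C(\Gamma))=\Gamma$ meets $\dr_\infty P_+(p_n)$ in the limit precisely along $\ell$, so there are points $q_n\in\partial_+C(\Gamma)$, projecting near $\ell$, that approach $P_+(p_n)$. Letting $q_n$ realize $\sup_{q\in\partial_+C(\Gamma)\cap I^+(p_n)}\delta(p_n,q)$ (a supremum attained by compactness of $\overline{C(\Gamma)}$ in $\overline{AdS}_3$), I would argue $\delta(p_n,q_n)\to\pi/2$, since the only obstruction to reaching distance $\pi/2$ is the barrier $P_+(p_n)$, which becomes tangent to $\Gamma$ along $\ell$ in the limit. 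This yields $w(\Gamma)\ge\pi/2$, hence $w(S)=w(\Gamma)=\pi/2$.

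The main obstacle is the last limiting step: controlling $\delta(p_n,q_n)$ as both $p_n$ and $q_n$ escape to infinity toward $\ell$. One must rule out the degenerate possibility that the nearest-to-$P_+(p_n)$ points of $\partial_+C(\Gamma)$ collapse onto $p_n$ with distance tending to $0$ rather than to $\pi/2$; this is exactly why $q_n$ is taken to maximize $\delta(p_n,\cdot)$ and why the one-sided barrier from Lemma~\ref{lm:width} is needed. To make this rigorous I would verify, using the explicit projective embedding $\pi^*$ of (\ref{projective:eq}) and the smoothness of the distance function on star neighbourhoods, that the time-distance from $p_n$ to its dual plane equals $\pi/2$ and varies continuously, and that the part of $\partial_+C(\Gamma)$ sitting over a fixed sub-arc of $\ell$ converges to $\ell$ inside $\overline{I^-(\Pi_\xi)}$ with $\dr_\infty$-tangency to $\Pi_\xi$ along $\ell$; continuity of $\delta$ up to the support plane then forces the supremum to converge to $\pi/2$. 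Combined with Lemma~\ref{lm:width}, this completes the proof that $w(S)=\pi/2$.
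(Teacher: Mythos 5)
Your overall strategy --- produce sequences $p_n\in\dr_-C(\Gamma)$, $q_n\in\dr_+C(\Gamma)$ accumulating on the lightlike segment $\ell$ with $\delta(p_n,q_n)\rightarrow\pi/2$ --- is in the right spirit, and the duality observations (the polar planes of points tending to $\xi\in\ell$ degenerate to the lightlike plane tangent to the quadric at $\xi$, whose ideal boundary contains $\ell$) are correct. But the step you yourself single out as the main obstacle is a genuine gap, not a technicality. The Lorentzian distance $\delta$ does \emph{not} extend continuously to $\overline{AdS}_3$, and ``$q_n$ approaches the barrier $P_+(p_n)$'' in the topology of $\overline{AdS}_3$ gives no control whatsoever on $\delta(p_n,q_n)$ when both points escape to infinity. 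Concretely, normalizing $p=(x^0,0)$ so that $P_+(p)=\{t=\pi/2\}$, formula (\ref{geoformula:eq}) gives $\delta(p,(x,t))=\arccos(x_{n+1}\cos t)$ on $C_p\cap I^+(p)$; along a sequence with $x_{n+1}\rightarrow\infty$ and $t\rightarrow\pi/2$ this can converge to any value in $[0,\pi/2]$ depending on the relative rates. So the asymptotic ``tangency'' of $\dr_+C(\Gamma)$ to the limiting lightlike plane $\Pi_\xi$ along $\ell$ does not force $\sup_q\delta(p_n,q)$ to tend to $\pi/2$, and your proposed fix (``continuity of $\delta$ up to the support plane'') assumes exactly what has to be proved.

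The paper closes precisely this hole by renormalizing: since $\dr_\infty S$ is the graph of a map $u$ with a jump, composing $u$ with powers of a projective transformation attracting to the jump point produces isometric images of $\Gamma$ converging in the Hausdorff topology to the standard $2$-step graph $\Gamma_0$, whose convex hull is explicitly computable --- its two boundary components contain a pair of \emph{dual} spacelike lines $\Delta$ and $\Delta^*$, every pair of points of which is at distance exactly $\pi/2$. Because the isometries preserve the width and the realizing points now sit in a fixed compact region where $\delta$ is continuous, one gets $w(\Gamma)\geq\pi/2$, and Lemma \ref{lm:width} gives the equality. If you want to keep your dual-plane picture you would still have to apply isometries sending $p_n$ to a fixed base point and then identify the limit of the renormalized convex hulls --- which is exactly the $\Gamma_0$ computation; without that renormalization the final limiting step does not go through.
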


\begin{figure}[h!]
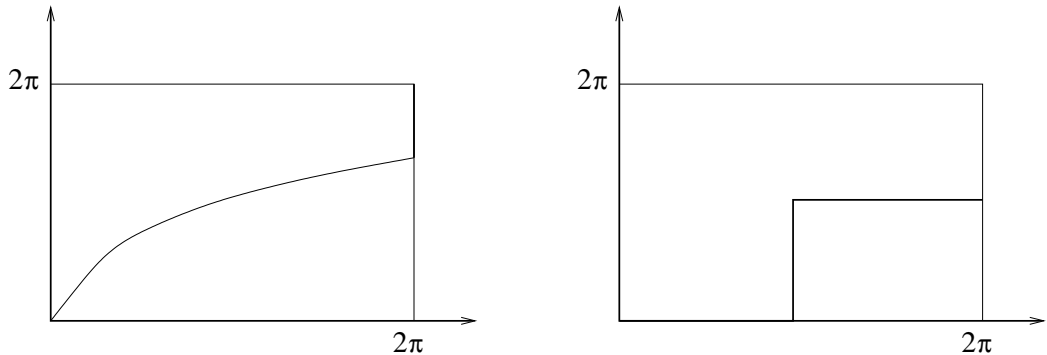
 
\begin{center}
\input graphs.pstex_t
\end{center}
\caption{Deforming a graph to the standard 2-step graph}\label{fg:graphs}
\end{figure}

\begin{proof}
The boundary at infinity of $S$ is the graph of a map $u:S^1\rightarrow S^1$.
If $\dr_\infty S$ contains a light-like segment then $u$ is not continuous, and
its graph has a ``jump'', as in the left-hand side of Figure \ref{fg:graphs}.
Composing $u$ on the left with a sequence of projective transformations, we can
make its graph as close as wanted (in the Hausdorff topology) from the standard
2-step graph shown on the right-hand side of Figure \ref{fg:graphs}. (This is 
achieved by composing $u$ on the right with a sequence of powers of a projective
transformation having as attracting fixed point the point where the ``jump'' occurs.) 
We call $\Gamma_0$ this 2-step graph, considered as a subset of $\dr \pi(AdS_3)$ (here
$\pi$ is the map in the projective model of $AdS_3$).

Now $\Gamma_0$, as a subset of $\dr \pi(AdS_3)$, is composed of four light-like
segments. It has four vertices, and it is not difficult to check that the 
lines $\Delta$ and $\Delta^*$ connecting the two pairs of opposite points 
are two dual space-like lines in $\pi(AdS_3)$. In particular, if $CH(\Gamma_0)$
denotes the convex hull of $\Gamma_0$, then $w(CH(\Gamma_0))=\pi/2$.

Since $\dr_\infty S$ can be made arbitrarily close to $\Gamma_0$ by applying
AdS isometries (corresponding to composing $u$ on the left and on the right
with projective transformations of $S^1$), it follows that $w(S)=\pi/2$.
\end{proof}

\begin{proof}[Proof of Lemma \ref{lm:320}]
The statement follows directly from Corollary \ref{cr:w} and Claim \ref{cl:light}.
\end{proof}

Let us come back to Proposition \ref{prop:global}.

\begin{proof}[Proof of Proposition \ref{prop:global}]
We consider again the surface $S_+$  of points in the future of $S$ 
at distance $\pi/4$ from $S$.
We have seen that $S_+$ is smooth and past-convex.
Moreover  a diffeomorphism $\sigma:S\rightarrow S_+$ is uniquely determined 
so  that 
the Lorentzian distance between $x$ and $\sigma(x)$ is exactly $\pi/4$.

Since the distance between points on $S_+$ and points on $S$ is
bounded, they share the same boundary. 
Moreover, since the boundary of $S$ does not contain lightlike segments, it can easily
seen that the map $\sigma$ extends to the identity at the boundary.

We claim that the map $\Phi_l$ can be factorized as the composition
of $\sigma$ and $\Phi^+_l$, where $\Phi^+_l:S_+\rightarrow P_0$ is the map
constructed in the same way as $\Phi_l$.
The claim and  Remark \ref{bo:rk} imply that $\Phi_l$ extends to the boundary.

Let us prove the claim. Given any point $x\in S$, we have to
check that $\Phi_l(x)=\Phi_l^+(\sigma(x))$.
Up to isometry we can suppose that:
\begin{itemize}
\item $P_0$ is the  plane tangent to $S$ at $x$,
\item $x=(x^0,0)$ and $P_0$ is the horizontal plane.
\end{itemize}

With this assumption clearly $\Phi_l(x)=x$.

Since the segment joining $x$ to $\sigma(x)$ is orthogonal to both
$S$ and $S^+$, it follows that $\sigma(x)=(x^0,\pi/4)$ 
and the plane $P_+$ tangent to $S_+$ at $\sigma(x)$ is the horizontal plane.

In this case the map $\Phi_{P_+,l}$ can be explictly computed. In particular it is given by 
$\Phi_{P_+,l}(y,t)=(R(y),t-\pi/4)$ where $R\in Isom(\mathbb H^2)$ 
is a rotation of angle $\pi/4$ around $x^0$.
It easily follows that $\Phi_l^+(\sigma(x))=\Phi_{P_+,l}(\sigma(x))=x$, and this proves
the claim.

Notice that the map $\Phi_l$ and $\Phi_r$ turn to be proper maps.
On the other hand, under the hypothesis that $S$ has negative sectional curvature, 
$\Phi_l$ and $\Phi_r$ are local diffeomorphisms from $S$ to $P_0$, 
so that, by the Dependence of Domain Theorem, they are global
diffeomorphism from $S$ to $P_0$.
\end{proof}

\begin{defi}
Suppose that $S$ has negative sectional curvature. We call 
$\Phi_S:\Phi_l^{-1}\circ \Phi_r:\HH^2\rightarrow \HH^2$. 
$\Phi_S$ is a global diffeomorphism, well-defined up
to composition by a hyperbolic isometry.
\end{defi}

By construction the differential of $\phi_S$ is given at each point by 
$(E+JB)^{-1}(E-JB)$. It follows that, as long as the principal 
curvatures of $S$ are in $[-1+\epsilon, 1-\epsilon]$ for some
$\epsilon>0$, the diffeomorphism $\phi_S$ is quasi-conformal (and conversely).

\begin{lemma}
The map $\Phi_S$ extends to a homeomorphism from $\overline{\HH^2}$ to 
$\overline{\HH^2}$, and the graph of $\dr\Phi_S:S^1\rightarrow S^1$ 
in (the image by $\pi$) of $AdS_3$ is the boundary at infinity of $S$ 
in $\dr_\infty AdS_3$.
\end{lemma}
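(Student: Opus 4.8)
The plan is to deduce everything from the boundary behaviour of $\Phi_l$ and $\Phi_r$ already established in Proposition \ref{prop:global}, so that the proof becomes an assembly of known facts rather than new analysis. Recall that, since $S$ is a maximal space-like graph with uniformly negative sectional curvature, Proposition \ref{prop:global} tells us that $\Phi_l$ and $\Phi_r$ are global diffeomorphisms from $S$ onto $P_0$ which extend continuously to the closure $\overline{S}$, with boundary values the restrictions to $\dr_\infty S$ of the projections $\pi_l$ and $\pi_r$. By Lemma \ref{lm:boundary} the closure $\overline{S}$ in $\overline{AdS_3}$ is a graph, hence homeomorphic to the closed disk $\overline{\HH^2}$ and in particular compact; the same holds for $\overline{P_0}$.

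First I would record that $\dr_\infty S$ is the graph of a homeomorphism. Indeed $\dr_\infty S=\Gamma_u$ for some $u:S^1\to S^1$ (as noted in Claim \ref{cl:light}), it is acausal by Proposition \ref{acausal:prop}, and by Lemma \ref{lm:320} it contains no lightlike segment; an acausal closed curve without lightlike segments is the graph of a strictly monotone continuous bijection, so $u$ is a homeomorphism. Consequently both $\pi_l|_{\dr_\infty S}$ and $\pi_r|_{\dr_\infty S}$ are homeomorphisms onto $\dr_\infty P_0$: under the identification of Section \ref{ssc:3d}, the map $\xi\mapsto(\pi_l(\xi),\pi_r(\xi))$ identifies $\dr_\infty AdS_3$ with $\dr_\infty P_0\times\dr_\infty P_0$, and each factor projection restricts to a homeomorphism of $\Gamma_u$ precisely because $u$ is a homeomorphism.

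Next I would upgrade $\Phi_l$ and $\Phi_r$ to homeomorphisms of the compactifications. Since $\Phi_l$ is a diffeomorphism on the interior and its continuous extension to $\dr_\infty S$ is the homeomorphism $\pi_l|_{\dr_\infty S}$ onto $\dr_\infty P_0$, the extended map $\overline{\Phi}_l:\overline{S}\to\overline{P_0}$ is a continuous bijection from a compact space to a Hausdorff space, hence a homeomorphism; likewise for $\overline{\Phi}_r$. Therefore $\Phi_S$, regarded as the self-map of $P_0\cong\HH^2$ built from $\Phi_l$ and $\Phi_r$, extends to the corresponding composition of $\overline{\Phi}_l$ and $\overline{\Phi}_r$, a composition of homeomorphisms of closed disks, and thus extends to a homeomorphism of $\overline{\HH^2}$. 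Its boundary value is the circle homeomorphism $\dr\Phi_S=\pi_r\circ(\pi_l|_{\dr_\infty S})^{-1}$.

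Finally I would identify the graph. Using the homeomorphism $\dr_\infty AdS_3\ni\xi\mapsto(\pi_l(\xi),\pi_r(\xi))\in\dr_\infty P_0\times\dr_\infty P_0$ from Section \ref{ssc:3d}, the image of $\dr_\infty S$ is exactly $\{(\pi_l(\xi),\pi_r(\xi)):\xi\in\dr_\infty S\}$, which is the graph of the map sending $\pi_l(\xi)\mapsto\pi_r(\xi)$; this map is precisely $\dr\Phi_S$. Hence the graph of $\dr\Phi_S$, read inside $\dr_\infty AdS_3\cong S^1\times S^1$, coincides with $\dr_\infty S$, as claimed. I expect the only genuinely delicate point to be the justification that $\dr_\infty S$ is the graph of a homeomorphism, and hence that $\pi_l$ and $\pi_r$ restrict to homeomorphisms onto the whole of $\dr_\infty P_0$ (which is what gives bijectivity on the boundary); everything else is a compactness-and-composition argument, since the hard analytic extension statement is already contained in Proposition \ref{prop:global}.
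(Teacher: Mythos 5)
Your proof is correct and follows essentially the same route as the paper, whose own proof is just a two-sentence sketch stating that the extension follows from the boundary extensions of $\Phi_l$ and $\Phi_r$ (Proposition \ref{prop:global}) and that the graph identification follows from their boundary values being $\pi_l$ and $\pi_r$. Your write-up simply supplies the details the paper leaves implicit — in particular that $\dr_\infty S$ contains no lightlike segment (Lemma \ref{lm:320}), so that $\pi_l$ and $\pi_r$ restrict to homeomorphisms of $\dr_\infty S$ onto $\dr_\infty P_0$ and the compactness argument applies.
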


\begin{proof}
The extension of $\Phi_S$ to the boundary is a direct consequence of 
its definition and of the extension to the boundary of $\Phi_l$ and
$\Phi_r$. It is then clear that the graph of $\dr \Phi_S$ is equal
to $\dr_\infty S$, since the restrictions of $\pi_l$ and $\pi_r$ to
$\dr_\infty S$ are equal to the boundary values of $\Phi_l$ and $\Phi_r$.
\end{proof}

We have now proved the first two points in Proposition \ref{pr:translation}.
To prove the third point it is necessary to construct, given a quasi-conformal minimal
Lagrangian diffeomorphism $\Phi:\HH^2\rightarrow \HH^2$, a maximal space-like 
$S$ such that $\Phi=\Phi_S$. One way to do this is through the identification
of $\HH^2\times \HH^2$ with the space of time-like geodesics in $AdS_3$ (see
\cite{collision}). We rather use here local arguments (as in \cite{minsurf}).

Let $\Phi:\HH^2\rightarrow \HH^2$ be a minimal Lagrangian diffeomorphism. 
Call $\rho_l$ and $\rho_r$ the hyperbolic metrics on the two copies of 
$\HH^2$ (this underlines the relationship with the construction in the
previous paragraphs). The fact that $\Phi$ is minimal Lagrangian is
equivalent (see \cite{L5}) to the fact that 
$$ \Phi^*\rho_r = \rho_l(b\cdot, b\cdot)~, $$
where $b$ is self-adjoint (for $\rho_l$), of determinant $1$, and satisfies
the equation 
$$ d^{\nabla^l} b=0~, $$
where $\nabla^l$ is the Levi-Civita connection of $\rho_l$ and $d^{\nabla^l} b$ is
defined (see \cite{Be}) as
$$ (d^{\nabla^l} b)(x,y)= \nabla^l_x(by)-\nabla^l_y(bx)-b([x,y])~. $$
We can then define a metric $I$ on $S$ by
\begin{equation}
  \label{eq:defI}
  4I = \rho_l((E+b)\cdot,(E+b)\cdot)~. 
\end{equation}
Since $b$ is non-singular and has positive eigenvalues, $I$ is a metric on $\HH^2$.
Since $d^{\nabla^l}b=0$ we also have $d^{\nabla^l}(E+b)=0$, it follows
from standard arguments (see e.g. \cite{minsurf}) that the Levi-Civita connection
of $I$ is 
$$ \nabla_xy = (E+b)^{-1}\nabla^l_x((E+b)y)~, $$
and therefore that the curvature $K$ of $I$ is equal to 
$$ K = \frac{K_l}{\det((E+b)/2)} = -\frac{4}{\det(E+b)} =
-\frac{4}{2+\tr(b)}~. $$
Let $J$ be the complex structure of $I$, we now define $B:T\HH^2\rightarrow T\HH^2$ 
as follows:
\begin{equation}
  \label{eq:defJB}
   JB = (E+b)^{-1}(E-b)~. 
\end{equation}
Then $JB$ has some remarkable properties.
\begin{enumerate}
\item $d^\nabla JB=0$. This follows from a direct computation, because $d^{\nabla^l}(E-b)=0$. 
Since $J$ is parallel for $\nabla$, it follows that $d^\nabla B=0$. 
\item $JB$ is self-adjoint for $I$, because $E-b$ is self-adjoint for $\rho_l$. 
It follows that $B$ is traceless. 
\item $JB$ is traceless -- this follows from a direct computation in a basis where
$b$ is diagonal, using the fact that $\det(b)=1$. It follows that $B$ is self-adjoint.
\item $\det(JB)=\frac{\det(E-b)}{\det(E+b)}=\frac{2-\tr(b)}{2+\tr(b)}$. It follows 
that $K=-1-\det(B)$. 
\end{enumerate}
In other terms, setting $\II=I(B\cdot,\cdot)$, we see that $\II$ satisfies the
Gauss and Codazzi equation relative to $I$. It follows that there exists
a (unique) isometric embedding of $(\HH^2, I)$ in $AdS_3$ with second fundamental
form $\II$ (and shape operator $B$). 

Equation (\ref{eq:defJB}) then shows that $E+JB=2(2+b)^{-1}$, so that $\mu_l=\rho_l$,
and a direct computation shows also that $\mu_r=\rho_r$. If $\Phi$ is quasi-conformal
then $b$ is bounded, so that the sectional curvature of $S$ is uniformy negative.
The first part of this section shows that the graph of $\dr\Phi$ in $S^1\times S^1
\simeq \dr_\infty AdS_3$ is equal to the boundary at infinity of $S$, and this 
finishes the proof of Proposition \ref{pr:translation}.

\section{The existence and regularity of maximal graphs}
\label{sc:maximal}

Given a   smooth
spacelike surface $M$  in $AdS_{n+1}$ we consider the
future-oriented normal vector field $\nu$.

The gradient function with respect to the field  $T=-\phi\bar\nabla t$ is
\[
    v_M=-\langle\nu, T\rangle\,.
\]
It measures the angle between the hypersurface $M$ and the horizontal slice.
Notice that $v_M(x)\geq 1$ for every $x\in M$.
If $M$ is the graph of a function $u$ then 
\[
   v_M=\frac{1}{\sqrt{1-\phi^2|\bar\nabla u|^2}}\,.
\]
In that case the normal field $\nu$ is equal to $\nu=\phi v_M(\nabla u-\nabla t)$.

The shape operator of $M$ is the linear operator of $TM$ defined by
\[
   B(v)=\bar\nabla_v\nu
\]
whereas the second fundamental form is defined by
$\II(v,w)=\langle v, B(w)\rangle$.
The mean curvature, denoted by $H$, is the trace of $B$.

In \cite{bartnik-existence} a general formula
for the mean curvature of a spacelike graph is given.
If $M$ is the spacelike graph of a function $u$ we have
\begin{equation}\label{meancurvature:eq}
H=\frac{1}{v_M}\left(\mathrm{div}_M(\phi \mathrm{grad}_M u)+\mathrm{div}_MT\right)~,
\end{equation}
where $\mathrm{div}_M$ is the operator on $M$ defined 
\[
   \mathrm{div}_M X=\sum \langle e_i,\bar\nabla_{e_i}X\rangle,\qquad
   X\in\Gamma(TAdS_{n+1})
\]
where $e_i$ is any orthonormal basis.


A spacelike surface $M$ is \emph{maximal} if its mean curvature vanishes.


\subsection{Maximal hypersurfaces and convex subsets}

We concentrate here on convexity properties of maximal hypersurfaces in $AdS_{n+1}$.

\begin{lemma}\label{planes:lem}
Let $M$ be a compact maximal graph.
Suppose that there exists a spacelike plane $P$ such that
$\partial M$ is contained in $I^-(P)$. 
Then $M$ is contained in $I^-(P)$.
\end{lemma}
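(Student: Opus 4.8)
The plan is to reduce to the case where $P$ is a horizontal slice and then run a geometric maximum principle against the totally geodesic foliation by such slices. First I would use the fact recalled in Section \ref{geodesics:sec} that every spacelike hyperplane is conjugate, by an isometry of $AdS_{n+1}$, to a horizontal slice $\HH^n\times\{a\}$. Since maximality is isometry-invariant and the graph property is preserved by isometries (the Remark following Proposition \ref{spacelikegraphs:prop}), after applying such an isometry and a time translation I may assume $P=\HH^n\times\{0\}$, with $M$ still a compact maximal graph. The crucial structural input is that \emph{every} horizontal slice $\HH^n\times\{a\}$ is totally geodesic, hence maximal. Moreover, since $t$ is strictly increasing along future-directed timelike curves, one has $I^-(P)=\{t<0\}$, so the hypothesis reads $t<0$ on $\partial M$.

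Next I would restrict the time function $t$ to the compact surface $M$ and set $t_{\maxi}=\max_M t$, attained at some $x_0\in M$. If $t_{\maxi}\geq 0$, then because $t<0$ on $\partial M$ the point $x_0$ must be interior. At an interior maximum $dt$ vanishes on $T_{x_0}M$, so $M$ is tangent at $x_0$ to the horizontal slice $Q=\HH^n\times\{t_{\maxi}\}$, and $M$ lies locally in the past $\{t\leq t_{\maxi}\}$ of $Q$. Writing both surfaces near $x_0$ as graphs $t=u_M(x)\leq u_Q(x)\equiv t_{\maxi}$ over $\HH^n$, the difference $w=u_Q-u_M\geq 0$ attains an interior minimum (value $0$) at $x_0$.

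Now I would invoke the strong maximum principle. Both $M$ and $Q$ are maximal, so $u_M$ and $u_Q$ solve the same quasilinear equation $H=0$ given by (\ref{meancurvature:eq}); on the compact surface $M$ the quantity $\phi^2|\bar\nabla u|^2$ stays bounded away from $1$, so this operator is uniformly elliptic near $x_0$. Subtracting the two equations and freezing coefficients along the segment joining the two solutions shows that $w$ satisfies a homogeneous linear second-order elliptic equation; since $w\geq 0$ has the interior minimum $0$ at $x_0$, Hopf's strong maximum principle forces $w\equiv 0$ near $x_0$, i.e. $M$ agrees with $Q$ on a neighbourhood of $x_0$. The set of points where $M$ coincides with $Q$ is therefore open, and it is plainly closed; as $M$ is connected it is all of $M$, whence $M\subset Q=\{t=t_{\maxi}\}$. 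But then $\partial M\subset\{t=t_{\maxi}\geq 0\}$, contradicting $\partial M\subset\{t<0\}$. Hence $t_{\maxi}<0$, that is $M\subset\{t<0\}=I^-(P)$.

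The main obstacle is this maximum-principle step: one must verify that the maximal-graph equation (\ref{meancurvature:eq}) is genuinely elliptic on spacelike graphs, that the comparison of $M$ with the totally geodesic slice $Q$ is legitimate at the tangency point (same future normal, correct causal ordering), and that Hopf's lemma together with the resulting unique-continuation/rigidity conclusion applies. The reduction to horizontal slices is exactly what makes this clean, since all of them are maximal and they foliate $AdS_{n+1}$, supplying explicit comparison surfaces and avoiding any focal-point difficulty. If one prefers to avoid the connectedness hypothesis in the rigidity step, I would instead use a sweeping argument, lowering the slice $\{t=a\}$ from $a=t_{\maxi}$ until its first contact with $M$, which yields the same contradiction.
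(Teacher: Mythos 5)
Your proof is correct, but it takes a genuinely different route from the paper's. The paper does not compare $M$ with the horizontal slices at a tangency point: after normalizing $P=\{t=0\}$ and using $M\subset I^+((p_0)_-)\cap I^-((p_0)_+)$ to confine $t$ to a window of length $2\pi$, it introduces the function $u(x,t)=x_{n+1}\sin t$ — the restriction to the quadric $AdS^*_{n+1}$ of the linear form $y\mapsto\langle y,e\rangle$ with $e=(0,\ldots,0,-1)$ — and computes that on any maximal hypersurface $\Delta u=nu$. The boundary hypothesis then gives $u<0$ on $\partial M$, a hypothetical point of $M$ in $I^+(P)$ would give $u>0$, and the identity $\Delta u=nu$ forces any interior maximum of $u$ to be nonpositive; this is a contradiction using only the \emph{weak} maximum principle for an explicit scalar equation. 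Your argument instead runs the geometric tangency principle for the quasilinear mean-curvature operator (\ref{meancurvature:eq}) against the totally geodesic foliation by slices $\HH^n\times\{a\}$; it needs the \emph{strong} (Hopf) maximum principle plus the open-closed unique-continuation step, but it is more conceptual, avoids the quadric computation, and would work against any ambient foliation by maximal hypersurfaces. Two small points you should make explicit: first, when you freeze coefficients the linearized operator acquires a zeroth-order term $c(x)w$ of a priori arbitrary sign (the operator depends on $u$ itself through $\phi$ and the ambient geometry), and the strong maximum principle still applies because $w\ge 0$ attains the minimum value $0$, so one may replace $c$ by $\min(c,0)$ and work with the resulting differential inequality; second, your uniform-ellipticity claim is right, but note that the spacelike condition is $\phi^2|\bar\nabla u|^2<1$ (the inequality in (\ref{splike:eq}) as printed is reversed relative to (\ref{normal:eq})), and compactness of $M$ is exactly what bounds this quantity away from $1$.
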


\begin{proof}
Suppose by contradiction that a point $p_0$ of $M$ lies in the future of $P$.
Without loss of generality we can suppose that $P$ is the horizontal
plane $\{t=0\}$ and $p_0=(x^0,a)$ with $a>0$.  
Since $M$ is contained in $I^+((p_0)_-)\cap I^-((p_0)_+)$, by our assumption on the boundary
we have that $0<a<\pi$ and $\partial M$ is contained in the region
of points with $-\pi<t<0$.

Consider the function $u:AdS_{n+1}\rightarrow\R$
defined at the point $p=(x,t)$ as
\[
  u(p)=x_{n+1}\sin(t)~.
\]

By our assumption,
\begin{equation}\label{neg:eq1}
u(p)< 0\quad \textrm{for  every }p\in\partial M\,.
\end{equation}

We compute now $\Delta u$, where $\Delta$ is the Beltrami-Laplace operator of $M$.
Notice that $u$ is the pull-back of the function $u^*$ defined on $AdS^*_{n+1}$ as
\[
  u^*(y)=\langle y, e\rangle~,
\]
where $e=(0,\ldots,0,-1)$. Thus we can suppose that $M$ is immersed in
$AdS^*_{n+1}$ and compute $\Delta u^*$. Notice that the gradient of
$u^*$ is the orthogonal projection of $e$ on $M$, that is,
\[
\nabla u(y)= e + \langle e,y\rangle y +\langle e, \nu^*\rangle \nu^*= 
e + uy+\langle e,\nu^*\rangle\nu^*~,
\]
where $\nu^*$ is the normal field of $M$ in $AdS^*_{n+1}$.
Since for $v\in T_yM$, $\nabla_v(\nabla u)$ is the tangential part
of $\bar\nabla_v(\nabla u)$ (where $\bar\nabla$ is the standard connection
in $\mathbb R^{2,2}$) we have 
\[
\nabla_v(\nabla u^*)=u^*v +\langle e,\nu^*\rangle B(v)~.
\]
Taking the trace we get $\Delta u^*=nu^* +\langle e,\nu^*\rangle H=nu^*$, where
the last equality holds since $M$ is maximal.
Eventually we have
\[
  \Delta u= nu\,.
\]

In particular if the maximum of the function $u$ is achieved at some interior point of
$M$, then it must be negative. Since $u(p_0)>0$ we get a contradiction.
\end{proof}

\begin{defi}
A convex slub of $AdS_{n+1}$ is  a convex domain in $AdS_{n+1}$ whose boundary is the union
of two  acausal graphs.
\end{defi}
Let $K$ be a convex slub and $M_v$ and $M_u$ be its boundary components with
$v<u$. The domain $K$ is
\[
\{ (x,t)| u(x)\leq t\leq v(x)\}~.
\]
The component $M_v$ (resp. $M_u$) is called the past (resp. future) boundary of $K$.
Notice that the future boundary is past-convex: this means that
points of $M_v$ are related by a spacelike geodesic that lies in the past  of $M_v$.  
Analogously $M_u$ is future convex.

Since points of a convex slub $K$ are connectible by geodesics, Remark
\ref{asym:rk} implies that the asymptotic boundary of $K$ can
intersect each vertical line in $\partial_\infty AdS_{n+1}$ in at most
one point. So we have

\begin{cor}\label{bound:cor}
If $K$ is a convex slub then its boundary components share the same asymptotic boundary.
\end{cor}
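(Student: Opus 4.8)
The plan is to combine two facts that are already in place: that the asymptotic boundary of each acausal graph bounding $K$ is a graph over all of $\partial\HH^n$, and that the convexity of $K$ forces its full asymptotic boundary to meet each vertical line at most once. Once both are recorded, matching the single point that each side contributes over a given boundary direction will finish the argument.

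First I would observe that the two boundary components $M_u$ and $M_v$ are weakly spacelike (acausal) graphs, so, as noted just after Lemma \ref{lm:boundary} (the closure of an acausal graph in $\overline{AdS}_{n+1}$ is again a graph), the closure of each in $\overline{AdS}_{n+1}$ is a graph over $\overline{\HH^n}$. Consequently $\partial_\infty M_u$ and $\partial_\infty M_v$ are each graphs over the entire boundary $\partial\HH^n$: for every $\zeta\in\partial\HH^n$, each of them contains exactly one point of the vertical line $\{\zeta\}\times\R\subset\partial_\infty AdS_{n+1}$.

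Next, since $K=\{(x,t)\mid u(x)\le t\le v(x)\}$ contains both of its boundary components, we have $\partial_\infty M_u\subset\partial_\infty K$ and $\partial_\infty M_v\subset\partial_\infty K$. As recalled just before the statement, the geodesic connectibility of points of $K$ together with Remark \ref{asym:rk} forces $\partial_\infty K$ to meet every vertical line $\{\zeta\}\times\R$ in at most one point (two such points would be joined by a timelike arc at infinity, hence not by a geodesic in $AdS_{n+1}$). Fixing $\zeta\in\partial\HH^n$, the single point of $\partial_\infty M_u$ and the single point of $\partial_\infty M_v$ lying over $\zeta$ both belong to $\partial_\infty K$, which admits at most one such point; hence they coincide. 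Letting $\zeta$ range over $\partial\HH^n$ yields $\partial_\infty M_u=\partial_\infty M_v$, which is the assertion. No genuine obstacle arises here: the real content has already been absorbed into the preliminary facts, and the only points needing a word of care are the surjectivity of each asymptotic graph onto $\partial\HH^n$ and the inclusions $\partial_\infty M_u,\partial_\infty M_v\subset\partial_\infty K$, both of which are immediate.
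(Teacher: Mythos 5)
Your proof is correct and follows exactly the paper's intended argument: the paper derives the corollary from the observation (via geodesic connectibility of the slub and Remark \ref{asym:rk}) that $\partial_\infty K$ meets each vertical line at most once, and you simply make explicit the remaining implicit steps, namely that each boundary component's asymptotic boundary is a full graph over $\partial\HH^n$ and is contained in $\partial_\infty K$. No substantive difference from the paper's route.
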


\begin{remark}
Let $u$  and $v$ be two spacelike functions defined on $\mathbb H^n$ such that
$M_u$ is past convex, $M_v$ is future convex and $v(x)<u(x)$.
Corollary \ref{bound:cor} implies that  in general  the domain 
$\Omega=\{(x,t)| v(x)<t<u(x)\}$ is not convex.
On the other hand it is not difficult to see that
if the functions $u$ and $v$ coincide on $\partial\mathbb H^n$, then
$\Omega$ is a convex slub.
\end{remark}

\begin{remark}
Let $K$ be a convex slub and $D$ be the domain of dependence of its asymptotic boundary.
Then $K$ is contained in $\bar D$.
\end{remark}

An important property of convex slubs is that a maximal surface whose boundary 
is contained in a convex slub is completely contained in the slub.

\begin{prop}\label{convexity:prop}
Let $\Omega$ be a convex slub. If $M$ is a compact maximal surface such that
$\partial M$ is contained in $\Omega$. Then $M$ is contained in $\Omega$. 
\end{prop}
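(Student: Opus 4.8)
The plan is to reduce the statement to the maximum principle of Lemma \ref{planes:lem}, applied to the two boundary components of $\Omega$ separately. Write $\Omega=\{(x,t)\mid v(x)\le t\le u(x)\}$, where $M_u=\dr_+\Omega$ is the past-convex future boundary and $M_v=\dr_-\Omega$ the future-convex past boundary. Since $\Omega$ and $M$ are all graphs over (subsets of) $\HH^n$, with $M$ the graph of a function $u_M$ over a compact $K\subset\HH^n$, the hypothesis $\dr M\subset\Omega$ reads exactly $v\le u_M\le u$ on $\dr K$, and the conclusion to prove is $v\le u_M\le u$ on all of $K$. In particular $M$ cannot escape $\Omega$ ``through the sides'', only through the future or the past boundary, and by time symmetry it suffices to establish the upper bound $u_M\le u$.

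To prove $u_M\le u$ I would argue by contradiction. Suppose $a^*:=\max_K(u_M-u)>0$; since $u_M-u\le 0$ on $\dr K$, the maximum is attained at an interior point $x_0$, namely at $p_0=(x_0,u_M(x_0))\in M$. Translating $\Omega$ to the future by $a^*$ (a time translation, hence an isometry preserving both convexity and the acausal-graph structure) produces a convex slub $\Omega+a^*$ whose future boundary is the graph of $u+a^*$. By construction $u_M\le u+a^*$ on $K$ with equality precisely at $x_0$, so $M$ lies in the closed past of $\dr_+(\Omega+a^*)$ and touches it at $p_0$.

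The key step is then to produce a \emph{spacelike} plane through $p_0$ supporting $\Omega+a^*$ from the future. As $M$ is a spacelike graph, its tangent plane $T$ at $p_0$ is a spacelike totally geodesic plane. Using that $\dr_+(\Omega+a^*)$ is past-convex and that $x_0$ maximizes $u_M-(u+a^*)$, one checks that $T$ supports the convex region $\overline\Omega+a^*$, i.e.\ $\overline\Omega+a^*\subset\overline{I^-(T)}$ with $p_0\in T$; translating back down by $a^*>0$ then yields the \emph{strict} inclusion $\overline\Omega\subset I^-(T)$. Consequently $\dr M\subset\Omega\subset I^-(T)$, so Lemma \ref{planes:lem} applies to the compact maximal graph $M$ and gives $M\subset I^-(T)$, contradicting $p_0\in M\cap T$. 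Hence $a^*\le 0$, that is $u_M\le u$; the symmetric time-reversed argument gives $u_M\ge v$, and therefore $M\subset\Omega$.

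The main obstacle is precisely the production of the spacelike support plane $T$ with $\overline\Omega\subset I^-(T)$ strictly. The future boundary of $\Omega$ is only acausal and a priori merely $\mathrm C^{0,1}$, so at $p_0$ it may fail to be differentiable and may carry only lightlike support planes, to which Lemma \ref{planes:lem} does not apply. The device circumventing this is to use the tangent plane of $M$ itself --- which is automatically spacelike --- rather than a support plane intrinsic to $\dr\Omega$: the comparison $u_M\le u+a^*$ together with the convexity of $\Omega$ (concavity of $u$ with respect to spacelike geodesic planes) forces this spacelike plane to support $\overline\Omega+a^*$. Once $T$ is in hand the contradiction is immediate, so essentially the whole weight of the argument rests on this geometric support-plane verification.
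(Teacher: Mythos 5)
Your proof is correct, but it follows a genuinely different route from the paper's. The paper deduces the proposition in one line from Lemma \ref{lm:omega}: every point of $\dr_\pm\Omega$ carries a non-timelike support plane $P_p$ with $\Omega\subset I^\mp(P_p)$, and $\Omega$ is exactly the intersection of all the corresponding half-spaces; applying Lemma \ref{planes:lem} to each half-space containing $\dr M$ then traps $M$ in the intersection. You instead run a sliding/first-contact argument: translate $\Omega$ to the future by $a^*=\max(u_M-u)>0$, locate the interior contact point $p_0$, and use the tangent plane $T$ of $M$ at $p_0$ as the separating spacelike plane. The step you leave as ``one checks'' does go through, and is worth spelling out since it is the crux: by convexity $\Omega+a^*$ admits some non-timelike support plane $P$ at $p_0$, so $u+a^*\le u_P$ and hence $u_M\le u_P$ with equality at the interior point $x_0$; the vanishing of $\nabla(u_M-u_P)$ at $x_0$ is impossible if $P$ is lightlike (it would force $M$ to be lightlike at $p_0$) and forces $P=T$ if $P$ is spacelike, so $T$ is indeed a support plane of $\overline\Omega+a^*$, and the strict inclusion $\overline\Omega\subset I^-(T)$ needed for Lemma \ref{planes:lem} then comes for free from $a^*>0$. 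The trade-off is this: the paper's route needs the global representation of $\Omega$ as an intersection of half-spaces (the second assertion of Lemma \ref{lm:omega}) but no contradiction argument, while yours needs only one support plane at one point and obtains strictness from the positive translation, at the cost of the tangency analysis at the contact point. Both arguments ultimately rest on the same maximum principle, Lemma \ref{planes:lem}.
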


Proposition \ref{convexity:prop} is a direct consequence of Lemma
\ref{planes:lem} and the following lemma.

\begin{lemma}\label{lm:omega}
Let $\Omega$ be a convex slub and let $S_-,S_+$ denote respectively its past and future boundary.
For every $p\in S_-$ (resp. $p\in S_+$) there is a spacelike geodesic plane $P_p$ passing through
$p$ such that $\Omega\subset I^+(P_p)$ (resp. $\Omega\subset I^-(P_p)$).

Moreover we have
\[
 \Omega=\bigcap_{p\in S_-}I^+(P_p)\cap \bigcap_{p\in S_+}I^-(P_p)~.
\]
\end{lemma}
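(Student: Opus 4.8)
The plan is to produce the support planes from the convexity of $\Omega$ in the projective model of Section \ref{proj:sec}, to check that they can be taken spacelike with $\Omega$ on the correct side by exploiting the acausality of the boundary, and finally to deduce the intersection formula by a soft argument along vertical lines.

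First I would fix a point $q\in AdS_{n+1}$ with $\Omega\subset\overline{U_q}$: such a $q$ exists because $\Omega$ is contained in the closure of the domain of dependence $D$ of its asymptotic boundary, and we have already seen that $D\subset U_q$ for a suitable $q$. In the projective chart $\pi^*:U_q\to\R^{n+1}$ geodesics become affine segments, so $\pi^*(\Omega)$ is a bounded convex body, and the supporting hyperplane theorem gives at each boundary point $p$ an affine hyperplane through $\pi^*(p)$ with $\pi^*(\Omega)$ on one side. Pulling back, this corresponds to a totally geodesic hyperplane $P_p\ni p$ with $\Omega$ on one side of $P_p$.

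The key step is that for $p\in S_-$ one may take $P_p$ spacelike with $\Omega\subset I^+(P_p)$. Writing $S_-=\{t=f_-(x)\}$, $S_+=\{t=f_+(x)\}$ and $\Omega=\{(x,t)\,|\,f_-(x)\le t\le f_+(x)\}$, I would show that the tangent cone $C_p$ of $\Omega$ at $p=(x_0,f_-(x_0))$ contains the future timelike cone. Indeed, if $r=(x_r,t_r)\in I^+(p)$ is close to $p$, its time coordinate lies below $f_+$, and it cannot lie under $S_-$: if $t_r<f_-(x_r)$ then, setting $s=(x_r,f_-(x_r))\in S_-$, transitivity of the chronological relation gives $s\in I^+(p)$, contradicting the acausality of $S_-$ (Proposition \ref{acausal:prop}); hence $r\in\Omega$. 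Passing to the dual cone, every supporting conormal at $p$ then lies in the closed past timelike cone, so at a non-singular $p$ one can select a strictly timelike conormal, i.e. a spacelike $P_p$, and the inclusion of the future cone in $C_p$ forces $\Omega\subset I^+(P_p)$. The case $p\in S_+$ is symmetric, yielding $\Omega\subset I^-(P_p)$.

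It remains to prove $\Omega=\bigcap_{p\in S_-}I^+(P_p)\cap\bigcap_{p\in S_+}I^-(P_p)$. The inclusion $\subset$ is immediate from the support property. For the reverse inclusion I would argue along vertical lines: if $w=(x_0,t_0)\notin\Omega$ then, $\Omega$ being the region between the two graphs, either $t_0<f_-(x_0)$ or $t_0>f_+(x_0)$. In the first case set $p=(x_0,f_-(x_0))\in S_-$; since $w$ lies vertically below $p$ we have $w\in I^-(p)\subset I^-(P_p)$, so $w\notin I^+(P_p)$ and $w$ is excluded from the right-hand side. The second case is symmetric. Hence equality holds. The main obstacle is the spacelikeness claim: ruling out timelike support planes is exactly what the inclusion of the future cone in $C_p$ achieves, but the lightlike borderline is delicate, since it occurs precisely at singular points of $S_-$ (interior points of lightlike segments in the boundary), where the only support plane is lightlike. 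One must therefore either restrict to slubs whose boundary has no singular points, or note that the last paragraph only uses that \emph{some} support plane through each boundary point lies on the correct side, which holds in all cases because lightlike planes also separate $AdS_{n+1}$ into a future and a past.
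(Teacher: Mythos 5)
Your proposal is correct and follows essentially the same route as the paper's proof: pass to the projective chart $\pi^*$ on a set $U_q$ containing $\Omega$, apply the supporting-hyperplane theorem to the convex body $\pi^*(\Omega)$, rule out timelike support planes by a causal argument, and obtain the reverse inclusion by running a timelike (here, vertical) path from an exterior point to the boundary. You are in fact slightly more careful than the paper about the lightlike borderline at singular boundary points: the paper's proof only establishes that the support plane is \emph{not timelike} and then uses that such a plane still separates $AdS_{n+1}$ into a future and a past, which is exactly the second resolution you propose.
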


\begin{proof}
Since $\Omega$ is contained in the domain of dependence $D$ of its asymptotic boundary, 
there is a point $p$ such that $\Omega\subset U_p$.
Up to isometry we can suppose that $p=(x^0,0)$ and 
 consider the projective map
\[
  \pi^*: U_p\rightarrow \mathbb R^{n+1}
\]
constructed in Section \ref{proj:sec}.
Since $\pi^*$ is a projective map, the set $\pi^*(\Omega)$ is convex in $\mathbb R^{n+1}$.
 
Given a point $q\in S_+$ the point $q^*=\pi^*(q)$ lies on the boundary
of $\pi^*(\Omega)$, so there is a support plane $P^*$ passing through
it.  We can consider the plane in $U_p$ equal to
$P_q=(\pi^*)^{-1}(P^*)$.  This plane passes through $q$ and does not
meet the interior of $\Omega$.  Since any timelike arc passing through
$q$ meet the interior of $\Omega$, the plane $P_q$ is not timelike.
In particular $P$ disconnects $AdS_{n+1}$ in two components that are
the future and the past of $P_q$. Since $q\in S_+$ it turns out that
$\Omega\subset I^-(P_q)$.  Analogously for $q\in S_-$ we find a plane
$P_q$ such that $\Omega\subset I^+(P_q)$.

In particular the inclusion
\[
 \Omega\subset\bigcap_{p\in S_-}I^+(P_p)\cap \bigcap_{p\in S_+}I^-(P_p)
\]
is proved.
Now take a point $q\notin\Omega$. Consider a timelike geodesic arc 
contained in $AdS_{n+1}\setminus\Omega$ such that $q$ is an end-point 
and the other end-point, say $p$, lies on $\partial \Omega$.
Without loss of generality we can assume $p\in S_+$.
In that case it turns out that $q\in I^+(P_p)$, so the reverse inclusion is also proved.
\end{proof}

\begin{lemma} \label{lm:ch}
Let $\Sigma$ be a spacelike graph in $\partial_\infty AdS_{n+1}$.
There is a convex slub $K(\Sigma)$, called the convex hull of $\Sigma$, 
such that :
\begin{itemize}
\item The asymptotic boundary of $K(\Sigma)$ is $\Sigma$.
\item Every convex slub with boundary $\Sigma$ contains $K(\Sigma)$.
\end{itemize}
\end{lemma}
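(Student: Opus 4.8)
The plan is to realize $K(\Sigma)$ as an affine convex hull inside the projective chart of Section~\ref{proj:sec}, exploiting that this chart turns geodesic convexity into ordinary affine convexity. Since $\Sigma$ is a spacelike graph at infinity, it is the asymptotic boundary of its domain of dependence $D$, which by Proposition~\ref{boundarydep:prop} is geodesically convex with $\partial_\infty D=\Sigma$, and whose closure $\overline D$ is compact in $\overline{AdS}_{n+1}$ by Remark~\ref{compact:rem}. As established in Section~\ref{sc:spacelike}, there is a point $p$ with $D\subset U_p$, so I may work in the projective embedding $\pi^*\colon U_p\to\mathbb R^{n+1}$, which carries geodesic segments of $AdS_{n+1}$ to affine segments. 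Because $D$ is geodesically convex, $\pi^*(\overline D)$ is a compact convex subset of $\mathbb R^{n+1}$, and $\pi^*(\Sigma)$ lies on its boundary, on the quadric $Q$ bounding the image of the chart.

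I would then set $\widehat K\subset\mathbb R^{n+1}$ to be the affine convex hull of $\pi^*(\Sigma)$. Since $\pi^*(\overline D)$ is convex and contains $\pi^*(\Sigma)$, we have $\widehat K\subset\pi^*(\overline D)$, so $\widehat K$ stays inside the chart and $K(\Sigma):=(\pi^*)^{-1}(\widehat K)\cap AdS_{n+1}$ is well defined and, $\pi^*$ being projective, geodesically convex. To verify $\partial_\infty K(\Sigma)=\Sigma$: the inclusion $\subseteq$ follows from $K(\Sigma)\subset\overline D$ together with $\partial_\infty D=\Sigma$; the reverse inclusion holds because every point of $\pi^*(\Sigma)$ lies on $Q$ and therefore on $\partial\widehat K$ (a point of $Q$ cannot be interior to the convex set $\widehat K\subset\overline{\pi^*(\mathrm{int}(U_p))}$), so $\Sigma$ lies in the closure of $K(\Sigma)$.

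The technical heart is to show that the boundary of $K(\Sigma)$ in $AdS_{n+1}$ is the disjoint union of two acausal graphs, i.e.\ that $K(\Sigma)$ is a convex slub in the sense of the definition above. At each boundary point there is a support plane of $\widehat K$, and the corresponding totally geodesic plane in $AdS_{n+1}$ cannot be timelike: a timelike support plane would, by Remark~\ref{asym:rk}, separate two points of $\Sigma$ joined by a timelike arc of $\partial_\infty AdS_{n+1}$, contradicting that $\Sigma$ is a graph. Hence $\partial K(\Sigma)$ is nowhere timelike, and slicing by vertical lines---each vertical line meeting the interior of $K(\Sigma)$ exiting through exactly one upper and one lower boundary point---exhibits the boundary as the union of two nowhere-timelike graphs over $\mathbb H^n$ meeting only at infinity, a future boundary and a past boundary. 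This is precisely the support-plane picture organized by Lemma~\ref{lm:omega}, so $K(\Sigma)$ is a convex slub with asymptotic boundary $\Sigma$.

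Finally, minimality is comparatively soft. If $K'$ is any convex slub with asymptotic boundary $\Sigma$, then $K'\subset\overline D$ (a convex slub lies in the closed domain of dependence of its asymptotic boundary) and $K'$ is geodesically convex, so $\pi^*(K')$ is a convex subset of $\mathbb R^{n+1}$ whose closure contains $\pi^*(\Sigma)$, and therefore contains $\widehat K$; pulling back gives $K(\Sigma)\subset K'$. I expect the third step to be the main obstacle: converting affine convexity together with the causal constraint into the simultaneous statements that the two boundary components are graphs and that they are weakly spacelike.
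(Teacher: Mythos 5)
Your construction is correct and is essentially the paper's own proof: both pass to the projective chart $\pi^*$ on $U_p$, take the affine convex hull of $\pi^*(\Sigma)$ inside the image of the domain of dependence, and rule out timelike support planes (since such a plane would disconnect $\Sigma$) to conclude that the boundary splits into two acausal graphs. You additionally spell out the minimality property via convexity of $\pi^*(K')$, which the paper leaves implicit.
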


\begin{proof}

Let $D$ be the domain of dependence of $\Sigma$ and take $p\in D$.

Consider the image $\Sigma^*$ of $\Sigma$ through the projective map
\[
   \pi^*: U_p\rightarrow\mathbb R^{n+1}~.
\]
Clearly $\Sigma^*$ is contained in the image, say $D^*$, of $D$.
In particular the convex hull in $\mathbb R^{n+1}$ of $\Sigma^*$, say $K$, is contained in $D^*$.

We denote by $K(\Sigma)$ the convex set $(\pi^*)^{-1}(K)$.
It is clear that $\Sigma$ is contained in the asymptotic boundary of $K(\Sigma)$.
By Corollary \ref{bound:cor}, $\Sigma$ coincides 
 with the asymptotic boundary of $K(M)$.
 
Clearly no support plane of $K(\Sigma)$ can be timelike. Indeed timelike planes
disconnect the asymptotic boundary of $K(\Sigma)$.
This implies that the boundary of $K(\Sigma)$ in $AdS_{n+1}$ is locally achronal.
Moreover it has two components, and each of them disconnects $AdS_{n+1}$ in
two components.
It follows easily that $K(\Sigma)$ is a convex slub.
\end{proof}

\begin{remark} \label{rk:chull}
The same proof shows that:
for a spacelike graph $M$ in $AdS_{n+1}$, there is convex slub, say $K(M)$, such that
\begin{itemize}
\item $K(M)$ contains $M$.
\item If $K$ is a convex slub containing $M$, then $K(M)\subset K$.
\end{itemize}
The slub $K(M)$ is called the convex hull of $M$.
\end{remark}

Clearly if $D$ is the domain of dependence of $\Sigma$ we have
$K(\Sigma)\subset \overline D$.  An important technical point for what
follows is the following statement. Recall that singular points of
$\Sigma$ are points contained in some light-like segment contained in
$\Sigma$.

\begin{lemma} \label{lm:nosing}
If $\Sigma$ is spacelike graph in $\partial_\infty AdS_{n+1}$ without singular points, 
then the boundary components of $K=K(\Sigma)$ do not contain singular points.
Moreover, in this case, no point of $K$ is contained in $\partial D$.
\end{lemma}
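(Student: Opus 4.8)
The plan is to read everything off the convex-hull structure of $K=K(\Sigma)$ together with the acausality of $\Sigma$, the decisive input being that, since $\Sigma$ is a spacelike (acausal) graph, \emph{each lightlike geodesic meets $\Sigma$ in at most one point} — two points of $\Sigma$ on a common lightlike geodesic would be lightlike-related (compare Remark \ref{asym:rk}). I first show that the two boundary components $S_\pm$ of $K$ contain no singular point, and then deduce that $K$ does not meet $\partial D$.

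For the first statement, suppose $S_+$ (say) contained a singular point $m$, lying in the interior of a lightlike segment $\ell\subset S_+$. By Lemma \ref{lm:ch}, $K$ is a convex slub, so as in Lemma \ref{lm:omega} it admits at every boundary point a nowhere-timelike support plane; at $m$ this plane $P$ satisfies $K\subset\overline{I^-(P)}$, and it must contain $\ell$. Since a spacelike plane meets the lightlike geodesic carrying $\ell$ transversally, one side of $\ell$ would then enter $I^+(P)$; hence $P$ is necessarily \emph{lightlike}. Let $L$ be the leaf of the lightlike foliation of $P$ (Lemma \ref{llkplanes:lem}) through $\ell$, with asymptotic endpoints $\zeta_-,\zeta_+$, and set $\ell'=L\cap\overline K$, the maximal lightlike segment of $\partial K$ along $L$. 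As $P$ is a support plane, $\ell'$ lies in the contact face $F=P\cap\overline K\subset\partial K$; and since the two components $S_\pm$ are disjoint in $AdS_{n+1}$ (they meet only at infinity, by Corollary \ref{bound:cor}), the connected segment $\ell'$ lies in $\overline{S_+}$, with interior in $S_+$.

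Next I analyze the endpoints $a,b$ of $\ell'$, each of which lies either in $\Sigma$ or in $S_+\subset AdS_{n+1}$. If both lay in $\Sigma$ they would be $\zeta_-,\zeta_+$, i.e. two points of $\Sigma$ on the single lightlike geodesic $L$, which acausality forbids. So at least one endpoint, say $b$, is interior. Working in the projective model (Lemma \ref{lm:ch}) one has $F=\mathrm{conv}(F\cap\Sigma)$, so $b$ is a nontrivial convex combination of points of $F\cap\Sigma$; moreover acausality forces $F\cap\Sigma$ to meet each leaf of $P$ at most once. The decisive — and hardest — step is to rule out an interior endpoint. Since $b$ is non-extreme (it is not in $\Sigma$) it lies on a segment of $F$ transversal to $L$, so $F$ is at least two-dimensional and carries a whole family of leaves; examining these leaves through the leaf-space projection $\rho$ of $P$ (which collapses each leaf to a point) and using that each leaf carries at most one point of $\Sigma$, the aim is to force some leaf of $F$ to terminate at \emph{two} points of $\Sigma$, that is, a lightlike segment inside $\Sigma$ — contradicting the absence of singular points. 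Hence both endpoints of $\ell'$ lie in $\Sigma$, which we already excluded; therefore $S_\pm$ contain no lightlike segment, i.e. no singular point.

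Finally, suppose some $q\in K$ lay on $\partial D$. Since $\mathrm{int}\,K\subset\mathrm{int}\,D$, necessarily $q\in S_\pm$, say $q\in S_+\cap\partial_+D$; by Proposition \ref{boundarydep:prop}, $q$ is joined to a point $\xi\in\Sigma$ by a lightlike ray. Convexity of $\overline K$ together with $\xi\in\Sigma\subset\overline K$ gives $[q,\xi]\subset\overline K$, and since every interior point of $[q,\xi]$ lies on $\partial D$ (hence not in $\mathrm{int}\,K\subset\mathrm{int}\,D$) it must lie in $S_+$; thus $[q,\xi]$ is a lightlike segment in $S_+$, whose interior points are singular. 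This contradicts the first part, so no point of $K$ meets $\partial D$. The main obstacle throughout is the mixed configuration of a lightlike segment in $S_\pm$ with exactly one endpoint on $\Sigma$ and one endpoint interior: it is precisely there that the hypothesis that $\Sigma$ has no singular point has to be converted, through the face $F$ and the leaf structure of the lightlike support plane $P$, into a forbidden lightlike segment of $\Sigma$ itself.
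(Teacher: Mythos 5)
Your setup matches the paper's: you pass to a lightlike support plane $P$ at the singular point, identify the face $F=P\cap\overline K$ with the convex hull of $F\cap\Sigma$ (via the projective model), and aim to produce a lightlike segment inside $\Sigma$. But the decisive step is missing: you explicitly state that ``the aim is to force some leaf of $F$ to terminate at two points of $\Sigma$'' without carrying it out, and the route you sketch does not obviously close. Knowing that $F\cap\Sigma$ has at least two points is not enough: $\partial_\infty P$ consists of two lightlike arcs from $\zeta_-$ to $\zeta_+$, and two points of $\Sigma$ lying on \emph{different} arcs (away from $\zeta_\pm$) are spacelike-related at infinity, so they yield no contradiction and no leaf carrying two points of $\Sigma$. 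Your leaf-space projection $\rho$ is never actually used to exclude this configuration, which is exactly the ``mixed'' case you identify as the obstacle.

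The paper closes this gap with a direct argument you do not have: it shows that the past endpoint $\zeta_-$ of the lightlike support plane $P$ itself lies on $\Sigma$. The point is that the vertical line $l$ at infinity through $\zeta_-$ must meet the graph $\Sigma$; the component of $l$ above $\zeta_-$ lies in $I^+(P)$, which is disjoint from $\overline K\supset\Sigma$ since $P$ is a support plane, while the component below $\zeta_-$ lies outside $\overline{U_p}$ (for $p$ on the lightlike segment of $\partial_+K$), which contains $\Sigma$; hence $\Sigma\cap l=\{\zeta_-\}$. Once $\zeta_-\in\Sigma$, the extreme-point fact gives a second point $q\in\overline P\cap\Sigma$, which necessarily sits on one of the two lightlike arcs of $\partial_\infty P$ emanating from $\zeta_-$; the sub-arc from $\zeta_-$ to $q$ is then a lightlike segment forced (by the graph property of $\Sigma$) to lie entirely in $\Sigma$, contradicting the absence of singular points. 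This is the idea your argument needs; without it, the first (and main) assertion of the lemma is not proved. Your deduction of the second assertion from the first is essentially the paper's and is fine, modulo a justification that a point of $\operatorname{int}K$ cannot lie on $\partial D$. A smaller imprecision: ruling out the case where both endpoints of $\ell'$ lie in $\Sigma$ is not a consequence of ``acausality'' alone (weakly spacelike graphs may contain lightlike-related pairs); it uses the no-singular-point hypothesis through the fact that a lightlike arc at infinity joining two points of the graph $\Sigma$ must be contained in $\Sigma$.
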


\begin{figure}[h!]
\begin{center}
\input lightlike.pstex_t
\end{center}
\end{figure}

\begin{proof}
Suppose that a lightlike segment $c$ is contained in $\partial_+K$.
Take a support plane $P$ of $\partial_+K$ at some point of $c$. 
Clearly $P$ is lightlike and contains $c$.
For every $p\in c$ notice that
\begin{equation}\label{cc:eq}
I^+(P)\cap \partial_+K=\emptyset~, \qquad \Sigma\subset\overline U_p\,. 
\end{equation}

Let $p_-$ be the past end-point of the lightlike geodesic through $p$
contained in $P$. Let $l$ be the vertical line through $p_-$.  Since
$\Sigma$ is a graph, it must intersect $l$ at some point.  Notice that
one component of $l\setminus\{p\}$ is contained in $I^+(P)$ whereas
the other component is contained in $I^-(p)$. This remark and (\ref{cc:eq}) 
show that $\Sigma$ must
intersect $l$ at $p_-$, that is, $p_-\in\Sigma$.


By a classical theorem on convex sets in Euclidean space (still using
the projective map $\pi^*$ as in Lemma \ref{lm:omega}), $P\cap
K(\Sigma)$ is the convex hull of $P\cap \Sigma$.  Thus there is
another point $q\in P\cap\Sigma$.

By Lemma \ref{llkplanes:lem}, we conclude that $p_-$ and $q$ are
connected by a lightlike segment and this contradicts the assumption
that $\Sigma$ does not contain any singular point.

Eventually, segments joining points of $\partial_+K(\Sigma)$ to $\Sigma$ are spacelike.
By Proposition \ref{boundarydep:prop} we conclude that no point 
of $\partial_+K(\Sigma)$ is contained in  $D$.
\end{proof}

\subsection{Existence of entire maximal graph with given boundary condition}

Let $\Sigma$ be a spacelike graph in $\partial_\infty AdS_{n+1}$ without singular points. 
In this section we prove the main theorem on the existence of a maximal graph with given
asymptotic boundary.

\begin{theorem}\label{maximal:teo}
There is a maximal graph $M$ in $AdS_{n+1}$ whose boundary at infinity coincides with $\Sigma$.
\end{theorem}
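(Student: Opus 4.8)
The natural strategy is to obtain the entire maximal graph $M$ as a limit of maximal graphs on larger and larger compact pieces, as the paper announces in the introduction (``the maximal surface is obtained as a limit of bigger and bigger maximal disks''). First I would fix an exhaustion of $\Sigma$ by the following data. Let $D$ be the domain of dependence of $\Sigma$ and let $K=K(\Sigma)$ be the convex hull from Lemma \ref{lm:ch}; by Lemma \ref{lm:nosing}, since $\Sigma$ has no singular points, the boundary components $\partial_\pm K$ contain no singular points and no point of $K$ lies on $\partial D$, so $K$ sits in the interior of $\overline D$. Using the projective model $\pi^*\colon U_p\rightarrow \R^{n+1}$ (Section \ref{proj:sec}) I would choose an increasing sequence of compact spacelike graphs $M_k$ with $\partial M_k \to \Sigma$, for instance by truncating $\partial_- D = M_{u_-}$ along larger and larger balls in $\HH^n$ and closing up. On each such boundary curve $\partial M_k$, which is compact and acausal, I would solve the Dirichlet problem for the maximal hypersurface equation: the existence of a compact maximal graph $N_k$ with $\partial N_k=\partial M_k$ follows from the standard theory of the spacelike mean-curvature-zero equation in Lorentzian manifolds (Bartnik--Simon type results, already cited as \cite{bartnik-existence,bartnik-simon}), since the boundary data are acausal and lie in the convex region $\overline D$.

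The second step is to confine all the $N_k$ to a fixed compact region so that a limit exists. This is exactly where Lemma \ref{planes:lem} and Proposition \ref{convexity:prop} enter. Since each $\partial N_k$ lies in the convex slub $K(\Sigma)$ (I would arrange the exhaustion so that $\partial M_k\subset K$), Proposition \ref{convexity:prop} forces $N_k\subset K(\Sigma)$ for every $k$. Thus all the approximating maximal graphs live inside the single compact set $\overline{K(\Sigma)}\subset \overline{AdS}_{n+1}$, with asymptotic boundary trapped inside $\Sigma$. This uniform confinement is the geometric heart of the argument and is what the convexity machinery of this subsection was built to supply.

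Next I would extract a convergent subsequence. On the interior, gradient and curvature estimates for the maximal equation (again of Bartnik type, using that the surfaces stay uniformly spacelike away from $\Sigma$ because they are trapped in the convex slub $K$, which is a positive distance from the ``light-like'' part of $\partial D$ by Lemma \ref{lm:nosing}) give local $C^\infty$ bounds on compact subsets of the interior. By a diagonal argument the $N_k$ then converge, uniformly on compact sets, to a smooth maximal graph $M\subset \overline{K(\Sigma)}$; being a monotone-type limit of graphs over exhausting domains, $M$ is itself an entire spacelike graph over $\HH^n$, using the Lipschitz bound (\ref{splike2:eq}).

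The final and most delicate step is to identify the asymptotic boundary: I must show $\partial_\infty M=\Sigma$, not merely $\partial_\infty M\subset\Sigma$. The containment $M\subset K(\Sigma)$ gives $\partial_\infty M\subseteq\Sigma$ immediately. For the reverse, I would use a barrier argument at infinity: for each point $\xi\in\Sigma$, Lemma \ref{lm:nosing} guarantees that $\xi$ is reached by the boundary $\partial_+ K$ and $\partial_- K$, which are themselves acausal graphs with asymptotic boundary exactly $\Sigma$ and which sandwich $M$ from above and below near $\xi$. Since $\overline{\partial_\pm K}$ both converge to $\xi$ and $M$ lies between them (as $M\subset K$ and $K$ is the region between $\partial_- K$ and $\partial_+ K$), the graph function of $M$ is squeezed so that $\overline M$ also accumulates at $\xi$. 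I expect \emph{this boundary-value control}—ruling out loss of boundary data at infinity in the non-compact limit—to be the main obstacle; the confinement by the convex slub is precisely the tool that makes it tractable, since it replaces delicate asymptotic estimates with the purely convex-geometric fact that $M$ is trapped between two acausal graphs sharing the boundary $\Sigma$ (Corollary \ref{bound:cor}).
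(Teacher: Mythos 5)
Your proposal follows essentially the same route as the paper: exhaust by compact maximal disks obtained from Bartnik's Dirichlet theory, confine them all in the convex hull $K(\Sigma)$ via Proposition \ref{convexity:prop}, use Bartnik-type a priori gradient estimates (the paper's Lemma \ref{apriori:lem}, applied over a finite cover of $K\cap(B_R\times\R)$ by sets $I^+_{\epsilon_k}(p_k)$ with $p_k\in D\cap I^-(\partial_-K)$) together with elliptic regularity and a diagonal argument to extract an entire maximal graph, and read off $\partial_\infty M=\Sigma$ from $M\subset K$. The only slip is the suggestion to truncate $\partial_-D$, whose truncations do not lie in $K$ (by Lemma \ref{lm:nosing} no point of $K$ is on $\partial D$); the paper instead truncates the future boundary of $K$ itself, which makes your parenthetical requirement $\partial M_k\subset K$ automatic.
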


Let us consider the following notation that we will use through this section:
\begin{itemize}
\item $D$ is the domain of dependence of $\Sigma$;
\item $K$ is the convex hull of $\Sigma$;
\item $S$ is the future boundary of $K$;
\item $B_r$ is the ball in $\mathbb H^n$ centered at $x^0$ of radius $r$;
\item $S_r$ is the intersection of $S$ with the cylinder $B_r\times \mathbb R$.
\end{itemize}

In \cite{bartnik-regularity} (Theorem 4.1) it is shown that there is
a maximal surface $M_r$ such that $\partial M_r=\partial S_r$.
Moreover $M_r$ is homotopic to $S_r$  (rel. $\partial S_r$)
in the sense that there exists a family of spacelike embeddings
\[
   h_s:S_r\rightarrow AdS_{n+1}
\]
such that 
\begin{enumerate}
\item $h_0=Id$, $h_1(S_r)=M_r$;
\item $h_s(x)=x$ for $x\in\partial S_r$ and $s\in [0,1]$;
\item the map $s\mapsto h_s(x)$ is a vertical path for every $x\in S_r$.
\end{enumerate}

It easily follows that $M_r$ is the graph of some function defined on $B_r$. 
Putting the previous results together we obtain the following lemma.

\begin{lemma}\label{approximation:lem}
For every $r>0$, there is a maximal surface $M_r$ such that $\partial
M_r=\partial S_r$.  Moreover, the surface $M_r$ is a graph of a
function $u_r$ defined on $B_r$ and is contained in $K$.
\end{lemma}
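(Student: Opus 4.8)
The plan is to assemble the ingredients established in the preceding paragraph with the convexity statement of Proposition \ref{convexity:prop}; the existence of $M_r$ and the fact that it is a graph are already in hand from Bartnik's theorem and the vertical-path homotopy, so the only genuinely new input required for the lemma is the containment $M_r\subset K$.

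First I would record the output of Theorem 4.1 of \cite{bartnik-regularity} applied to the spacelike disk $S_r$: for each $r>0$ it furnishes a maximal spacelike surface $M_r$ with $\partial M_r=\partial S_r$, together with the homotopy $h_s$ from $S_r$ to $M_r$ (rel.\ $\partial S_r$) through spacelike embeddings satisfying properties (1)--(3), in particular the vertical-path condition (3). From this I would deduce that $M_r$ is a graph over $B_r$. Since $S=\partial_+K$ is the future boundary of a convex slub, it is a weakly spacelike graph over $\HH^n$, so $S_r=S\cap(B_r\times\R)$ is the graph over $B_r$ of a function and its vertical projection to $B_r$ is a bijection. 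Property (3) keeps every point of $S_r$ on its own fibre $\{x\}\times\R$, so the vertical projection of $M_r=h_1(S_r)$ has the same image $B_r$ and is still a bijection; hence $M_r$ is the graph of a single function $u_r$ defined on $B_r$. Being a graph over a compact ball with compact boundary, $M_r$ is a compact surface.

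Finally I would obtain the inclusion $M_r\subset K$. By construction $\partial M_r=\partial S_r\subset S_r\subset S=\partial_+K\subset K$, so the boundary of the compact maximal surface $M_r$ lies in the convex slub $K$; Proposition \ref{convexity:prop} then applies verbatim and yields $M_r\subset K$. No serious obstacle arises here: the single point deserving a moment's care is that the vertical-path homotopy genuinely forces $M_r$ to remain a graph, rather than merely to be homotopic to one, but this is immediate from property (3), which pins each fibre. The lemma thus follows by putting the cited existence-and-regularity result together with Proposition \ref{convexity:prop}.
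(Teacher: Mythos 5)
Your proposal follows the paper's own argument essentially verbatim: existence of $M_r$ and the vertical-path homotopy come from Theorem 4.1 of \cite{bartnik-regularity}, the graph property is read off from condition (3) of that homotopy, and the inclusion $M_r\subset K$ is exactly the application of Proposition \ref{convexity:prop} to the compact maximal surface $M_r$ with $\partial M_r=\partial S_r\subset\partial_+K\subset K$. The reasoning is correct and complete.
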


The basic idea of the proof of Theorem \ref{maximal:teo} is to
construct a sequence $r_k\rightarrow+\infty$ such that $u_{r_k}$
converges $C^2$ on compact subset of $\mathbb H^n$.  The proof is
based on an a-priori gradient estimate, that is a particular case of an
estimate proved by Bartnik \cite{bartnik-regularity}.  Given a point
$p\in AdS_{n+1}$ and $\epsilon>0$ we denote by $I_\epsilon^+(p)$ the
set of points in the future of $p$ whose distance from $p$ is at least
$\epsilon$.

\begin{lemma}\label{apriori:lem}
Let $p\in AdS_{n+1}$ and $\epsilon>0$, and let 
$H \subset I^-(p_+)$ be a compact domain 
(where $p_+$ is defined in Section \ref{geodesics:sec}). 
There is a constant $C=C(p,\epsilon, H)$ such that,
for every maximal graph $M$ that verifies the following conditions:
 \begin{itemize}
 \item  $\partial M\cap I^+(p)=\emptyset$,
 \item $M\cap I^+(p)$ is contained in $H$,
  \end{itemize}
we have that
  \[
      \sup_{M\cap I^+_\epsilon(p)} v_M<C
  \]
where $v_M$ is the gradient function of $M$.    
\end{lemma}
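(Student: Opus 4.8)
The plan is to deduce this estimate from Bartnik's interior gradient estimate for maximal spacelike hypersurfaces \cite{bartnik-regularity}, after arranging the geometry near $p$ so that its hypotheses hold with constants depending only on $p$, $\epsilon$ and $H$. First I would localize: the two conditions on $M$ force the relevant portion $M\cap I^+(p)$ to lie in $H\subset I^-(p_+)$, hence inside the precompact star-neighbourhood $I^+(p)\cap I^-(p_+)$ of $p$. On this region the time-distance function $\delta_p(\cdot)=\delta(p,\cdot)$ is smooth (by the lemma on smoothness of $\delta_p$ on star-neighbourhoods) and, because $H$ is compact and stays away from the conjugate point $p_+$, its gradient and Hessian, the ambient metric and curvature, the lapse $\phi$, and the second fundamental forms of the level sets $\{\delta_p=c\}$ are all uniformly bounded on $H$ by constants depending only on $p$ and $H$.

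Next I would use $\delta_p$ (suitably normalized) as a time function replacing $t$ on this star-neighbourhood. Its level sets are the spacelike equidistant hypersurfaces, and they foliate $I^+(p)\cap I^-(p_+)$. Since $\partial M\cap I^+(p)=\emptyset$, the piece $M\cap I^+(p)$ is a spacelike maximal graph over a domain in such a slice, whose only boundary lies on the light-cone $\partial I^+(p)=\{\delta_p=0\}$, where $v_M$ may degenerate. The gradient function $v_M$ defined with respect to $T=-\phi\bar\nabla t$ and the analogous gradient function defined with respect to the unit normal of the $\delta_p$-foliation are uniformly comparable on $H$ (the change of time function being controlled there), so a bound on one yields a bound on the other. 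Bartnik's estimate then bounds the gradient of a maximal graph at an interior point in terms of the ambient geometry bounds, the oscillation of the height (which is bounded since $M\cap I^+(p)\subset H$ is compact), and the time-distance from the point to the boundary of the graph region. For $x\in M\cap I^+_\epsilon(p)$ this last distance is at least $\epsilon$, so the estimate produces a constant $C=C(p,\epsilon,H)$ with $v_M(x)<C$, uniformly over all admissible $M$.

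The main obstacle, and the step requiring care, is the faithful translation of the hypotheses of Bartnik's theorem --- stated for graphs over a fixed spacelike slice in a globally hyperbolic region equipped with a time function --- into the present AdS star-neighbourhood picture. Concretely one must verify that $I^+(p)\cap H$ is globally hyperbolic with Cauchy time function a reparametrization of $\delta_p$; that the leaves of this foliation have uniformly bounded geometry on $H$ (which is exactly where compactness and the inclusion $H\subset I^-(p_+)$, keeping us away from the conjugate locus $p_+$, are used); and that the condition $x\in I^+_\epsilon(p)$ indeed furnishes the interior distance to the boundary required by the estimate. Once these identifications are in place the conclusion is immediate, and all constants depend only on the allowed data $p$, $\epsilon$, $H$, since the estimate is applied to each $M$ separately and never uses $M$ beyond the two stated hypotheses.
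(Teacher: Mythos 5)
Your proposal is correct and follows essentially the same route as the paper: both use the Lorentzian distance $\delta(\cdot,p)$, smooth on the compact region $H\cap I^+(p)$ away from the conjugate point $p_+$, as an auxiliary time function, and then invoke Bartnik's interior gradient estimate (Theorem 3.1 of \cite{bartnik-regularity}), with the constant controlled by the $C^2$-norms of the time functions and the ambient curvature on that fixed compact set. The only cosmetic difference is that the paper keeps $v_M$ defined relative to the global time function $t$ and uses $\tau=\delta(\cdot,p)-\epsilon/2$ purely as the localizing function in Bartnik's theorem, whereas you additionally propose comparing the two gradient functions; this is harmless.
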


\begin{proof}
Let us consider the time-function
\[
  \tau(x)=\delta(x,p)-(\epsilon/2)
\]
where $\delta(x,p)$ is the Lorentzian distance between $x$ and $p$.
This function is smooth on the domain 
$\mathcal V= H\cap I^+(p)$.

Notice that by the assumption on $M$, the region
$M\cap\mathcal V$ contains the region of $M$ where $\tau\geq 0$
and $M\cap I^+_\epsilon(p)$ is contained in $\mathcal V$.

We can apply Theorem 3.1 of \cite{bartnik-regularity} and conclude that
 \[
      \sup_{M\cap I^+_\epsilon(p)} v_M<C
  \]
where $C$ depends on the $C^2$-norms of $t$ and $\tau$ and  on the $C^0$ norm of
$\mathrm{Ric}$, taken on the domain $\mathcal V_{\tau\geq 0}$ 
with respect to a reference Riemannian
metric.
\end{proof}

We can prove now Theorem \ref{maximal:teo}.

\begin{proof}[Proof of Theorem \ref{maximal:teo}]
For every  point $p\in D\cap I^-(\partial_-K)$ we choose $\epsilon=\epsilon(p)$ such that
the family $\{I^+_{\epsilon(p)}(p)\cap K\}_{p\in D\cap I^-(\partial_-K)}$ is an open 
covering of $K$.

Given a number $R$, the intersection $(B_R\times\mathbb R)\cap K$ is compact, 
so there is a finite numbers of points $p_1,\ldots, p_{k_0}\in D\cap I^-(\partial_-K)$ 
such for all $k\in \{ 1,\cdots, k_0\}$, there exists $\epsilon_k=\epsilon(p_k)$ such that
\[
    (B_R\times\mathbb R)\cap K\subset\bigcup_{1}^{k_0} I^+_{\epsilon_k}(p_k)~.
\]

For all $k\in \{ 1,\cdots, k_0\}$, $p_k\in D$, so that the intersection 
$\overline{I^+(p_k)}\cap D$ is compact. Moreover, $D\subset  I^-((p_k)_+)$.
It follows that 
the set $H_k=\overline{I^+(p_k)}\cap K$ is compact and contained in $I^-((p_k)_+)$.

By Lemma \ref{apriori:lem}, there is a constant $C_k$, 
such that
\[
  \sup_{M\cap I^+_{\epsilon_k}(p_k)} v_M<C_k
\]
for every maximal surface $M$ that satisfies the following requirements:
\begin{itemize}
\item  $\partial M\cap I^+(p_k)=\emptyset$; 
\item $M\cap I^+(p_k)$ is contained in $H_k$. 
 \end{itemize}

By the compactness of $I^+(p_k)\cap D$, there is $r_0>0$ such that
\[
   I^+(p_k)\subset B_{r_0}\times\mathbb R
\]
for $k=1,\ldots,k_0$.

Let $\{M_r\}$ be the family of maximal surfaces constructed in Lemma \ref{approximation:lem}.
Then $M_r\subset K$. Moreover there exists $r_0>0$ such that, 
for $r>r_0$, $\partial M_r\cap I^+(p_k)=\emptyset$ for 
$k=1,\ldots, k_0$.

It follows that $\sup_{M_r\cap I^+_{\epsilon_k}(p_k)} v_{M_r}\leq C_k$ for $k=1\ldots,k_0$. 
Since $M_r\cap (B_R\times\mathbb R)\subset\bigcup_k I^+_{\epsilon_k}(p_k)$ we conclude that
\begin{equation}\label{grad:eq}
  \sup_{M_r\cap(B_R\times\mathbb R)} v_{M_r}\leq \max\{C_1,\ldots,C_{k_0}\}
\end{equation}
for every $r>r_0$.

Eventually we deduce that for every $R$ there is a constant $C(R)$ such that
the gradient function of $v_{M_r}$ is bounded by $C(R)$ for $r$ sufficiently big.

Take now any divergent sequence $r_i$.  Let $u_i$ be the function
defined on $B_{r_i}$ such that $M_{r_i}=M_{u_i}$.  By comparing
Equation (\ref{meancurvature:eq}) with estimate (\ref{grad:eq}), we
see that the restriction of $u_i$ on $B_R$ is solution of a
uniformly elliptic quasi-linear operator on $B_R$, with bounded
coefficients.

Since $|u_i|$ and $|\bar\nabla u_i|$ are uniformly bounded on $B_R$,
by elliptic regularity theory (see e.g. \cite{gilbarg-trudinger}) the
norms of $u_i$ in $C^{2,\alpha}(B_{R-1})$ are uniformly bounded.  It
follows that the family $u_i$ is precompact in $C^2(B_{R-1})$.

By a diagonal process we extract a subsequence $u_{i_h}$ converging to
a function $u_\infty$ defined on $\mathbb H^n$ in such a way that the
convergence is $C^2$ on compact sets. Since the $u_{i_h}$ are
uniformly spacelike, so is $u_\infty$. Moreover, since it is the $C^2$
limit of solutions of Equation (\ref{meancurvature:eq}), it is still a solution.

As a consequence, $M=M_u$ is a maximal graph.
Since $M$ is a limit of surfaces contained in $K$, it is contained in $K$.
In particular the asymptotic boundary of $M$ is contained in $\Sigma$, 
and so it coincides with $\Sigma$.
\end{proof}

\subsection{Regularity of maximal hypersurfaces} \label{ssc:regularity}

We will now show that if the distance between $K$ and the past
boundary of $D$ is strictly positive, then any maximal surface
contained in $K$ has bounded second fundamental form.

\begin{theorem}\label{bound:teo}
Suppose that there exists $\epsilon>0$ such that, for every $y\in
\partial_-K$, there exists a point $x\in\partial_-D$ such that
$\delta(x,y)\geq\epsilon$.  Then there exists a constant $C>0$,
depending on $\epsilon$, such that the second fundamental form of any
maximal graph contained in $K$ is bounded by $C$.
\end{theorem}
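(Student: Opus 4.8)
The plan is to convert the hypothesis into a \emph{uniform} lower bound for the past Lorentzian distance from $M$ to $\partial_-D$, to use this to produce a uniform gradient bound along $M$, and finally to invoke Bartnik's interior curvature estimate; throughout, the homogeneity of $AdS_{n+1}$ (Remark \ref{rk:basics}) is what lets every constant be chosen independently of the point and of $M$.

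First I would record the distance bound. Let $M$ be a maximal graph contained in $K$; the containment is Proposition \ref{convexity:prop}. Given $p\in M$, the point $y\in\partial_-K$ lying on the same vertical line is in the past of $p$ along a timelike segment, and by hypothesis there is $x\in\partial_-D$ with $\delta(x,y)\ge\epsilon$. As $x$, $y$, $p$ are causally ordered inside the geodesically convex set $\overline D$ (Proposition \ref{boundarydep:prop}), the reverse triangle inequality (\ref{reverse:eq}) gives
\[
  \delta(x,p)\ \ge\ \delta(x,y)+\delta(y,p)\ \ge\ \epsilon .
\]
Hence every point of $M$ lies at past time-distance at least $\epsilon$ from $\partial_-D$.

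Next I would bound the gradient function. Fix $p\in M$ and the associated $x\in\partial_-D$. Then $M\cap I^+(x)\subset K\cap I^+(x)$ is compact (Lemma \ref{compact:lem}) and contained in $I^-(x_+)$ (Proposition \ref{boundarydep:prop}), so Lemma \ref{apriori:lem}, applied with observation point $x$ and parameter $\epsilon$, bounds $v_M$ on $M\cap I^+_\epsilon(x)$, and in particular at $p$ since $\delta(x,p)\ge\epsilon$. The constant there depends only on the $C^2$-geometry of the time function and of $\tau=\delta(\cdot,x)$ on the relevant compact region and on the ambient Ricci curvature; all of these are invariant under the isometries of $AdS_{n+1}$ and, for a configuration at past-distance exactly $\epsilon$, uniformly bounded. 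By homogeneity the constant therefore depends only on $\epsilon$, and we obtain $v_M\le V(\epsilon)$ on all of $M$. It is precisely here that the one-sided past hypothesis enters, and it suffices, since a single past observation point already controls the tilt.

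Finally, a uniform gradient bound upgrades to a uniform bound on the second fundamental form by Bartnik's interior estimate \cite{bartnik-regularity}: since $M$ is an entire spacelike graph over $\HH^n$, there is room for a fixed-size ball around every point, and on such a ball the estimate bounds $\II$ in terms of $V(\epsilon)$ and the (constant) ambient curvature of $AdS_{n+1}$. The main obstacle is exactly this uniformity: one must check that the constants entering both Lemma \ref{apriori:lem} and the interior estimate can be frozen along the whole surface, which is where transitivity of the isometry group together with the uniform distance $\epsilon$ from the first step is indispensable, as without it the estimates would degenerate as $p$ escapes to infinity in $M$. Heuristically the necessity of maximality is also visible: the trace-free condition $\tr B=0$ means that controlling the principal curvatures from one side, as the past barrier $\partial_-D$ does, automatically controls them from the other, a feature that would fail for a general spacelike graph.
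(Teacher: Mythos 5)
Your overall strategy --- reduce everything to Bartnik's a priori gradient estimate (Lemma \ref{apriori:lem}) applied from an observation point in the causal past of $M$, then upgrade to a curvature bound by interior elliptic estimates --- is indeed the strategy of the paper. The gap is at the uniformity step, which you dispatch with ``by homogeneity the constant therefore depends only on $\epsilon$''. The constant in Lemma \ref{apriori:lem} is $C(p,\epsilon,H)$: it depends on the compact set $H\subset I^-(p_+)$ containing $M\cap I^+(p)$, through the $C^2$-norms of $t$ and $\tau$ on $H$. Homogeneity lets you normalize the observation point, but it does not normalize the set $H=\overline{I^+(x)}\cap K$, which depends on $K$ and on which $x\in\partial_-D$ was chosen; to extract a single constant you must show that, after normalization, all these sets lie in one \emph{fixed} compact subset of $I^-(x_+)$. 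With the observation point taken on $\partial_-D$ itself this fails: by Lemma \ref{distance:lem} (second point) the set $K\cap\overline{I^+(x)}$ can reach all the way out to $P_+(x)$ (distance exactly $\pi/2$), and since $x\in\partial_-D$ is joined to $\Sigma$ by a lightlike ray (Proposition \ref{boundarydep:prop}), $\overline{I^+(x)}\cap K$ can accumulate on $\Sigma\cap\partial_\infty P_+(x)$, so it need not even be compact. Your appeal to Lemma \ref{compact:lem} is not legitimate here: that lemma is stated, and its proof only works, for $p$ in the open domain $D$, where $\Sigma$ lies in the \emph{open} set $U_p$; for $x\in\partial_-D$ one only has $\Sigma\subset\overline{U_p}$ with contact along $\partial_\infty P_+(x)$.

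The paper's proof contains exactly the ingredient you are missing. Instead of using $x=p_0\in\partial_-D$ as the observation point, it takes an intermediate point $q$ on the timelike segment $[p_0,p]$ with $\delta(p_0,q)\geq\epsilon/2$ and $\delta(q,p)\geq\epsilon/2$. The first point of Lemma \ref{distance:lem} ($\delta\leq\pi/2$ from $\partial_-D$ to any point of $K$) combined with the reverse triangle inequality (\ref{reverse:eq}) then forces $\delta(q,r)\leq\pi/2-\epsilon/2$ for every $r\in K\cap I^+(q)$, and the support plane of $K$ at the distance-maximizing $r$ can be normalized to a horizontal plane at height $\bar s<\pi/2-\epsilon/2$. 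This places the normalized $M\cap I^+(q)$ inside the fixed compact set $H_0=\overline{I^+(q_0)\cap I^-(P_0)}$ depending only on $\epsilon$, and it is this containment --- not homogeneity alone --- that makes the constant of Lemma \ref{apriori:lem}, and of the subsequent Schauder estimate, uniform over all $p\in M$ and all maximal graphs in $K$. Without this step your argument does not close.
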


To prove this theorem we will need the following relation between the
boundaries of $D$ and $K$.  The first part of the lemma will be used
in the proof of Theorem \ref{bound:teo}, while the second part will be
necessary below.

\begin{lemma}\label{distance:lem}
Let $\Sigma\subset \dr_\infty AdS_{n+1}$ be space-like graph, let $K=K(\Sigma)$ be its
convex hull, and let $D=D(\Sigma)$ be its domain of dependence. Then:
\begin{enumerate}
\item For all $q\in K$ and $p\in\partial_-D\cap I^-(q)$ we have
that $\delta(p,q)\leq \pi/2$.
\item For all $q\in\partial_+K$ there exists $p\in\dr_-D\cap I^-(q)$
such that $\delta(p,q)=\pi/2$.
\end{enumerate}
\end{lemma}

The proof of the first point in dimension $2+1$ can be found in \cite{benedetti-bonsante}. 
That argument actually applies in every dimension. For the sake of completeness we
sketch the argument here.

\begin{proof}
Since $p\in\partial_-D$, $\Sigma$ is contained in $\overline U_p$ and
$\Sigma\cap (P_+(p)\cup P_-(p))\neq\emptyset$.

Notice that the plane $P_+(p)$ does not disconnect $\Sigma$, so, it is a support plane for $K$.
In particular $K\subset \overline{I^-(P_+(p))}$.
This implies that the distance of every point of $K\cap I^+(p)$ from $p$ is bounded by $\pi/2$,
and proves the first point. Moreover, since $P_+(p)$ is a support plane of $K$, its 
intersection with $\dr_+K$ is non-empty. But for any point $q\in P_+(p)$ we have 
$\delta(p,q)=\pi/2$, and this proves the second point.
\end{proof}

As a consequence we find a bound on the width of the boundary at infinity of a space-like
graph in $AdS_{n+1}$. This estimate is improved for $n=2$ when the boundary at 
infinity is the graph of a quasi-symmetric homeomorphism, see Theorem \ref{tm:qsym}.

\begin{lemma} \label{lm:width}
Let $M\subset AdS_{n+1}$ be a space-like graph. 
Then $w(\dr_\infty M)\leq\pi/2$.  
\end{lemma}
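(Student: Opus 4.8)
The plan is to reduce the statement to a distance estimate between the two boundary components of the convex hull of $\Sigma := \partial_\infty M$, and then to combine Lemma \ref{distance:lem} with the reverse triangle inequality (\ref{reverse:eq}). First I would recall that, by Lemma \ref{lm:boundary}, $\Sigma$ is a space-like graph in $\partial_\infty AdS_{n+1}$, so that $w(\partial_\infty M)=w(\Sigma)$ is, by Definition \ref{df:width}, the supremum of $\delta(q_-,q_+)$ over pairs $q_-\in\partial_-K$, $q_+\in\partial_+K$ with $q_+\in I^+(q_-)$, where $K=K(\Sigma)$ is the convex hull of $\Sigma$ (Lemma \ref{lm:ch}). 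I also recall that $K\subset\overline{D}$, where $D=D(\Sigma)$ is the domain of dependence of $\Sigma$.

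Fix such a pair $q_-,q_+$. The key step is to drop from $q_-$ a past-directed inextensible time-like geodesic. Since $\partial_-D$ is a weakly space-like graph (Proposition \ref{boundarydep:prop}), this geodesic meets $\partial_-D$ in a unique point $p$ (Proposition \ref{spacelikegraphs:prop}); as $q_-\in\overline{D}$ lies in the future of, or on, $\partial_-D$, the point $p$ lies in the closed past of $q_-$, whence $p\in\partial_-D\cap I^-(q_+)$ (the case $p=q_-$ being trivial, since then $q_-\in\partial_-D$ and the argument below applies directly). Lemma \ref{distance:lem}(1), applied with $q=q_+\in K$, then gives $\delta(p,q_+)\leq\pi/2$.

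It remains to compare $\delta(q_-,q_+)$ with $\delta(p,q_+)$. Since $p,q_-,q_+$ all lie in $\overline{D}$, which by the last lemma of Section \ref{sc:spacelike} is contained in the closure of a single star-neighbourhood $U_{p_0}$, the reverse triangle inequality (\ref{reverse:eq}) applies and yields
\[
  \delta(p,q_+)\geq\delta(p,q_-)+\delta(q_-,q_+)\geq\delta(q_-,q_+)~.
\]
Combining the two inequalities gives $\delta(q_-,q_+)\leq\pi/2$, and taking the supremum over all admissible pairs proves $w(\Sigma)\leq\pi/2$.

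The main technical obstacle I anticipate is the rigorous application of (\ref{reverse:eq}) at points lying on $\partial D$ rather than in the open domain $D$: the inequality was stated for points interior to a star-neighbourhood, so one must check that it persists up to the boundary, either by approximating $p$ and $q_\pm$ from inside $D$ and invoking continuity of the Lorentzian distance, or by observing that the realizing geodesics stay inside $\overline{U_{p_0}}$, where all the relevant distance functions are well-behaved. A secondary point requiring care is the verification that the past-directed geodesic issued from $q_-$ exits $\overline{D}$ exactly through $\partial_-D$, which follows from the compactness of $\overline{D}$ (Remark \ref{compact:rem}) together with the graph property of $\partial_-D$ furnished by Proposition \ref{spacelikegraphs:prop}.
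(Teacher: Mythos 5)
Your argument is correct and is essentially the deduction the paper intends: Lemma \ref{lm:width} is stated there as an immediate consequence of Lemma \ref{distance:lem}(1), and your proof simply makes that deduction explicit by dropping a past-directed time-like geodesic from $q_-$ to a point $p\in\partial_-D$ and combining $\delta(p,q_+)\leq\pi/2$ with the reverse triangle inequality (\ref{reverse:eq}). The technical caveats you raise (continuity of $\delta$ up to $\partial D$, and that the geodesic exits $\overline{D}$ through $\partial_-D$) are handled exactly as you suggest, using the convexity and compactness statements of Proposition \ref{boundarydep:prop}.
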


We can now prove  Theorem \ref{bound:teo}. 

\begin{proof}[Proof of Theorem \ref{bound:teo}]
We consider $q_0=(x^0,0)$ and consider the horizontal plane
 $P_0$ passing though $(x^0,\pi/2-\epsilon/2)$, and define
$H_0=\overline{I^+(q_0)\cap I^-(P_0)}$. 

From Lemma \ref{apriori:lem}, we find a constant $C$ (depending on $\epsilon$)
such that
\[
     \sup_{N\cap I^+_{\epsilon/3}(q_0)} v_N<C
\]
for every maximal surface $N$ such that
\begin{enumerate}
\item $\partial N\cap I^+(q_0)=\emptyset$,
\item $N\cap I^+(q_0)\subset H_0$.
\end{enumerate}

Moreover, by applying the elliptic regularity theory as in
the proof of Theorem \ref{maximal:teo}, we see that 
there is another constant, still denoted by $C$, such that 
\[
  \sup_{N\cap I^+_{\epsilon/2}(q_0)} |A|^2<C
\]
for the same class of maximal surfaces.

Now consider a point $p$ on the maximal surface $M$.
By the assumption there is a point $p_0\in\partial_-D$ such that $\delta(p,p_0)>\epsilon$.
We can fix a point $q$ on the segment $[p_0,p]$ such that $\delta(p,q)>\epsilon/2$.

Since $I^+(q)\cap K$ is compact, there is a point $r\in\partial_+K$
that maximizes the distance from $q$.  Lemma \ref{distance:lem} and
the reverse triangle inequality imply that $\bar
s:=\delta(q,r)<\pi/2-\epsilon/2$.

Moreover the plane passing through $r$ and orthogonal to the segment
$[q,r]$ is a support plane $P$ for $K$ (that is $K\subset
\overline{I^-(P)}$).

Now consider an isometry $\gamma$ of $AdS_{n+1}$ such that  $\gamma(q)=(x^0,0)$
and $\gamma(r)= (x^0,\bar s)$.
We have that $\gamma(P)$ is  the horizontal plane
through $(x^0, \bar s)$.
Since $\bar s<\pi/2-\epsilon/2$, $\gamma(P)\subset I^-(P_0)$.
Thus, $\gamma(K)\subset I^-(P_0)$, and  $\gamma(M)\cap I^+(q_0)\subset H_0$.

In particular $\gamma(M)$ satisfies the conditions (1), (2) above and we conclude that
\[
     \sup_{\gamma(M)\cap I^+_{\epsilon/2}(q_0)} |\tilde A|^2<C\,.
\]
where $\tilde A$ denotes the second fundamental form of $\gamma(M)$.

Since $\gamma(p)\in I^+_{\epsilon/2}(q_0)$ we conclude that
\[
   |A|^2(p)=|\tilde A|^2(\gamma(p))<C\,.
\]
where the constant $C$ is independent of the point $p$.
\end{proof}

\begin{cor} \label{cr:bound}
Suppose that $w(K)<\pi/2$. Then there exists $C>0$ such that any 
maximal space-like graph in $K$ has second fundamental form bounded
by $C$.
\end{cor}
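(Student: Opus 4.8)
The plan is to deduce the corollary from Theorem \ref{bound:teo} by verifying its hypothesis: I must produce an $\epsilon>0$ such that every $y\in\partial_-K$ admits a point $x\in\partial_-D$ with $\delta(x,y)\geq\epsilon$. I claim $\epsilon=\pi/2-w(K)$ works, and this is positive precisely because $w(K)<\pi/2$. Thus the real content is to show that the past time-like distance from any point of $\partial_-K$ to $\partial_-D$ is at least $\pi/2-w(K)$, which I will do by combining the width bound (which controls the distance between $\partial_-K$ and $\partial_+K$) with Lemma \ref{distance:lem} (which controls the distance between $\partial_+K$ and $\partial_-D$) through the collinearity of the relevant geodesic.

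Fix $y\in\partial_-K$. Since $\overline D$ is compact (Remark \ref{compact:rem}) and $K\subset\overline D$, the continuous function $q\mapsto\delta(y,q)$ attains its maximum over $\partial_+K\cap I^+(y)$ at some $q$; by the definition of width, $\delta(y,q)\leq w(K)$. At such a maximizing $q$ the convex surface $\partial_+K$ carries a space-like support plane $P$ (so $K\subset\overline{I^-(P)}$; it is space-like because $\partial_+K$ has no singular points by Lemma \ref{lm:nosing}), and the geodesic $[y,q]$ is orthogonal to $P$. Let $p$ be the past dual point of $P$, that is, the point with $P=P_+(p)$. Then $p$ lies on the geodesic through $y$ and $q$, and since $\delta(p,\cdot)\equiv\pi/2$ on $P_+(p)$ we get $\delta(p,q)=\pi/2$. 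As $\delta(y,q)<\pi/2$, the point $p$ lies strictly in the past of $y$, and collinearity gives $\delta(p,y)=\delta(p,q)-\delta(y,q)=\pi/2-\delta(y,q)\geq\pi/2-w(K)$.

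It remains to identify $p$ as a point of $\partial_-D$, for then $x=p$ satisfies $x\in\partial_-D\cap I^-(y)$ and $\delta(x,y)\geq\pi/2-w(K)$, which is exactly the required inequality. This is the step needing care, and it is where Lemma \ref{distance:lem} is used. The point $p$ cannot lie in the interior of $D$: if it did, the geodesic through $p$ and $q$ would exit $\overline D$ through $\partial_-D$ at a point $x$ strictly in the past of $p$, whence $\delta(x,q)=\delta(x,p)+\pi/2>\pi/2$, contradicting the bound $\delta(x,q)\leq\pi/2$ of Lemma \ref{distance:lem}(1) (note $q\in\partial_+K\subset K$ and $x\in\partial_-D\cap I^-(q)$). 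For the reverse direction, since $P=P_+(p)$ is a support plane of $K$ with $K\subset\overline{I^-(P)}$ and $p$ sits a full distance $\pi/2$ below $\partial_+K$, hence strictly below $\partial_-K$, one checks that $\Sigma\subset\overline{U_p}$ and that $P_+(p)$ meets $\partial_\infty K=\Sigma$, so that $p\in\partial_-D$ in the sense of the characterization used in Proposition \ref{boundarydep:prop} and in the proof of Lemma \ref{distance:lem}(2).

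The main obstacle is precisely this final identification: confirming that the past dual of a genuine support plane of $\partial_+K$ lands exactly on $\partial_-D$ rather than strictly outside $\overline D$, and doing so uniformly in $y$. A secondary difficulty is that the support plane at the maximizing point $q$ need not be unique, since $\partial_+K$ may have edges or vertices. I would handle the non-uniqueness by a support-plane limiting argument, and the ``not strictly outside $\overline D$'' direction via the description of $\partial_-D$ through the condition $\Sigma\subset\overline{U_p}$ together with the fact that $\Sigma$ is a space-like graph without singular points, so that $K$ lies in the interior of $\overline D$ (Lemma \ref{lm:nosing}). Once $\delta(p,y)\geq\pi/2-w(K)$ is established uniformly in $y$, Theorem \ref{bound:teo} applied with $\epsilon=\pi/2-w(K)$ furnishes the constant $C$ bounding the second fundamental form of every maximal graph in $K$, which is the assertion of the corollary.
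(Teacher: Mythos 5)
Your strategy is the same as the paper's: set $\epsilon=\pi/2-w(K)$, take the point $q\in\dr_+K$ maximizing $\delta(y,\cdot)$, use the space-like support plane of $K$ orthogonal to the geodesic $[y,q]$ at $q$, and conclude via Lemma \ref{distance:lem} and Theorem \ref{bound:teo}. The only real divergence is one of bookkeeping, and it is exactly where your remaining gap sits. You introduce the dual point $p$ of the support plane (so that $\delta(p,q)=\pi/2$ is automatic) and must then prove $p\in\dr_-D$; your argument that $p\notin\mathrm{int}(D)$ via Lemma \ref{distance:lem}(1) is sound, but the other half, ``one checks that $\Sigma\subset\overline{U_p}$,'' is not free: the inclusion $\Sigma\subset\overline{I^-(P_+(p))}$ follows from the support-plane property, whereas $\Sigma\subset\overline{I^+(P_-(p))}$ still requires an argument, and without it you cannot rule out that the geodesic ray exits $\overline D$ through $\dr_-D$ at a point strictly between $p$ and $y$, which would destroy the lower bound on $\delta(x,y)$. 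The paper runs the identification in the opposite direction and thereby avoids this: it defines $x$ directly as the intersection of the past-oriented ray with $\dr_-D$ (so membership in $\dr_-D$ is by construction), observes that $z=q$ is a critical point, hence by convexity a maximum, of $\delta(x,\cdot)$ on $\dr_+K$, and then invokes Lemma \ref{distance:lem} to get $\delta(x,z)=\pi/2$, from which $\delta(x,y)\geq\epsilon$ follows by collinearity. I would recommend adopting that ordering, or equivalently noting that your $x$ and $p$ must coincide because $\delta(x,q)$ is simultaneously $\leq\pi/2$ (point (1) of the lemma) and equal to the maximum of $\delta(x,\cdot)$ on $\dr_+K$, which is $\pi/2$ by the proof of point (2).
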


\begin{proof}
Let $\epsilon=\pi/2-w(K)$, so that $\epsilon>0$. Let $y\in \dr_-K$. 
Consider a point $z\in \dr_+K\cap I^+(y)$ for which $\delta(y,z)$
is maximal. Then $\delta(y,z)\leq w(K)$ by definition of $w$. 

Let now $\Delta$ be the past-oriented time-like geodesic ray 
starting from $z$ and containing $y$, and let $x$ be its 
intersection with $\dr_-D$. By the definition of $z$, the
space-like plane orthogonal to $\Delta$ at $z$ is a support
plane of $K$ (otherwise $z$ would not maximize $\delta(y,\cdot)$
on $\dr_+K$). 

This shows that $z$ is also a critical point of
$\delta(x,\cdot)$ on $\dr_+K$ and, since $K$ is convex, it
is a maximum of this function on $\dr_+K$. Therefore 
$\delta(x,z)=\pi/2$ by the second point of Lemma \ref{distance:lem}.
Therefore $\delta(x,y)\geq \epsilon$. So we can apply Theorem 
\ref{bound:teo}, which yields the result.
\end{proof}

\section{Uniqueness of maximal surfaces in $AdS_3$}
\label{sc:uniqueness}

We consider in this section the uniqueness of maximal graphs with given
boundary at infinity and bounded second fundamental form in $AdS_3$. 
The argument has two parts. The first is to show that those surfaces
have negative sectional curvature. The second part is to show that the
existence of such a negatively curved maximal space-like graph forbids
the existence of any other maximal graph with the same boundary. 
Both parts use
a version ``at infinity'' of the maximum principle, for which a compactness
argument is needed. For the first part we need a simple compactness 
statement on sequences of maximal surfaces.

\subsection{A compactness result for sequences of maximal hypersurfaces}

The following statement is useful to use ``at infinity'' the maximum principle.

\begin{lemma} \label{lm:compact}
Choose $C>0$, a point $x_0\in AdS_{n+1}$, and a future-oriented 
unit time-like vector $n_0\in T_{x_0}AdS_{n+1}$. There exists $r_0>0$ as follows.
Let $P_0$ be the space-like hyperplane orthogonal to $n_0$ at $x_0$, let $D_0$
be the disk of radius $r_0$ centered at $x_0$ in $P_0$, and let
$(S_n)_{n\in \N}$ be a sequence of maximal space-like graphs containing $x_0$ and
orthogonal to $n_0$, with second fundamental form bounded by $C$. 
After extracting a sub-sequence, the restrictions of
the $S_n$ to the cylinder above $D_0$ converge $C^\infty$ to a maximal
space-like disk with boundary contained in the cylinder over $\dr D_0$.
\end{lemma}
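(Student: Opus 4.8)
The plan is to realize each $S_n$ as a graph over the disk $D_0$ in Fermi coordinates adapted to $P_0$, extract uniform higher-order estimates from the bound on the second fundamental form together with maximality, and then pass to a convergent subsequence by Arzel\`a--Ascoli. First I would set up the coordinates: let $N$ be the future unit normal field along $P_0$ and consider $F(y,s)=\exp_y(sN(y))$ for $y\in P_0$ and $|s|$ small. Since the ambient geometry of $AdS_{n+1}$ near $x_0$ is fixed, there is $\delta>0$ (depending only on $x_0$, before the first focal distance of $P_0$) so that $F$ is a diffeomorphism onto its image for $|s|<\delta$. In these coordinates the time-like geodesics orthogonal to $P_0$ are the vertical fibers, and the cylinder over a disk $D\subset P_0$ is the image $F(D\times(-\delta,\delta))$.

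The main step is a uniform local graph estimate. Each $S_n$ passes through $x_0=F(x_0,0)$ and is tangent there to $P_0$, so near $x_0$ it is the graph $s=u_n(y)$ of a function on $P_0$ with $u_n(x_0)=0$ and $\nabla u_n(x_0)=0$. The bound $|\II_n|\leq C$ translates into a bound $|\hess u_n|\leq C'$ on a neighborhood, with $C'$ depending only on $C$ and the fixed coordinate geometry; by Taylor's formula this forces $|u_n(y)|\leq C'r^2/2$ and $|\nabla u_n(y)|\leq C'r$ on the disk of radius $r$ around $x_0$. Hence there is $r_0>0$, depending only on $C$ (and on $x_0,n_0$), such that for all $n$ the surface $S_n$ is a graph $s=u_n(y)$ over all of $D_0$, stays inside the coordinate chart, and is \emph{uniformly} space-like, i.e.\ its gradient stays a definite distance from the light cone and its induced metric is uniformly positive definite. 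This fixes the domain $D_0$ once and for all and gives uniform $C^{1,1}(D_0)$ bounds on the $u_n$.

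With the domain fixed, I would invoke ellipticity. In these coordinates the maximality condition $H=0$ is a second-order quasilinear equation $\mathcal Q[u_n]=0$ whose coefficients are smooth functions of $(y,u_n,\nabla u_n)$; because the $u_n$ are uniformly space-like on $D_0$, this equation is uniformly elliptic with coefficients bounded in $C^\alpha(D_0)$, using the uniform $C^{1,\alpha}$ bound already obtained. Interior Schauder estimates then bound $\|u_n\|_{C^{2,\alpha}(D_{r_0/2})}$ uniformly, and since the coefficients depend smoothly on the lower-order jets, a standard bootstrap gives uniform bounds on $\|u_n\|_{C^{k,\alpha}}$ on every compact subset of $D_0$, for each $k$. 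By Arzel\`a--Ascoli and a diagonal argument over an exhaustion of $D_0$ by smaller disks, a subsequence $u_{n_j}$ converges in $C^\infty_{\mathrm{loc}}$ to some $u_\infty$; the limit is still uniformly space-like and still solves $\mathcal Q[u_\infty]=0$, so its graph is a maximal space-like disk, with boundary on the cylinder over $\dr D_0$ by construction.

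The step I expect to be the crux is the uniform local graph estimate, i.e.\ producing a single $r_0$ valid for the entire sequence: one must convert the extrinsic bound $|\II_n|\leq C$ into a $C^{1,1}$ bound on the graph functions over a fixed-size disk and \emph{simultaneously} guarantee that all the $S_n$ remain uniformly space-like there, so that the PDE is uniformly elliptic with uniformly controlled coefficients on the common domain $D_0$. Once this is in place, the regularity bootstrap and the extraction of a limit are routine elliptic theory and compactness.
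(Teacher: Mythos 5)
Your proposal follows essentially the same route as the paper: use the tangency at $x_0$ together with the bound on the second fundamental form to get a uniform gradient bound (uniform spacelikeness) and hence a uniform Hessian bound for the graph functions over a fixed disk of radius $r_0$ depending only on $C$, extract a $C^{1,1}$-convergent subsequence, and then upgrade to $C^\infty$ convergence via elliptic regularity for the quasilinear maximal surface equation. The step you correctly identify as the crux --- the uniform local graph estimate keeping all $S_n$ uniformly spacelike over a common $D_0$ --- is exactly the first step of the paper's argument, so the two proofs coincide in substance.
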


The proof given here applies with a few modifications to the more general 
context of maximal (resp. minimal) immersions of hypersurfaces in any
Lorentzian (resp. Riemannian) manifold with bounded
geometry, we state the lemma in $AdS_{n+1}$ for simplicity. 

\begin{proof}
For all $n$, the surface $S_n$ is the graph of a function $f_n$ over $P_n$. 
The bound on the second fundamental form of $S_n$, along with the
fact that the $S_n$ are orthogonal to $n_0$, indicates that, for some $r>0$,
the derivative of $f_n$ is bounded on the disk of center $x_0$ and radius $r$,
more precisely there exists $\epsilon>0$ such that 
$$ \phi\|\nabla f_n\|<1-\epsilon $$
on this disk of center $x_0$ and radius $r$.

This, along with the bound on the second fundamental form of $S_n$ (again)
shows that the Hessian of $f_n$ is bounded by a constant depending on $r$
(for $r$ small enough). Thus we can extract from $(f_n)_{n\in \N}$ a 
subsequence which is $C^{1,1}$ converging to a function $f_\infty$ on the
disk of center $x_0$ and radius $r$. Moreover the gradient of $f_\infty$ is 
uniformly bounded, so that the graph of $f_\infty$ is a disk which is uniformly
space-like.

By definition the $f_n$ are solutions of Equation (\ref{meancurvature:eq}), which just
translates analytically the fact that their graphs are maximal surfaces.
Since $f_\infty$ is a $C^{1,1}$-limit of the $f_n$, it is itself a weak
solution of (\ref{meancurvature:eq}). Since Equation (\ref{meancurvature:eq}) 
is quasi-linear, it then follows from 
elliptic regularity that $f_\infty$ is $C^\infty$, and that $(f_n)$
is $C^\infty$-converging to $f_\infty$ (see \cite{gilbarg-trudinger}). 
This means that the restriction 
of the $S_n$ to the cylinder above the disk of radius $r_0$ in $P_0$, 
for some $r_0>0$ (depending only on $C$) converge to a limit which
is a maximal surface, the graph of $f_\infty$ over the disk of
radius $r_0$.
\end{proof}

\subsection{Maximal surfaces with bounded second fundamental form}
\label{ssc:52}

The first proposition of this section is the following, its proof is based
on Lemma \ref{lm:compact}.

\begin{prop} \label{pr:maxk}
Let $S$ be a complete maximal surface in $AdS_3$. Suppose that the norm of the 
fundamental form of $S$ is bounded. Then $S$ either has negative sectional curvature,
or $S$ is flat. If the supremum of the sectional curvature of $S$ is $0$, then 
$w(\dr_\infty S)=\pi/2$.
\end{prop}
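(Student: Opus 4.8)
The plan is to translate everything into statements about the principal curvatures. Since $S$ is maximal, its shape operator $B$ is trace-free, so its principal curvatures are $\pm\lambda$ for some $\lambda\geq 0$; then $\det B=-\lambda^2$, and the Gauss equation in $AdS_3$ recalled in Section \ref{ssc:diffeos} gives for the sectional curvature
\[
   K=-1-\det B=\lambda^2-1~.
\]
Thus negativity of $K$ means $\lambda<1$, flatness means $\lambda\equiv 1$, and $\sup K=0$ means $\sup\lambda=1$. The boundedness hypothesis on $\II$ gives $\Lambda:=\sup_S\lambda<\infty$. The analytic engine will be the identity
\[
   \Delta_I\log\lambda=2K=2(\lambda^2-1)~,
\]
valid wherever $\lambda>0$, which I would obtain in a conformal coordinate $z$ for the induced metric $I=e^{2u}|dz|^2$: the trace-free Codazzi tensor $\II$ has a holomorphic Hopf differential $\phi\,dz^2$, with $\lambda=2|\phi|e^{-2u}$, so $\log|\phi|$ is harmonic off the zeros of $\phi$; combined with the Gauss equation $\Delta_0u=e^{2u}-4|\phi|^2e^{-2u}$ and $\Delta_I=e^{-2u}\Delta_0$ this yields the displayed formula.

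To prove $K\leq 0$ I would run the maximum principle ``at infinity''. Suppose $\Lambda>1$, and choose $x_n\in S$ with $\lambda(x_n)\to\Lambda$. Using that the isometry group acts simply transitively on pointed orthonormal frames (Remark \ref{rk:basics}), apply isometries $\gamma_n$ of $AdS_3$ carrying each $x_n$ to a fixed point $x_0$ with fixed future unit normal $n_0$; since isometries preserve both the graph property and $|\II|$, the $\gamma_n(S)$ are maximal graphs through $x_0$, orthogonal to $n_0$, with $|\II|$ bounded by the same constant $C$. Lemma \ref{lm:compact}, whose radius $r_0=r_0(C)$ is uniform, together with a diagonal exhaustion (reapplying the lemma at boundary points of successive disks), produces a complete maximal limit surface $S_\infty$ with $\lambda\leq\Lambda$ everywhere and $\lambda(x_0)=\Lambda>0$. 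Near $x_0$ the function $\log\lambda$ is smooth and has an interior maximum, so $\Delta_I\log\lambda(x_0)\leq 0$; but the identity gives $\Delta_I\log\lambda(x_0)=2(\Lambda^2-1)>0$, a contradiction. Hence $\Lambda\leq 1$ and $K\leq 0$.

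For the dichotomy suppose $\lambda(p)=1$ for some $p\in S$; I must show $\lambda\equiv 1$. Near $p$ the function $g=\log\lambda\leq 0$ is smooth, vanishes at $p$, and by convexity of the exponential satisfies $\Delta_Ig=2(e^{2g}-1)\geq 4g$, i.e. $(\Delta_I-4)g\geq 0$. As $g$ attains the nonnegative value $0$ at an interior maximum, the strong maximum principle for operators with nonpositive zeroth order term (Hopf) forces $g\equiv 0$ near $p$. Thus $\{\lambda=1\}$ is open; being a level set of a continuous function it is closed, so by connectedness of $S$ it is all of $S$, and $S$ is flat. Otherwise $\lambda<1$ throughout and $K<0$.

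Finally assume $\sup K=0$, i.e. $\sup\lambda=1$; by Lemma \ref{lm:width} only $w(\dr_\infty S)\geq\pi/2$ remains. If $S$ is flat, its Hopf differential is holomorphic of constant modulus, hence constant, so $S$ is, up to an isometry of $AdS_3$, the explicit product surface $X(s,w)=\tfrac{1}{\sqrt2}(\sh s,\sh w,\ch s,\ch w)$ in $AdS_3^*$; inspecting its limit directions (for instance $X(s,s+t)$ as $s\to+\infty$ traces the null segment $[1:e^t:1:e^t]$) shows $\dr_\infty S$ contains lightlike segments, so Claim \ref{cl:light} gives $w(\dr_\infty S)=\pi/2$. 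If instead $K<0$ with $\sup\lambda=1$, I again normalise by isometries $\gamma_n$ at points $x_n$ with $\lambda(x_n)\to 1$ and pass, via Lemma \ref{lm:compact}, to a complete maximal limit $S_\infty$ with $\lambda(x_0)=1$; by the dichotomy just proved $S_\infty$ is flat, so $w(\dr_\infty S_\infty)=\pi/2$, realised by a pair $q_\pm^\infty\in\dr_\pm C(\dr_\infty S_\infty)$ with $\delta(q_-^\infty,q_+^\infty)=\pi/2$. Since width is isometry invariant, each translate $\gamma_n(S)$ has width $w(\dr_\infty S)$; writing $q_\pm^\infty=\lim\gamma_n(y_\pm^n)$ with $y_\pm^n\in\dr_\pm C(\dr_\infty S)$ and using continuity of $\delta$ gives $w(\dr_\infty S)\geq\delta(y_-^n,y_+^n)\to\pi/2$, hence $w(\dr_\infty S)=\pi/2$. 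The main obstacle of the argument is precisely this last step: one must show that the convex hull boundaries $\dr_\pm C(\dr_\infty\gamma_nS)$ converge locally to $\dr_\pm C(\dr_\infty S_\infty)$ under the degeneration $\gamma_n$ (whose limit is not an isometry), so that the width of the flat limit is not lost. This localisation is plausible because Corollary \ref{cr:w} shows that where $\lambda$ stays below $1-\delta$ the contribution to the width is at most $\pi/2-c(\delta)<\pi/2$, so the full width can only accumulate near the points $x_n$ where $\lambda\to 1$, exactly where the convergence to $S_\infty$ takes place.
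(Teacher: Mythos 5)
Your proof of the dichotomy ($K<0$ everywhere or $S$ flat) is correct and is essentially the paper's argument: your identity $\Delta_I\log\lambda=2(\lambda^2-1)$ is Lemma \ref{lm:equation} in the form $\Delta\chi=e^{4\chi}-1$ with $\chi=\tfrac12\log\lambda$, and the renormalisation by isometries followed by Lemma \ref{lm:compact} and the (strong) maximum principle is exactly how the paper rules out $\sup K>0$ and propagates $K=0$. The flat case of the last assertion is also fine: your explicit surface is the ``horosphere'' of Lemma \ref{lm:horo}, whose boundary at infinity consists of light-like segments, so Claim \ref{cl:light} applies.

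The genuine gap is the one you flag yourself, in the case $\sup K=0$ with $K<0$ everywhere: you must transport the equality $w=\pi/2$ from the degenerate limit $S_\infty$ back to $S$, and for this you assert without proof that the points $q_\pm^\infty$ realising the width of $C(\dr_\infty S_\infty)$ are limits of points $\gamma_n(y_\pm^n)$ with $y_\pm^n\in\dr_\pm C(\dr_\infty S)$, i.e. that the convex-hull boundaries $\dr_\pm C(\dr_\infty\gamma_n S)$ converge locally to those of $S_\infty$. Your closing plausibility argument via Corollary \ref{cr:w} does not establish this: that corollary is a global bound for surfaces with uniformly negative curvature and gives no meaning to a ``local contribution to the width''. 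The paper sidesteps the convex hull at this stage. Since the graph functions of the $\gamma_n(S)$ are uniformly $2$-Lipschitz (Lemma \ref{lm:boundary} and (\ref{splike2:eq})), their boundary values converge uniformly, so the curves $E_n=\dr_\infty\gamma_n(S)$ converge in the Hausdorff sense to the boundary $E_0$ of the horosphere; one then picks four points of $E_n$ converging to the vertices $\Delta_\pm,\Delta^*_\pm$ of $E_0$. The geodesics of $C(E_n)$ joining these pairs converge to $\Delta$ and to $\Delta^*$, so $C(E_n)$ contains points $y_n,z_n$ with $\delta(y_n,z_n)\rightarrow\pi/2$; extending the time-like geodesic through $y_n$ and $z_n$ until it meets $\dr_-C(E_n)$ and $\dr_+C(E_n)$ and using the reverse triangle inequality (\ref{reverse:eq}) gives $w(E_n)\geq\delta(y_n,z_n)$, whence $w(\dr_\infty S)=w(E_n)\rightarrow\pi/2$. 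If you prefer to keep your formulation, you would have to prove Hausdorff convergence $C(E_n)\rightarrow C(E_0)$ and deduce convergence of the two boundary graphs $\dr_\pm C(E_n)$; this is doable (it again reduces, in the projective model, to the uniform convergence $E_n\rightarrow E_0$), but it is precisely the step that is missing from your argument.
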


The completeness mentioned here is with respect to the induced metric
on $S$.  The proof uses two preliminary statements. The first is taken
from \cite{minsurf}, where it can be found in the proof of Lemma 3.11,
p. 214. Note that the sign of the Laplacian used here is defined so
that $\Delta$ is negative as an operator acting on $L^2$.

\begin{lemma} \label{lm:equation}
Let $\Sigma$ be a maximal space-like surface in a 3-dimensional AdS
manifold. Let $B$ be its shape operator, and let
$\chi=\log(-\det(B))/4$. Then $\chi$ satisfies the equation
$$ \Delta \chi=e^{4\chi}-1~. $$
\end{lemma}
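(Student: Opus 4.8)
The plan is to reduce the asserted semilinear equation to the Gauss equation and then to compute $\Delta\chi$ using the holomorphicity of the Hopf differential. I begin with two algebraic observations. Since $\Sigma$ is maximal, $\tr(B)=0$; as $B$ is in addition self-adjoint for the induced metric $I$, its eigenvalues are $\pm\lambda$ for some $\lambda\ge 0$, so that $\det(B)=-\lambda^2\le 0$ and $-\det(B)=e^{4\chi}>0$ wherever $B\neq 0$ (so that $\chi$ is well defined there). On the other hand $AdS_3$ has constant curvature $-1$, so the Gauss equation gives $K=-1-\det(B)$, where $K$ is the intrinsic curvature of $I$. Combining the two identities yields
\[
  e^{4\chi}-1 = -\det(B)-1 = K~,
\]
so it is enough to prove that $\Delta\chi=K$.

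To compute $\Delta\chi$ I would pass to a local conformal coordinate $z=x+iy$ for $I$, writing $I=e^{2u}|dz|^2$, and introduce the Hopf differential $\phi\,dz^2$ with $\phi=\II(\partial_z,\partial_z)$. The Codazzi equation $d^\nabla B=0$ (valid because the ambient space has constant curvature) together with $\tr(B)=0$ is exactly the classical condition forcing $\phi$ to be holomorphic; this is the step borrowed from \cite{minsurf}. Using $\II_{yy}=-\II_{xx}$ (the tracelessness of $B$ in conformal coordinates) a direct computation gives $\phi=\tfrac12(\II_{xx}-i\,\II_{xy})$ and hence $-\det(B)=4|\phi|^2e^{-4u}$. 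Therefore
\[
  \chi=\tfrac14\log(-\det(B))=\tfrac14\log 4+\tfrac12\log|\phi|-u~.
\]

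The conclusion is then immediate. As $\phi$ is holomorphic and nonzero, $\log|\phi|$ is harmonic for the flat Laplacian $\Delta_0=\partial_x^2+\partial_y^2$, so $\Delta_0\chi=-\Delta_0 u$. Recalling that in conformal coordinates one has $K=-e^{-2u}\Delta_0 u$ and that the Laplace--Beltrami operator is $\Delta=e^{-2u}\Delta_0$, I obtain
\[
  \Delta\chi = e^{-2u}\Delta_0\chi = -e^{-2u}\Delta_0 u = K~,
\]
which, combined with the first paragraph, gives $\Delta\chi=e^{4\chi}-1$.

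The only delicate point is the behaviour at the umbilic points, where $\det(B)=0$ and $\chi\to-\infty$. Because $\phi$ is holomorphic these points are isolated, unless $\phi\equiv 0$, in which case $B\equiv 0$ and $S$ is totally geodesic (hence flat) and the statement is vacuous; away from them all the manipulations above, in particular the harmonicity of $\log|\phi|$, are legitimate. I would therefore phrase the identity on the open set $\{B\neq 0\}$, which is all that is used afterwards.
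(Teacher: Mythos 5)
Your proof is correct. Note that the paper does not actually prove this lemma itself --- it defers to the proof of Lemma 3.11 of \cite{minsurf} --- and your argument is precisely the standard one used there: reduce the claim to $\Delta\chi=K$ via the Gauss equation $K=-1-\det(B)$, then compute $\Delta\chi$ in a conformal coordinate $I=e^{2u}|dz|^2$ using $-\det(B)=4|\phi|^2e^{-4u}$ and the harmonicity of $\log|\phi|$, where $\phi$ is the (holomorphic, by Codazzi plus $\tr B=0$) Hopf differential. Your sign conventions match the paper's (the Laplacian is $\mathrm{div}\,\mathrm{grad}$, negative on $L^2$), and your closing remark --- that the identity holds away from the umbilic points, which are isolated zeros of $\phi$ unless $\Sigma$ is totally geodesic --- is the right caveat and is compatible with the maximum-principle application that follows, since $\chi\to-\infty$ at those points.
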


As a consequence, we can apply the maximum principle to $\chi$, it shows
that $\chi$ cannot have a positive local maximum. This can be translated into
a statement on $K$, using the Gauss formula, which shows that $K=-1+e^{4\chi}$.

\begin{lemma} \label{lm:max_prin}
Suppose that $K$ has a local maximum at a point where it is non-negative.
Then $K=0$ at that point, and on the whole surface $S$, so that $S$ is flat
(in the intrinsic sense).
\end{lemma}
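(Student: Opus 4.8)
The plan is to work with the function $\chi=\log(-\det(B))/4$, for which Lemma \ref{lm:equation} gives $\Delta\chi=e^{4\chi}-1$, and to note that the Gauss equation rewrites this as $\Delta\chi=K$ with $K=-1+e^{4\chi}$. Since $t\mapsto -1+e^{4t}$ is strictly increasing, $K$ and $\chi$ have exactly the same local maxima, so it is equivalent to study $\chi$. At the point $p$ where $K$ (hence $\chi$) has a local maximum we have $\nabla\chi(p)=0$ and the Hessian of $\chi$ is negative semi-definite, so $\Delta\chi(p)\le 0$; but $\Delta\chi(p)=K(p)\ge 0$ by hypothesis, forcing $\Delta\chi(p)=0$ and therefore $K(p)=e^{4\chi(p)}-1=0$, i.e. $\chi(p)=0$. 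This proves the first assertion.

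For the global statement I would first upgrade this to local constancy via the strong maximum principle. Near $p$ the function $\chi$ is smooth (since $K(p)=0>-1$ means $B(p)\neq 0$) and satisfies $\chi\le \chi(p)=0$. Writing $e^{4\chi}-1=a(x)\,\chi$ with $a(x)=(e^{4\chi}-1)/\chi\ge 0$ (and $a=4$ where $\chi=0$), the equation becomes a linear homogeneous elliptic equation $\Delta\chi-a(x)\chi=0$ with non-negative zeroth order coefficient. Since $\chi$ attains the non-negative value $0$ as an interior maximum on a small geodesic ball around $p$, Hopf's strong maximum principle yields $\chi\equiv 0$, equivalently $K\equiv 0$, on a whole neighbourhood of $p$.

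It remains to propagate $K\equiv 0$ from this neighbourhood to all of $S$, which is the only delicate point. The cleanest route is to invoke the real-analyticity of maximal space-like graphs: the maximal surface equation (\ref{meancurvature:eq}) is a quasi-linear elliptic equation with analytic coefficients, so $S$ and its induced metric are real-analytic, and hence $K$ is a real-analytic function on the connected surface $S$. A real-analytic function that vanishes on a non-empty open set vanishes identically, so $K\equiv 0$ on $S$; that is, $S$ is intrinsically flat. Alternatively, one can avoid analyticity: using that $S$ is complete with bounded second fundamental form --- so its Ricci curvature is bounded and $\chi$ is bounded above --- the Omori--Yau maximum principle gives a sequence $x_k$ with $\chi(x_k)\to\sup_S\chi$ and $\Delta\chi(x_k)\le 1/k$, whence $e^{4\sup_S\chi}-1\le 0$ and $\sup_S\chi\le 0$; thus $0=\chi(p)=\sup_S\chi$ is attained at the interior point $p$, and the open--closed form of the strong maximum principle then forces $\chi\equiv 0$ on the connected surface $S$.

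The main obstacle is exactly this last passage from a local to a global conclusion: the strong maximum principle by itself only gives constancy where the maximum is attained, so one must either certify that $0$ is the global supremum of $\chi$ (via Omori--Yau, using completeness and the curvature bound) or exploit unique continuation / analyticity to spread the vanishing of $K$ across $S$. A secondary technical point to keep in mind is the behaviour at totally geodesic points, where $B=0$ and $\chi=-\infty$; these cause no trouble because the conclusion $K\equiv 0$ (principal curvatures $\pm 1$) excludes them a posteriori, and the analytic-continuation argument is best carried out directly on the everywhere-smooth function $K$ rather than on $\chi$.
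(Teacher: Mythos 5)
Your proof is correct and its core is the same as the paper's: the paper deduces Lemma \ref{lm:max_prin} in one line from Lemma \ref{lm:equation}, by noting that at a local maximum of $\chi$ (equivalently of $K$, since $K=-1+e^{4\chi}$) one has $\Delta\chi\le 0$ while $\Delta\chi=e^{4\chi}-1=K\ge 0$; this is exactly your first paragraph. Where you add genuine content is the global assertion ``and on the whole surface $S$'': the paper gives no argument for it, and you correctly identify that the strong maximum principle alone only yields constancy near the point where the maximum is attained, the set $\{\chi=0\}$ not being obviously open unless one first knows $\chi\le 0$ everywhere. Both of your completions are sound, but note that in the paper's actual application (the proof of Proposition \ref{pr:maxk}) the lemma is invoked for the limit disk $S_0$ produced by Lemma \ref{lm:compact}, which is a maximal graph over a small disk and is not complete; there the Omori--Yau route is unavailable, and it is the analyticity/unique-continuation argument (or simply the remark that flatness on a neighbourhood of $x_0$ already pins down the horosphere of Lemma \ref{lm:horo} through $x_0$ with the given $2$-jet) that actually carries the day. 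For the Omori--Yau variant it is also cleaner to apply the principle to the smooth bounded function $-\det(B)=e^{4\chi}$ rather than to $\chi$, which is $-\infty$ at totally geodesic points, as you yourself observe.
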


We need another elementary statement, characterizing the maximal
surfaces with flat induced metric in $AdS_3$. We include the proof for
the reader's convenience.

\begin{lemma} \label{lm:horo}
Let $\Sigma$ be a space-like maximal surface in $AdS_3$, with zero
sectional curvature. Then $\Sigma$ is a subset of a ``horosphere'',
that is, its principal curvatures are $-1$ and $1$, and its lines of
curvature form two orthogonal foliations by parallel lines. If
$\Sigma$ is a space-like graph, then its boundary at infinity is the
union of four light-like segments in $\dr_\infty AdS_3$.
\end{lemma}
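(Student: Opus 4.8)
The plan is to pin down the shape operator algebraically, then use Codazzi to show it is parallel, integrate the resulting constant-coefficient structure equations to get an explicit model surface, and read its asymptotic boundary off that model. First I would use the Gauss equation recalled in Section~\ref{ssc:diffeos}, $K=-1-\det(B)$, so that $K\equiv 0$ forces $\det(B)=-1$. As $\Sigma$ is maximal, $B$ is traceless, and being $I$-self-adjoint its eigenvalues are $\lambda,-\lambda$ with $-\lambda^2=\det(B)=-1$; hence the principal curvatures are $-1$ and $1$ everywhere. Cayley--Hamilton then gives $B^2-\tr(B)B+\det(B)\,\mathrm{Id}=0$, i.e. $B^2=\mathrm{Id}$, so $B$ is a pointwise involution with eigenvalues $\pm 1$.

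Next I would show that $B$ is parallel. Differentiating $B^2=\mathrm{Id}$ yields $(\nabla_X B)B+B(\nabla_X B)=0$, so $\nabla_X B$ anticommutes with $B$; in the orthonormal eigenframe $(e_1,e_2)$ (with $Be_1=e_1$, $Be_2=-e_2$) this forces $\nabla_X B=\left(\begin{smallmatrix}0&\alpha(X)\\ \alpha(X)&0\end{smallmatrix}\right)$, using also that $\nabla_X B$ is $I$-self-adjoint. Plugging this into the Codazzi equation $d^\nabla B=0$, i.e. $(\nabla_{e_1}B)e_2=(\nabla_{e_2}B)e_1$, gives $\alpha(e_1)e_1=\alpha(e_2)e_2$, hence $\alpha\equiv 0$ and $\nabla B=0$. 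Then each unit eigenfield satisfies $\nabla e_i=0$, so the lines of curvature are geodesics of the flat metric $I$ and form two orthogonal foliations by parallel straight lines, as claimed.

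Now fix flat coordinates $(x,y)$ with $\dr_x=e_1$, $\dr_y=e_2$, so that $I=dx^2+dy^2$ and $\II=dx^2-dy^2$. Writing the immersion into $AdS_3^*\subset\R^{2,2}$ with the adapted frame $(f,\nu,e_1,e_2)$, the Gauss--Weingarten equations become the constant-coefficient linear system $f_{xy}=0$, $f_{xx}=f+\nu$, $f_{yy}=f-\nu$, $\nu_x=e_1$, $\nu_y=-e_2$, which I would integrate explicitly to obtain, up to an ambient isometry,
\[
 f(x,y)=\Bigl(\tfrac{1}{\sqrt2}\sh(\sqrt2\,x),\ \tfrac{1}{\sqrt2}\sh(\sqrt2\,y),\ \tfrac12\ch(\sqrt2\,x)+\tfrac12\ch(\sqrt2\,y),\ \tfrac12\ch(\sqrt2\,x)-\tfrac12\ch(\sqrt2\,y)\Bigr).
\]
One checks $\langle f,f\rangle=-1$, so $f$ maps into $AdS_3^*$; call $\Sigma_0$ its image. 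By the fundamental theorem for isometric immersions (the pair $I,\II$ satisfies Gauss and Codazzi by construction), $\Sigma_0$ is the unique surface carrying this data up to a global isometry of $AdS_3$, so $\Sigma$ is an open subset of an isometric copy of $\Sigma_0$; this is the ``horosphere'' of the statement.

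Finally I would compute $\dr_\infty\Sigma_0$ in the projective compactification of $AdS_3^*$. The dominant exponential terms show that the limits along $x\to\pm\infty$ and $y\to\pm\infty$ are four null directions $[a],[b],[c],[d]$, and letting $x,y\to\pm\infty$ at comparable rates within each of the four quadrants sweeps out the projective segment joining two consecutive corners. Each such segment lies in a totally degenerate $2$-plane — for instance $a,c$ are null with $\langle a,c\rangle=0$, so every $\alpha a+\gamma c$ is null — hence is light-like, and is therefore a leaf of $\cL_l$ or $\cL_r$; thus $\dr_\infty\Sigma_0$ is the union of four light-like segments, exactly the $2$-step graph $\Gamma_0$ of Claim~\ref{cl:light}. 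Since $\Sigma_0$ is itself an entire space-like graph (its projection to $\HH^2$ via the covering \eqref{covering:eq} is onto, with $t$-coordinate valued in $(-\pi/4,\pi/4)$), any space-like graph $\Sigma\subseteq\Sigma_0$ must equal $\Sigma_0$, which yields the boundary statement. The main obstacle is precisely this last part: organizing the explicit integration and the uniqueness via the fundamental theorem cleanly, and verifying that the four edges at infinity genuinely sweep out light-like segments rather than collapsing to the corner points.
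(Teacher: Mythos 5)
Your proposal is correct, and its first half coincides with the paper's argument: both derive the principal curvatures $\pm 1$ from maximality plus the Gauss equation, and both use the Codazzi equation to show the principal frame is parallel (you phrase this as $\nabla B=0$ via the anticommutation coming from $B^2=\mathrm{Id}$; the paper shows directly that $[e_1,e_2]=0$ and that the connection form vanishes --- the same computation in a slightly different order). Where you genuinely diverge is in the identification of the model surface and of its asymptotic boundary. The paper describes the ``horosphere'' synthetically, as the set of endpoints of future-oriented time-like segments of length $\pi/4$ issued from a space-like line $\Delta$, and then gets the boundary at infinity almost for free: $\dr_\infty\Sigma_0$ must contain the four ideal endpoints of $\Delta$ and of its dual line $\Delta^*$, and a nowhere time-like curve in $\dr_\infty AdS_3$ through those four points is forced to be the four light-like segments joining them. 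You instead integrate the (constant-coefficient, because $\nabla B=0$) Gauss--Weingarten system to an explicit parametrization $f(x,y)$ in $\R^{2,2}$ and compute the asymptotic null directions and the swept segments by hand; I checked your formula and the asymptotics, and they are right (including the point, which you rightly flag, that the quadrant limits sweep out the full open light-like segments rather than collapsing to the corners). Your route costs an explicit integration and the verification that $\Sigma_0$ is an entire graph (needed for your final ``$\Sigma\subseteq\Sigma_0$ implies $\Sigma=\Sigma_0$'' step), but it buys a completely concrete model and avoids the paper's appeal to the duality $\Delta\leftrightarrow\Delta^*$ and to the classification of nowhere time-like curves through four prescribed boundary points. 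Both arguments also share the same mild implicit step (connectedness plus the fundamental theorem of surface theory to globalize the local uniqueness), so there is no gap.
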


\begin{proof}
Since $\Sigma$ is maximal, its principal curvatures are at each point two
opposite numbers, $k$ and $-k$. The Gauss formula asserts that the sectional
curvature of $\Sigma$ is $K=-1+k^2$, so $k=1$. Let $(e_1,e_2)$ be an
orthonormal frame of unit principal vectors on $\Sigma_0$, and let $\II$
be the second fundamental form of $\Sigma$. The Codazzi equation can be written
as follows, at any point $m\in \Sigma$, for any vector field $x$ on $\Sigma$ such
that $\nabla x=0$ at $m$:
$$ I((d^\nabla B)(e_1,e_2),x)= e_1.\II(e_2,x)-e_2.\II(e_1,x)-\II([e_1,e_2],x)=0~. $$
Since the first two terms clearly vanish and $\II$ is non-degenerate, 
$[e_1,e_2]=0$, so that, if $\omega$ is the connection form of the frame
$(e_1, e_2)$, 
$$ \nabla_{e_1}e_2-\nabla_{e_2}e_1 = -\omega(e_1)e_1 -\omega(e_2)e_2=0~. $$
Therefore $e_1$ and $e_2$ are both parallel vector fields, and the 
first part of the statement follows.

There is a simple way to describe such a horosphere. Consider a
space-like line $\Delta$ in $AdS_3$, and the set $\Sigma_0$ of
endpoints of the future-oriented time-like segments of length $\pi/4$
starting from $\Delta$. An explicit computation (as in the proof of
Proposition \ref{pr:maxk} below) shows that $\Sigma_0$ is precisely a
horosphere as described above. The action of the isometry group of
$AdS_3$ shows that there exists a unique surface of this type passing
through each point $x$ of $AdS_3$, with fixed (time-like) normal and fixed
principal direction at $x$ for the principal curvature $+1$, so any
maximal graph with zero sectional curvature is of this type.

Let $\Delta^*$ be the line dual to $\Delta$, that is, the set of endpoints of 
future-oriented time-like segments of length $\pi/2$ starting from $\Delta$
(see Section \ref{proj:sec}).
Now let $\dr\Sigma_0$ be the boundary at infinity of $\Sigma_0$. Considering
the projective model of $AdS_3$ shows that $\dr\Sigma_0$ 
contains the endpoints at infinity $\Delta_-$ and $\Delta_+$ 
of $\Delta$, and also the endpoints at infinity of $\Delta^*_+$ and
$\Delta^*_-$ of $\Delta^*$. Since $\dr\Sigma_0$ is a nowhere time-like curve
in $\dr_\infty AdS_3$, it is necessarily made of the four segments from 
$\Delta_+$ to $\Delta^*_+$, from $\Delta_+^*$ to $\Delta_-$, from $\Delta_-$
to $\Delta^*_-$, and from $\Delta^*_-$ to $\Delta_+$, which are all 
light-like. This proves the last part of the lemma.
\end{proof}

\begin{proof}[Proof of Proposition \ref{pr:maxk}]
Since $S$ has bounded second fundamental form, its sectional curvature
$K$ is bounded, we call $K_S$ the upper bound of $K$ on $S$. Lemma
\ref{lm:max_prin} already shows that if this upper bound is attained
on $S$, then it is non-positive, and if it is equal to $0$ then $S$ is
flat. We will use Lemma \ref{lm:compact} to extend this argument to
the case where the upper bound $K_S$ is not attained.

Consider a sequence $(s_n)_{n\in \N}$
of points in $S$ such that $K_S-1/n<K(s_n)<K_S$, and apply to $S$ a sequence of
isometries $(\phi_n)_{n\in \N}$ which sends $s_n$ to a fixed point $x_0$ and the
oriented unit normal vector to $S$ at $s_n$ to a fixed vector $n_0$. 
Since $S$ has bounded second fundamental form, Lemma \ref{lm:compact} shows that
we can extract from the sequence $(\phi_n(S))_{n\in \N}$ a subsequence which
converges, in the neighborhood of $x_0$, to a maximal space-like graph $S_0$.
By construction the curvature of $S_0$ has a local maximum at $x_0$, and
this local maximum is equal to $K_S$. Lemma \ref{lm:max_prin} therefore shows
that $K_S\leq 0$. 

Suppose now that $K_S=0$. Then the sequence $\phi_n(S)$ converges, in
a neighborhood of $x_0$, to a ``horosphere'' $\Sigma_0$, as described
in Lemma \ref{lm:horo}.  Lemma \ref{lm:compact} shows that the
convergence is $C^\infty$ in compact subsets of $AdS_3$.  Let
$E_n$ be the boundary at infinity of $\phi_n(S)$. Since
$\phi_n(S)$ is space-like, $E_n$ is a nowhere time-like curve in
$\dr_\infty AdS_3$.  By construction,
$E_n=(\rho_{l,n},\rho_{r,n})E$, where $E=\dr_\infty S$,
$(\rho_{l,n})$ and $(\rho_{r,n})$ are two sequences of elements of
$PSL_2(\R)$, and, for all $n\in \N$, $(\rho_{l,n},\rho_{r,n})$ is
considered as an isometry acting on $AdS_3$ through the natural
identification (see Section \ref{ssc:3d} or \cite{mess,mess-notes}).

By Lemma \ref{lm:boundary} (more precisely the fact that space-like
hypersurfaces in $AdS_{n+1}$ are the graphs of 2-Lipschitz functions), 
since $\phi_n(S)$ converges on compact
subsets of $AdS_3$ to $\Sigma_0$, $E_n$ converges to the boundary
at infinity of $\Sigma_0$, which we call $E_0$. In particular,
using the notations in the proof of Lemma \ref{lm:horo}, for each
$n\in \N$ there are four points $x^+_n, x^-_n, x^{+*}_n, x^{-*}_n\in
E_n$ which can be chosen so that $x^+_n\rightarrow \Delta_+$,
$x^-_n\rightarrow \Delta_-$, $x^{+*}_n\rightarrow \Delta^*_+$ and
$x^{-*}_n\rightarrow \Delta^*_-$.

Therefore, for $n$ large enough, there are points $y_n, z_n$ which are 
arbitrarily close to $\Delta$ and to $\Delta^*$ respectively, with 
$(y_n)$ and $(z_n)$ converging to limits respectively in $\Delta$ and
to $\Delta^*$. The distance between the limits is $\pi/2$, so that the
distance between $y_n$ and $z_n$ goes to $\pi/2$ as $n\rightarrow \infty$,
this shows that $w(K)=\pi/2$.
\end{proof}

\subsection{Quasi-symmetric homeomorphisms and the width} 

There is another important relation which is valid only in $AdS_3$, as stated
in the next proposition.

\begin{prop} \label{pr:width}
Let $E$ be a weakly space-like graph in $\dr_\infty AdS_3$ (that is, $E$
is a space-like curve). Let $K$ be the convex hull of $E$. Suppose that
$w(K)=\pi/2$. Then $E$ is not the graph of a quasi-symmetric 
homeomorphism from $S^1$ to $S^1$.
\end{prop}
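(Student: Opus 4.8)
The plan is to argue by contradiction. Suppose $w(K)=\pi/2$ while $u$ is $M$-quasi-symmetric for some $M$; I will manufacture, after renormalizing by ambient isometries, a degenerating family of boundary curves whose Hausdorff limit contains a light-like segment, and then show that this is incompatible with a uniform quasi-symmetry bound. (If $E$ already contains a light-like segment then $u$ has a jump and is not even a homeomorphism, so I may assume $E$ is a genuine space-like graph.)

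First I would set up the degeneration. Since $w(K)=\pi/2$, I choose $y_n\in\dr_-K$ and $z_n\in\dr_+K$ joined by time-like geodesics with $\delta(y_n,z_n)\to\pi/2$, and extend each geodesic into the past until it meets $\dr_-D$ at a point $x_n$ (every inextensible time-like geodesic through a point of $D$ meets both boundary components, by the proof of Proposition \ref{boundarydep:prop}). By the first part of Lemma \ref{distance:lem}, $\delta(x_n,z_n)\le\pi/2$, so the reverse triangle inequality (\ref{reverse:eq}) applied to $x_n\prec y_n\prec z_n$ forces $\delta(x_n,y_n)\to0$. Now renormalize by isometries $\gamma_n\in PSL(2,\R)\times PSL(2,\R)$ sending $x_n$ to $(x^0,0)$ and the geodesic to the vertical axis. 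Writing $E_n=\gamma_nE$, $K_n=\gamma_nK=CH(E_n)$ and $D_n=\gamma_nD=D(E_n)$ (the domain of dependence is isometry-equivariant), I get $(x^0,0)\in\dr_-D_n$ for every $n$, whence $E_n\subset\overline{U_{(x^0,0)}}=\{-\pi/2\le t\le\pi/2\}$ by Lemma \ref{dom:lem}, while $\gamma_ny_n\to(x^0,0)$.

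Next I would pass to the limit and invoke Lemma \ref{lm:nosing}. The curves $E_n$ are nowhere time-like graphs inside a fixed compact band, hence uniformly Lipschitz by (\ref{splike2:eq}), so a subsequence converges in the Hausdorff topology to a weakly space-like graph $E_\infty$; using the projective-model descriptions of the convex hull and of the domain of dependence (Lemmas \ref{lm:ch} and \ref{dom:lem}) one checks that $K_n\to K_\infty=CH(E_\infty)$ and $D_n\to D_\infty=D(E_\infty)$. In the limit $(x^0,0)\in\dr_-D_\infty$ (the defining conditions $E_\infty\subset\overline{U_{(x^0,0)}}$ and $E_\infty\cap\dr_\infty(P_-\cup P_+)\neq\emptyset$ are closed), while $(x^0,0)=\lim\gamma_ny_n\in K_\infty$. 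Thus $K_\infty$ meets $\dr D_\infty$, and the contrapositive of Lemma \ref{lm:nosing} yields that $E_\infty$ contains a singular point, i.e. a light-like segment. I expect this to be the \emph{main obstacle}: turning the mere approach of $\dr_-K$ to $\dr_-D$ (the bound $\delta(x_n,y_n)\to0$) into an honest contact in the limit, and justifying that taking convex hulls and domains of dependence commutes with the Hausdorff limit, so that Lemma \ref{lm:nosing} applies.

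Finally I would derive the contradiction on the circle. Each $u_n=\rho_{r,n}\circ u\circ\rho_{l,n}^{-1}$ is the pre- and post-composition of $u$ by Möbius maps, hence is again $M$-quasi-symmetric; consequently the whole family satisfies a single quasi-Möbius estimate $\mathrm{cr}(u_n(\vec\theta))\le\lambda(\mathrm{cr}(\vec\theta))$, with $\lambda=\lambda(M)$ and $\lambda(0^+)=0$. The light-like segment of $E_\infty$ is a jump of the limiting monotone map at some point $a$, between distinct values $c_1\neq c_2$. Choosing four cyclically ordered points of $E_n$ with domain coordinates $\theta_1^{(n)}\to a^-$ and $\theta_2^{(n)}\to a^+$ straddling the jump, together with two fixed points $\theta_3,\theta_4$ having distinct continuity values $d,d'\notin\{c_1,c_2\}$, the source cross-ratio tends to $0$ (the adjacent pair collides) while the image cross-ratio tends to a nonzero limit (the four values $c_1,c_2,d,d'$ are distinct). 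This contradicts the uniform quasi-Möbius bound, so $u$ cannot be quasi-symmetric, proving the proposition.
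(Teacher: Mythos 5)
Your proof reaches the light-like segment in the limit by a genuinely different route from the paper's. The paper replaces $x_n\in\dr_-K$ and $y_n\in\dr_+K$ by points lying on space-like geodesics contained in the faces of $\dr K$, renormalizes so that these geodesics converge to a line $\Delta$ and its dual $\Delta^*$, and concludes that the limit curve is \emph{exactly} the union of four light-like segments joining the ideal endpoints of $\Delta$ and $\Delta^*$; the cross-ratio contradiction is then an explicit computation with the resulting $2$-step map. You instead push the time-like geodesic realizing the width back to $\dr_-D$, use Lemma \ref{distance:lem} together with the reverse triangle inequality to force $\dr_-K$ to touch $\dr_-D$ after renormalization, and invoke the contrapositive of Lemma \ref{lm:nosing} to produce a light-like segment in $E_\infty$. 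This is more modular --- it reuses the $K$-versus-$\dr D$ dichotomy already proved and avoids any discussion of the face structure of $\dr K$ --- but it yields strictly less information: you only know that $E_\infty$ \emph{contains} a light-like segment, not that it is the standard $2$-step graph. The shared technical points (Hausdorff convergence of the curves and of their convex hulls, closedness of the conditions characterizing $\dr D$) are treated at the same level of rigor as in the paper's own proof, so I do not count them against you.

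The loss of information bites in your last step, where there is a genuine but easily repaired gap. You require two auxiliary points $\theta_3,\theta_4$ with distinct continuity values $d,d'\notin\{c_1,c_2\}$. If $E_\infty$ is the $2$-step graph --- which is precisely the limit the paper obtains, so it is not an exotic case --- the limiting monotone map takes \emph{only} the values $c_1$ and $c_2$, and no such $\theta_3,\theta_4$ exist. The fix is to take $\theta_3,\theta_4$ converging to points on the other step, with limit values $d=c_2$ and $d'=c_1$: with the cross-ratio $\frac{|x_1-x_2|\,|x_3-x_4|}{|x_1-x_3|\,|x_2-x_4|}$ the source quadruple still tends to $0$ while the image quadruple tends to $1$, violating the uniform quasi-M\"obius bound. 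A second, smaller omission: the light-like segment of $E_\infty$ may lie in a leaf of $\cL_r$ rather than $\cL_l$, in which case it is not a jump of the limiting map but an interval of constancy; one then runs the same argument with the roles of $\pi_l$ and $\pi_r$ exchanged (equivalently, on $u^{-1}$, which is also quasi-symmetric). With these two adjustments your argument is complete.
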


\begin{proof}
We suppose that $w(K)=\pi/2$, it follows that there exist two sequences
of points $(x_n)$ in $\dr_-K$ and $(y_n)$ in $\dr_+K$ such that 
$\delta(x_n,y_n)\rightarrow \pi/2$. We can suppose (replacing $x_n$
and $y_n$ by points in the same face of $\dr K$ if necessary) that
$x_n$ is contained in a space-like geodesic $\Delta_n\subset\dr_-K$,
and that $y_n$ is contained in a space-like geodesic 
$\Delta'_n\subset \dr_+K$.

We can find a sequence $(\phi_n)$
of isometries of $AdS_3$ such that $\phi_n(x_n)\rightarrow x$, 
$\phi_n(y_n)\rightarrow y$, with $\delta(x,y)=\pi/2$. Moreover, 
$\phi_n(K)$ is the convex hull $\phi_n(E)$. Since the 
$\phi_n(K)$ are convex, they converge (perhaps after extracting a 
subsequence) in the Hausdorff topology to a limit $K_0$, which
is the convex hull of $E_0=\lim \phi_n(E)$. Moreover,
extracting a subsequence again if necessary, we can suppose that 
$\phi_n(\Delta_n)\rightarrow \Delta$ and that $\phi_n(\Delta'_n)
\rightarrow \Delta'$. Since $x\in \Delta$, $y\in \Delta'$, and
$\delta(x,y)=\pi/2$, $\Delta'=\Delta^*$, otherwise the width of
$K_0$ would have to be strictly larger than $\pi/2$, contradicting
Lemma \ref{lm:width}.

Then $E_0$ contains the endpoints $\Delta_-,\Delta_+$ of 
$\Delta$, and the endpoints $\Delta^*_-,\Delta^*_+$ of $\Delta^*$.
Since $E$ is weakly space-like, so is $E_0$, so it is
the union of four light-like segments joining those four points.

Since $E_0$ is composed of four light-like segments (with endpoints 
$\Delta_+,\Delta^*_+,\Delta_-$ and $\Delta^*_-$) there are points 
$u,v$ and $u',v'$ in $\R P^1$, with $u\neq v$ and $u'\neq v'$, such that,
in the identification of $\dr_\infty AdS_3$ with $\R P^1\times \R P^1$, 
$\Delta_+=(u,u')$, $\Delta^*_+=(u,v')$, $\Delta_-=(v,v')$, and 
$\Delta^*_-=(v,u')$. 

So $E_0$ is the graph of the function $f_0:\R P^1\rightarrow
\R P^1$ sending $(u,v)$ to $v'$ and $(v,u)$ to $u'$. After composing
on the right and on the left with projective transformations, we can
suppose that it is the graph of the function $f_0:\R P^1\rightarrow \R P^1$
sending $(0,2)$ to $0$ and $(2,\infty]\cup[-\infty,0)$ to $1$. 

Consider the points $-3,-1,1,\infty\in \R P^1$. A direct computation shows
that their cross-ratio is $[-3,-1;1,\infty]=2$, while the cross-ratio of
their images by $f_0$ is $[0,0;1,1]=1$. 

It follows that there are 4-tuples
of points on $\phi_n(E)$ whose projection by $p_l$ are 4-tuples of points
with cross-ratio arbitrarily to $2$ and whose projection by $p_r$
are 4-tuples of points with cross-ratio arbitrarily close to $1$. This
means precisely, by definition of a quasi-symmetric homeomorphism, that $E$ is not
the graph of a quasi-symmetric homeomorphism. 
\end{proof}

\subsection{Uniqueness of negatively curved maximal surfaces}

We now turn to the second proposition of this section, the fact that maximal
space-like graphs with negative sectional curvature are uniquely determined,
among all maximal space-like graphs, by their boundary at infinity. 

\begin{prop} \label{pr:k1}
Let $S$ be a maximal graph in $AdS_3$, with sectional curvature bounded from
above by a negative constant.
Then $S$ is unique among complete 
maximal graphs with given boundary curve at infinity
and bounded second fundamental form.
\end{prop}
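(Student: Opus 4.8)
The plan is to compare $S$ with an arbitrary competitor through the difference of their time functions and to run a strong maximum principle, the real work being to make that principle effective on the non-compact base $\HH^2$ by a compactness argument. Let $S'$ be another complete maximal graph with $\dr_\infty S'=\dr_\infty S=:\Gamma$ and bounded second fundamental form, and write $S=M_u$, $S'=M_{u'}$, with $K_S\le-\kappa<0$. Set $f=u-u'$. By Lemma \ref{lm:boundary} both $u$ and $u'$ extend continuously to $\overline{\HH^2}$, and since $\Gamma$ is the graph of a single function on $\dr\HH^2$, the two extensions agree there; thus $f$ extends continuously with $f\equiv 0$ on $\dr\HH^2$. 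It is enough to prove $\sup f\le 0$, for the symmetric statement applied to $u'-u$ then gives $f\equiv 0$, i.e. $S=S'$.

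First I would record the maximum principle in usable form. Time translations are isometries of $AdS_3$, so for every $a$ the vertical translate $S'_a=M_{u'+a}$ is again maximal, and both $u$ and $u'+a$ solve the quasilinear equation (\ref{meancurvature:eq}), which is uniformly elliptic on uniformly space-like graphs. Hence the difference of two such solutions solves a linear elliptic equation, and the geometric strong maximum principle applies: if $S$ lies weakly in the past of $S'_a$ and is tangent to it at an interior point, then $S=S'_a$. If this occurs with $a=\sup f>0$, then $u=u'+a$ on all of $\HH^2$, so on $\dr\HH^2$ we get $\bar u=\bar u'+a$, contradicting $\bar u=\bar u'$; this forces $\sup f\le 0$. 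So everything reduces to producing an honest interior tangency at the level $\sup f$.

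The difficulty — and the reason the compactness Lemma \ref{lm:compact} is needed — is that although $f$ vanishes on $\dr\HH^2$, the supremum $m=\sup f$ may be approached along a sequence $x_n$ escaping to infinity in $\HH^2$, at points where the graphs come close to being null and (\ref{meancurvature:eq}) degenerates, so the maximum principle cannot be invoked at a genuine interior tangency of uniformly space-like surfaces. To circumvent this I would normalize by isometries $\gamma_n$ of $AdS_3$ sending $(x_n,u(x_n))\in S$ to a fixed base point $x_0$ with fixed future unit normal $n_0$. Both $S$ and $S'$ have second fundamental form bounded by a constant invariant under $\gamma_n$, so Lemma \ref{lm:compact} applies to both $\gamma_n(S)$ and $\gamma_n(S')$: after passing to a subsequence they converge $C^\infty$ near $x_0$ to maximal space-like disks $\hat S$ and $\hat S'$. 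Since the vertical gap $(u-u')(x_n)\to m$ is bounded and realized along the normal direction, $\gamma_n(S')$ stays in a fixed compact region, the marked point of $\hat S'$ sits at time-distance $m$ below $x_0$, and in the limit $\hat S$ is weakly to the past of the translate $\hat S'_m$ and tangent to it at the interior point $x_0$. Now the strong maximum principle does apply on these uniformly space-like disks and forces $\hat S=\hat S'_m$; the uniformly negative bound $K_S\le-\kappa$ is $\gamma_n$-invariant and passes to the limit, so $K_{\hat S}\le-\kappa<0$, which keeps $\hat S$ a genuine negatively curved graph and rules out the degenerate flat ``horosphere'' of Lemma \ref{lm:horo}. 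The coincidence $\hat S=\hat S'_m$ with $m>0$ means the normalized gap is identically $m$, so $f\to m$ on larger and larger regions, and transporting this back contradicts $f\equiv 0$ on $\dr\HH^2$; hence $m\le 0$.

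I expect the main obstacle to be precisely this passage to the limit: extracting from the bounded-second-fundamental-form hypothesis on \emph{both} surfaces, via Lemma \ref{lm:compact}, enough uniform control to convert a supremum approached at infinity into an interior tangency of two uniformly space-like maximal disks, and checking that the uniformly negative curvature of $S$ (the output of Proposition \ref{pr:maxk}, inherited by $\hat S$) survives in the limit so that the strong maximum principle can be applied without the equation degenerating. The purely algebraic steps — the Riccati evolution of $B$, the linearization of (\ref{meancurvature:eq}), and the boundary computation $\bar u=\bar u'+a$ — are routine by comparison.
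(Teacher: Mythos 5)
Your strategy is genuinely different from the paper's. The paper measures the Lorentzian normal distance from the competitor $S'$ to $S$, uses the hypothesis $K_S\le-\kappa<0$ to build the equidistant foliation $(S_r)_{|r|<\pi/4}$ of Claim \ref{cl:equidistant}, whose leaves have non-zero mean curvature and so act as barriers, traps $S'$ in $\Omega$ via Lemma \ref{lm:saddle}, and then handles a possibly unattained supremum of the distance with an Omori--Yau sequence plus Lemma \ref{lm:compact}. You instead compare the coordinate functions $u$ and $u'+a$ directly. Your first two paragraphs in fact already contain a complete (and simpler) proof, and you do not seem to notice it: once $f=u-u'$ extends continuously to $\overline{\HH^2}$ with $f\equiv 0$ on $S^1$ (Lemma \ref{lm:boundary} plus the shared boundary graph), a positive supremum of $f$ is \emph{attained at an interior point} of $\HH^2$ by compactness of $\overline{\HH^2}$. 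At any interior point both graphs are smooth and strictly space-like, the space-like condition (\ref{splike2:eq}) is convex in the gradient, so the linearization of (\ref{meancurvature:eq}) along the segment from $Du'$ to $Du$ is locally uniformly elliptic with no zeroth-order term, and Hopf's strong maximum principle gives $u\equiv u'+\sup f$, contradicting the boundary values. The scenario you call ``the difficulty'' --- the supremum approached only along $x_n\to\dr\HH^2$ --- simply cannot occur; indeed the vertical segment from $(x,u'(x))$ to $(x,u(x))$ is a time-like arc in the domain of dependence, of proper length $\phi(x)\,|f(x)|<\pi$, so $|f(x)|<\pi/\phi(x)\to 0$ at infinity.

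The third paragraph, which you present as the real work, is therefore both unnecessary and, as written, incorrect. The quantity $f(x_n)=u(x_n)-u'(x_n)$ is a coordinate gap, not a Lorentzian distance, and neither it nor the notion of ``vertical translation by $m$'' is invariant under the normalizing isometries $\gamma_n$: after applying $\gamma_n$, the image of the vertical segment is a time-like arc of proper length $\phi(x_n)f(x_n)$ (which blows up if $x_n\to\dr\HH^2$ and $f(x_n)\to m>0$), the marked point of $\gamma_n(S')$ is not at ``time-distance $m$ below $x_0$'', and the surface $\hat S'_m$ you want to be tangent to $\hat S$ has no well-defined meaning. The concluding step (``$f\to m$ on larger and larger regions, transport back'') is also not justified, since Lemma \ref{lm:compact} only gives convergence on a disk of fixed radius $r_0$ about $x_0$. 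I recommend deleting that paragraph entirely and replacing it with the one-line observation that the continuous extension forces an interior maximum; note that your argument then uses neither the negative curvature of $S$ nor the bounded second fundamental form of $S'$, whereas both hypotheses are essential to the paper's barrier construction, so you should double-check that trade-off rather than take it for granted.
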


We first state a preliminary lemma (see also Lemma \ref{lm:ch}). Note that
from this point on we will often consider space-graphs in the projective model
of $AdS_{n+1}$.

\begin{lemma} \label{lm:saddle}
Let $u:S^1\rightarrow S^1$ be a homeomorphism, and let $E_u\subset S^1\times S^1
\simeq \dr\pi(AdS_3)$ be its graph. Let $C(E_u)$ be defined as in the paragraph before
Definition \ref{df:width}. Then any maximal surface in $AdS_3$ with boundary at
infinity $E_u$ is contained in $C(E_u)$.
\end{lemma}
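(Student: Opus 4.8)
The plan is to recognize $C(E_u)$ as the convex hull $K=K(E_u)$ of Lemma~\ref{lm:ch}, which is a convex slub, and then to prove the inclusion one support plane at a time. By Lemma~\ref{lm:omega}, $K$ is the intersection of the futures $I^+(P_p)$ of the space-like support planes $P_p$ along $\dr_-K$ and of the pasts $I^-(P_p)$ of the space-like support planes along $\dr_+K$. Since $u$ is a homeomorphism its graph contains no horizontal or vertical subsegment, hence $E_u$ contains no light-like segment, so by Lemma~\ref{lm:nosing} the boundary components of $K$ carry no singular points and the \emph{space-like} support planes furnished by Lemma~\ref{lm:omega} are indeed available at every boundary point. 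It therefore suffices to show, for each space-like plane $P$ with $K\subset\overline{I^-(P)}$, that $M\subset\overline{I^-(P)}$, together with the dual statement for past support planes.

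First I would trap $M$ in a fixed precompact region. Being a smooth, strictly space-like maximal graph, $M$ contains no light-like segment. For any $q\in M$ one has $M\subset U_q$, hence $E_u=\dr_\infty M\subset\overline{U_q}$; if $q$ did not lie in the domain of dependence $D=D(E_u)$, then by Lemma~\ref{dom:lem} some $\xi\in E_u$ would lie on $\dr_\infty\bigl(P_+(q)\cup P_-(q)\bigr)$, so the geodesic $[q,\xi]$ would be light-like, and Proposition~\ref{acausal:prop} would force $[q,\xi]\subset M$, which is impossible. Thus $M\subset D$, and by the lemma producing a point $p$ with $D\subset U_p$, the whole surface lies in one precompact star-neighborhood $U_p$. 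In particular $\overline M$ is compact in $\overline{AdS}_3$ with $\dr_\infty\overline M=E_u$.

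The heart of the argument is a maximum principle ``at infinity'' using the linear functionals of Lemma~\ref{planes:lem}. Writing a space-like plane as $P=w^\perp\cap AdS_3^*$ for a future unit time-like vector $w$ (Remark~\ref{planes:rem}, with $w$ oriented so that $I^-(P)=\{f_w<0\}$), the function $f_w=\langle\,\cdot\,,w\rangle$ satisfies $\Delta_M f_w=nf_w$ on any maximal surface, by exactly the computation carried out in Lemma~\ref{planes:lem}. Given a future support plane $P=w^\perp$ with $K\subset\overline{I^-(P)}$, I would push it slightly into the future: let $P_\delta=w_\delta^\perp$ be the parallel space-like plane obtained by flowing $P$ a distance $\delta>0$ along its future normal geodesics, so $w_\delta\to w$ as $\delta\to 0$. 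Then $E_u$ lies strictly in the past of $P_\delta$, whence $\langle\xi,w_\delta\rangle<0$ for every null ray $\xi$ lying over a point of $E_u$, and consequently $f_{w_\delta}\to-\infty$ along $\dr_\infty M=E_u$. Therefore the super-level set $A_\delta=\{x\in M:\ f_{w_\delta}(x)\ge 0\}$ has closure disjoint from $\dr_\infty AdS_3$, and by the precompactness of $\overline M$ it is compact. The maximum of $f_{w_\delta}$ over $M$ is thus attained on $A_\delta$; if it were positive it would occur at an interior point $x_0$, where $\Delta_M f_{w_\delta}(x_0)\le 0$ while $\Delta_M f_{w_\delta}(x_0)=nf_{w_\delta}(x_0)>0$, a contradiction. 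Hence $f_{w_\delta}\le 0$ on $M$, i.e. $M\subset\overline{I^-(P_\delta)}$, and letting $\delta\to 0$ gives $M\subset\overline{I^-(P)}$. The symmetric argument for past support planes yields $M\subset\overline{I^+(P)}$ for each of them, and intersecting over all support planes gives $M\subset K=C(E_u)$ via Lemma~\ref{lm:omega}.

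The main obstacle is exactly the non-compactness of $M$: one cannot invoke Lemma~\ref{planes:lem} or Proposition~\ref{convexity:prop} directly, as these concern \emph{compact} maximal surfaces, and the naive super-level set $\{f_w\ge 0\}$ may run off to infinity precisely where $E_u$ meets $\dr_\infty P$. The device that removes this difficulty is the infinitesimal push $P\rightsquigarrow P_\delta$, which places $E_u$ strictly below $P_\delta$ and thereby forces $f_{w_\delta}\to-\infty$ at infinity, compactifying $A_\delta$; the confinement $M\subset U_p$ proved above is what guarantees that ``$f_{w_\delta}\to-\infty$ at $\dr_\infty M$'' genuinely keeps $A_\delta$ within a compact part of $AdS_3$. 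The only remaining routine points are the continuity $w_\delta\to w$ and the sign of $f_{w_\delta}$ near $E_u$, both immediate from the description of space-like planes as $w^\perp$ and of $\dr_\infty AdS_3$ via the defining quadric.
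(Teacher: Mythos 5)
Your proof is essentially correct, but it takes a genuinely different route from the paper. The paper's own proof is very short: in the projective model a maximal surface is a saddle surface (opposite principal curvatures), and it invokes the characterization of saddle surfaces from Burago--Zalgaller -- every relatively compact piece lies in the convex hull of its boundary -- applied to an exhaustion of $S$. Your argument instead extends the paper's \emph{compact} convexity statements (Lemma \ref{planes:lem} and Proposition \ref{convexity:prop}) to the non-compact setting: you use the identity $\Delta_M\langle\cdot,w\rangle=n\langle\cdot,w\rangle$ on a maximal surface together with a barrier perturbation $P\rightsquigarrow P_\delta$ to compactify the super-level set and run the maximum principle, then intersect over the support planes of Lemma \ref{lm:omega}. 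This is more work but is self-contained (no external saddle-surface theorem) and makes explicit why non-compactness is harmless. Two points in your write-up deserve correction, though neither is fatal. First, the surface obtained by flowing $P=w^\perp$ a distance $\delta$ along its future normal geodesics is \emph{not} a totally geodesic plane $w_\delta^\perp$: it is the umbilic equidistant surface, i.e.\ a level set $\{\langle\cdot,w\rangle=\mathrm{const}\}$ of the \emph{same} linear functional, which would not separate $E_u$ from $\dr_\infty P$ and hence would not compactify $A_\delta$. You should instead take a genuinely tilted plane, e.g.\ in coordinates where $P=\{t=0\}$ take $P_\delta=\{t=\delta\}$, which is totally geodesic, equals $w_\delta^\perp$ with $w_\delta\to w$, and has $\dr_\infty P_\delta$ disjoint from $\dr_\infty P$. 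Second, the assertion that $\langle\xi,w_\delta\rangle<0$ for every null direction $\xi$ over $E_u$ requires $E_u$ to lie in a single ``period'' $\dr\HH^2\times(\delta-\pi,\delta)$ of the past of $P_\delta$; this does hold, because $E_u$ is the graph of a homeomorphism and hence an acausal curve at infinity whose time-extent is strictly less than $\pi$, but it should be said (the same single-period issue is handled explicitly in the paper's Lemma \ref{planes:lem}). With these repairs your argument goes through.
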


\begin{proof}
Let $S\subset AdS_3$ be a maximal surface, with boundary at infinity $E_u$. 
The image of $S$ in the projective model of $AdS_3$ is a saddle surface, that is,
a surface which has opposite principal curvatures at each point. A characterization
of saddle surfaces (see \cite[Section 6.5.1]{burago-zalgaller:geometric}) is that, for any
relatively compact subset $G\subset S$, then $G$ is contained in the convex hull
of $\dr G$. This property, applied to an exhaustion of the image of $S$ in the projective
model by compact subsets, is precisely what we need.
\end{proof}

\begin{proof}[Proof of Proposition \ref{pr:k1}]
We consider the domain $\Omega$ introduced in the proof of Claim \ref{cl:equidistant},
as the set of points at time-like distance at most $\pi/4$ from $S$. Claim
\ref{cl:equidistant} shows that $\Omega$ is convex, with smooth,space-like boundary.

Consider now another maximal graph $S'\subset AdS_3$, complete, with the same 
boundary at infinity as $S$, and with bounded second fundamental form. 
By construction the boundary of $\Omega$ is equal to $E$. 
Since $\Omega$ is convex, it contains the convex hull of $E$ and
therefore, by Lemma \ref{lm:saddle}, it contains $S'$. Let $r_1$ be
the supremum over $S'$ of the distance to $S$. The argument above
shows that $r_1\in [0,\pi/4-\alpha)$, and the maximum principle shows
that, if $r_1>0$, then it cannot be attained at an interior point of $S'$, since 
then $S'$ would have to be tangent from the interior of $S_{r_1}$, 
which would contradict the maximality of $S'$.

Since $S'$ is complete, there exists a sequence
$(x_n)_{n\in \N}$ of points in $S'$ such that
$d(x_n,S)\rightarrow r_1$ and that the norm of the 
differential at $x_n$ of the restriction to $S'$ of the 
distance to $S$ goes to zero as $n\rightarrow \infty$
(this is a very weak form of a lemma appearing e.g. 
in \cite{yau:cpam75}).

Consider a sequence
of isometries $(\phi_n)_{n\in \N}$ chosen such that $\phi_n(x_n)$ is 
equal to a fixed point $x_0$, and that the normal to $\phi_n(S')$ at
$\phi_n(x_n)$ is a fixed vector $n_0$. 
Lemma \ref{lm:compact} shows that, after extracting a sub-sequence,
$(\phi_n(S'))_{n\in \N}$ converges in a neighborhood of $x_0$ to 
a smooth, maximal surface $S'_\infty$. Moreover, since the differential
at $x_n$ of the distance to $S$ goes to zero, the images by $\phi_n$
of $S$ also converge to a limit $S_\infty$, in a neighborhood of its
intersection with the normal to $S'_\infty$ at $x_0$. 

We can now apply the maximum principle to the distance to $S'_\infty$
as a maximal surface in the foliation by the surfaces equidistant to 
$S_\infty$, and obtain a contradiction if $r_1>0$. So $r_1=0$, 
and $S'=S$.
\end{proof}

Together with Proposition \ref{pr:maxk} and Proposition \ref{pr:width}, Proposition \ref{pr:k1} 
leads directly to a simple consequence.

\begin{cor}
Let $S$ be a maximal graph in $AdS_3$, with bounded second fundamental form. 
Suppose that the boundary at infinity of $S$ is the graph $E$ of a quasi-symmetric
homeomorphism from $S^1$ to $S^1$. 
Then $S$ is the unique maximal surface with boundary at infinity $E$ and bounded
second fundamental form.
\end{cor}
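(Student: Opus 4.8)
The plan is to deduce this corollary by combining the three preceding propositions of the section, in the order \ref{pr:width}, \ref{pr:maxk}, \ref{pr:k1}. First I would exploit the quasi-symmetry hypothesis to control the width of the boundary data. By Lemma \ref{lm:width} we already know $w(E)\leq \pi/2$, where I write $w(E)=w(K(E))$ as in Definition \ref{df:width}. Since $E=\dr_\infty S$ is the graph of a quasi-symmetric homeomorphism and $S$ is space-like (so $E$ is a space-like curve), the contrapositive of Proposition \ref{pr:width} rules out the value $w(E)=\pi/2$. Combining these two facts gives the strict inequality $w(E)<\pi/2$, which is the key quantitative input for the rest of the argument.

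Next I would feed this into Proposition \ref{pr:maxk}. Before doing so I would note that $S$, being a space-like graph over all of $\HH^2$ with bounded second fundamental form, is complete, so the proposition applies. It tells us that $S$ is either flat or negatively curved, and in particular that the supremum $K_S$ of its sectional curvature satisfies $K_S\leq 0$; moreover it asserts that $K_S=0$ would force $w(\dr_\infty S)=\pi/2$. Since $\dr_\infty S=E$ and we have just shown $w(E)<\pi/2$, the borderline value $K_S=0$ is impossible. Hence $K_S<0$, i.e.\ the sectional curvature of $S$ is bounded from above by a negative constant.

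Finally I would invoke Proposition \ref{pr:k1} verbatim: a maximal graph in $AdS_3$ whose sectional curvature is bounded above by a negative constant is the unique complete maximal graph with bounded second fundamental form and the given boundary curve at infinity. Since $S$ now satisfies exactly this hypothesis, and $E$ is its boundary at infinity, the desired uniqueness follows at once.

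The step I would be most careful about, and the only genuinely delicate point, is the passage from the dichotomy of Proposition \ref{pr:maxk} to the \emph{uniform} (strictly) negative upper bound required by Proposition \ref{pr:k1}. Pointwise negativity of $K$ is not enough: one must exclude the degenerate possibility that $K_S=0$ is approached but never attained. This is precisely where the width estimate does the work, since the clause of Proposition \ref{pr:maxk} linking $\sup K=0$ to $w=\pi/2$, together with the strict inequality $w(E)<\pi/2$ coming from quasi-symmetry, closes the gap. I would therefore present the logic as $K_S\leq 0$ from the dichotomy, $K_S\neq 0$ from the width bound, hence $K_S<0$, making explicit that it is the strictness of $w(E)<\pi/2$ that upgrades ``negatively curved'' to ``curvature bounded above by a negative constant.''
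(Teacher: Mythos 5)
Your proof is correct and follows exactly the route the paper intends: the paper derives this corollary by combining Proposition \ref{pr:width} (via its contrapositive, together with Lemma \ref{lm:width}, to get $w(E)<\pi/2$), Proposition \ref{pr:maxk} (to upgrade this to a uniform negative upper bound on the sectional curvature of $S$), and Proposition \ref{pr:k1} (for uniqueness). Your emphasis on why the strict width inequality is needed to exclude $\sup K=0$ is precisely the point of the chain of implications.
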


\section{Proof of the main results}
\label{sc:proof}

\subsection{A characterization of quasi-symmetric homeomorphisms}
\label{ssc:qsym}

We now prove Theorem \ref{tm:qsym}.
Let $u:S^1\rightarrow S^1$ be a homeomorphism, and let $E_u$ be its graph.
We already know, from Lemma \ref{lm:width}, that $w(E_u)\leq \pi/2$. 
Moreover Proposition \ref{pr:width} shows that if $u$ is quasi-symmetric, then
$w(E_u)<\pi/2$. 

Suppose conversely that $w(E_u)<\pi/2$. 
We can apply Theorem \ref{maximal:teo} to $E_u$, and obtain a
maximal graph $M$ in $AdS_3$ with boundary at infinity equal
to $E_u$. Corollary \ref{cr:bound} shows that $M$ has bounded
second fundamental form.

Proposition \ref{pr:maxk} 
then shows that $M$ has sectional curvature bounded from above
by a negative constant. Therefore we obtain through Proposition \ref{pr:translation}
a minimal Lagrangian quasi-conformal diffeomorphism $\phi$ with 
boundary value equal to $u$. Since $\phi$ is quasi-conformal, 
$u$ is quasi-symmetric, as claimed.

\subsection{Theorems \ref{tm:ml} and \ref{tm:uniqueness}}

Theorem \ref{tm:ml} clearly follows, through Proposition \ref{pr:translation}, 
from Theorem \ref{tm:uniqueness},
so we now concentrate on this last statement.

\begin{proof}[Proof of Theorem \ref{tm:uniqueness}]
Let $E=\dr_\infty S\subset\dr_\infty AdS_3$, and let $M$ be the maximal
graph with boundary at infinity $E$ which is provided by Theorem 
\ref{maximal:teo}. Since $E$ is the graph of a quasi-symmetric homeomorphism,
Proposition \ref{pr:width} shows that $w(E)<\pi/2$. 

The argument in the previous paragraph then shows that $E$ is the boundary
at infinity of a maximal graph $M$ in $AdS_3$, which has bounded 
second fundamental form by Theorem \ref{bound:teo}. Then Proposition 
\ref{pr:maxk} shows that $M$ has sectional curvature bounded from
above by a negative constant. Proposition \ref{pr:k1} can therefore be
used to obtain that $M$ is unique among maximal graphs with 
boundary at infinity $E$ and bounded second fundamental form. 
\end{proof}

\appendix

\section{Mean curvature flow for spacelike graphs}

In this section we prove a longtime existence solution 
for the mean curvature flow of spacelike graphs in $AdS_{n+1}$.
The proof is based on Ecker's estimates \cite{ecker}, that are the parabolic
analogous of Bartnik's estimates we have used in Lemma \ref{apriori:lem}.
This argument provides an alternate proof of the existence and 
regularity of maximal surfaces with given asymptotic boundary already
proved in Section \ref{sc:maximal}.

We recall that a mean curvature flow of a spacelike surface is 
a family of spacelike embeddings $\sigma_s:M\rightarrow AdS_{n+1}$
such that
\begin{equation}\label{mcf:eq1}
\frac{\partial \sigma}{\partial s}(x,s)=H(x,s)\nu(x,s)
\end{equation}
where $H(x,s)$ and $\nu(x,s)$ are respectively
the mean curvature and the normal vector of the surface 
$M_s=\sigma_s(S)$ at point $\sigma_s(x)$.

We also consider the case where $M$ is compact with boundary. In that case 
we always consider the Dirichlet condition
\begin{equation}\label{dir:eq}
\sigma_s(x)=\sigma_0(x)\qquad\textrm{ for all }x\in\partial M~.
\end{equation}

\begin{lemma}\label{graph:lem}
Let $(M_s)_{s\in[0,s_0]}$ be a family of spacelike surfaces moving
by mean curvature flow.
If $M_0$ is a graph of a function $u_0$  defined on some domain $\Omega$ of $\mathbb H^n$
with smooth boundary,
then so is $M_s$ for every $s\in[0,s_0]$.

Moreover, if $u_s:\Omega\rightarrow\mathbb R$ is the function defining $M_s$ then 
\begin{equation}\label{mcf:eq2}
\frac{\partial u}{\partial s}=\phi^{-1}v^{-1}H
\end{equation}
where $v$ is the gradient function on $M_s$
\end{lemma}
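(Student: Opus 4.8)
The plan is to treat the two assertions in turn: first that the flow keeps $M_s$ a graph over the fixed domain $\Omega$, and then, granting this, to derive the scalar equation (\ref{mcf:eq2}) by a short computation of normal components. The starting point is that each $M_s$ is spacelike, so its tangent plane is positive definite at every point and in particular cannot contain the timelike vertical direction $\partial/\partial t$. Hence $M_s$ is everywhere transverse to the vertical lines $\{x\}\times\mathbb R$, and the restriction to $M_s$ of the projection $\pi:\mathbb H^n\times\mathbb R\to\mathbb H^n$ onto the first factor is a local diffeomorphism. Writing $\hat\pi_s=\pi\circ\sigma_s:M\to\mathbb H^n$, this says each $\hat\pi_s$ is a local diffeomorphism, so $M_s$ is \emph{locally} a graph; the substance of the first assertion is to upgrade this to a global statement over $\Omega$.

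First I would prove the global graph property by a connectedness argument on $[0,s_0]$. Let $A$ be the set of $s$ for which $\hat\pi_s$ is a diffeomorphism of $M$ onto $\overline\Omega$; then $0\in A$ by hypothesis. The set $A$ is open, by the standard stability of embeddings: a local diffeomorphism onto $\overline\Omega$ whose restriction to $\partial M$ is the fixed embedding onto $\partial\Omega$ (prescribed by the Dirichlet condition (\ref{dir:eq})) stays one under small $C^1$ perturbations. The hard part will be to show that $A$ is closed. Since $M$ is compact each $\hat\pi_s$ is a proper local diffeomorphism, hence a covering map onto its image; because $\hat\pi_s|_{\partial M}$ does not depend on $s$ and $\hat\pi_0$ is a degree-one map relative to the boundary, the relative degree of $\hat\pi_s$ stays equal to one, and a proper local diffeomorphism of degree one onto the connected set $\overline\Omega$ is a global diffeomorphism. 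Thus $A=[0,s_0]$, every $M_s$ is the graph of a (spacelike, cf. (\ref{splike:eq})) function $u_s:\Omega\to\mathbb R$, and the domain stays $\Omega$ since $\hat\pi_s(\partial M)=\partial\Omega$ is fixed.

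It remains to derive (\ref{mcf:eq2}). I would compare two parametrizations of the same moving surfaces: the flow parametrization $\sigma_s$, whose velocity $\partial_s\sigma=H\nu$ from (\ref{mcf:eq1}) is purely normal, and the graph parametrization $\tilde\sigma_s(x)=(x,u_s(x))$, whose velocity $\partial_s\tilde\sigma=(\partial u/\partial s)\,\partial/\partial t$ is purely vertical. As these parametrize the identical surfaces $M_s$, they differ by a time-dependent tangential reparametrization, so their velocities have the same normal component; taking the inner product with the future unit normal $\nu$ and using $\langle\nu,\nu\rangle=-1$ gives
\[
  \frac{\partial u}{\partial s}\,\left\langle \frac{\partial}{\partial t},\nu\right\rangle=-H~.
\]
Now $T=-\phi\bar\nabla t=\phi^{-1}\,\partial/\partial t$ is the future unit timelike field, so $\partial/\partial t=\phi T$ and, by the definition $v=-\langle\nu,T\rangle$ of the gradient function, $\langle \partial/\partial t,\nu\rangle=\phi\langle T,\nu\rangle=-\phi v$. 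Substituting gives $(\partial u/\partial s)(-\phi v)=-H$, that is $\partial u/\partial s=\phi^{-1}v^{-1}H$, which is exactly (\ref{mcf:eq2}).

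I expect the closedness step to be the only real difficulty: spacelikeness already forbids $\hat\pi_s$ from becoming singular, so the only way the graph property could fail is through a loss of global injectivity (two sheets of the vertical projection colliding), and this is precisely what the degree/covering argument excludes. Once the graph property is secured, the evolution equation is the elementary normal-component computation described above.
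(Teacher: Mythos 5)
Your proof is correct, but it takes a genuinely different route from the paper's. For the graph property, the paper argues causally: since $M_s$ is deformed from $M_0$ through spacelike surfaces with fixed boundary, it stays inside the domain of dependence of $M_0$, hence inside the cylinder $\Omega\times\mathbb R$, and then the separation argument of Proposition~\ref{spacelikegraphs:prop} (every vertical line meets an acausal hypersurface that disconnects the cylinder exactly once) shows $M_s$ is a graph. You instead exploit the purely differential-topological fact that spacelikeness makes the vertical projection $\hat\pi_s$ a local diffeomorphism, and run a degree argument anchored at the fixed Dirichlet boundary data~(\ref{dir:eq}). This is sound, and it has the pleasant feature that the degree computation gives you \emph{both} conclusions at once: degree $1$ at points of $\Omega$ forces exactly one preimage (global injectivity and surjectivity), while degree $0$ at points outside $\overline\Omega$ forces the surface to stay in the cylinder --- the containment the paper obtains from the domain-of-dependence argument. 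Two small remarks: once the degree argument is available it applies to every $s$ directly, so your open--closed scaffolding on $[0,s_0]$ is redundant (and the claim that $\hat\pi_s$ maps \emph{onto} $\overline\Omega$, which you assert when setting up $A$, is really an output of the degree count rather than an input); and your use of properness and of the constancy of the local degree relies on $\overline\Omega$ being compact and connected and $M$ oriented, which is indeed the setting in which the lemma is applied ($\Omega=B_r$). For the evolution equation, the paper simply cites Ecker--Huisken, whereas you derive (\ref{mcf:eq2}) by equating the normal components of the velocities of the flow parametrization (\ref{mcf:eq1}) and the graph parametrization; your sign bookkeeping ($\langle\partial/\partial t,\nu\rangle=-\phi v$ with $v=-\langle\nu,T\rangle$ and $T=\phi^{-1}\partial/\partial t$) is consistent with the paper's conventions, so this gives a clean self-contained replacement for the citation.
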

\begin{proof}
Since $M_s$ is homotopic to $M_0$ through a family of
spacelike surfaces with fixed boundary , then $M_s$ is contained
in the domain of dependence of $M_0$ 
that, in turn,  is contained in $\Omega\times\mathbb R$.

Moreover, $M_s$ disconnects $\Omega\times\mathbb R$ in two regions.
The same argument as in Proposition \ref{spacelikegraphs:prop} shows that
$M_s$ is a graph on $\Omega$ of a function $u_s$.

The evolution equation of $u_s$ is computed in \cite{ecker-huisken}.
\end{proof}

\begin{remark}
\begin{enumerate}
\item Notice that  $\frac{\partial (t\circ\sigma)}{\partial s}=\phi^{-1}v H$, that is different from
(\ref{mcf:eq2}).
The reason is that the curve $\sigma(x,\cdot)$ at some point $s$ is tangential to the normal 
of $M_s$, so in general it is not a vertical line. This implies that the function
$u_s$ agrees with $t|_{M_s}$ only up some tangential diffeomorphism of $M_s$.
\item
Equation (\ref{mcf:eq2})  is equivalent, up to tangential diffeomorphisms, to equation
(\ref{mcf:eq1}). This means that if $(u_s)_{s\in [0,s_0]}$ is a solution of (\ref{mcf:eq2}), there is a time-dependent field $X_s$ on $\Omega$ such that the map
$\sigma:\Omega\times[0,s_0]\rightarrow AdS^{n+1}$ defined by
\[
\sigma(x,s)=(\psi_s(x), u_s(\psi_s(p)))
\]
is a solution of (\ref{mcf:eq1}), where $\psi_s$ is the flow of $X_s$.
\end{enumerate}
\end{remark}

\begin{prop}\label{compactmcf:prop}\cite{ecker}
Let $M_0$ be a spacelike $\mathrm C^{0,1}$  compact graph in $AdS^{n+1}$.
Then there is a smooth solution of (\ref{mcf:eq1}) for $s\in(0,+\infty)$ such that
\begin{itemize}
\item  $\partial M_s=\partial M_0$ for every $s$;
\item $M_s\rightarrow M_0$ in the Hausdorff topology as $s\rightarrow 0$;
\item $M_s\rightarrow M_\infty$ in the $\mathrm C^{\infty}$-topology as $s\rightarrow +\infty$,
where $M_\infty$ is the unique maximal spacelike surface with the property that
$\partial M_\infty=\partial M_0$;
\item   if $H_s$ denotes the mean curvature on $M_s$ we have
\begin{equation}\label{mcest:eq}
H^2_s(x)\leq \frac{n}{2}\frac{1}{s}\,.
\end{equation}
\end{itemize} 
\end{prop}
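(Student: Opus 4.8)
The plan is to treat the flow as a quasilinear parabolic initial--boundary value problem for the graph function and to invoke Ecker's a priori estimates \cite{ecker}, which are the parabolic counterparts of the Bartnik estimates used in Lemma \ref{apriori:lem}. By Lemma \ref{graph:lem}, a spacelike solution of (\ref{mcf:eq1}) starting from a graph $M_0=M_{u_0}$ over a domain $\Omega\subset\HH^n$ stays a graph, and the height function $u_s$ solves the scalar equation (\ref{mcf:eq2}), $\partial_s u=\phi^{-1}v^{-1}H$. Writing $H$ out through (\ref{meancurvature:eq}) exhibits this as a quasilinear equation whose ellipticity is controlled by the spacelike condition (\ref{splike:eq}): as long as $1-\phi^2|\bar\nabla u|^2$ stays bounded away from $0$, the equation is uniformly parabolic. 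Standard parabolic theory then gives short-time existence and uniqueness of a smooth solution subject to the Dirichlet condition (\ref{dir:eq}). The delicate point at $s=0$ is that $u_0$ is merely $\mathrm C^{0,1}$, so one first solves for smooth approximations $u_0^{(k)}\to u_0$ (uniformly spacelike in the interior) and then passes to the limit using the interior estimates below, which force instantaneous smoothing for $s>0$ and give $M_s\to M_0$ in the Hausdorff topology.

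The core of the argument is a pair of a priori estimates obtained by the maximum principle. First, the gradient function $v=v_{M_s}$ satisfies along the flow a parabolic evolution inequality of the schematic form $(\partial_s-\Delta_{M_s})v\le (\text{curvature terms})\,v$, in which the contribution of the ambient AdS curvature and of the Killing field $\partial/\partial t$ has a sign allowing $\sup v$ to be controlled in terms of its initial and boundary values; this is precisely Ecker's parabolic analogue of Lemma \ref{apriori:lem}, and it keeps $M_s$ uniformly spacelike on compact subsets, with a bound depending only on the Lipschitz constant of $u_0$ and not on its higher regularity. Second, one computes the evolution equation for $H$ (hence for $H^2$) along the flow; in the spacelike AdS setting the reaction term again has a favourable sign, and applying the maximum principle to the quantity $sH^2$ yields the interior bound (\ref{mcest:eq}), $H_s^2\le \frac n2\frac1s$. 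Feeding the gradient bound back into the quasilinear structure and applying parabolic Schauder theory (as in \cite{gilbarg-trudinger}) produces uniform interior bounds on all higher derivatives, so no singularity can form and the flow extends to all $s\in(0,+\infty)$.

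Finally, for the convergence as $s\to\infty$ I would use the mean-curvature decay (\ref{mcest:eq}): since $H_s^2\le n/(2s)\to 0$ while the surfaces remain uniformly spacelike with uniformly bounded geometry on compacta, any sequence $s_k\to\infty$ admits a subsequence along which $M_{s_k}$ converges in $\mathrm C^\infty_{\mathrm{loc}}$ to a spacelike graph with $H\equiv 0$, that is, a maximal graph sharing the boundary $\partial M_0$. A maximal graph with prescribed boundary is unique: two such surfaces are compared by foliating a neighbourhood of one by equidistant surfaces and applying the maximum principle to their mean curvatures, exactly the device used in the proof of Proposition \ref{pr:k1}. Hence all subsequential limits coincide with a single surface $M_\infty$, and the whole flow converges to $M_\infty$ in $\mathrm C^\infty_{\mathrm{loc}}$, which establishes the third bullet.

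I expect the main obstacle to be the gradient estimate for small $s$ with only Lipschitz initial data: one must show that the merely $\mathrm C^{0,1}$, possibly only weakly spacelike, initial graph is instantaneously regularized into a uniformly spacelike smooth surface, uniformly enough that the approximation scheme passes to the limit and still attains $M_0$ in the Hausdorff sense. This is exactly where Ecker's interior estimates are indispensable, and the AdS-specific input is that the favourable sign of the curvature terms — which makes the maximum principle applicable to both $v$ and $H$ — survives in the anti-de Sitter background.
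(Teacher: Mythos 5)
This proposition is not proved in the paper at all: the citation to \cite{ecker} sits in the statement itself and the result is imported wholesale, so there is no internal argument to compare yours against. Your sketch does, in outline, reproduce the scheme of the cited source (Ecker, building on Ecker--Huisken): reduce to the scalar quasilinear equation (\ref{mcf:eq2}) via Lemma \ref{graph:lem}; run the parabolic maximum principle on the gradient function $v$, using the timelike convergence condition --- in $AdS_{n+1}$ one has $\overline{\mathrm{Ric}}(\nu,\nu)=n>0$ for unit timelike $\nu$, which is exactly the favourable sign you invoke; derive (\ref{mcest:eq}) from $(\partial_s-\Delta)H^2\le -\tfrac{2}{n}H^4$ by comparison with the ODE solution $n/(2s)$; bootstrap; and pass to the limit $s\to\infty$ using the decay of $H$ together with uniqueness of the compact maximal surface with prescribed boundary. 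For that last uniqueness you do not need the equidistant-foliation device of Proposition \ref{pr:k1}: on a compact graph the difference of two solutions of the maximal surface equation obeys the ordinary comparison principle (Bartnik--Simon), which is both simpler and what the compact setting calls for.

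Two steps in your outline are thinner than they need to be. First, for a Dirichlet problem interior estimates alone do not yield long-time existence of a solution attached to $\partial M_0$: one also needs boundary gradient estimates, which in this Lorentzian setting come from barriers at boundary points (light cones, or the totally geodesic planes used in Lemma \ref{apriori:lem}) and exploit the acausality of $M_0$ near its boundary; without them the evolving graph could degenerate towards a lightlike cone over $\partial M_0$. Second, your maximum-principle argument for $sH^2$ must dispose of the lateral boundary $\partial M_0\times(0,\infty)$, where $H$ is not a priori controlled --- the boundary points are fixed, but the second-order behaviour of $u_s$ there still evolves --- and this is precisely where the cited estimates do real work. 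Neither point is a wrong turn, but as written your sketch silently assumes both.
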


\subsection{Mean curvature flow and convex subsets}

To show the convergence of the mean curvature flow, we need to remark that, under
suitable hypothesis, it does not leave convex subsets of $AdS_{n+1}$. 

\begin{lemma}\label{mcfplanes:lem}
Let $M_s$ be a compact solution of  (\ref{mcf:eq1}).
Suppose that there exists a spacelike plane  $P$ such that
$M_0$ is contained in $\overline{I^-(P)}$ and $\partial M_0\subset I^-(P)$. 
Then $M_s$ is contained in $I^-(P)$ for every $s>0$.
\end{lemma}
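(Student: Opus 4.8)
The plan is to follow closely the proof of Lemma \ref{planes:lem}, replacing the elliptic maximum principle used there by a parabolic one. First I would reduce to the case where $P$ is the horizontal plane $\{t=0\}$: the mean curvature flow (\ref{mcf:eq1}) is equivariant under the isometries of $AdS_{n+1}$ (which extend to $\overline{AdS}_{n+1}$ by Lemma \ref{isoextension:lem}), and both hypotheses and conclusion are isometry--invariant, so we may apply an isometry carrying $P$ to $\{t=0\}$. With this normalization, a point $(x,t)$ in the relevant region lies in $I^-(P)$ exactly when the function $u(x,t)=x_{n+1}\sin t$ is negative, with $u=0$ on $P$ and $u\le 0$ on $\overline{I^-(P)}$. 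This is the same function used in Lemma \ref{planes:lem}, the pull-back of the linear function $u^*(y)=\langle y,e\rangle$ on $AdS^*_{n+1}$ with $e=(0,\dots,0,-1)$, and I would work on the quadric model as there. The surfaces $M_s$ stay inside the domain of dependence of $M_0$ (as in Lemma \ref{graph:lem}), hence in a region $U_{p_0}$ where this characterization of $I^-(P)$ is valid.

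Next I would compute the evolution of $u$ along the flow. The computation in the proof of Lemma \ref{planes:lem} gives, for any spacelike surface, $\Delta u = nu + \langle e,\nu\rangle H$, where $\Delta$ is the intrinsic Laplacian (negative as an operator) and $\nu$ the future unit normal. On the other hand, since $u^*$ is linear and $\frac{\partial \sigma}{\partial s}=H\nu$, the restriction of $u$ to the moving surface satisfies $\frac{\partial u}{\partial s}=\langle \nu,e\rangle H$. Subtracting, the mean--curvature term cancels exactly and $u$ solves the linear parabolic equation $\frac{\partial u}{\partial s}=\Delta_s u - nu$, where $\Delta_s$ is the (time--dependent) Laplacian of the induced metric on $M_s$.

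The remaining step is a maximum principle argument. Setting $\tilde u=e^{ns}u$, a direct computation gives $\frac{\partial \tilde u}{\partial s}=\Delta_s\tilde u$, i.e. $\tilde u$ solves the heat equation for the smooth, time--dependent induced metrics. On the parabolic boundary of $M\times[0,s_0]$ one has $\tilde u\le 0$: at $s=0$ because $M_0\subset\overline{I^-(P)}$, and on $\partial M\times[0,s_0]$ because the Dirichlet condition (\ref{dir:eq}) keeps $\partial M_s=\partial M_0\subset I^-(P)$, so in fact $\tilde u<0$ there. The weak parabolic maximum principle then yields $\tilde u\le 0$, i.e. $M_s\subset\overline{I^-(P)}$; and if $\tilde u$ vanished at an interior point with $s>0$, the strong maximum principle would force $\tilde u\equiv 0$ on all earlier slices, contradicting $\tilde u<0$ on $\partial M$. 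Hence $\tilde u<0$, and therefore $u<0$, in the interior for every $s>0$, which is precisely $M_s\subset I^-(P)$.

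The main point requiring care is the derivation of the evolution equation $\frac{\partial u}{\partial s}=\Delta_s u-nu$ together with the verification that the parabolic maximum principle applies despite the metric on $M_s$ varying with $s$; the latter is standard once one localizes in coordinates, where expressions of the type (\ref{meancurvature:eq}) exhibit the operator as uniformly parabolic with smooth coefficients. A secondary point, handled as above, is to ensure the surfaces remain in the region where $\{u<0\}$ genuinely describes $I^-(P)$, which follows because the flow keeps $M_s$ inside the domain of dependence of $M_0$.
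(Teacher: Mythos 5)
Your proof is correct and follows essentially the same route as the paper: reduce to $P$ horizontal, use the function $u(x,t)=x_{n+1}\sin t$ from Lemma \ref{planes:lem}, derive the evolution equation $(\partial_s-\Delta)u=-nu$, and conclude by the parabolic maximum principle. Your substitution $\tilde u=e^{ns}u$ and the explicit appeal to the weak and strong maximum principles merely make precise the step the paper compresses into the differential inequality for $u_{\max}(s)$.
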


\begin{proof}
Without loss of generality we can suppose that $P$ is the horizontal plane.
We consider the function $u:AdS^{n+1}\rightarrow\R$ 
defined, as in the proof of Lemma \ref{planes:lem},
by $u(x,t)=x_{n+1}\sin t$.
 
By our assumption
\begin{equation}\label{neg:eq}
\begin{array}{ll}
u(p)\leq 0 & \textrm{for every } p\in M_0~,\\
u(p)< 0 &\textrm{for  every }p\in\partial M_s~.
\end{array}
\end{equation}

On the other hand the computation in Lemma \ref{planes:lem}
shows that
\[
  (\frac{d}{ds}-\Delta)u=-nu
\]
where $\Delta$ is the Laplace-Beltrami operator
on $M_s$.

In particular if the maximum of the function $u$ is achieved at some interior point of
$M_s$ we have
\[
\frac{du_{max}}{ds}\leq nu_{max}~.
\]
By (\ref{neg:eq}), we deduce that $u_{max}(s)<0$ for every $s>0$.
In particular $M_s$ is contained in the region $\{(x,t)|0<t<\pi\}$ for every $s>0$.
\end{proof}

Lemma \ref{mcfplanes:lem} and Lemma \ref{lm:omega}
imply the following property.

\begin{prop}\label{mcfconvexity:prop}
If $M_s$ be a compact solution of (\ref{mcf:eq1}) such that
$M_0$ is contained in the closure of some convex slub $\Omega$, 
and $\partial M_0$ is contained in $\Omega$, 
then $M_s$ is contained in $\Omega$ for every $s>0$.
\end{prop}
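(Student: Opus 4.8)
The plan is to reduce the statement to the single-plane case already treated in Lemma \ref{mcfplanes:lem}, using the description of a convex slub as an intersection of half-spaces furnished by Lemma \ref{lm:omega}. Write $S_-$ and $S_+$ for the past and future boundary components of $\Omega$. Lemma \ref{lm:omega} produces, for each $p\in S_+$, a spacelike geodesic plane $P_p$ through $p$ with $\Omega\subset I^-(P_p)$, and for each $p\in S_-$ a spacelike plane $P_p$ with $\Omega\subset I^+(P_p)$, and moreover asserts the identity
$$ \Omega=\bigcap_{p\in S_-}I^+(P_p)\cap\bigcap_{p\in S_+}I^-(P_p)~. $$
So it will suffice to show that for every $s>0$ the surface $M_s$ lies in $I^-(P_p)$ for each $p\in S_+$ and in $I^+(P_p)$ for each $p\in S_-$; intersecting these inclusions over all such $p$ and invoking the identity above then yields $M_s\subset\Omega$.

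First I would fix a point $p\in S_+$ and set $P=P_p$. From $\Omega\subset I^-(P)$ together with the hypotheses one gets $M_0\subset\overline\Omega\subset\overline{I^-(P)}$ and $\partial M_0\subset\Omega\subset I^-(P)$, which are precisely the hypotheses of Lemma \ref{mcfplanes:lem}. That lemma therefore gives $M_s\subset I^-(P)$ for all $s>0$. For a point $p\in S_-$ the plane $P=P_p$ instead satisfies $\Omega\subset I^+(P)$; here I would apply the time-reversed version of Lemma \ref{mcfplanes:lem}, obtained by composing with a time-reversing isometry of $AdS_{n+1}$, under which the mean curvature flow (\ref{mcf:eq1}) is equivariant, to conclude $M_s\subset I^+(P)$ for all $s>0$.

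The only point requiring care is the verification, for each bounding plane, that $M_0$ sits in the \emph{closed} half-space while $\partial M_0$ sits in its \emph{open} interior, since Lemma \ref{mcfplanes:lem} demands the strict inclusion $\partial M_0\subset I^-(P)$; this is immediate from $M_0\subset\overline\Omega$ and $\partial M_0\subset\Omega$. I do not anticipate any genuine obstacle beyond this bookkeeping: the entire maximum-principle content is already packaged in Lemma \ref{mcfplanes:lem}, and the passage from a single plane to the full slub is purely set-theoretic, so that intersecting the inclusions obtained above over $p\in S_-\cup S_+$ closes the argument and gives $M_s\subset\Omega$ for every $s>0$.
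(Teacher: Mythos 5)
Your proof is correct and is exactly the argument the paper intends: the paper offers no written proof beyond the one-line remark that Lemma \ref{mcfplanes:lem} and Lemma \ref{lm:omega} imply the proposition, and your reduction of the slub to the intersection of half-spaces $I^{\pm}(P_p)$, followed by an application of Lemma \ref{mcfplanes:lem} (and its time-reversed counterpart) to each bounding plane, is precisely that deduction with the details filled in. The care you take over the open versus closed inclusions of $M_0$ and $\partial M_0$ is also the right bookkeeping for invoking Lemma \ref{mcfplanes:lem}.
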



Let $M=\Gamma_u$ be a weakly 
spacelike  graph and $\Sigma$ be 
its asymptotic boundary.
We will assume that neither $M$ nor $\Sigma$ contains any
singular point.
Finally we denote by $D$
the domain of dependence of $M$ and by $K$ its convex hull,
introduced in Remark \ref{rk:chull}.
The same argument as in Lemma \ref{lm:nosing}
shows that $\overline{K}\cap \dr D=\emptyset$. 

For every $r>0$ let $u^r$ be the restriction of $u$ on 
 $B_r$ (that is the ball in $\mathbb H^n$ of center at $x^0$ and 
radius $r$).

We consider the mean curvature flow with Dirichlet condition of the compact graph
of $u^r$, that is, a map
\[
\sigma^r: \overline B_r\times (0,+\infty)\rightarrow AdS^{n+1}
\]
that verifies (\ref{mcf:eq1}) and satisfies
\begin{itemize}
\item $\sigma^r(x,0)=(x,u(x))$ for every $x\in B_r$;
\item  $\sigma^r(x,s)=(x,u(x))$ for every $x\in \partial B_r$.
\end{itemize}
Let us denote by $M^r_s$ the image of $B_r$ through the map $\sigma(\cdot,s)$.

By Lemma \ref{graph:lem} and Proposition \ref{compactmcf:prop}
 there is a family of spacelike functions
\[
   u^r_s:\overline B_r\rightarrow \mathbb R
\]
such that $M^r_s$ is the graph of $u^r_s$ and the family $(u^r_s)$ 
satifies (\ref{mcf:eq2}).



\begin{prop}\label{mainmcf:prop}
For every $R>0$, $\eta>0$ 
there is $\bar r>0$ and  constants $C, C_0,C_1,\ldots$ such that for every $r>\bar r$  and every 
$s>\eta$ we have
\[
\begin{array}{l}
    \sup_{M^r_s\cap B_R\times\mathbb R} v<C\\
    \sup_{M^r_s\cap B_R\times\mathbb R}|\nabla^m A|^2<C_m\qquad\textrm{for }m=0,\ldots\ .
\end{array}
\]
\end{prop}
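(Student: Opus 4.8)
The plan is to transcribe the elliptic argument of Theorem \ref{maximal:teo}, replacing Bartnik's a priori gradient estimate (Lemma \ref{apriori:lem}) by its parabolic analogue due to Ecker \cite{ecker}, and then to upgrade the gradient control into curvature control through the parabolic maximum principle and standard parabolic regularity. The first point is to confine the flow: since $M^r_0$ is the graph of $u|_{B_r}$ it lies in $M\subset\overline K$ (Remark \ref{rk:chull}) and $\partial M^r_0\subset M\subset K$, so Proposition \ref{mcfconvexity:prop} applied with $\Omega=K$ gives $M^r_s\subset\overline K$ for every $s>0$ and every $r$. In particular $M^r_s\cap(B_R\times\mathbb R)\subset\overline K\cap(B_R\times\mathbb R)$, a fixed compact subset of $AdS_{n+1}$ which, by the remark recalled just before the statement, stays uniformly away from $\partial D$.

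The technical heart is a parabolic version of Lemma \ref{apriori:lem}: given a point $p\in D\cap I^-(\partial_-K)$, numbers $\epsilon>0$ and $\eta>0$, and a compact domain $H\subset I^-(p_+)$, Ecker's estimates provide a constant $C=C(p,\epsilon,\eta,H)$ so that any mean curvature flow $(N_s)$ of space-like graphs with $\partial N_s\cap I^+(p)=\emptyset$ and $N_s\cap I^+(p)\subset H$ satisfies $\sup_{N_s\cap I^+_\epsilon(p)}v<C$ for all $s>\eta$. The lower bound $s>\eta$ is essential: the initial datum $u^r$ is only $\mathrm C^{0,1}$ and its gradient function need not be bounded, so one must exploit the parabolic smoothing of the flow over a definite time interval. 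Granting this estimate, the covering argument of Theorem \ref{maximal:teo} goes through verbatim. Cover the compact set $\overline K\cap(B_R\times\mathbb R)$ by finitely many $I^+_{\epsilon_k}(p_k)$ with $p_k\in D\cap I^-(\partial_-K)$; the sets $H_k=\overline{I^+(p_k)}\cap\overline K$ are compact and contained in $I^-((p_k)_+)$, just as in that proof. For $r$ larger than some $\bar r$ one has $\partial M^r_s=\partial M^r_0$ lying over $\partial B_r$, hence $\partial M^r_s\cap I^+(p_k)=\emptyset$, while $M^r_s\cap I^+(p_k)\subset\overline K\cap I^+(p_k)=H_k$ by the confinement step. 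The estimate above then yields $\sup_{M^r_s\cap I^+_{\epsilon_k}(p_k)}v<C_k$ for all $s>\eta$, and setting $C=\max_k C_k$ the covering gives $\sup_{M^r_s\cap(B_R\times\mathbb R)}v<C$ uniformly in $r>\bar r$ and $s>\eta$, which is the first bound.

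With a uniform gradient bound available on a slightly larger cylinder $B_{R+1}\times\mathbb R$ and for $s>\eta/2$, the bound on $|A|^2$ follows from Ecker's parabolic estimate for the second fundamental form: one applies the parabolic maximum principle to the evolution equation of $|A|^2$ against a cutoff supported in $B_{R+1}\times\mathbb R$, the gradient bound together with the elapsed time controlling $\sup_{M^r_s\cap(B_R\times\mathbb R)}|A|^2<C_0$ independently of $r$ and of $s>\eta$. The higher-order bounds $\sup_{M^r_s\cap(B_R\times\mathbb R)}|\nabla^m A|^2<C_m$ then follow by the usual parabolic bootstrapping for the quasi-linear equation (\ref{mcf:eq2}): once $v$ and $|A|^2$ are bounded on a cylinder, interior parabolic Schauder estimates bound all derivatives after a further arbitrarily small loss of radius and time, which is harmless because $R$ and $\eta$ were arbitrary.

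The main obstacle is the parabolic gradient estimate of the second paragraph, that is, checking that Ecker's interior gradient estimate for the mean curvature flow is available in the Lorentzian $AdS_{n+1}$ setting, with a constant depending only on $\eta$ and on the geometry of the fixed compact set $H$, and in particular uniform in the approximation parameter $r$ and in the time $s>\eta$. Once this is secured, the remaining steps are a direct transcription of the elliptic proof combined with standard parabolic regularity theory.
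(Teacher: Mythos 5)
Your proposal is correct and follows essentially the same route as the paper: confine the flow to the convex hull $K$ via Proposition \ref{mcfconvexity:prop}, reuse the covering of $(B_R\times\mathbb R)\cap K$ by sets $I^+_{\epsilon_k}(p_k)$ from the elliptic argument, and replace Bartnik's gradient estimate by Ecker's interior-in-time parabolic gradient estimate (which gives a bound of the form $A_k(1+1/s)$, hence uniform for $s>\eta$), with the curvature bounds $|\nabla^m A|^2\le C_m$ supplied by Ecker's accompanying curvature estimates. The only cosmetic difference is that you sketch the derivation of the $|A|^2$ bound via a cutoff and parabolic bootstrapping, whereas the paper simply invokes Ecker's Theorem 2.2.
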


\begin{proof}
The scheme of the proof is the same as for Theorem \ref{bound:teo}.
In particular we use the notations introduced there.

We choose points $p_1,\ldots p_{k_0}\in D\cap I^-(\partial_-K)$
and numbers $\epsilon_1,\ldots,\epsilon_k$ such that
\[
    (B_R\times\mathbb R)\cap K\subset\bigcup_1^{k_0} I^+_{\epsilon_k}(p_k)\,.
\]

On $I^+_{\epsilon_k}(p_k)$ we consider the time function
$\tau_k=\tau_{p_k}-\epsilon_k$
where $\tau_{p_k}$ denote the Lorentzian distance from 
$p_k$ and is a time function on $I^+(p_k)$.
Notice that $\tau_k$ is smooth on  the domain 
$\mathcal V=I^+(p_k)\cap I^-((p_k)_+)$.

Moreover $\overline{K\cap I^+_{\epsilon_k/2}(p_k)}$ 
is a compact domain in $\mathcal V$.

Since $M^r_s$ is contained in $K$ for every $r$ 
and $s$,  we deduce that there exists $r_0$ such that
for $r\geq r_0$ and $k=1,\ldots,k_0$ 
\[
   \partial M^r_s\cap  I^+(p_k)=\emptyset
\]
and $M_r\cap\{\tau_k\geq 0\}=M^r_s\cap \overline{I^+_{\epsilon_k/2}(p_k)}$ is compact.

Thus we are in the hypothesis of Theorem 2.1 of \cite{ecker}, 
 there is a constant $A_k$ 
\begin{equation}
   \sup_{M^r_s\cap I^+_{\epsilon_k}(p_k)}v_{M^r_s}\leq A_k(1+\frac{1}{s})\,.
\end{equation}      
where $A_k$ depends on 
 the $C^2$ norm of $\tau_k$ and $t$  and the $C^0$ norm of $Ric$
taken on the domain $\overline{K\cap I^+_{\epsilon_k/2}(p_k)}$
with respect to a reference Riemannian metric.

In particular for $s>\eta$ we have
\begin{equation}\label{ecker:eq}
   \sup_{M^r_s\cap I^+_{\epsilon_k}}v_{M^r_s}\leq A_k(1+\frac{1}{\eta})\,.
\end{equation}  

By Theorem 2.2 of \cite{ecker} we also have that for every $m=0,1,\ldots$ there are constants
$A_{k,m}$ such that
\[
   \sup_{M^R_s\cap I^+_{\epsilon_k}(p_k)}|\nabla^m A|^2\leq A_{k,m}\,.
\]

In particular, the constants $C=\sup\{A_1,\ldots A_{k_0}\}$, $C_m=\sup\{A_{1,m},\ldots, A_{k,m}\}$
satisfy the statement.


\end{proof}

\begin{theorem}\label{mcfmain:teo}
There is a family of spacelike functions
\[
  \bar u_s:\mathbb H^n\rightarrow\mathbb R
\]
for $s\in (0,+\infty)$ that verifies (\ref{mcf:eq2}) such that
\begin{itemize}
\item $\bar u_s\rightarrow u$ as $s\rightarrow 0$ in the compact open topology.
\item $\{\bar u_s\}_{s>1}$ is a relatively compact family in $C^\infty(\mathbb H^n)$.
\item the graph $M_s$ of  $\bar u_s$ is contained  in  $K$ for every $s>0$.
\item the mean curvature of $M_s$ satisfies $H_s(x)^2<\frac{n}{2s}$.
\end{itemize}
\end{theorem}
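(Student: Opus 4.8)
The plan is to obtain $\bar u_s$ as a limit of the compact flows $u^r_s$ as $r\to\infty$, using the uniform estimates of Proposition \ref{mainmcf:prop} to guarantee convergence, and then to read off the four listed properties from the limit. First I would fix an exhaustion of $\mathbb H^n\times(0,+\infty)$ by products $\overline B_{R_j}\times[\eta_j,T_j]$ with $R_j,T_j\to\infty$ and $\eta_j\to 0$. By Proposition \ref{mainmcf:prop}, for $r$ large the gradient function $v$ and all the quantities $|\nabla^m A|^2$ are bounded on $M^r_s\cap(B_{R_j}\times\mathbb R)$ for $s\geq\eta_j$ by constants independent of $r$. Translating these bounds through the graph equation (\ref{mcf:eq2}) --- which expresses $\partial_s u^r_s$ in terms of the (bounded) mean curvature --- and through the relation between $v$, the induced metric, and the spatial derivatives of $u^r_s$, I obtain uniform bounds on $u^r_s$ in $C^\infty(\overline B_{R_j}\times[\eta_j,T_j])$. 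By Arzel\`a--Ascoli and a diagonal argument over $j$, along a diverging sequence $r_k\to\infty$, I extract a subsequence of the $u^{r_k}_s$ converging in $C^\infty_{loc}$ on $\mathbb H^n\times(0,+\infty)$ to a limit $\bar u_s$. Since the convergence is $C^\infty$ and each $u^{r_k}_s$ solves (\ref{mcf:eq2}), so does $\bar u_s$; and the uniform bound on $v$ passes to the limit, so each $\bar u_s$ is uniformly spacelike.

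Next I would deduce three of the four conclusions directly from the limit. The graphs $M^{r_k}_s$ lie in the convex hull $K$ by Proposition \ref{mcfconvexity:prop} (the initial datum is the restriction of $u$, whose graph lies in $K$ by Remark \ref{rk:chull}, and the Dirichlet condition keeps $\partial M^{r_k}_s$ in $K$), so the $C^0$ limit $M_s$ also lies in $K$, giving the containment assertion. The mean curvature bound (\ref{mcest:eq}) of Proposition \ref{compactmcf:prop} gives $H_s(x)^2\leq n/(2s)$ on each $M^{r_k}_s$, and passes to the limit. Finally, the estimates of Proposition \ref{mainmcf:prop} are uniform in $s$ for $s>1$, so the family $\{\bar u_s\}_{s>1}$ satisfies uniform $C^\infty$ bounds on every $B_R$ and is therefore relatively compact in $C^\infty(\mathbb H^n)$.

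The hard part will be the remaining conclusion, that $\bar u_s\to u$ in the compact--open topology as $s\to0$. The interior estimates of Proposition \ref{mainmcf:prop} (following \cite{ecker}) degenerate like $1+1/s$ as $s\to0$, so they cannot control small times, and the diagonal limit above was taken with $s$ bounded away from $0$. The issue is precisely to interchange the limits $r\to\infty$ and $s\to0$, which requires a modulus of continuity at the initial time that is uniform in $r$. Here I would use that all the flows have the same fixed Lipschitz initial datum $u$, and that all the surfaces $M^r_s$ remain trapped in $K$; since $\overline K\cap\partial D=\emptyset$, this keeps them uniformly spacelike and at uniformly positive distance from the light cones forming $\partial D$. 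This yields uniform geometric constants, so that Ecker's estimates on the continuous attainment of locally Lipschitz spacelike initial data (\cite{ecker}, see also \cite{ecker-huisken}) provide a modulus of continuity for $s\mapsto u^r_s$ at $s=0$ on each $\overline B_R$, depending only on $u$ and on these uniform constants, hence independent of $r$.

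Letting $r=r_k\to\infty$ then transfers this modulus of continuity at $s=0$ to $\bar u_s$, and combined with the convergence $u^{r}_s\to u$ as $s\to0$ for each fixed $r$ (Proposition \ref{compactmcf:prop}) it gives $\bar u_s\to u$ uniformly on compact subsets of $\mathbb H^n$ as $s\to0$. I expect the uniform-in-$r$ initial regularity to be the only delicate point; the rest is a routine passage to the limit in the estimates already established.
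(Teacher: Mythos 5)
Your proposal has the same skeleton as the paper's proof: extract $\bar u_s$ as a $C^\infty_{loc}$ diagonal limit of the compact flows $u^{r_k}_s$ on $\mathbb H^n\times(0,+\infty)$ using Proposition \ref{mainmcf:prop}, and read off the containment in $K$, the bound $H_s^2\leq n/(2s)$, and the precompactness of $\{\bar u_s\}_{s>1}$ by passing to the limit in the corresponding statements for the compact flows. For those three items you match the paper essentially verbatim. The one place you diverge is the convergence $\bar u_s\to u$ as $s\to 0$, which you correctly identify as the delicate point (the interior estimates degenerate like $1+1/s$) but then resolve by appealing to unspecified ``continuous attainment of Lipschitz initial data'' estimates from \cite{ecker}, with constants claimed to be uniform in $r$ because the flows stay trapped in $K$. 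That route is plausible but left unverified --- you would have to check exactly which geometric quantities enter Ecker's initial-time estimate and confirm they are controlled uniformly in $r$. The paper does something much simpler and entirely self-contained, using only ingredients you already have on the table: since $\phi\geq 1$ and the gradient function satisfies $v\geq 1$, the evolution equation (\ref{mcf:eq2}) gives $|\partial_s u^r_s|=\phi^{-1}v^{-1}|H|\leq |H|$, and the universal decay (\ref{mcest:eq}) from Proposition \ref{compactmcf:prop} then yields
\[
|u^r_s(x)-u(x)|\ \leq\ \int_0^s\sqrt{\tfrac{n}{2\sigma}}\,d\sigma\ =\ \sqrt{2ns}\,,
\]
a modulus of continuity at $s=0$ that is manifestly independent of $r$ and of $x$; letting $r\to\infty$ transfers it to $\bar u_s$ and gives the compact-open (indeed uniform) convergence. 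I recommend replacing your appeal to external initial-regularity theorems by this one-line integration; as written, that step is the only part of your argument that is not fully justified.
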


\begin{proof}
For any $R>0$ and $\epsilon>0$
we consider the restriction of $u^r$ on $B_R\times [-\epsilon, +\infty)$.
Proposition \ref{mainmcf:prop} implies that such restrictions form
 a pre-compact  family in $C^\infty(B_R\times[-\epsilon,+\infty))$.
 
 By a diagonal process, we can construct a sequence $r_n\rightarrow+\infty$ such that
 $(u^{r_n})$ converges to $\bar u$ in the $C^\infty$-topology on compact subsets of 
 $\mathbb H^n\times (0,+\infty)$.
 Notice that by construction $(\bar u_s)_{s>1}$ is precompact in $C^\infty(\mathbb H^n)$.
 
 By the uniform estimate on the gradient function of $u^r_s$ on $B_R$ we get that
 the graph $M_s$ of $\bar u_s$ is spacelike.
 Clearly $\bar u_s$ verifies equation (\ref{mcf:eq2}).
 
 Since (\ref{mcest:eq}) holds for every $u^r_s$,
we get that $H(\bar u_s)^2<\frac{n}{2s}$.
 
 Analogously, passing to the limit in the inclusion $M^r_s\subset K$, we get that $M_s$ is contained
 in $K$.
 
 Comparing (\ref{mcf:eq2}) with (\ref{mcest:eq}), it results that
 \[
     |u^r_s(x)-u(x)|\leq \sqrt{ns}\,.
 \]  
 Taking the limit for $r\rightarrow+\infty$ we get
 \[
   |\bar u_s(x)-u(x)|\leq\sqrt{ns}
\]
that  shows that $\bar u_s\rightarrow u$ in the compact open topology.
\end{proof}
 
\begin{remark}
Taking the limit of $M_{s_k}$ for a suitable sequence $s_k\rightarrow+\infty$ we obtain a maximal surface
contained in $D$. Thus Theorem \ref{mcfmain:teo} furnishes another
proof of Theorem \ref{maximal:teo}.
\end{remark}

\bibliographystyle{amsplain}
\bibliography{../outils/biblio}
\end{document}